\theoremstyle{plain}
\newtheorem{theorem}{Theorem}[subsection]
\newtheorem{bigtheorem}{Theorem}[section]
\newtheorem{proposition}[theorem]{Proposition}
\newtheorem{lemma}[theorem]{Lemma}
\newtheorem{corollary}[theorem]{Corollary}
\theoremstyle{definition}
\newtheorem{definition}[theorem]{Definition}
\theoremstyle{remark}
\newtheorem{remark}[theorem]{Remark}
\numberwithin{equation}{subsection}
\newcommand{\map}[1]{\xrightarrow{#1}}
\newcommand{\iso}{\cong}
\newcommand{\define}{\stackrel{\mathrm{def}}{=}}
\newcommand{\imes}{\ltimes}
\newcommand{\normal}{\lhd}
\newcommand{\Gal}{\mathrm{Gal}} 
\newcommand{\Hom}{\mathrm{Hom}}
\newcommand{\Aut}{\mathrm{Aut}}
\newcommand{\End}{\mathrm{End}}
\newcommand{\Spec}{\mathrm{Spec}}
\newcommand{\Q}{\mathbb Q}
\newcommand{\Z}{\mathbb Z}
\newcommand{\R}{\mathbb R}
\newcommand{\C}{\mathbb C}
\newcommand{\F}{\mathbb F}
\newcommand{\A}{\mathbb A}
\newcommand{\co}{\mathcal O}
\newcommand{\alg}{\mathrm{alg}}
\newcommand{\action}{\bullet}
\newcommand{\Lie}{\mathrm{Lie}}
\newcommand{\Fil}{F}
\newcommand{\Pap}{\mathrm{Pap}}
\newcommand{\Kra}{\mathrm{Kra}}
\newcommand{\art}{\mathrm{art}}
\newcommand{\ord}{\mathrm{ord}}
\newcommand{\GL}{\mathrm{GL}}
\newcommand{\GU}{\mathrm{GU}}
\newcommand{\SO}{\mathrm{SO}}
\newcommand{\SL}{\mathrm{SL}}
\newcommand{\Stab}{\mathrm{Stab}}
\newcommand{\kk}{{\bm{k}}}
\newcommand{\dR}{\mathrm{dR}}
\newcommand{\eee}{\mathrm{e}}
\newcommand{\fff}{\mathrm{f}}
\renewcommand{\Im}{\operatorname{Im}}
\renewcommand{\Re}{\operatorname{Re}}
\newcommand{\pure}{{\bm{\Omega}}}
\newcommand{\zxz}[4]{\begin{pmatrix} #1 & #2 \\ #3 & #4 \end{pmatrix}}
\newcommand{\abcd}{\zxz{a}{b}{c}{d}}
\newcommand{\calB}{\mathcal{B}}
\newcommand{\calD}{\mathcal{D}}
\newcommand{\calO}{\mathcal{O}}
\newcommand{\calS}{\mathcal{S}}
\newcommand{\calZ}{\mathcal{Z}}
\newcommand{\tot}{\mathrm{tot}}
\newcommand{\reg}{\mathrm{reg}}
\newcommand{\TR}{\Theta^\reg}
\newcommand{\bs}{\backslash}
\newcommand{\Ch}{\operatorname{Ch}}
\newcommand{\Cha}{\widehat{\operatorname{Ch}}}
\author[J.~Bruinier]{Jan H.~Bruinier}
\address{Fachbereich Mathematik, Technische Universit\"at Darmstadt, Schlossgartenstrasse 7, D-64289 Darmstadt, Germany}
\email{bruinier@mathematik.tu-darmstadt.de}
\author[B.~Howard]{Benjamin Howard}
\address{Department of Mathematics, Boston College, 140 Commonwealth Ave, Chestnut Hill, MA 02467, USA}
\email{howardbe@bc.edu}
\author[S.~Kudla]{Stephen S.~Kudla}
\address{Department of Mathematics, University of Toronto, 40 St. George St., BA6290, Toronto, ON M5S 2E4, Canada}
\email{skudla@math.toronto.edu}
\author[M.~Rapoport]{Michael Rapoport}
\address{
Mathematisches Institut der Universit\"at Bonn, Endenicher Allee 60, 53115 Bonn, Germany, and 
 Department of Mathematics, University of Maryland, College Park, MD 20742, USA 
}
\email{rapoport@math.uni-bonn.de}
\author[T.~Yang]{Tonghai Yang}
\address{Department of Mathematics, University of Wisconsin Madison, Van Vleck Hall, Madison, WI 53706, USA}
\email{thyang@math.wisc.edu}
\title[Modularity of unitary generating series]{Modularity of generating series of divisors on  unitary Shimura varieties}
\begin{document}

\begin{abstract}
We form  generating series, valued in the Chow group and  the arithmetic Chow group,  of special divisors on  the compactified integral model of a Shimura variety associated to a unitary group of signature $(n-1,1)$, and prove   their modularity. 
The main ingredient in the proof is the calculation of  vertical components appearing in the divisor of a Borcherds product on the integral model.
\end{abstract}


\subjclass{14G35, 14G40,  11F55, 11F27, 11G18}
\keywords{Shimura varieties, Borcherds products}

\thanks{J.B.~was  supported in part by DFG grant BR-2163/4-2. 
B.H.~was supported in part by NSF grants DMS-1501583 and DMS-1801905. 
M.R.~was supported in part by   the Deutsche Forschungsgemeinschaft through the grant SFB/TR 45. 
S.K.~was supported in part by an NSERC Discovery Grant. 
 T.Y.~was  supported in part by NSF grant DMS-1500743.}

\maketitle

\setcounter{tocdepth}{1}
\tableofcontents



\section{Introduction}


The goal of this paper is to prove the modularity of a  generating series of special divisors on the compactified integral model of a Shimura variety associated to a unitary group of signature $(n-1,1)$.    
The special divisors in question  were first  studied on the open Shimura variety in \cite{KR1, KR2}, and then on the toroidal compactification in \cite{Ho2}.  

This generating series is an arithmetic analogue of the classical theta kernel used to lift modular forms from $\mathrm{U}(2)$ and $\mathrm{U}(n)$.  
In a similar vein, our modular generating series can be used to define a lift from classical cuspidal  modular forms of weight $n$ to the 
codimension one Chow group of the unitary Shimura variety.


\subsection{Statement of the main result}


Fix a quadratic imaginary field $\kk\subset \C$ of odd discriminant $\mathrm{disc}(\kk)=-D$. 
We are concerned with the arithmetic of a certain unitary Shimura variety, whose definition depends on 
the choices of $\kk$-hermitian spaces $W_0$ and $W$  of signature $(1,0)$ and $(n-1,1)$, respectively, where  $n\ge 3$.   We assume that  $W_0$ and $W$  each admit  an $\co_\kk$-lattice  that is self-dual with respect to the hermitian form.

Attached to this data is a reductive algebraic group
\begin{equation}\label{G def}
G \subset \GU(W_0 ) \times \GU(W)
\end{equation}
over $\Q$, defined as the subgroup on which the unitary similitude characters are equal, and a compact open subgroup $K\subset G(\A_f)$ depending on the above choice of self-dual lattices.  As explained in \S \ref{s:unitary}, there is an associated hermitian symmetric domain $\mathcal{D}$, and a Deligne-Mumford stack $\mathrm{Sh}(G,\mathcal{D})$ over $\kk$ whose complex points are identified with the orbifold quotient
\[
\mathrm{Sh}(G,\mathcal{D}) (\C) = G(\Q) \backslash \mathcal{D} \times G(\A_f) / K.
\]
This is the unitary Shimura variety of the title.

The stack  $\mathrm{Sh}(G,\mathcal{D})$ can be interpreted as a moduli space of pairs $(A_0,A)$ in which 
 $A_0$ is an elliptic curve with complex multiplication by $\co_\kk$, and $A$ is a principally polarized abelian scheme of dimension $n$ endowed with an $\co_\kk$-action.  The pair $(A_0,A)$ is required to satisfy some additional conditions, which need not concern us in the introduction.

Using the moduli interpretation, one can construct an integral model of $\mathrm{Sh}(G,\mathcal{D})$ over $\co_\kk$.  
In fact, following work of Pappas and Kr\"amer, we explain in \S \ref{ss:unitary integral models} that there are two natural integral models  related by a morphism $\mathcal{S}_\Kra \to \mathcal{S}_\Pap.$  
Each integral model has a canonical toroidal compactification whose boundary  is a disjoint union of  smooth Cartier divisors, and the above morphism  extends uniquely to a morphism
\begin{equation}\label{intro models}
\mathcal{S}^*_\Kra \to \mathcal{S}^*_\Pap
\end{equation}
 of compactifications.

Each compactified integral model has its own desirable and undesirable properties.  
For example, $\mathcal{S}^*_\Kra$ is regular, while $\mathcal{S}^*_\Pap$ is not.  
On the other hand, every vertical (\emph{i.e.}~ supported in nonzero characteristic) Weil divisor on $\mathcal{S}^*_\Pap$ has nonempty intersection with the boundary, while $\mathcal{S}^*_\Kra$ has certain \emph{exceptional} divisors in characteristics $p\mid D$ that do not meet the boundary.   
An essential part of our method is to pass back and forth between these two models in order to exploit the best properties of each.
For simplicity,  we will state our main results in terms of the regular model $\mathcal{S}^*_\Kra$.

In \S \ref{s:unitary} we define  a distinguished line bundle $\bm{\omega}$ on $\mathcal{S}_\Kra$, called the \emph{line bundle of weight one modular forms}, and  a family of Cartier divisors $\mathcal{Z}_\Kra(m)$ indexed by integers $m>0$.   These special  divisors were  introduced  in \cite{KR1, KR2}, and studied further in \cite{BHY, Ho1, Ho2}.   For the purposes of the introduction, we note only that one should regard the divisors as arising from embeddings of smaller unitary groups into $G$.

Denote by 
\[
\Ch_\Q^1(\mathcal{S}^*_\Kra) \iso \mathrm{Pic}(\mathcal{S}^*_\Kra)\otimes_\Z\Q
\] 
the Chow group  of rational equivalence classes of divisors with  $\Q$ coefficients.
Each special divisor $\mathcal{Z}_\Kra(m)$ can be extended to a divisor on the toroidal compactification simply by taking its Zariski closure, denoted $\mathcal{Z}_\Kra^*(m)$.   The \emph{total special divisor} is  defined as
\begin{equation}\label{intro total}
\mathcal{Z}_\Kra^\tot(m) = \mathcal{Z}_\Kra^*(m)  +  \mathcal{B}_\Kra(m)\in \Ch_\Q^1(\mathcal{S}^*_\Kra)
\end{equation}
where the boundary contribution is defined, as in  (\ref{m boundary mult}), by
\[
\mathcal{B}_\Kra(m) = \frac{ m }{n-2}   \sum_\Phi  \#\{ x \in L_0 :  \langle x , x \rangle  =m \} \cdot \mathcal{S}^*_\Kra(\Phi).
\]
The notation here is the following: The sum is over the equivalence classes of  \emph{proper cusp label representatives} $\Phi$ as defined in \S \ref{ss:cusp notation}.  
These index the connected components $\mathcal{S}^*_\Kra(\Phi) \subset  \partial \mathcal{S}_\Kra^*$  of the boundary\footnote{After base change to $\C$, each   $\mathcal{S}^*_\Kra(\Phi)$ decomposes into $h$ connected components, where $h$ is the class number of $\kk$.}.   
Inside the sum, $( L_0, \langle \cdot,\cdot \rangle)$ is a hermitian $\co_\kk$-module of signature $(n-2,0)$, which depends on $\Phi$.

The line bundle of modular forms $\bm{\omega}$ admits a canonical extension to the toroidal compactification, denoted the same way. 
 For the sake of notational uniformity,  we extend (\ref{intro total}) to $m=0$ by setting
\begin{equation}\label{intro naked constant}
\mathcal{Z}_\Kra^\tot(0) = \bm{\omega}^{-1} + \mathrm{Exc} \in \Ch_\Q^1(\mathcal{S}^*_\Kra).
\end{equation}
Here $\mathrm{Exc}$ is the exceptional divisor of Theorem \ref{thm:integral comparison}.  It is a reduced effective divisor supported in characteristics $p\mid D$,  disjoint from the boundary of the compactification.
   The following result appears in the text as Theorem \ref{thm:kramer modularity}.

\begin{bigtheorem}\label{ThmA}
Let  $\chi_\kk: (\Z/ D\Z )^\times \to \{\pm 1\}$ be the  Dirichlet character  determined by $\kk/\Q$.
The formal generating series
\[
\sum_{m\ge 0}   \mathcal{Z}^\tot_\Kra(m) \cdot q^m \in \Ch_\Q^1(\mathcal{S}^*_\Kra)[[ q]] 
\]
is modular of weight $n$, level $\Gamma_0(D)$, and character $\chi_\kk^n$ in the following sense:  for every $\Q$-linear functional
$
\alpha : \Ch_\Q^1(\mathcal{S}^*_\Kra) \to \C,
$
 the  series
\[
\sum_{m\ge 0}   \alpha (  \mathcal{Z}^\tot_\Kra(m) ) \cdot q^m \in  \C [[ q]] 
\]
is the $q$-expansion of a classical modular form  of the indicated weight, level,  and character.
\end{bigtheorem}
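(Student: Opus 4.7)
The plan is to apply the Borcherds-Bruinier theta lift strategy, adapted to the integral model. The key reduction is the Borcherds/Eichler-Zagier duality: a formal series $\sum_{m \ge 0} a_m \cdot q^m$ with coefficients in a $\Q$-vector space $V$ is modular of weight $n$, level $\Gamma_0(D)$, and character $\chi_\kk^n$ if and only if for every weakly holomorphic modular form
\[
f(\tau) = \sum_m c_f(m)\cdot q^m \in M^!_{2-n}(\Gamma_0(D), \chi_\kk^n),
\]
one has $\sum_{m\ge 0} c_f(-m) \cdot a_m = 0$ in $V$. Thus Theorem A will follow if for every such $f$ we can produce a rational section of a power of $\bm{\omega}$ on $\mathcal{S}_\Kra^*$ whose divisor in $\Ch^1_\Q(\mathcal{S}^*_\Kra)$ equals $\sum_{m \ge 0} c_f(-m) \cdot \mathcal{Z}^\tot_\Kra(m)$.

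To construct such a section, I would choose a harmonic Maass form $F$ of weight $2-n$ whose holomorphic part has principal part $\sum_{m > 0} c_f(-m)q^{-m}$ and constant term $c_f(0)$, and apply the Bruinier generalization of the Borcherds regularized theta lift in its unitary incarnation (\cite{Ho14, BHY}). This yields a meromorphic section $\Psi(F)$ of some power of $\bm{\omega}$ on $\mathcal{S}^*_\Kra \otimes \C$ whose divisor, on the generic fibre, combines with the results on boundary behaviour of \cite{BHY} to give
\[
\mathrm{div}(\Psi(F))_{/\kk} = \sum_{m\ge 0} c_f(-m) \cdot \mathcal{Z}^\tot_\Kra(m) \quad \text{in } \Ch^1_\Q(\mathcal{S}^*_\Kra \otimes \kk),
\]
where the term $c_f(0)\cdot \bm{\omega}^{-1}$ is built into $\mathcal{Z}^\tot_\Kra(0)$ by construction, and the boundary sums match the definition of $\mathcal{B}_\Kra(m)$.

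The main obstacle, as flagged in the abstract, is promoting this identity from the generic fibre to the integral model $\mathcal{S}^*_\Kra$. The Zariski closure of $\mathrm{div}(\Psi(F))_{/\kk}$ may differ from the full divisor of $\Psi(F)$ on $\mathcal{S}^*_\Kra$ by a vertical cycle supported in characteristics $p \mid D$. To control this, I would exploit the pair of models in (\ref{intro models}). On $\mathcal{S}^*_\Pap$, every vertical prime divisor meets the boundary $\partial \mathcal{S}^*_\Pap$, so the vertical multiplicities of the pushforward of $\Psi(F)$ to $\mathcal{S}^*_\Pap$ can be extracted from its asymptotic behaviour near the cusps, which is controlled by the $q$-expansion principle applied to the Borcherds product. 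Pulling the resulting identity back to $\mathcal{S}^*_\Kra$, the only remaining uncertainty lives along the exceptional divisors collapsed by $\mathcal{S}^*_\Kra \to \mathcal{S}^*_\Pap$. These exceptional components do \emph{not} meet the boundary, so the only way to pin down the multiplicity of $\Psi(F)$ along each of them is by a direct local calculation on $\mathcal{S}_\Kra$ near an exceptional fibre; I expect this local computation --- matching the multiplicity of a Borcherds product along each exceptional divisor with the coefficient $c_f(0)$ prescribed by the inclusion of $\mathrm{Exc}$ in $\mathcal{Z}^\tot_\Kra(0)$ --- to be the central technical step of the paper.

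Once the multiplicities along the exceptional locus are determined and seen to agree with $c_f(0) \cdot \mathrm{Exc}$, combining with the generic-fibre identity and the $\mathcal{S}^*_\Pap$ analysis yields
\[
\mathrm{div}(\Psi(F)) = \sum_{m\ge 0} c_f(-m) \cdot \mathcal{Z}^\tot_\Kra(m) \quad \text{in } \Ch^1_\Q(\mathcal{S}^*_\Kra),
\]
so the right-hand side is a principal divisor class for every admissible choice of polar coefficients $(c_f(-m))$. By the duality recalled above, this is precisely what is needed to conclude that the generating series is modular in the sense of the theorem.
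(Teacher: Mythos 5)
Your overall architecture (Borcherds duality, Borcherds products as Chow-group relations, two integral models with vertical components detected at the boundary of $\mathcal{S}^*_\Pap$) matches the paper, but the argument does not close as written because the divisor identity you assert is false, both generically and integrally. The Borcherds product attached to $f\in M^{!,\infty}_{2-n}(D,\chi_\kk^{n-2})$ is a section of $\bm{\omega}^k$ with $k=\sum_{r\mid D}\gamma_r c_r(0)$, involving the constant terms of $f$ at \emph{all} cusps of $\Gamma_0(D)$ weighted by local Weil indices, not just $c_f(0)=c_1(0)$. So already over $\kk$ the relation one obtains is $\sum_{r\mid D,\,r>1}\gamma_r c_r(0)\cdot\bm{\omega}=\sum_{m\ge0}c(-m)\,\mathcal{Z}^\tot_\Kra(m)_{/\kk}$, and the left-hand side is not absorbed by $\mathcal{Z}^\tot_\Kra(0)$. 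The paper eliminates the $c_r(0)$ for $r>1$ via the residue identity $c_r(0)=-\sum_{m>0}c(-m)e_r(m)$ against the Eisenstein series $E_r$ for the cusp $\infty_r$, and it is the modularity of the $E_r$ that lets one strip these terms off at the end; without this step the relation is not of the form $0=\sum_m c(-m)d(m)$, and the modularity criterion (which must be stated for $M^{!,\infty}_{2-n}$, forms holomorphic away from $\infty$, with conclusion in $M^\infty_n$ --- your pairing over all of $M^!_{2-n}$ using only the coefficients at $\infty$ is not the correct duality) cannot be invoked. On the integral model the divisor acquires further terms $-k\cdot\mathrm{div}(\delta)$ and $\sum_{r\mid D}\gamma_r c_r(0)\sum_{p\mid r}\mathcal{S}^*_{\Kra/\F_\mathfrak{p}}$ that you do not account for.

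Your treatment of the exceptional locus is also wrong in substance. The multiplicity of $\mathrm{div}(\bm{\psi}(f))$ along $\mathrm{Exc}_s$ is $\tfrac{k}{2}-\tfrac12\sum_{m>0}c(-m)\,\#\{x\in L_s:\langle x,x\rangle=m\}$, where $L_s$ is the positive definite hermitian lattice attached to the supersingular point $s$ --- not $c_f(0)$. The reason is that the special divisors themselves contain exceptional components: $2\mathcal{Z}_\Kra(m)=\mathcal{Y}_\Kra(m)+\sum_s\#\{x\in L_s:\langle x,x\rangle=m\}\cdot\mathrm{Exc}_s$ (Theorem \ref{ThmC}). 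The paper never evaluates the Borcherds product locally at a supersingular point in characteristic $p\mid D$ (it is unclear how one would); instead it proves the purely geometric comparisons $\bm{\omega}^2\iso\pure_\Kra\otimes\co(\mathrm{Exc})$ and the displayed divisor relation by deformation theory at the superspecial points, computes $\mathrm{div}(\bm{\psi}(f)^2)$ on $\mathcal{S}^*_\Pap$ from Fourier--Jacobi expansions, and pulls back. In the final argument the exceptional contributions assemble into the theta series $\vartheta_s(\tau)=\sum_{x\in L_s}q^{\langle x,x\rangle}$, whose modularity, like that of the $E_r$, is essential to conclude. A smaller but real error: the regularized theta lift of a harmonic Maass form that is not weakly holomorphic is a Green function, not $-\log\|\cdot\|^2$ of a meromorphic section, so your $\Psi(F)$ does not produce a Chow-group relation; for Theorem \ref{ThmA} the input must be the weakly holomorphic form $f$ itself.
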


We can prove a stronger version of Theorem \ref{ThmA}.  
 Denote by $\Cha^1_\Q(\mathcal{S}^*_\Kra)$
 the Gillet-Soul\'e \cite{GS} arithmetic Chow group   of rational equivalence classes of pairs 
$
\widehat{\mathcal{Z}} = (\mathcal{Z} ,  \mathrm{Gr}  ),
$
 where $\mathcal{Z}$ is a divisor on $\mathcal{S}^*_\Kra$ with rational coefficients, 
and $\mathrm{Gr}$ is a Green function for $\mathcal{Z}$.  
We  allow the Green function to have additional $\log$-$\log$ singularities  along the boundary, as in the more general theory  developed in \cite{BKK}. See also \cite{BBK,Ho2}.

In \S \ref{ss:arithmetic modular} we use the theory of regularized theta lifts  to construct  Green functions for the special divisors $\mathcal{Z}_\Kra^\tot(m)$, and hence obtain arithmetic divisors
\[
\widehat{\mathcal{Z}}_\Kra^\tot(m) \in \Cha^1_\Q(\mathcal{S}^*_\Kra)
\]
for $m>0$.   We also endow the line bundle $\bm{\omega}$ with a metric,  and the resulting metrized line bundle $\widehat{\bm{\omega}}$  defines a class
\[
\widehat{\mathcal{Z}}_\Kra^\tot(0) = \widehat{\bm{\omega}}^{-1} + ( \mathrm{Exc} , -\log(D))  \in \Cha^1_\Q(\mathcal{S}^*_\Kra),
\]
where the vertical divisor $\mathrm{Exc}$ has been endowed with the constant Green function $-\log(D)$.
The following result is  Theorem \ref{thm:arithmeticmodularity}  in the text.

\begin{bigtheorem}\label{ThmB}
 The formal generating series
\[
\widehat{\phi}(\tau) = \sum_{m\ge 0}   \widehat{\mathcal{Z}}^\tot_\Kra(m) \cdot q^m \in \Cha_\Q^1(\mathcal{S}^*_\Kra)[[ q]] 
\]
is modular  of weight $n$, level $\Gamma_0(D)$, and character $\chi_\kk^n$, where modularity is understood in the same sense as Theorem \ref{ThmA}.
\end{bigtheorem}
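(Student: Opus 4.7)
The plan is to follow the Borcherds strategy, extended to the arithmetic setting à la Bruinier--Burgos--K\"uhn \cite{BBK}: prove modularity by producing, via a regularized theta lift, a sufficiently large supply of meromorphic ``Borcherds products'' on $\mathcal{S}^*_\Kra$ whose arithmetic divisors realize all linear relations among the coefficients of $\widehat{\phi}(\tau)$ predicted by modularity. The underlying duality principle (a form of Serre duality for weakly holomorphic modular forms, already used by Borcherds in \cite{Bo2}) is the following: a formal $q$-series with coefficients in a $\Q$-vector space is modular of weight $n$, level $\Gamma_0(D)$, character $\chi_\kk^n$ precisely when, for every weakly holomorphic form $f$ of complementary weight $2-n$ (in the appropriate vector-valued Weil representation) with integral principal part $\{c_f(-m)\}_{m\ge 0}$, the finite linear relation $\sum_m c_f(-m)\, \widehat{\mathcal{Z}}^\tot_\Kra(m)=0$ holds in $\Cha^1_\Q(\mathcal{S}^*_\Kra)$.

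To produce such a relation, I would invoke the unitary Borcherds--Bruinier construction of \cite{Ho14, BHY}: associated to $f$ is a meromorphic section $\psi(f)$ of a power of $\bm{\omega}$ on $\mathcal{S}_\Kra$, whose Petersson norm supplies a Green function with log-log growth along the boundary. Over the generic fiber, the divisor of $\psi(f)$ is the expected combination $\sum_{m>0} c_f(-m)\mathcal{Z}_\Kra(m)$, and the corresponding arithmetic principal divisor matches $\sum_m c_f(-m)\widehat{\mathcal{Z}}^\tot_\Kra(m)$ on the generic fiber together with its archimedean contribution. Assembling these generic-fiber computations with the Kodaira--Spencer identification of $\bm\omega$ and the standard analysis of Green functions attached to regularized theta lifts should produce the desired identity \emph{up to} divisors supported on the special fibers of $\mathcal{S}^*_\Kra$.

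The main obstacle, as the abstract anticipates, is the identification of these vertical contributions on the integral model. A priori, $\widehat{\mathrm{div}}(\psi(f))$ on $\mathcal{S}^*_\Kra$ can contain: (a) boundary components along $\partial\mathcal{S}^*_\Kra$, which must reproduce the combination $\sum c_f(-m)\mathcal{B}_\Kra(m)$ coming from the hermitian lattices $L_0$ at each cusp; (b) exceptional components in characteristics $p\mid D$ disjoint from the boundary, which must be absorbed by the term $\mathrm{Exc}$ in $\widehat{\mathcal{Z}}^\tot_\Kra(0)$; and (c) possible further vertical components. The strategy is to exploit the two models of (\ref{intro models}) in tandem. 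On $\mathcal{S}^*_\Pap$ every vertical Weil divisor meets the boundary, so the $q$-expansion principle applied to $\psi(f)$ around each toroidal cusp $\Phi$ determines the boundary multiplicities in terms of theta-integrals, which one shows equal the representation numbers $\#\{x\in L_0:\langle x,x\rangle =m\}$ appearing in the definition of $\mathcal{B}_\Kra(m)$; this simultaneously forces the nonexistence of vertical components of type (c). Pulling back under $\mathcal{S}^*_\Kra\to\mathcal{S}^*_\Pap$ then reintroduces a controlled multiple of $\mathrm{Exc}$, whose coefficient, when paired with the constant Green function $-\log(D)$ attached to $\mathrm{Exc}$ in the definition of $\widehat{\mathcal{Z}}^\tot_\Kra(0)$, is forced to match the logarithm of the leading Fourier coefficient of $\psi(f)$ (the source of the factor $-\log(D)$).

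Once the full arithmetic principal divisor of $\psi(f)$ has been computed on $\mathcal{S}^*_\Kra$ and shown to equal $\sum_{m\ge 0}c_f(-m)\widehat{\mathcal{Z}}^\tot_\Kra(m)$, the duality criterion of the first paragraph immediately gives Theorem \ref{ThmB}, and Theorem \ref{ThmA} follows by applying the forgetful map $\Cha^1_\Q(\mathcal{S}^*_\Kra)\to\Ch^1_\Q(\mathcal{S}^*_\Kra)$. The entire proof thus hinges on the explicit integral-model divisor computation of Borcherds products, which is the technical core that the remainder of the paper must supply.
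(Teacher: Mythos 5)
Your overall architecture --- the duality criterion paired with an integral-model divisor computation of Borcherds products, exploiting both $\mathcal{S}^*_\Kra$ and $\mathcal{S}^*_\Pap$ --- is exactly the paper's. But the endgame as you describe it would fail, for two linked reasons. First, the duality criterion (Theorem \ref{thm:modularity criterion}) pairs against $M^{!,\infty}_{2-n}(D,\chi)$ and characterizes membership in $M^{\infty}_{n}(D,\chi)$, the space of forms \emph{vanishing at every cusp other than} $\infty$. The generating series $\widehat{\phi}$ is not of this type: its constant term at $\infty_r$ for $r>1$ is a nonzero multiple of $\widehat{\bm{\omega}}^{-1}$ plus vertical classes. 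Consequently the clean relation $\sum_m c(-m)\,\widehat{\mathcal{Z}}^\tot_\Kra(m)=0$ that you assert at the end is false, and cannot be what the divisor computation yields.

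Second, and correspondingly, your claim that the Fourier--Jacobi analysis on $\mathcal{S}^*_\Pap$ ``forces the nonexistence of vertical components of type (c)'' is wrong. The leading Fourier--Jacobi coefficient of $\bm{\psi}(f)$ contains the factors $P^{vert}_\Phi$ and $P^{\eta}_\Phi$, whose divisors are entire special fibers: one finds $\sum_{r\mid D}\gamma_r c_r(0)\sum_{p\mid r}\mathcal{S}^*_{\Kra/\F_\mathfrak{p}}$ and $-k\cdot\mathrm{div}(\delta)$ in $\mathrm{div}(\bm{\psi}(f))$ (Theorem \ref{thm:unitary borcherds II}), in addition to exceptional components weighted by the representation numbers $\#\{x\in L_s:\langle x,x\rangle=m\}$ of the definite lattices $L_s$. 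The relation one actually obtains therefore expresses $\sum_m c(-m)\,\widehat{\mathcal{Z}}^\tot_\Kra(m)$ as an explicit combination of these extra classes, whose coefficients are recognized as Fourier coefficients of the Eisenstein series $E_r$ (via the residue identity $c_r(0)=-\sum_{m>0}c(-m)e_r(m)$ of Proposition \ref{prop:distribute cusps}) and of the theta series $\vartheta_s$. The modularity criterion is then applied not to $\widehat{\phi}$ itself but to $\widehat{\phi}$ minus these Eisenstein and theta contributions, and modularity of $\widehat{\phi}$ follows only because $E_r$ and $\vartheta_s$ are themselves modular. Without this final step your argument does not close.
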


\begin{remark}
As this article was being revised for publication, Wei Zhang announced a proof of his \emph{arithmetic fundamental lemma}, conjectured in \cite{zhangAFL}.  
Although the statement is a purely local result concerning intersections of cycles on unitary Rapoport-Zink spaces, Zhang's proof uses  global calculations on unitary Shimura varieties, and makes essential use of the modularity result of Theorem \ref{ThmB}.  See  \cite{zhangAFLproof}.
\end{remark}

\begin{remark}
Theorem \ref{ThmB} implies that the $\Q$-span of the classes $ \widehat{\mathcal{Z}}^\tot_\Kra(m) $ is finite dimensional.  See Remark \ref{rem:finite span}.
\end{remark}

\begin{remark}
There is a second method of constructing Green functions for the special divisors, based on the methods of \cite{Ku97}, which gives rise to a non-holomorphic variant of $\widehat{\phi}(\tau)$.  It is a recent theorem of Ehlen-Sankaran \cite{ES} that Theorem \ref{ThmB} implies the modularity of this non-holomorphic generating series.  See \S \ref{ss:nonhol}.
\end{remark}

One motivation for the modularity result of Theorem \ref{ThmB} is that it allows one to construct arithmetic theta lifts.
 If $g(\tau)\in S_n(\Gamma_0(D),\chi_\kk^n)$ is a classical scalar valued cusp form, we may form the Petersson inner product
\[
\widehat{\theta}(g) \define \langle   \widehat{\phi} ,g \rangle_\mathrm{Pet} \in \Cha_\C^1(\mathcal{S}^*_\Kra)
\]
as in \cite{Ku04}.  
One expects, as in [\emph{loc.~cit.}],  that the arithmetic  intersection pairing of $\widehat{\theta}(g)$ against other cycle classes should be related to derivatives of $L$-functions, providing generalizations of the Gross-Zagier and Gross-Kohnen-Zagier theorems.  Specific instances in which this expectation is fulfilled can be deduced from \cite{BHY,Ho1,Ho2}.  This will be explained  in the companion paper \cite{BHKRY-2}.

As this paper is rather long, we  explain in the next two subsections  the main ideas that go into the proof of Theorem \ref{ThmA}.   
The proof of Theorem \ref{ThmB} is exactly the same, but one must keep track of Green functions.


\subsection{Sketch of the proof, part I: the generic fiber}
\label{ss:intro sketch I}


In this subsection we sketch the proof of modularity only in the generic fiber.  That is, the modularity of
\begin{equation}\label{intro generic series}
\sum_{m\ge 0}   \mathcal{Z}^\tot_\Kra(m)_{/\kk} \cdot q^m \in \Ch_\Q^1(\mathcal{S}^*_{\Kra/ \kk} )[[ q]] .
\end{equation}
The key to the proof is the study of \emph{Borcherds products} \cite{Bo1,Bo2}.

A Borcherds product is a meromorphic modular form on an orthogonal Shimura variety, whose construction depends on a choice of weakly holomorphic input form, typically of negative weight.  In our case the input form is any 
\begin{equation}\label{intro input form}
f (\tau) = \sum_{  m\gg -\infty } c(m) q^m  \in M^{!,\infty}_{2-n}(D,\chi_\kk^{n-2} ),
\end{equation}
where the superscripts $!$ and $\infty$ indicate that the weakly holomorphic form $f(\tau)$ of weight $2-n$ and level $\Gamma_0(D)$ is allowed to have a pole at the cusp $\infty$, but must be holomorphic at all other cusps.
We assume also that all  $c(m) \in \Z$.

Our Shimura variety $\mathrm{Sh}(G,\mathcal{D})$ admits a natural map to an orthogonal Shimura variety.  Indeed, the $\kk$-vector space
\[
V=\Hom_\kk(W_0,W)
\]
admits a natural hermitian form  $\langle \cdot,\cdot\rangle$ of signature $(n-1,1)$, induced by the hermitian forms on $W_0$ and $W$.  The natural action of $G$ on $V$ determines an exact sequence 
\begin{equation}\label{G to U}
1 \to \mathrm{Res}_{\kk/\Q} \mathbb{G}_m \to G \to \mathrm{U}(V) \to 1
\end{equation}
of reductive groups over $\Q$.

We may also view $V$ as a $\Q$-vector space endowed with the  quadratic form $Q(x)=\langle x,x\rangle$ of signature $(2n-2, 2)$, and so obtain a homomorphism $G\to \SO(V)$.  This induces a map from $\mathrm{Sh}(G,\mathcal{D})$ to the Shimura variety associated with the group $\mathrm{SO}(V)$.

After possibly replacing $f$ by a nonzero integer multiple, Borcherds constructs   a meromorphic modular form on the orthogonal Shimura variety, which can be pulled back to a meromorphic modular form on $\mathrm{Sh}(G,\mathcal{D})(\C)$.  
The result is a meromorphic section $\bm{\psi}(f)$ of $\bm{\omega}^k$, where the weight 
\begin{equation}\label{k intro}
k = \sum_{r\mid D} \gamma_r  \cdot c_r(0) \in \Z
\end{equation}
 is the integer defined in  \S \ref{ss:unitary borcherds}.  
 The constant  $\gamma_r =\prod_{p\mid r}\gamma_p$ is a $4^\mathrm{th}$ root of unity (with $\gamma_1=1$) and $c_r(0)$ is the constant term of $f$ at the cusp 
 \[
 \infty_r = \frac{r}{D} \in \Gamma_0(D)\backslash  \mathbb{P}^1(\Q),
 \] 
 in the sense of Definition \ref{def:constant at cusp}.

Initially, $\bm{\psi}(f)$ is characterized by specifying $-\log\| \bm{\psi}(f) \|$, where $\| \cdot \|$ is the Petersson norm on $\bm{\omega}^k$.  
In particular, $\bm{\psi}(f)$ is only defined up to rescaling by a complex number of absolute value $1$ on each connected component of $\mathrm{Sh}(G,\mathcal{D})(\C)$.    
We prove that, after a suitable rescaling, $\bm{\psi}(f)$ is the analytification of a rational section of the line bundle $\bm{\omega}^k$ on $\mathrm{Sh}(G,\mathcal{D})$.  
In other words, the Borcherds product is algebraic and defined over the reflex field $\kk$.   
This allows us to  view $\bm{\psi}(f)$ as a rational section of $\bm{\omega}^k$ both on the integral model $\mathcal{S}_\Kra$, and on its toroidal compactification.

We compute the divisor of $\bm{\psi}(f)$ on the generic fiber of the toroidal compactification $\mathcal{S}^*_{\Kra/\kk}$, and find
\begin{equation}\label{intro borcherds generic}
\mathrm{div}(  \bm{\psi}(f) )_{/ \kk} =  \sum_{m> 0} c(-m) \cdot \mathcal{Z}_\Kra^\tot(m)_{/\kk}.
\end{equation}
The calculation of the divisor on the interior $\mathcal{S}_{\Kra/\kk}$ follows immediately from the corresponding calculations of Borcherds on the orthogonal Shimura variety.  The multiplicities of the boundary components are computed using the results of \cite{Ku:ABP}, which describe the structure of the Fourier-Jacobi expansions of $\bm{\psi}(f)$ along the various boundary components.

The equality of divisors (\ref{intro borcherds generic}) implies the relation
\[
k\cdot \bm{\omega}  =  \sum_{m> 0} c(-m) \cdot \mathcal{Z}_\Kra^\tot(m)_{/\kk}
\]
in the Chow group $\Ch^1_\Q( \mathcal{S}_{\Kra/\kk}^*)$.
The cusp $\infty_1=1/D$ is $\Gamma_0(D)$-equivalent to the usual cusp at $\infty$, and so $c_1(0)=c(0)$. 
Substituting the expression (\ref{k intro}) for $k$ into the left hand side and using (\ref{intro naked constant}) therefore yields the relation
\begin{equation}\label{intro borcherds generic 2}
  \sum_{  \substack{ r\mid D \\ r> 1 }} \gamma_r c_r(0) \cdot   \bm{\omega} =  \sum_{m\ge  0} c(-m) \cdot \mathcal{Z}_\Kra^\tot(m)_{/\kk}
\end{equation}
in $\Ch^1_\Q( \mathcal{S}_{\Kra/\kk}^*)$.   In \S \ref{ss:eisenstein} we construct for each $r\mid D$ an Eisenstein series 
\[
E_r(\tau)  = \sum_{ m \ge 0 } e_r(m) \cdot q^m  \in M_n(D,\chi_\kk^n),
\]
which, by a simple residue calculation, satisfies
\[
 c_r(0) =  -\sum_{ m>0 } c(-m) e_r(m).
\]
Substituting this expression  into (\ref{intro borcherds generic 2}) yields
\begin{equation}\label{intro borcherds generic 3}
0=  \sum_{m\ge  0} c(-m) \cdot  \Big( \mathcal{Z}_\Kra^\tot(m)_{/\kk}
+   \sum_{  \substack{ r\mid D \\ r> 1 }}   \gamma_r e_r(m) \cdot   \bm{\omega} \Big),
\end{equation}
where we have also used the relation  $e_r(0)=0$ for $r>1$.

We now invoke a variant of the modularity criterion of \cite{Bo2}, which is our Theorem \ref{thm:modularity criterion}: if a formal $q$-expansion
\[
\sum_{m\ge 0} d(m) q^m \in \C[[q]]
\]
satisfies 
$
0=\sum_{m\ge 0} c(-m) d(m)
$ 
for every input form (\ref{intro input form}), then  it must be the $q$-expansion of a modular form of weight $n$, level $\Gamma_0(D)$, and character $\chi_\kk^n$.    It follows immediately from this and  (\ref{intro borcherds generic 3})  that the formal $q$-expansion
\[
\sum_{m\ge 0} \Big(  \mathcal{Z}_\Kra^\tot(m)_{/\kk}
+  \sum_{  \substack{ r\mid D \\ r> 1 }}   \gamma_r e_r(m) \cdot   \bm{\omega} \Big) \cdot  q^m
\]
is modular in the sense of Theorem \ref{ThmA}.  Rewriting this as
\[
\sum_{m\ge 0}  \mathcal{Z}_\Kra^\tot(m)_{/\kk} \cdot q^m + \sum_{  \substack{ r\mid D \\ r> 1 }}   \gamma_r E_r(\tau) \cdot   \bm{\omega} 
\]
and using the modularity of each Eisenstein series $E_r(\tau)$, we deduce that (\ref{intro generic series}) is modular.


\subsection{Sketch of the proof, part II: vertical components}


In order to extend the arguments of \S \ref{ss:intro sketch I} to prove Theorem \ref{ThmA}, it is clear that one should attempt to 
compute the divisor of the Borcherds product $\bm{\psi}(f)$ on the integral model $\mathcal{S}^*_\Kra$ and hope for an expression similar to (\ref{intro borcherds generic}).
Indeed, the bulk of this paper is devoted to precisely this problem.

The  subtlety is that both $\mathrm{div}(\bm{\psi}(f))$ and $\mathcal{Z}_\Kra^\tot(m)$ will turn out to have vertical components supported in characteristics dividing $D$.  Even worse, in these bad characteristics the components of the exceptional divisor  $\mathrm{Exc}\subset \mathcal{S}^*_\Kra$  do not intersect the boundary, and so the multiplicities of these components in the divisor of $\bm{\psi}(f)$ cannot be detected by examining its Fourier-Jacobi expansion.

This is where the second integral model $\mathcal{S}^*_\Pap$  plays an essential role.  
The morphism (\ref{intro models}) sits in a cartesian diagram
\[
\xymatrix{
{  \mathrm{Exc}  } \ar[r]\ar[d]   &  {   \mathcal{S}^*_\Kra  }  \ar[d] \\
{  \mathrm{Sing} } \ar[r]  &   { \mathcal{S}^*_\Pap ,} 
}
\]
where the \emph{singular locus} $\mathrm{Sing} \subset \mathcal{S}^*_\Pap$ is the reduced closed substack of points at which the structure morphism $\mathcal{S}^*_\Pap \to \Spec(\co_\kk)$ is not smooth.  
It is $0$-dimensional and supported in characteristics dividing $D$.  The right vertical arrow restricts to an isomorphism
\begin{equation}\label{intro nonsing}
\mathcal{S}^*_\Kra \smallsetminus  \mathrm{Exc}  \iso   \mathcal{S}^*_\Pap\smallsetminus  \mathrm{Sing}.
\end{equation}
 For each connected component $s\in \pi_0(\mathrm{Sing})$ the fiber 
\[
\mathrm{Exc}_s= \mathrm{Exc}\times_{\mathcal{S}^*_\Pap} s 
\]
is a smooth, irreducible, vertical Cartier divisor on $\mathcal{S}^*_\Kra$, and $\mathrm{Exc}=\bigsqcup_s \mathrm{Exc}_s$.

As the $\co_\kk$-stack  $\mathcal{S}_\Pap^*$ is proper and normal with  normal fibers,
every irreducible vertical divisor on it is the reduction, modulo some prime of $\co_\kk$, of an entire connected (=irreducible) component.
From this it follows that every vertical divisor meets the boundary.  
Thus one could hope to use (\ref{intro nonsing}) to view $\bm{\psi}(f)$ as a rational section on $\mathcal{S}_\Pap^*$, compute its divisor there by examining Fourier-Jacobi expansions, and then pull that calculation back to $\mathcal{S}_\Kra^*$.

This is essentially what we do, but there is an added complication.  The line bundle $\bm{\omega}$ on (\ref{intro nonsing}) does not extend to $\mathcal{S}_\Pap^*$, and similarly the divisor $\mathcal{Z}_\Kra^*(m)$ on (\ref{intro nonsing}) cannot be extended across the singular locus to a Cartier divisor on $\mathcal{S}_\Pap^*$.   However, if you square the line bundle and the divisors, they have much better behavior.
This is the content of the following result, which is an amalgamation of Theorems \ref{thm:weight two nonsingular}, \ref{thm:pure divisor},  \ref{thm:cartier error}, and \ref{thm:toroidal} of the text.

\begin{bigtheorem}\label{ThmC}
There is a unique line bundle $\pure_\Pap$  on $\mathcal{S}_\Pap^*$ whose restriction to (\ref{intro nonsing}) is isomorphic to $ \bm{\omega}^2$. Denoting by $\pure_\Kra$ its pullback to $\mathcal{S}_\Kra^*$, there is an isomorphism
\[
\bm{\omega}^2  \iso  \pure_\Kra \otimes \co(\mathrm{Exc}).
\]
Similarly, there is a unique  Cartier divisor $\mathcal{Y}_\Pap^\tot(m)$ on $\mathcal{S}_\Pap^*$  whose restriction to (\ref{intro nonsing}) is equal  to $2\mathcal{Z}_\Kra^\tot(m)$. 
 Its pullback $\mathcal{Y}^\tot_\Kra(m)$ to $\mathcal{S}^*_\Kra$ satisfies
\[
2 \mathcal{Z}^\tot_\Kra(m)
 = \mathcal{Y}^\tot_\Kra(m) + \sum_{ s\in \pi_0(\mathrm{Sing}) }  \# \{ x\in L_s : \langle x,x\rangle =m \}  \cdot \mathrm{Exc}_s.
\]
Here $L_s$ is a positive definite self-dual hermitian lattice of rank $n$ associated to the singular point $s$, and $\langle \cdot\, ,\cdot\rangle$ is its hermitian form.
\end{bigtheorem}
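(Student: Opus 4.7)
\emph{Overall plan.} Reduce all three statements to local calculations in a formal neighborhood of each singular point $s \in \pi_0(\mathrm{Sing})$ of $\mathcal{S}^*_\Pap$, exploiting the Pappas local model and the Kr\"amer blow-up $\pi:\mathcal{S}^*_\Kra \to \mathcal{S}^*_\Pap$. Uniqueness of the extensions $\pure_\Pap$ and $\mathcal{Y}^\tot_\Pap(m)$ is essentially automatic: $\mathrm{Sing}$ is $0$-dimensional in the $n$-dimensional normal stack $\mathcal{S}^*_\Pap$, so it has codimension $\geq 2$, and any line bundle or Cartier divisor on the open substack (\ref{intro nonsing}) admits at most one extension to $\mathcal{S}^*_\Pap$ (the reflexive pushforward along the open immersion, with invertibility and Cartier-ness to be checked locally at the singular points). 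Thus only existence and the precise pullback formulas require work.

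\emph{Descent of line bundles.} Since $\pi$ is proper and birational and $\mathcal{S}^*_\Pap$ is normal, Stein factorization gives $\pi_*\co_{\mathcal{S}^*_\Kra} = \co_{\mathcal{S}^*_\Pap}$, so a line bundle $\mathcal{L}$ on $\mathcal{S}^*_\Kra$ descends to $\mathcal{S}^*_\Pap$ if and only if $\mathcal{L}|_{\mathrm{Exc}_s}$ is trivial for every $s \in \pi_0(\mathrm{Sing})$. I would therefore verify that $\bm{\omega}^2 \otimes \co(-\mathrm{Exc})|_{\mathrm{Exc}_s}$ is trivial, which simultaneously produces $\pure_\Pap$ and establishes the formula $\bm{\omega}^2 \iso \pure_\Kra \otimes \co(\mathrm{Exc})$. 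The key input is the explicit description of $\mathrm{Exc}_s$ from the Kr\"amer resolution---a projective space over the residue field of $s$---on which the degrees of $\bm{\omega}|_{\mathrm{Exc}_s}$ and $\co(\mathrm{Exc})|_{\mathrm{Exc}_s}$ can be computed directly from the universal abelian scheme via the definition of $\bm{\omega}$ as the line bundle of weight one modular forms. The need to square is precisely that $\bm{\omega}|_{\mathrm{Exc}_s}$ turns out to carry half the degree of $\co(\mathrm{Exc})|_{\mathrm{Exc}_s}$, so $\bm{\omega}$ itself does not descend whereas $\bm{\omega}^2$ does.

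\emph{The divisor statement and the multiplicity formula.} The same strategy applies to $2\mathcal{Z}_\Kra(m)$: compute its multiplicity along each $\mathrm{Exc}_s$, subtract off these vertical contributions, and apply the descent criterion. At the singular point $s$, the moduli interpretation produces a positive definite self-dual hermitian $\co_\kk$-lattice $L_s$ of rank $n$, canonically built from the endomorphisms (equivalently, the Dieudonn\'e module) of the supersingular object classified by $s$. Each $x \in L_s$ with $\langle x, x \rangle = m$ contributes a local branch of $\mathcal{Z}_\Kra(m)$ meeting $\mathrm{Exc}_s$ whose local equation on the Kr\"amer model is half-integral, so that the integer multiplicity of $2\mathcal{Z}_\Kra(m)$ along $\mathrm{Exc}_s$ is exactly $\#\{x \in L_s : \langle x, x \rangle = m\}$. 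Subtracting this off gives a Cartier divisor whose associated line bundle is trivial on each $\mathrm{Exc}_s$, hence descends to $\mathcal{Y}^\tot_\Pap(m)$. The boundary term $\mathcal{B}_\Kra(m)$ of $\mathcal{Z}^\tot_\Kra(m)$ is supported on the boundary, which is disjoint from $\mathrm{Exc}$ by the structure of the exceptional locus, and it descends trivially.

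\emph{Main obstacle.} The technical heart is the explicit local computation at each $s$: identifying $L_s$ with the hermitian lattice of special endomorphisms of the object at $s$, determining the isomorphism type of $\mathrm{Exc}_s$ inside the Pappas/Kr\"amer local model in characteristic $p \mid D$, and checking that each $x \in L_s$ of norm $m$ contributes the claimed half-multiplicity to $\mathcal{Z}_\Kra(m)$. Everything else---uniqueness, the form of the pullback isomorphisms, and the descent from $\mathcal{S}^*_\Kra$ to $\mathcal{S}^*_\Pap$---then reduces to the Stein-factorization criterion and formal manipulations.
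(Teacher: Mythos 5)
Your strategy is genuinely different from the paper's. You propose to work entirely on $\mathcal{S}^*_\Kra$ and descend along the blow-down $\pi:\mathcal{S}^*_\Kra\to\mathcal{S}^*_\Pap$ using the criterion ``trivial on each $\mathrm{Exc}_s$ $\Rightarrow$ descends.'' The paper instead constructs $\pure_\Pap$ and $\mathcal{Y}_\Pap(m)$ \emph{directly on} $\mathcal{S}_\Pap$: it defines $\mathcal{P}_\Pap=\underline{\Hom}(\bigwedge^n H^\dR_1(A)/\overline{\epsilon}H^\dR_1(A),\bigwedge^n\Lie(A))$ and an explicit bilinear map $P:H^\dR_1(A)^{\otimes 2}\to\mathcal{P}_\Pap$ built from the polarization and the wedge condition, shows $P$ factors through $(\Lie(A)/\mathcal{F}_A)^{\otimes 2}$ and is an isomorphism off the singular locus (giving $\pure_\Pap^{-1}=\Lie(A_0)^{\otimes 2}\otimes\mathcal{P}_\Pap$), and defines $\mathcal{Y}_\Pap(m)$ as the vanishing locus of $P\circ(x'\otimes x')$ on a square-zero thickening of $\mathcal{Z}_\Pap(m)$, where $x'$ is the canonical crystalline extension of the universal homomorphism $x$. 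This route sidesteps the descent criterion entirely. Your route can be made to work, but the criterion is not just ``Stein factorization'': $\pi$ is not flat and $\mathcal{S}^*_\Pap$ is singular at $s$, so cohomology-and-base-change does not apply directly. One must run the theorem on formal functions, which requires knowing that the conormal bundle of $\mathrm{Exc}_s\iso\mathbb{P}^{n-1}$ is positive (it is $\co(2)$, since $\co(\mathrm{Exc})|_{\mathrm{Exc}_s}\iso\bm{\omega}^2|_{\mathrm{Exc}_s}\iso\co(-2)$) and that $H^1$ of the relevant twists vanishes. This is doable but is genuine work you have elided.

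The more serious gap is in the divisor statement. First, to descend $2\mathcal{Z}_\Kra(m)-\sum_s\#\{x\in L_s:\langle x,x\rangle=m\}\cdot\mathrm{Exc}_s$ you need the \emph{degree} of $\co(2\mathcal{Z}_\Kra(m))$ on a line in $\mathrm{Exc}_s\iso\mathbb{P}^{n-1}$, not merely the multiplicity of $\mathrm{Exc}_s$ as a component of $2\mathcal{Z}_\Kra(m)$; components of $\mathcal{Z}_\Kra(m)$ that meet $\mathrm{Exc}_s$ without containing it also contribute, and your proposal does not distinguish these. Second, the phrase ``local equation is half-integral'' is not a mechanism that exists for Cartier divisors: $\mathcal{Z}_\Kra(m)$ has integral multiplicities everywhere. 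What is actually true, and what the paper proves, is a statement about orders of vanishing along a transversal arc: using Grothendieck--Messing theory over $\mathcal{W}=\co_\kk\otimes_\Z W(\F)$ one constructs a point $\Spec(\mathcal{W})\to\mathcal{S}_\Kra$ meeting $\mathrm{Exc}_s$ in $\Spec(\mathcal{W}/\varpi)$, along which each $x\in L_s$ of norm $m$ contributes $\Spec(\mathcal{W}/\varpi^{k(x)})$ to $\mathcal{Z}_\Kra(m)$ and $\Spec(\mathcal{W}/\varpi^{2k(x)-1})$ to $\mathcal{Y}_\Kra(m)$, where $k(x)$ is the maximal order to which $x$ deforms; the difference $2k(x)-(2k(x)-1)=1$ per vector gives the coefficient $\#\{x\in L_s:\langle x,x\rangle=m\}$. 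Note this computation compares $2\mathcal{Z}$ with an independently constructed $\mathcal{Y}$; in your setup, where $\mathcal{Y}$ is \emph{defined} by subtracting the exceptional contribution, the entire content of the theorem is concentrated in the degree computation on $\mathbb{P}^{n-1}$, which you have only asserted. (Minor point: $L_s=\Hom_{\co_\kk}(A_{0,s},A_s)$, homomorphisms from the CM elliptic curve, not endomorphisms.)
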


Setting $\mathcal{Y}_\Pap^\tot(0) = \pure_\Pap^{-1}$, we obtain a  formal generating series 
\[
\sum_{m\ge 0} \mathcal{Y}_\Pap^\tot(m) \cdot q^m \in \Ch^1_\Q(\mathcal{S}_\Pap^*) [[q]]
\]
whose pullback via $\mathcal{S}_\Kra^* \to \mathcal{S}_\Pap^*$ is  twice the generating series of Theorem \ref{ThmA}, up to an error term coming from the exceptional divisors.  More precisely,  Theorem \ref{ThmC} shows that the pullback is
\[
2 \sum_{m\ge 0} \mathcal{Z}_\Kra^\tot(m) \cdot q^m - \sum_{s\in \pi_0(\mathrm{Sing}) } \vartheta_s(\tau) \cdot \mathrm{Exc}_s \in \Ch^1_\Q(\mathcal{S}_\Kra^*) [[q]],
\]
where each $\vartheta_s(\tau)$ is the classical theta function whose coefficients count points in the positive definite hermitian lattice $L_s$.

Over  (\ref{intro nonsing}) we have 
$
\bm{\omega}^{2k} \iso \pure_\Pap^k,
$
which allows us to view  $\bm{\psi}(f)^2$ as a rational section of the line bundle  $\pure_\Pap^k$  on $\mathcal{S}_\Pap^*$.   
We  examine its Fourier-Jacobi expansions along the boundary components and are able to compute its divisor completely (it happens to include nontrivial vertical components).  We then pull this calculation back to $\mathcal{S}_\Kra^*$, and find that $\bm{\psi}(f)$, when viewed as a rational section of $\bm{\omega}^k$, has divisor
\begin{align*}
\mathrm{div}( \bm{\psi}  (f) )   
& =   \sum_{m>0} c(-m) \cdot \mathcal{Z}_\Kra^\tot(m)     + 
   \sum_{ r \mid D} \gamma_r c_r(0)  \cdot  \Big(   \frac{ \mathrm{Exc}}{2}+    \sum_{p\mid r} \mathcal{S}^*_{\Kra /\F_\mathfrak{p}}  \Big) \\
 & \quad -  \sum_{m>0}   \frac{c(-m)}{2}    \sum_{ s\in \pi_0(\mathrm{Sing}) }   \# \{ x\in L_s : \langle x,x\rangle =m \} \cdot  \mathrm{Exc}_s \\
 & \quad -k\cdot  \mathrm{div}(\delta)
\end{align*}
where $\delta\in \co_\kk$ is a square root of  $-D$, $\mathfrak{p} \subset \co_\kk$ is  the unique prime above $p\mid D$, and $\mathcal{S}^*_{\Kra /\F_\mathfrak{p}}$ is the mod $\mathfrak{p}$ fiber of $\mathcal{S}^*_\Kra$, viewed as a divisor. 
This is stated in the text as Theorem \ref{thm:unitary borcherds II}.
Passing to the generic fiber recovers (\ref{intro borcherds generic}), as it must.

 As in the argument leading to (\ref{intro borcherds generic 3}), this implies the relation
\begin{align*}
0  
& =   \sum_{m\ge 0} c(-m) \cdot 
 \Bigg( \mathcal{Z}_\Kra^\tot(m)    -  \frac{1}{2}  \sum_{ s\in \pi_0(\mathrm{Sing}) }   \# \{ x\in L_s : \langle x,x\rangle =m \} \cdot  \mathrm{Exc}_s   \Bigg)\\
& \quad 
+  \sum_{m\ge 0} c(-m) \cdot 
  \sum_{  \substack{ r \mid D \\ r>1} } \gamma_r e_r(m)  \Bigg(  \bm{\omega}  -  \frac{\mathrm{Exc}}{2}   -  \sum_{p\mid r} \mathcal{S}^*_{\Kra /\F_\mathfrak{p}}  \Bigg) 
\end{align*}
in the Chow group of $\mathcal{S}_\Kra^*$,
and the modularity criterion implies that
\begin{align*}
\sum_{m\ge 0} \mathcal{Z}_\Kra^\tot(m) \cdot q^m & - \frac{1}{2}  \sum_{s\in \pi_0(\mathrm{Sing})} \vartheta_s(\tau) \cdot \mathrm{Exc}_s \\
& \quad +  \sum_{\substack{  r\mid D \\ r>1 } } \gamma_r E_r (\tau) \cdot  \Bigg( \bm{\omega}   -   \frac{\mathrm{Exc}}{2}   -  \sum_{p\mid r} \mathcal{S}^*_{\Kra /\F_\mathfrak{p}}  \Bigg)
\end{align*}
is a modular form.
As each theta series $\vartheta_s(\tau)$ and Eisenstein series $E_r(\tau)$  is modular, so is $\sum  \mathcal{Z}_\Kra^\tot(m)  \cdot  q^m$. 
 This completes the outline of the proof of Theorem \ref{ThmA}.


\subsection{The structure of the paper}


We now briefly describe the contents of the various sections of the paper. 

In \S \ref{s:unitary} we introduce the unitary Shimura variety associated to the group $G$ of (\ref{G def}),  and explain its realization 
as a moduli space of pairs $(A_0,A)$ of abelian varieties with extra structure. 
We then review the integral models constructed by Pappas and Kr\"amer, and the singular and exceptional loci of these models.
These are related by a cartesian diagram
\[
\xymatrix{
{  \mathrm{Exc}  } \ar[r]\ar[d]   &  {   \mathcal{S}_\Kra  }  \ar[d] \\
{  \mathrm{Sing} } \ar[r]  &   { \mathcal{S}_\Pap ,} 
}
\]
where the vertical arrow on the right is an isomorphism outside of the $0$-dimensional singular locus $\mathrm{Sing}$. 
We also define   the line bundle of modular forms $\bm{\omega}$ on $\mathcal{S}_\Kra$.

The first main result of \S \ref{s:unitary} is Theorem \ref{thm:weight two nonsingular}, which  asserts the existence of a line bundle $\pure_\Pap$ on $\mathcal{S}_\Pap$ restricting to $\bm{\omega}^2$ over
\[
\mathcal{S}_\Kra \smallsetminus  \mathrm{Exc}  \iso   \mathcal{S}_\Pap\smallsetminus  \mathrm{Sing}.
\]
We then define the Cartier divisor  $\mathcal{Z}_\Kra(m)$  on $\mathcal{S}_\Kra$ and prove  Theorem \ref{thm:pure divisor}, 
which asserts the existence of a  Cartier divisor $\mathcal{Y}_\Pap(m)$ on $\mathcal{S}_\Pap$
whose restriction to  $\mathcal{S}_\Pap\smallsetminus  \mathrm{Sing}$ coincides with $2\mathcal{Z}_\Kra(m)$.
Up to error terms supported on the exceptional locus $\mathrm{Exc}$, the pullbacks of $\Omega_\Pap$ and $\mathcal{Y}_\Pap(m)$ to $\mathcal{S}_\Kra$ are therefore equal to $\bm{\omega}^2$ and $2 \mathcal{Z}_\Kra(m)$, respectively.   The  error terms are  computed in  Theorem \ref{thm:cartier error}, which is the analogue of 
 Theorem \ref{ThmC} for the noncompactified Shimura varieties.

In \S  \ref{s:unitary compactification} we describe the canonical  toroidal compactifications $\mathcal{S}_\Kra^* \to \mathcal{S}_\Pap^*$, 
and the structure of their  formal completions along the boundary.  
In \S \ref{ss:cusp notation} and \S \ref{ss:mixed data}  we introduce the cusp labels $\Phi$ that  index the boundary components, and their associated mixed Shimura varieties.
In \S \ref{ss:one-motives} we construct  smooth integral models $\mathcal{C}_\Phi$ of these mixed Shimura varieties, following the general recipes of the 
theory of arithmetic toroidal compactification, as moduli spaces of $1$-motives.
In \S \ref{ss:second moduli} we give a second moduli interpretation of these integral models. 
This is one of the key technical steps in our work, and  allows us to compare 
Fourier-Jacobi expansions  on our unitary Shimura varieties to 
Fourier-Jacobi expansions on orthogonal Shimura varieties.  
See the remarks at the beginning of \S  \ref{s:unitary compactification}  for further discussion. 
In \S \ref{ss:mfbundle} and \S \ref{ss:mixed special divisors} we construct the line bundle of modular forms and the special divisors on the mixed Shimura varieties $\mathcal{C}_\Phi$.  Theorem~\ref{thm:toroidal} describes the canonical toroidal compactifications  $\mathcal{S}^*_\Kra$ and $\mathcal{S}_\Pap^*$ and their properties. 
In \S \ref{ss:abstract FJ} we describe the Fourier-Jacobi expansions of sections of $\bm{\omega}^k$ on $\mathcal{S}_\Kra^*$ in algebraic language, 
and in \S \ref{ss:explicit boundary}  we explain how to express  these Fourier-Jacobi coefficients in classical complex analytic coordinates.

In the short \S \ref{s:modular forms} we introduce the weakly holomorphic modular forms that will be used as inputs for the construction of  Borcherds products.
 We also state in Theorem~\ref{thm:modularity criterion} a variant of the modularity criterion of Borcherds.

In \S \ref{s:divisor calc}  we consider the unitary Borcherds products associated to weakly holomorphic forms 
\begin{equation}\label{intro input}
f\in M^{!,\infty}_{2-n}(D,\chi_\kk^{n-2} ).
\end{equation}
Ultimately, the integrality properties of the unitary Borcherds products  will be deduced from an analysis of their Fourier-Jacobi expansions. 
These expansions involve certain products of Jacobi theta  functions, and so, in \S \ref{s:divisor calc}
we review facts about the arithmetic theory of Jacobi forms.
For us, Jacobi forms will be sections of a suitable line bundle $\mathcal{J}_{k,m}$ on the 
universal elliptic curve living over the moduli stack (over $\Z$) of all elliptic curves. 
The key point is to have a precise description of the divisor of the canonical section 
\[
\Theta^{24}\in H^0(\mathcal{E},\mathcal{J}_{0,12})
\]
 of  Proposition~\ref{prop:integral jacobi}. 
In \S \ref{ss:borcherdsquad} we prove Borcherds quadratic identity, allowing  us to relate $\mathcal{J}_{0,1}$ to  a certain line bundle (determined by a Borcherds product) on the boundary component $\mathcal{B}_\Phi$ associated to a cusp label $\Phi$.

 After these technical preliminaries, we come to the statements of our main results about unitary Borcherds products.
Theorem~\ref{thm:unitary borcherds I} asserts that, for each weakly holomorphic form (\ref{intro input}) satisfying integrality conditions on the Fourier coefficients $c(m)$ with $m\le 0$,  there is a rational section  $\bm{\psi}(f)$ of the line bundle $\bm{\omega}^k$ on
$\mathcal{S}^*_\Kra$ with explicit divisor on the generic fiber and prescribed zeros and poles along each boundary component. 
Moreover, for each cusp label $\Phi$,  the leading Fourier-Jacobi coefficient  of $\bm{\psi}(f)$  has an expression as a product of three factors, two of which,  $P_\Phi^{vert}$ and $P_{\Phi}^{hor}$,   are constructed in terms of $\Theta^{24}$.   
Theorem~\ref{thm:unitary borcherds II} gives the precise divisor of $\bm{\psi}(f)$ on 
$\mathcal{S}_\Kra^*$, and Theorem~\ref{thm:unitary borcherds III} gives an analogous formula on 
$\mathcal{S}_\Pap^*$.   An essential ingredient in the calculation of these divisors is the calculation of the divisors of the factors  $P_\Phi^{vert}$ and $P_{\Phi}^{hor}$, which is done  in \S \ref{ss:div calc bd}.

In \S \ref{s:analytic borcherds} we prove the main results stated in \S \ref{ss:unitary borcherds}. 
In \S \ref{ss:vector-valued} we construct a vector valued form $\tilde{f}$  from  (\ref{intro input}),  
and give expressions for its Fourier  coefficients in terms of those of $f$.  
The vector valued form $\tilde{f}$ defines a Borcherds product $\tilde{\bm{\psi}}(f)$ on the symmetric space $\tilde{\mathcal{D}}$ for  
the orthogonal group of the quadratic space  $(V,Q)$ and, in \S \ref{ss:borcherds define}, we define the unitary Borcherds product $\bm{\psi}(f)$ as its pullback to $\mathcal{D}$.  
In \S \ref{ss:analytic-FJ} we determine the analytic Fourier-Jacobi expansion of $\bm{\psi}(f)$ at the cusp $\Phi$ by pulling back the  product formula for $\tilde{\bm{\psi}}(f)$ computed in \cite{Ku:ABP} along a one-dimensional boundary component of $\tilde{\mathcal{D}}$.
In \S \ref{ss:alganddescent}  we show that the unitary Borcherds product constructed analytically arises from a rational section of $\bm{\omega}^k$ and that, after rescaling by a constant of absolute value $1$, this section is defined over $\kk$.  This is Proposition~\ref{prop:B descent}.  
In \S \ref{ss:conclude pf} we complete the proofs of Theorems~\ref{thm:unitary borcherds I}, ~\ref{thm:unitary borcherds II}, and ~\ref{thm:unitary borcherds III}.  

In \S \ref{s:modularity} we use the calculation of the divisors of Borcherds products to prove the modularity results discussed in detail earlier in the introduction. 

In  \S \ref{s:appendix}  we provide some supplementary technical calculations.


\subsection{The case $n=2$}


Throughout the introduction we have assumed that $n\ge 3$, but one could ask if similar results hold for $n=2$.
This seems to be a delicate question.

The assumption that $n\ge 3$ guarantees that $W$ contains an isotropic $\kk$-line, which implies that $\mathrm{Sh}(G,\mathcal{D})$ has no compact (meaning proper over $\kk$) components.
When $n=2$ the Shimura variety $\mathrm{Sh}(G,\mathcal{D})$ is essentially a union of classical modular curves (if $W$ contains an isotropic $\kk$-line) or of compact quaternionic Shimura curves (if $W$ contains no isotropic $\kk$-line).    
 
When $n=2$ one could still construct Borcherds products on $\mathrm{Sh}(G,\mathcal{D})$ as pullbacks from orthogonal Shimura varies, and use the results of \cite{HMP} to prove that they are defined over the reflex field $\kk$.
Analyzing their divisors on the integral models $\mathcal{S}_\Kra \to \mathcal{S}_\Pap$ seems quite difficult.
The compact  case  falls well outside the reach of our arguments, which rely in an essential way on the anaysis of Fourier-Jacobi expansions near the boundary of a toroidal compactification.  

However, even in the noncompact $n=2$ case there are some technical issues that we do not know how to resolve. 
 Foremost among these is that  when $n=2$  the reduction of  $\mathcal{S}_\Pap$ at a prime of $\co_\kk$ above $D$ is not normal, and  so (as in the familiar case of modular curves)   the reduction of an irreducible component need not remain irreducible.
 This causes the proof of Proposition \ref{prop:interior divisor} to break down in a serious way.
In essence, we do not know how to exclude the possibility that  constants $\kappa_\Phi$ appearing in Proposition \ref{prop:algebraic BFJ}  contribute some nontrivial error term to the divisor of the Borcherds product.

In \S \ref{s:unitary} and \S \ref{s:unitary compactification} we  assume $n \ge 2$, but  from  \S \ref{s:divisor calc} onwards we restrict to $n \ge 3$ (the integer $n$ plays no role in the short \S \ref{s:modular forms}).


\subsection{Thanks} The results of this paper are the outcome of a long term project, begun initially in Bonn in June of 2013, and supported in a crucial way 
by three weeklong meetings at AIM, in Palo Alto (May of 2014) and San Jose (November of 2015 and 2016), as part of their AIM SQuaRE's program. The opportunity to spend these periods of intensely focused 
efforts on the problems involved was essential. We would like to thank the University of Bonn and AIM for their support. 



\subsection{Notation}
\label{ss:notation}


Throughout the paper,  $\kk\subset \C$ is a quadratic imaginary field of  odd  discriminant $\mathrm{disc}(\kk)=-D$.  
Denote by $\delta=\sqrt{-D} \in \kk$ the unique choice of square root with $\mathrm{Im}(\delta)>0$, and  by  $\mathfrak{d}=\delta\co_\kk$  the different of $\co_\kk$.

Fix a $\pi\in \co_\kk$ satisfying $\co_\kk = \Z + \Z\pi$.   If $S$ is any $\co_\kk$-scheme, define
\begin{align*}
\epsilon_S   & = \pi \otimes 1 - 1 \otimes i_S(\overline{\pi}) \in  \co_\kk \otimes_\Z \co_S\\
\overline{\epsilon}_S   & =  \overline{\pi} \otimes 1 - 1\otimes i_S( \overline{\pi} ) \in  \co_\kk \otimes_\Z \co_S ,
\end{align*}
where $i_S : \co_\kk \to \co_S$ is the structure map.  
The ideal sheaves generated by these sections are independent of the choice of $\pi$, and sit in exact sequences of free $\co_S$-modules
\[
0 \to (\overline{\epsilon}_S) \to \co_\kk \otimes_\Z \co_S \map{ \alpha\otimes x\mapsto i_S(\alpha) x} \co_S \to 0
\]
and
\[
0 \to (\epsilon_S) \to \co_\kk \otimes_\Z \co_S \map{ \alpha\otimes x\mapsto i_S( \overline{\alpha} ) x  } \co_S \to 0.
\]

It is easy to see that $\epsilon_S \cdot  \overline{\epsilon}_S=0$, and that the images of $(\epsilon_S)$ and $(\overline{\epsilon}_S)$ under
\begin{align*}
\co_\kk \otimes_\Z \co_S & \map{ \alpha\otimes x\mapsto i_S(\alpha) x} \co_S \\
\co_\kk \otimes_\Z \co_S & \map{ \alpha\otimes x\mapsto i_S( \overline{\alpha} ) x  } \co_S ,
\end{align*}
respectively, are both equal to the sub-sheaf $\mathfrak{d}\co_S$.   This defines isomorphisms of $\co_S$-modules
\begin{equation}\label{different bundle}
(\epsilon_S) \iso \mathfrak{d} \co_S \iso (\overline{\epsilon}_S).
\end{equation}

 If $N$ is an $\co_\kk \otimes_\Z \co_S$-module then $N/ \overline{\epsilon}_S N$ is the maximal quotient of $N$ on which $\co_\kk$ acts through the structure morphism $i_S : \co_\kk \to \co_S$, and  $N/\epsilon_S N$ is the maximal quotient on which $\co_\kk$ acts through the complex conjugate of the structure morphism. 
  If $D \in \co_S^\times$ then more is true: there is a decomposition
 \begin{equation}\label{idempotent decomp}
N = \epsilon_S N \oplus \overline{\epsilon}_S N,
\end{equation}
and the summands are the maximal submodules on which $\co_\kk$ acts through the structure morphism and its conjugate, respectively.  
From this discussion it is clear that one should regard $\epsilon_S$ and $\overline{\epsilon}_S$ as integral substitutes for the orthogonal idempotents in $\kk\otimes_\Q \C \iso \C \times \C$.  The $\co_\kk$-scheme $S$ will usually be clear from context, and we abbreviate $\epsilon_S$ and $\overline{\epsilon}_S$ to $\epsilon$ and $\overline{\epsilon}$.

 Let $\kk^\mathrm{ab} \subset \C$ be the maximal abelian extension of $\kk$ in $\C$, and let
\[
\art: \kk^\times \backslash \widehat{\kk}^\times \to \Gal(\kk^\mathrm{ab} /\kk)
\]
be the Artin map of class field theory, normalized as in \cite[\S 11]{Milne}.
As usual, $\mathbb{S}=\mathrm{Res}_{\C/\R} \mathbb{G}_m$ is Deligne's torus.

For a prime $p\leq \infty$ we write $(a,b)_p$ for the Hilbert symbol of $a,b\in \Q_p^\times$. 
Recall that the \emph{invariant} of a hermitian space $V$ over $\kk_p=\kk\otimes_\Q \Q_p$ is defined by 
\begin{align}
\label{eq:locinv}
\operatorname{inv}_p(V) =  (\det V, -D)_p,
\end{align}
where $\det V$ is the determinant of the matrix of the hermitian form with respect to a $\kk_p$-basis.
 If $p<\infty$ then $V$ is determined up to isomorphism by its $\kk_p$-rank and  invariant.
 If $p=\infty$  then $V$ is determined up to isomorphism by its signature $(r,s)$, and its invariant is  $\operatorname{inv}_\infty(V)=(-1)^s$.

The term \emph{stack} always means \emph{Deligne-Mumford stack}.


\section{Unitary Shimura varieties}
\label{s:unitary}


In this section we define a unitary Shimura variety $\mathrm{Sh}(G,\mathcal{D})$ over our quadratic imaginary field $\kk\subset \C$ and describe its moduli interpretation.
We then recall the work of Pappas and Kr\"amer, which provides us with two integral models 
related by a surjection $\mathcal{S}_\Kra \to \mathcal{S}_\Pap$.   This surjection becomes an isomorphism after restriction to $\co_\kk[1/D]$.
We  define a line bundle of weight one modular forms $\bm{\omega}$ and a family of Cartier divisors $\mathcal{Z}_\Kra(m)$,  $m>0$,    
on $\mathcal{S}_\Kra$, 

The line bundle $\bm{\omega}$ and the divisors $\mathcal{Z}_\Kra(m)$ do not descend to $\mathcal{S}_\Pap$, and the main original material in \S \ref{s:unitary} is the construction of suitable substitutes on $\mathcal{S}_\Pap$.  These substitutes consist of a line bundle $\pure_\Pap$ that 
agrees with $\bm{\omega}^2$ after restricting to $\co_\kk[1/D]$, and Cartier divisors $\mathcal{Y}_\Pap(m)$ that agree with  $2 \mathcal{Z}_\Kra(m)$ after   restricting to $\co_\kk[1/D]$.


\subsection{The Shimura variety}
\label{ss:shimura data}


 Let  $W_0$ and $W$ be $\kk$-vector spaces  endowed with hermitian forms $H_0$ and $H$ of signatures  $(1,0)$ and $(n-1,1)$, respectively.   We always assume that $n\ge 2$. Abbreviate 
\[
W(\R)=W\otimes_\Q \R,  \quad W(\C)=W\otimes_\Q\C, \quad W(\A_f) = W\otimes_\Q \A_f,
\]
 and similarly for $W_0$.      In particular, $W_0(\R)$ and $W(\R)$ are hermitian spaces over $\C=\kk\otimes_\Q \R$.
 
 We  assume the existence of $\co_\kk$-lattices $\mathfrak{a}_0 \subset W_0$ and  $\mathfrak{a} \subset W$,  self-dual with respect to  the hermitian  forms $H_0$ and $H$.  
As  the inverse of $\delta=  \sqrt{-D} \in \kk$ generates the inverse different of $\kk/\Q$, this is equivalent to self-duality with respect to the symplectic forms 
 \begin{equation}\label{symplectic construction}
 \psi_0 (w ,w') = \mathrm{Tr}_{\kk/\Q} H_0( \delta^{-1} w,w') ,\quad
  \psi (w ,w') = \mathrm{Tr}_{\kk/\Q}   H (\delta^{-1} w,w') .
  \end{equation}
 This data will remain fixed throughout the paper.

 As in (\ref{G def}), let $G  \subset    \GU( W_0 )  \times \GU( W )$ be the subgroup of pairs  for which the similitude factors are equal. We denote by   $\nu : G\to \mathbb{G}_m$ the common similitude character, and note that 
 $\nu (G(\R)) \subset \R^{>0}$.

 Let $\mathcal{D}(W_0)=\{ y_0\}$  be a one-point set,  and define 
\begin{equation}\label{GU hermitian}
\mathcal{D}(W) = 
\{   \mbox{negative definite  $\C$-planes  $y\subset W(\R)$}   \} ,
\end{equation}
so that $G(\R)$ acts on the connected hermitian domain
\[
\mathcal{D}  =\mathcal{D}( W_0 )  \times \mathcal{D}( W).
\]
   The lattices  $\mathfrak{a}_0$ and $\mathfrak{a}$ determine a  maximal compact open subgroup
\begin{equation}\label{K choice}
K = \big\{ g \in G(\A_f) :
g \widehat{\mathfrak{a}}_0 =  \widehat{\mathfrak{a}}_0 \mbox{ and } g \widehat{\mathfrak{a}}  =  \widehat{\mathfrak{a}}   \big\} \subset G(\A_f),
\end{equation}
and the orbifold quotient
\[
\mathrm{Sh}(G,\mathcal{D}) (\C) = G(\Q) \backslash \mathcal{D} \times G(\A_f) / K
\]
is the space of complex points of a smooth  $\kk$-stack of dimension $n-1$,  denoted $\mathrm{Sh}(G,\mathcal{D})$.

The  symplectic forms (\ref{symplectic construction})  determine a  $\kk$-conjugate-linear isomorphism
\begin{equation}\label{flippy map}
\Hom_\kk( W_0, W ) \map{  x\mapsto x^\vee} \Hom_\kk( W, W_0 ),
\end{equation}
characterized by $\psi ( x w_0 ,w ) = \psi_0 (  w_0 , x^\vee w )$.  The $\kk$-vector space
\[
V = \Hom_\kk (W_0,W)
\]  
carries a hermitian form of signature $(n-1,1)$ defined by 
\begin{equation}\label{hom hermitian}
 \langle x_1 , x_2 \rangle =  x_2^\vee \circ x_1 \in \End_\kk ( W_0) \iso \kk.
\end{equation}
The group   $G$  acts on  $V$ in a natural way, defining an exact sequence (\ref{G to U}).

The hermitian form  on $V$ induces a quadratic form $Q(x)=\langle x,x\rangle$, with associated $\Q$-bilinear form 
\begin{equation}\label{hom quadratic}
[x,y]= \mathrm{Tr}_{\kk/\Q} \langle x,y\rangle .
\end{equation}
In particular, we obtain a  representation $G\to \SO(V)$.

\begin{proposition}\label{prop:component count}
The stack  $\mathrm{Sh}(G,\mathcal{D})_{/\C}$ has $2^{1-o(D)} h^2$ connected components, where $h$ is the class number of $\kk$ and $o(D)$ is the number of prime divisors of $D$.  
\end{proposition}

\begin{proof}
Each  $g\in G(\A_f)$ determines $\co_\kk$-lattices
\[
g \mathfrak{a}_0 = W_0  \cap g\widehat{\mathfrak{a}}_0 ,\quad
g \mathfrak{a} = W \cap g\widehat{\mathfrak{a}} .
\]
The hermitian forms $H_0$ and $H$ need  not be $\co_\kk$-valued on these lattices. However,  if  $\mathrm{rat}(\nu(g))$ denotes the  unique positive rational number  such that 
\[
 \frac{ \nu(g) }  {   \mathrm{rat}(\nu(g)) }   \in \widehat{\Z}^\times
\]
then the rescaled hermitian forms $\mathrm{rat}(\nu(g))^{-1} H_0$ and $\mathrm{rat}(\nu(g))^{-1} H$ make $g\mathfrak{a}_0$ and $g\mathfrak{a}$  into self-dual hermitian lattices.

As $\mathcal{D}$ is connected, the components of $\mathrm{Sh}(G,\mathcal{D})_{/\C}$ are in bijection with the set $G(\Q) \backslash G(\A_f)/K$.  
The function  $g\mapsto (g \mathfrak{a}_0, g  \mathfrak{a})$  establishes a bijection from  $G(\Q) \backslash G(\A_f)/K$  to the set of isometry classes of pairs of self-dual hermitian $\co_\kk$-lattices $(\mathfrak{a}_0',\mathfrak{a}')$ of signatures $(1,0)$ and $(n-1,1)$,  respectively, for which the self-dual hermitian lattice $\Hom_{\co_\kk}(\mathfrak{a}_0' , \mathfrak{a}')$ lies in the same genus as $\Hom_{\co_\kk}(\mathfrak{a}_0 , \mathfrak{a} ) \subset V$.

Using the fact that $\mathrm{SU}(V)$ satisfies strong approximation, 
one can show that there are exactly $2^{1-o(D)} h$ isometry classes in the genus of $\Hom_{\co_\kk}(\mathfrak{a}_0 , \mathfrak{a})$, and each  isometry class arises from exactly $h$ isometry classes of pairs $(\mathfrak{a}_0',\mathfrak{a}')$.
\end{proof}

It will be useful at   times to have other  interpretations of the hermitian domain $\mathcal{D}$. 
The following remarks provide  alternate points of view.  
Recalling the idempotents $\epsilon,\overline{\epsilon} \in \kk\otimes_\Q \C$ of \S \ref{ss:notation}, define isomorphisms of real vector spaces
\begin{equation}\label{idem proj}
\mathrm{pr}_\epsilon : W(\R) \iso \epsilon W(\C) ,\quad \mathrm{pr}_{\overline{\epsilon}} : W(\R) \iso  \overline{\epsilon} W(\C)
\end{equation}
as, respectively,  the compositions
 \begin{align*}
W(\R) \hookrightarrow  W(\C)  & = \epsilon W(\C) \oplus \overline{\epsilon} W(\C) \map{\mathrm{proj.} }  \epsilon W(\C) \\ 
W(\R)  \hookrightarrow W(\C) &= \epsilon W(\C) \oplus \overline{\epsilon} W(\C) \map{\mathrm{proj.} } \overline{\epsilon} W(\C).
\end{align*}

\begin{remark}\label{rem:to so}
  Each pair $z=(y_0,y) \in \mathcal{D}$ determines a  line   $\mathrm{pr}_\epsilon (y) \subset W(\C)$, and hence a line 
\[
z=\Hom_\C ( W_0(\C) /  \overline{\epsilon} W_0(\C) ,  \mathrm{pr}_\epsilon ( y ) )  \subset  \epsilon V(\C).
\]
This construction identifies  
\[
 \mathcal{D}   \iso \big\{ z \in   \epsilon V(\C) :   [z,\overline{z}] <0    \big\} / \C^\times  \subset \mathbb{P}( \epsilon V(\C) )
 \]
 as an open subset of projective space.
\end{remark}

\begin{remark}\label{rem:hodge}
Define a  Hodge structure
\begin{equation*}
\Fil^1  W_0(\C) = 0  ,\quad \Fil^0  W_0(\C) =   \overline{\epsilon} W_0(\C)  ,\quad \Fil^{-1} W_0(\C) =W_0(\C)
\end{equation*}
on $W_0(\C)$, and identify the unique point  $y_0 \in \mathcal{D}(W_0)$ with the  corresponding morphism  $\mathbb{S} \to \GU(W_0)_\R$.   Every   $y\in\mathcal{D}(W)$ defines a Hodge structure
\begin{equation*}
\Fil^1 W(\C) = 0  ,\quad
\Fil^0  W(\C) =  \mathrm{pr}_{\epsilon} ( y)  \oplus \mathrm{pr}_{\overline{\epsilon} } ( y ^\perp)  , \quad 
\Fil^{-1} W (\C) =W(\C)
\end{equation*}
 on $W(\C)$.  If we identify $y\in \mathcal{D}(W)$ with the corresponding morphism $\mathbb{S} \to \GU(W)_\R$, then for any point $z=(y_0,y)\in\mathcal{D}$ the product morphism
\[
  y_0 \times y : \mathbb{S} \to  \GU( W_0)_{\R}  \times \GU( W )_{\R}
\]
takes values in  $G_\R$.  This realizes  $\mathcal{D}  \subset \Hom(\mathbb{S} , G_\R)$  as a $G(\R)$-conjugacy class.
\end{remark}

\begin{remark}
In fact, the discussion above shows that $\mathrm{Sh}(G,\mathcal{D})$ admits a map to the Shimura variety defined the group 
$\mathrm{U}(V)$ together with the homomorphism
$$h_{\text{Gross}}: \mathbb{S} \to \mathrm{U}(V)(\R), \qquad z\mapsto \text{diag}(1, \dots, 1,  \bar{z}/z).$$
Here we have chosen a basis for $V(\R)$ for which the hermitian form has matrix $\mathrm{diag}(1_{n-1},-1)$. Note that, for analogous choices of 
bases for $W_0(\R)$ and $W(\R)$, the corresponding map is
$$h:\mathbb{S} \to  G(\R), \qquad z\mapsto (z)\times \mathrm{diag}(z, \dots,z,\bar z),$$
which, under composition with the homomorphism $G(\R) \rightarrow \mathrm{U}(V)(\R)$,  gives $h_{\mathrm{Gross}}$. 
The existence of this map provides an answer to a question 
posed by Gross:  how can one explicitly relate the Shimura variety defined by the 
unitary group $\mathrm{U}(V)$, as opposed to the Shimura variety defined by the 
similitude group $\mathrm{GU}(V)$,   to a moduli space of abelian varieties?  
Our answer is that  Gross's unitary Shimura variety is a quotient of our $\mathrm{Sh}(G,\mathcal{D})$, whose interpretation as a 
moduli space is explained in the next section.
\end{remark}


\subsection{Moduli interpretation}
\label{ss:generic moduli}


We wish to interpret  $\mathrm{Sh}(G,\mathcal{D}) $  as a moduli space of pairs of abelian varieties with additional structure.  First, we recall some generalities on abelian schemes.

For an abelian scheme $\pi:A\to S$ over an arbitrary base $S$,  define the \emph{first relative de Rham cohomology sheaf}
$
H^1_\dR(A) = \mathbb{R}^1\pi_* \Omega^\bullet_{A/S}
$
as the relative hypercohomology of the de Rham complex $\Omega^\bullet_{A/S}$.  The \emph{relative de Rham homology}
 \[
 H_1^\dR(A) = \underline{\Hom} ( H^1_\dR(A) , \co_S)
 \]
 is a locally free $\co_S$-module of rank $2 \cdot \mathrm{dim}(A)$, sitting  in an exact sequence
\[
0 \to \Fil^0 H_1^\dR(A) \to H_1^\dR(A) \to \Lie(A) \to 0.
\]
Any  polarization of $A$ induces an $\co_S$-valued alternating pairing  on $H_1^\dR(A)$,  which in turn induces a pairing
\begin{equation}\label{fil-lie dual}
\Fil^0H_1^\dR(A) \otimes  \Lie(A) \to \co_S.
\end{equation}
If the polarization is principal then both pairings are perfect.  
When $S=\Spec(\C)$,   Betti homology  satisfies
$
H_1(A(\C),\C) \iso H_1^\dR(A),
$
and
\[
A(\C) \iso H_1(A(\C),\Z) \backslash H_1^\dR(A) /  \Fil^0 H_1^\dR(A).
\]

For any pair of nonnegative integers $(s,t)$, define an algebraic stack $M_{(s,t)}$ over $\kk$ as follows: for any   $\kk$-scheme $S$ let $M_{(s,t)}(S)$ be the groupoid of triples $(A,\iota,\psi)$ in which
\begin{itemize}
\item
$A\to S$ is an abelian scheme of relative dimension $s+t$,
\item
$\iota : \co_\kk \to \End(A)$ is an action  such that the locally free summands 
\[
 \Lie(A)  = \epsilon  \Lie(A) \oplus \overline{\epsilon} \Lie(A)
\]
of (\ref{idempotent decomp}) have $\co_S$-ranks $s$ and $t$, respectively,
\item
$\psi : A\to A^\vee$ is a principal polarization, such that the induced  Rosati involution $\dagger$
on $\End^0(A)$ satisfies $\iota (\alpha)^\dagger = \iota(\overline{\alpha})$ for all $\alpha\in \co_\kk$.
\end{itemize}
We usually omit $\iota$ and $\psi$ from the notation, and just write $A\in M_{(s,t)}(S)$.

\begin{proposition}\label{prop:shimura moduli}
The Shimura variety $\mathrm{Sh}(G,\mathcal{D})$ is isomorphic to an open and closed substack
\begin{equation}\label{moduli inclusion}
\mathrm{Sh}(G,\mathcal{D}) \subset M_{(1,0)}  \times_\kk M_{(n-1,1)}.
\end{equation}
More precisely,     $\mathrm{Sh}(G,\mathcal{D})(S) $ classifies,  for any $\kk$-scheme $S$, pairs
\begin{equation}\label{moduli pair}
(A_0,A) \in M_{(1,0)} (S)  \times M_{(n-1,1)}  (S)
\end{equation}
for which  there exists, at every geometric point  $s\to S$, an isomorphism of hermitian  $\co_{\kk,\ell}$-modules
\begin{equation}\label{tate genus}
\Hom_{\co_\kk}( T_\ell A_{0,s}   , T_\ell A_s  ) \iso \Hom_{\co_\kk} (  \mathfrak{a}_0 ,  \mathfrak{a} ) \otimes \Z_\ell
\end{equation}
for every  prime $\ell$.    
Here the hermitian form on the right hand side of (\ref{tate genus}) is the restriction of the hermitian form  (\ref{hom hermitian})  on $\Hom_\kk(   W_0 , W) \otimes \Q_\ell$. The hermitian form on the left hand side is defined similarly, replacing the symplectic forms (\ref{symplectic construction}) on $W_0$ and $W$ with the Weil pairings on the Tate modules $T_\ell A_{0,s}$ and  $T_\ell A_s$.
\end{proposition}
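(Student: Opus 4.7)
The plan is to construct the morphism (\ref{moduli inclusion}) first complex-analytically via the dictionary between polarized Hodge structures and complex abelian varieties, and then invoke the existence of canonical models for PEL Shimura varieties to descend it to $\kk$. Given a triple $(y_0, y, g) \in \mathcal{D} \times G(\A_f)$ representing a point of $\mathrm{Sh}(G,\mathcal{D})(\C)$, I would form the $\co_\kk$-lattices $g\mathfrak{a}_0 \subset W_0$ and $g\mathfrak{a} \subset W$ exactly as in the proof of Proposition~\ref{prop:component reciprocity}. The Hodge structures of Remark~\ref{rem:hodge} turn the real tori $W_0(\R)/g\mathfrak{a}_0$ and $W(\R)/g\mathfrak{a}$ into complex abelian varieties $A_0$ and $A$, and the symplectic forms (\ref{symplectic construction}), rescaled by $\mathrm{rat}(\nu(g))^{-1}$, furnish principal polarizations. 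The scalar $\co_\kk$-action on $W_0$ and $W$ preserves both the lattices and the Hodge structures, producing the required $\co_\kk$-action; the signature conditions $(1,0)$ on $A_0$ and $(n-1,1)$ on $A$ are immediate from the definitions of $\mathcal{D}(W_0)$ and $\mathcal{D}(W)$, and the Rosati involution sends $\iota(\alpha)$ to $\iota(\bar\alpha)$ because the symplectic forms arise from hermitian ones via (\ref{symplectic construction}).  By construction, Betti homology gives $H_1(A_0, \Z) \otimes \Z_\ell \iso g\mathfrak{a}_0 \otimes \Z_\ell$ and $H_1(A, \Z) \otimes \Z_\ell \iso g\mathfrak{a} \otimes \Z_\ell$ as hermitian $\co_{\kk,\ell}$-lattices, whence (\ref{tate genus}) follows.

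Conversely, for a pair $(A_0, A)$ satisfying (\ref{tate genus}), I would choose symplectic $\co_\kk$-linear isomorphisms $H_1(A_0, \Q) \iso W_0$ and $H_1(A, \Q) \iso W$. The genus hypothesis (\ref{tate genus}), combined with the bijection between genera of hermitian lattices and $G(\Q)\backslash G(\A_f)/K$ established in the proof of Proposition~\ref{prop:component reciprocity}, guarantees that such rational isomorphisms exist and are unique up to the $G(\Q)$-action on $W_0 \oplus W$. Together with an adelic trivialization they produce a well-defined element $g \in G(\Q)\backslash G(\A_f)/K$, and transporting the Hodge decomposition on $H_1(A,\C)$ back to $W(\C)$ yields a point $(y_0, y) \in \mathcal{D}$ via Remark~\ref{rem:hodge}. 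The resulting map on groupoids is fully faithful because the self-isogenies of $(A_0, A)$ respecting polarization and $\co_\kk$-action are identified with the stabilizer $G(\Q)_{y_0,y} \cap gKg^{-1}$. This complex-analytic equivalence descends to an algebraic morphism of $\kk$-stacks by Deligne's canonical-model theorem for PEL data: the reflex field of $(G,\mathcal{D})$ is $\kk$ because the Hodge cocharacter factors through the $\kk$-rational subtorus $\mathrm{Res}_{\kk/\Q}\mathbb{G}_m \subset G$ appearing in (\ref{G to U}).

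It remains to show that the image is open and closed and is precisely the locus cut out by (\ref{tate genus}). Openness and closedness reduce to the observation that, in any family, the hermitian $\co_{\kk,\ell}$-module $\Hom_{\co_\kk}(T_\ell A_0, T_\ell A)$ forms a local system whose genus is a locally constant invariant; and the characterization by (\ref{tate genus}) encodes exactly the selection of our genus of self-dual lattices, which, as noted above, matches $G(\Q)\backslash G(\A_f)/K$ bijectively. The main technical obstacle I anticipate is in making the inverse construction canonical at the level of stacks rather than on isomorphism classes: one must track the scaling factor $\mathrm{rat}(\nu(g))$ carefully so that the rescaled symplectic form delivers a \emph{principal} polarization compatible with automorphisms, and verify that the equivalence of groupoids is natural in the base scheme $S$. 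Once this bookkeeping is in place, the proposition follows.
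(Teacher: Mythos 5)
Your proposal is correct and takes essentially the same route as the paper: the paper declares the proposition routine and writes out only the forward construction on complex points (lattices $g\mathfrak{a}_0$, $g\mathfrak{a}$ plus the Hodge structures of Remark \ref{rem:hodge} giving the pair $(A_0,A)$), which is exactly your first paragraph, while your inverse construction, descent, and open-closedness argument supply the standard details the paper omits. One small slip worth fixing: the Hodge cocharacter does \emph{not} factor through the subtorus $\mathrm{Res}_{\kk/\Q}\mathbb{G}_m \subset G$ of (\ref{G to U}) (that torus acts trivially on $V$, whereas $\mu$ acts on $V$ with weights of multiplicities $n-1$ and $1$); the reflex field is $\kk$ because those two weight multiplicities on $\epsilon V(\C)$ and $\overline{\epsilon}V(\C)$ are unequal and are interchanged precisely by the nontrivial element of $\Gal(\kk/\Q)$ — though for this proposition the canonical model over $\kk$ is already assumed in the paper's definition of $\mathrm{Sh}(G,\mathcal{D})$, so nothing downstream breaks.
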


\begin{proof}
As this is routine, we only describe the open and closed immersion  on complex points.  Fix a point 
\[
( z , g ) \in \mathrm{Sh}( G , \mathcal{D} )(\C) .
\]
The component $g$ determines $\co_\kk$-lattices $g\mathfrak{a}_0\subset W_0$ and $g\mathfrak{a}\subset W$, which are self-dual with respect to the  symplectic forms 
\[
\mathrm{rat}(\nu(g))^{-1} \psi_0 \quad \mbox{ and } \quad  \mathrm{rat}(\nu(g))^{-1} \psi
\]
 of  (\ref{symplectic construction}),  rescaled as in the proof of Proposition \ref{prop:component count}.

By Remark \ref{rem:hodge} the point $z \in \mathcal{D}$ determines Hodge structures on $W_0$ and $W$,  and in this way  $(z  , g)$ determines principally polarized complex abelian varieties
\begin{align*}
A_0(\C)  &= g \mathfrak{a}_0  \backslash W_0(\C) / \Fil^0(W_0) \\
A(\C)  & = g \mathfrak{a} \backslash W(\C) / \Fil^0(W)
\end{align*}
with  actions of $\co_\kk$.  One can easily check that the pair  $(A_0,A)$ determines a complex point of  $M_{(1,0)} \times_\kk M_{(n-1,1)}$, and this construction defines  (\ref{moduli inclusion}) on complex points.
\end{proof}

The following lemma will be needed in \S \ref{ss:unitary integral models} for the construction of integral models for $\mathrm{Sh}(G,\mathcal{D})$.

\begin{lemma}\label{lem:jac herm}
Fix a $\kk$-scheme $S$, a geometric point $s\to S$, a prime $p$,   and a point (\ref{moduli pair}).  If the relation (\ref{tate genus}) holds for all $\ell\neq p$, then it also holds for $\ell=p$.
\end{lemma}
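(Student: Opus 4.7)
Plan: The strategy reduces matters to Hilbert reciprocity for local hermitian invariants. Since $S$ is a $\kk$-scheme and $\kk$ has characteristic zero, every geometric point $s\to S$ has residue field of characteristic zero; fix an embedding into $\C$ so that $A_{0,s}$ and $A_s$ become complex abelian varieties. The rational Betti homology groups $H_1(A_{0,s},\Q)$ and $H_1(A_s,\Q)$ are then $\kk$-vector spaces of dimensions $1$ and $n$, carrying hermitian forms of signatures $(1,0)$ and $(n-1,1)$ induced from the $\co_\kk$-actions and the symplectic polarization pairings exactly as in (\ref{symplectic construction}). Form the global hermitian space
\[
V' = \Hom_\kk\bigl(H_1(A_{0,s},\Q),\, H_1(A_s,\Q)\bigr),
\]
of rank $n$ and signature $(n-1,1)$, defined in analogy with $V=\Hom_\kk(W_0,W)$. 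For every finite prime $\ell$, the Betti--\'etale comparison yields a canonical isomorphism of hermitian spaces identifying $V'\otimes_\Q\Q_\ell$ with $\Hom_{\co_\kk}(T_\ell A_{0,s}, T_\ell A_s)\otimes_{\Z_\ell}\Q_\ell$.

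Next I would compare the local invariants (\ref{eq:locinv}) of $V$ and $V'$ place by place. At $\infty$ both have signature $(n-1,1)$, so $\operatorname{inv}_\infty(V)=\operatorname{inv}_\infty(V')$; at each finite $\ell\neq p$ the hypothesis (\ref{tate genus}) provides an isomorphism of integral hermitian lattices, so $\operatorname{inv}_\ell(V)=\operatorname{inv}_\ell(V')$. The product formula for Hilbert symbols gives $\prod_v(\det V,-D)_v=1=\prod_v(\det V',-D)_v$, which forces $\operatorname{inv}_p(V)=\operatorname{inv}_p(V')$ as well. Since a hermitian space over $\kk_p$ is determined up to isomorphism by its $\kk_p$-rank and invariant, we obtain an isomorphism $V\otimes_\Q\Q_p\cong V'\otimes_\Q\Q_p$ of hermitian $\kk_p$-spaces.

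The final step is to promote this rational isomorphism to an isomorphism of $\co_{\kk,p}$-lattices. Both $\Hom_{\co_\kk}(\mathfrak{a}_0,\mathfrak{a})\otimes\Z_p$ and $\Hom_{\co_\kk}(T_pA_{0,s},T_pA_s)$ are self-dual hermitian $\co_{\kk,p}$-modules of rank $n$ inside the now-isomorphic ambient $\kk_p$-spaces. For $p\nmid D$ the extension $\kk_p/\Q_p$ is unramified and self-dual hermitian lattices of a given rank are unique up to isomorphism; for $p\mid D$ the analogous local classification shows that, inside a fixed ambient hermitian $\kk_p$-space, self-dual lattices are again unique up to isomorphism whenever they exist. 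Either way we obtain the desired isomorphism (\ref{tate genus}) at $\ell=p$. The main subtlety lies in this last step at the ramified primes dividing $D$; the conceptual heart of the proof is Hilbert reciprocity applied to the global hermitian space $V'$ built from Betti homology.
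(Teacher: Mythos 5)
Your proof is correct and follows essentially the same route as the paper's: build the global hermitian space from Betti homology, use the comparison isomorphisms and the product formula for invariants to get the isomorphism of rational hermitian spaces at $p$, then invoke uniqueness of self-dual lattices in a local hermitian space to upgrade to an integral isomorphism. The only thing worth adding is a citation for that last uniqueness claim (Jacobowitz's classification of hermitian lattices over local fields), together with the observation that it relies on the standing hypothesis that $D$ is odd.
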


\begin{proof}
As the stack $\mathrm{Sh}(G,\mathcal{D})$ is of finite type over $\kk$, we may assume that $s=\Spec(\C)$.  The polarizations on $A_0$ and $A$ induce symplectic forms on the first homology groups $H_1(A_{0,s}(\C) ,\Z)$ and  $H_1(A_s(\C) ,\Z)$,  and the construction (\ref{hom hermitian}) makes 
\[
L_{\mathrm{Be}}(A_{0,s},A_s) = \Hom_{\co_\kk}\big( H_1(A_{0,s} (\C) ,\Z) , H_1(A_s (\C) ,\Z) \big)
\]
into a self-dual hermitian $\co_\kk$-lattice of signature $(n-1,1)$, satisfying
\[
L_{\mathrm{Be}}(A_{0,s},A_s) \otimes_\Z \Z_\ell \iso \Hom_{\co_\kk}( T_\ell A_{0,s}   , T_\ell A_s  )
\]
for all primes $\ell$.

 If the relation (\ref{tate genus}) holds for all primes $\ell\neq p$, then $L_{\mathrm{Be}}(A_{0,s},A_s)\otimes \Q$ and $\Hom_\kk(W_0,W)$ are isomorphic as $\kk$-hermitian spaces everywhere locally except at $p$, and so they are isomorphic  at $p$ as well.  In particular,  for every $\ell$ (including $\ell=p$) both sides of (\ref{tate genus}) are isomorphic to self-dual lattices in the hermitian space $\Hom_\kk(W_0,W)\otimes_\Q\Q_\ell$.   By the  results of Jacobowitz \cite{Jac}  all self-dual lattices in this local hermitian space are isomorphic\footnote{This uses our standing hypothesis that $D$ is odd.}, and so (\ref{tate genus}) holds for all $\ell$.
\end{proof}

\begin{remark}
For any positive integer $m$ define
\[
K(m) = \mathrm{ker}\big( K  \to  \Aut_{\co_\kk} (  \widehat{\mathfrak{a}}_0 / m   \widehat{\mathfrak{a}}_0 )
  \times   \Aut_{\co_\kk} (  \widehat{\mathfrak{a}} / m  \widehat{\mathfrak{a}} ) \big).
\]
 For a    $\kk$-scheme $S$, a \emph{$K(m)$-structure} on  
$
 (A_0,A)\in \mathrm{Sh}(G,\mathcal{D})(S)
 $
is a triple $(\alpha_0, \alpha, \zeta)$ in which
$
\zeta:   \underline{\mu_m }  \iso \underline{ \Z/m\Z }  
$
is an isomorphism of $S$-group schemes, and
\[
\alpha_0  :   A_0[ m ]    \iso   \underline{ \widehat{\mathfrak{a}}_0/ m \widehat{\mathfrak{a}}_0 } ,\quad
\alpha  : A[m] \iso \underline{ \widehat{\mathfrak{a}} / m \widehat{\mathfrak{a}} }
\]
are  $\co_\kk$-linear  isomorphisms  identifying  the  Weil  pairings  on $A_0[m]$ and $A[m]$ with  the $\Z/m\Z$-valued symplectic forms on $ \widehat{\mathfrak{a}}_0/ m \widehat{\mathfrak{a}}_0$ and  $\widehat{\mathfrak{a}}/ m \widehat{\mathfrak{a}}$ deduced from the pairings (\ref{symplectic construction}).   The Shimura variety $G(\Q) \backslash \mathcal{D} \times G(\A_f) / K(m)$  admits a canonical model over $\kk$,  parametrizing   $K(m)$-structures on points of  $ \mathrm{Sh}(G,\mathcal{D})$. 
\end{remark}


\subsection{Integral models}
\label{ss:unitary integral models}


In this subsection we describe two integral models of $\mathrm{Sh}(G,\mathcal{D})$ over $\co_\kk$, related by a morphism  
$\mathcal{S}_\Kra \to \mathcal{S}_\Pap .$

The  first step is to construct an integral model of the moduli space $M_{(1,0)}$.  More generally, we will construct an  integral model of $M_{(s,0)}$ for any $s>0$.  Define an $\co_\kk$-stack $\mathcal{M}_{(s,0)}$ as the moduli space of triples $(A, \iota,\psi )$ over $\co_\kk$-schemes $S$ such that
\begin{itemize}
\item $A\to S$ is an abelian scheme of relative dimension $s$,
\item $\iota:\co_\kk \to \End(A)$ is an action such $\overline{\epsilon} \Lie(A) =0$, or, equivalently,
such that  the induced action of $\co_\kk$ on the $\co_S$-module
$\Lie(A)$ is through the structure map $i_S : \co_\kk \to \co_S$,
\item $\psi :A\to A^\vee$ is a principal polarization whose Rosati involution satisfies $\iota(\alpha)^\dagger = \iota(\overline{\alpha})$
for all $\alpha\in \co_\kk$.
\end{itemize}
The stack $\mathcal{M}_{(s,0)}$ is smooth of relative dimension $0$ over $\co_\kk$ by \cite[Proposition 2.1.2]{Ho2}, and its generic fiber  is the stack $M_{(s,0)}$ defined earlier.

\begin{remark}
The stack $\mathcal{M}_{(n-2,0)}$ will play an important role in  \S \ref{s:unitary compactification}.
In the degenerate case $n=2$, we interpret this as  $\mathcal{M}_{(0,0)} = \Spec(\co_\kk)$.
 The universal abelian scheme over it should be understood as the $0$ group scheme.
\end{remark}

The question of integral models for $M_{(n-1,1)}$ is more subtle, but well-understood after work of Pappas and Kr\"amer.   
The first integral model was defined by Pappas \cite{Pa}. Let  
\[
\mathcal{M}_{(n-1,1)}^\Pap \to \Spec(\co_\kk)
\]
be the stack  whose functor of points assigns to an $\co_\kk$-scheme $S$ the groupoid of triples $(A,\iota,\psi)$ in which
\begin{itemize}
\item $A\to S$ is an abelian scheme of relative dimension $n$,
\item $\iota:\co_\kk \to \End(A)$ is an action satisfying  the determinant condition 
\[
\det( T- \iota(\alpha)  \mid \Lie(A) ) = (T- \alpha)^{n-1} (T- \overline{\alpha} ) \in \co_S[T]
\]
for all $\alpha\in \co_\kk$,  
\item 
$\psi :A\to A^\vee$ is a principal polarization whose Rosati involution satisfies $\iota(\alpha)^\dagger = \iota(\overline{\alpha})$
for all $\alpha\in \co_\kk$,
\item
viewing the elements $\epsilon_S$ and $\overline{\epsilon}_S$ of \S \ref{ss:notation} as endomorphisms of $\Lie(A)$, the induced endomorphisms
\begin{align*}
\bigwedge\nolimits^n  \epsilon_S : \bigwedge\nolimits^n \Lie(A) \to \bigwedge\nolimits^n \Lie(A) \\
\bigwedge\nolimits^2 \overline{\epsilon}_S : \bigwedge\nolimits^2 \Lie(A) \to \bigwedge\nolimits^2 \Lie(A)
\end{align*}
are trivial (\emph{Pappas's wedge condition}).
\end{itemize}
It is clear  that the generic fiber of $\mathcal{M}_{(n-1,1)}^\Pap$ is isomorphic to the moduli space $M_{(n-1,1)}$ defined earlier.  Denote by 
\[
\mathrm{Sing}_{(n-1,1)}  \subset \mathcal{M}^\Pap_{(n-1,1)}
\]
the singular locus: the reduced substack of points at which the structure morphism to $\co_\kk$ is not smooth.

\begin{theorem}[Pappas]\label{thm:pappas model}
The stack $\mathcal{M}_{(n-1,1)}^\Pap$  is  flat over $\co_\kk$ of relative dimension $n-1$, and is Cohen-Macaulay and normal.   Moreover:
\begin{enumerate}
\item
 For any prime $\mathfrak{p}\subset \co_\kk$, the reduction $\mathcal{M}_{(n-1,1) /\F_\mathfrak{p}} ^\Pap$ is Cohen-Macaulay.  If $n>2$ the reduction is  geometrically normal.
\item
The  singular locus is a $0$-dimensional stack, finite over $\co_\kk$ and  supported  in characteristics dividing $D$.  It is the reduced substack underlying the closed substack defined by $\delta \cdot \Lie(A) =0$.
\end{enumerate}
\end{theorem}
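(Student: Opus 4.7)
The strategy is to reduce every claim to the analysis of a local model $M^{\mathrm{loc}}$ over $\co_\kk$ via the local model diagram of Rapoport--Zink. Concretely, I would fix a self-dual $\co_\kk$-lattice $\Lambda \subset W$ and define $M^{\mathrm{loc}}$ as the closed subscheme of the Grassmannian parametrizing rank-$n$ locally direct summands $\mathcal{F} \subset \Lambda \otimes_\Z \co_S$ that are $\co_\kk$-stable, totally isotropic for the induced symplectic pairing, and such that the quotient $(\Lambda \otimes_\Z \co_S)/\mathcal{F}$, modeling $\Lie(A)$, satisfies the determinant and wedge conditions of the theorem. Trivializing $H_1^{\dR}(A)$ étale-locally and invoking Grothendieck--Messing then produces an étale-local isomorphism between $\calM^\Pap_{(n-1,1)}$ and $M^{\mathrm{loc}}$, so that flatness, relative dimension, Cohen--Macaulay, normality, the description of the singular locus, and the regularity of the blow-up can all be checked on $M^{\mathrm{loc}}$.

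The core of the argument is then an explicit affine presentation of $M^{\mathrm{loc}}$. Away from primes dividing $D$ the idempotent decomposition (\ref{idempotent decomp}) splits $\Lambda \otimes \co_S$ into its $\epsilon$- and $\overline{\epsilon}$-parts, and $M^{\mathrm{loc}}$ is identified with a projective bundle parametrizing a line in the $\epsilon$-summand; this is smooth of relative dimension $n-1$. At a closed point of residue characteristic $p \mid D$ I would choose a convenient basis of $\Lambda$ and write the $\co_\kk$-action, the symplectic form, and the isotropy and stability conditions explicitly; the determinant and wedge conditions then translate into simultaneous rank conditions on a pair of matrices, giving $M^{\mathrm{loc}}$ an affine presentation of determinantal type. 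A direct commutative-algebra calculation on this ring verifies that it is flat over $\co_\kk$ of relative dimension $n-1$, that it is Cohen--Macaulay and normal, and that it remains Cohen--Macaulay and geometrically normal after reduction modulo $\mathfrak{p}$. This settles the main assertion and part (1).

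For (2), the wedge condition $\bigwedge^2 \overline{\epsilon}_S = 0$ forces $\overline{\epsilon}_S$ to have rank at most $1$ on $\Lie(A)$, while the determinant condition forces $\epsilon_S$ to have rank at most $n-1$; inspection of the explicit presentation shows that a point is singular exactly when \emph{both} $\epsilon_S$ and $\overline{\epsilon}_S$ vanish on $\Lie(A)$, which is the condition $\delta \cdot \Lie(A) = 0$ since $\delta$ and $\epsilon_S - \overline{\epsilon}_S$ differ by a unit modulo $\mathfrak{p}$ for $\mathfrak{p} \mid D$. This locus is visibly supported in characteristics dividing $D$, and a Dieudonné-module enumeration of its geometric points shows it is finite, so the singular substack is zero-dimensional and finite over $\co_\kk$. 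For (3), the blow-up of $\calM^\Pap_{(n-1,1)}$ along its singular locus corresponds under the local model diagram to the blow-up of $M^{\mathrm{loc}}$ at the ideal cutting out its singular points in the explicit coordinates above, and regularity reduces to inspecting the finitely many standard affine charts of this blow-up, on each of which the defining equations simplify to those of a regular local ring. The main obstacle will be precisely this sequence of explicit commutative-algebra computations for $M^{\mathrm{loc}}$: writing down a good affine presentation in mixed characteristic, verifying Cohen--Macaulay and normality of the resulting rings and of their special fibres, and then verifying regularity in the blow-up charts. This is the technical heart of Pappas's original analysis in \cite{Pa}.
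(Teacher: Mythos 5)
The paper offers no proof of this theorem: it is imported from Pappas \cite{Pa} (with the moduli-theoretic description of the blow-up supplied by Kr\"amer \cite{Kr}), so there is no internal argument to compare yours against. That said, your outline is a faithful reconstruction of the strategy of the cited proof: the local model diagram reduces all assertions to an explicit closed subscheme of a Grassmannian, which away from $D$ is a $\mathbb{P}^{n-1}$-bundle and at primes dividing $D$ is analyzed through a determinantal affine presentation; the flatness, Cohen--Macaulayness, normality, the identification of the singular locus, and the regularity of the blow-up charts are exactly the commutative-algebra computations you defer to at the end, and they constitute the actual content of Pappas's paper rather than of this one. Two small corrections to your sketch: it is the wedge condition $\bigwedge\nolimits^{n}\epsilon_S=0$, not the determinant condition, that directly bounds the rank of $\epsilon_S$ on $\Lie(A)$ by $n-1$; and the equivalence of ``$\epsilon_S$ and $\overline{\epsilon}_S$ both kill $\Lie(A)$'' with ``$\delta\cdot\Lie(A)=0$'' holds because in residue characteristic dividing $D$ the elements $\epsilon_S$, $\overline{\epsilon}_S$, and $\delta\otimes 1$ all generate the same one-dimensional nilradical of $\co_\kk\otimes_\Z\F$, so any one of the three vanishing on $\Lie(A)$ implies the others --- the phrase ``differ by a unit'' applies to each pair, not just to $\delta$ versus $\epsilon_S-\overline{\epsilon}_S$. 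Finally, be aware that your text is a strategy sketch rather than a proof: the finiteness of the singular locus and the regularity of the blow-up each require the explicit coordinate work you only gesture at.
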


\begin{proof}
When $n>2$  all of this is proved in \cite{Pa} using the theory of local models, and it is straightforward to check that the  arguments carry over\footnote{
When $n=2$,  the $\co_\kk$-stack  $\mathcal{M}_{(n-1,1)}^\Pap$ admits a canonical descent to $\Z$, 
and Pappas  analyzes the structure of this descent.  The  descent is regular, but the regularity is destroyed by base change to $\co_\kk$. }
  to the case $n=2$.  
The only change is that if $\mathfrak{p} \subset \co_\kk$ lies above $p\mid D$, 
the stack  $\mathcal{M}^\Pap_{(1,1) /\co_{\kk,\mathfrak{p} } }$ 
is \'etale locally isomorphic to   
\[
\Spec( \co_{\kk,\mathfrak{p} } [ x,y] / ( xy-p)),
\]
 whose special fiber is not normal.
\end{proof}

The stack $\mathcal{M}^\Pap_{(n-1,1)}$ is not regular, but has a natural resolution of singularities.
This leads us to our second integral model of $M_{(n-1,1)}$.  As in the work of Kr\"amer \cite{Kr},  define 
\[
\mathcal{M}^\Kra_{(n-1,1)} \to \Spec(\co_\kk)
\]
to be the stack whose functor of points assigns to an $\co_\kk$-scheme $S$ the groupoid of quadruples  $(A,\iota,\psi,\mathcal{F}_A)$ in which
\begin{itemize}
\item $A\to S$ is an abelian scheme of relative dimension $n$,
\item $\iota:\co_\kk \to \End(A)$ is an action of $\co_\kk$,
\item $\psi :A\to A^\vee$ is a principal polarization satisfying $\iota(\alpha)^\dagger = \iota(\overline{\alpha})$
for all $\alpha\in \co_\kk$,
\item
$\mathcal{F}_A\subset \Lie(A)$ is an $\co_\kk$-stable  $\co_S$-module local direct summand of rank $n-1$ satisfying \emph{Kr\"amer's condition}:  $\co_\kk$ acts on $\mathcal{F}_A$ via the structure map $\co_\kk \to \co_S$, and acts on the line bundle $\Lie(A)/\mathcal{F}_A$ via the complex conjugate of the structure map.  
\end{itemize}

There is a proper morphism
\begin{equation}\label{blowup}
\mathcal{M}^\Kra_{(n-1,1)} \to \mathcal{M}^\Pap_{(n-1,1)}.
\end{equation}
defined by forgetting the subsheaf $\mathcal{F}_A$, and we define the \emph{exceptional locus}
\begin{equation}\label{kramer exceptional}
\mathrm{Exc}_{(n-1,1)} \subset \mathcal{M}^\Kra_{(n-1,1)}
\end{equation}
by the Cartesian diagram
\[
\xymatrix{
{  \mathrm{Exc}_{(n-1,1)}  } \ar[r]\ar[d]   &  {   \mathcal{M}^\Kra_{(n-1,1)}  }  \ar[d] \\
{  \mathrm{Sing}_{(n-1,1)} } \ar[r] &   { \mathcal{M}^\Pap_{(n-1,1)} .}  
}
\]

\begin{theorem}[Kr\"amer]\label{thm:kramer model}
The $\co_\kk$-stack $\mathcal{M}^\Kra_{(n-1,1)} $ is regular and flat with reduced fibers, and satisfies the following properties:
\begin{enumerate}
\item
The exceptional locus (\ref{kramer exceptional})  is a disjoint union of smooth Cartier divisors.
Its   fiber  over a   geometric point $s \to \mathrm{Sing}_{(n-1,1)}$  is isomorphic to the projective space $\mathbb{P}^{n-1}$ over $k(s)$.
 \item
The morphism  (\ref{blowup}) is proper and surjective, and restricts to an isomorphism
\[
\mathcal{M}^\Kra_{(n-1,1)}  \smallsetminus \mathrm{Exc}_{(n-1,1)} \iso\mathcal{M}^\Pap_{(n-1,1)}  \smallsetminus \mathrm{Sing}_{(n-1,1)}.
\]
For an $\co_\kk$-scheme $S$, the inverse of this isomorphism endows 
\[
A \in  \big(\mathcal{M}^\Pap_{(n-1,1)}  \smallsetminus \mathrm{Sing}_{(n-1,1)} \big) (S)
\]
with the subsheaf
$
\mathcal{F}_A  = \mathrm{ker} \big( \overline{\epsilon} : \Lie(A) \to \Lie(A) \big).
$
\end{enumerate}
 \end{theorem}

\begin{proof}
When  $n>2$ all of this is proved in  \cite{Kr} using the theory of local models, and it is straightforward to check that nearly everything\footnote{When $n>2$,  the statement of \cite[Theorem 4.4]{Kr} asserts that the special fiber of the local model of $\mathcal{M}^\Kra_{(n-1,1)}$ is the union of two smooth and geometrically irreducible varieties of dimension $n-1$, whose intersection is smooth and geometrically irreducible  of dimension $n-2$.
When $n=2$, the structure of the local model is slightly different: its geometric special fiber is a union $X_1\cup X_2\cup X_3$ of three irreducible varieties, each  isomorphic to  $\mathbb{P}^1$,  intersecting in such a way that 
$X_1\cap X_2$ and  $X_2\cap X_3$ are distinct reduced points. 
The difference between the two cases occurs because  the scheme $\mathcal{Q}$ defined in the proof of \cite[Theorem 4.4]{Kr}, which parametrizes isotropic lines in a quadratic space of dimension $n$ over a finite field, is geometrically irreducible only when $n>2$.}  
carries over to the case $n=2$.   In particular, if $n=2$ and  $\mathfrak{p} \subset \co_\kk$ lies above $p\mid D$, 
the same arguments used in [\emph{loc.~cit.}] show that   $\mathcal{M}^\Kra_{(1,1) /\co_{\kk,\mathfrak{p} } }$  is \'etale locally isomorphic to  the regular scheme
\[
\Spec( \co_{\kk,\mathfrak{p} } [ x,y] / ( xy-\pi)),
\]
for any uniformizer $\pi \in \co_{\kk,\mathfrak{p} }$.  
\end{proof}

Recalling  (\ref{moduli inclusion}), we define our first integral model 
\[
\mathcal{S}_\Pap  \subset  \mathcal{M}_{(1,0)} \times \mathcal{M}_{(n-1,1)}^\Pap
\]
  as the Zariski closure of $\mathrm{Sh}(G,\mathcal{D}) $ in the fiber product on the right, which, like all fiber products below,  is taken over  over $\Spec(\co_\kk)$. 
   Using Lemma \ref{lem:jac herm}, one can show that  it is characterized as the  open and closed substack  whose functor of points assigns to any $\co_\kk$-scheme $S$ the groupoid of  pairs 
 \[
(A_0, A) \in \mathcal{M}_{(1,0)}( S ) \times \mathcal{M}^\Pap_{(n-1,1)} (S)
\]
such that, at any geometric point $s \to S$, the relation (\ref{tate genus}) holds for all primes $\ell \neq \mathrm{char}(k(s))$.

 Our second integral model of $\mathrm{Sh}(G,\mathcal{D})$ is defined as   the cartesian product
\[
\xymatrix{
{ \mathcal{S}_\Kra } \ar[r]\ar[d]   &  {    \mathcal{M}_{(1,0)} \times \mathcal{M}^\Kra_{(n-1,1)} }  \ar[d] \\
{  \mathcal{S}_\Pap   } \ar[r]  & {    \mathcal{M}_{(1,0)} \times \mathcal{M}_{(n-1,1)}^\Pap   .}
}
\]
The \emph{singular locus} $\mathrm{Sing}   \subset \mathcal{S}_\Pap$ and \emph{exceptional locus}
$\mathrm{Exc}  \subset \mathcal{S}_\Kra$  are defined by the cartesian squares
\[
\xymatrix{
{  \mathrm{Exc}  } \ar[r]\ar[d]   &  {   \mathcal{S}_\Kra  }  \ar[d] \\
{  \mathrm{Sing} } \ar[r]\ar[d]  &   { \mathcal{S}_\Pap } \ar[d]  \\
{  \mathcal{M}_{(1,0)} \times \mathrm{Sing}_{(n-1,1)} }  \ar[r] & { \mathcal{M}_{(1,0)} \times \mathcal{M}_{(n-1,1)}^\Pap  }.
}
\]
Both loci are proper over $\co_\kk$, and supported in characteristics dividing $D$.

\begin{theorem}[Pappas, Kr\"amer]\label{thm:integral comparison}
The $\co_\kk$-stack $\mathcal{S}_\Kra $ is regular and flat with reduced fibers.  
The $\co_\kk$-stack $\mathcal{S}_\Pap$ is Cohen-Macaulay and normal, with Cohen-Macaulay  fibers.  Furthermore:
\begin{enumerate}
\item
If $n>2$, the geometric fibers of $\mathcal{S}_\Pap$  are  normal.
\item
The exceptional locus $\mathrm{Exc} \subset \mathcal{S}_\Kra$  is a disjoint union of smooth Cartier divisors.   The singular locus $\mathrm{Sing}\subset \mathcal{S}_\Pap$ is a reduced closed stack of dimension $0$, supported in characteristics dividing $D$.
\item
 The fiber of $\mathrm{Exc}$ over a   geometric point $s \to \mathrm{Sing}$  is isomorphic to the projective space $\mathbb{P}^{n-1}$ over $k(s)$.
 \item
The morphism  $\mathcal{S}_\Kra \to \mathcal{S}_\Pap$ is surjective, and restricts to an isomorphism
\begin{equation}\label{nonsingular locus}
\mathcal{S}_\Kra  \smallsetminus \mathrm{Exc} \iso \mathcal{S}_\Pap  \smallsetminus \mathrm{Sing}.
\end{equation}
For an $\co_\kk$-scheme $S$, the inverse of this isomorphism endows 
\[
(A_0,A) \in  \big(  \mathcal{S}_\Pap  \smallsetminus \mathrm{Sing} \big) (S)
\]
with the subsheaf
$
\mathcal{F}_A  = \mathrm{ker} \big( \overline{\epsilon} : \Lie(A) \to \Lie(A) \big).
$
\end{enumerate}
 \end{theorem}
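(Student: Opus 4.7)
The plan is to reduce essentially every assertion to Theorem~\ref{thm:pappas model} of Pappas together with Kr\"amer's identification of $\mathcal{M}^\Kra_{(n-1,1)}$ as the blow-up (\ref{blowup}). First I would observe that the projection $\mathcal{M}_{(1,0)}\to \Spec(\co_\kk)$ is smooth of relative dimension zero, so that flatness, Cohen--Macaulayness, normality, and the corresponding fiberwise properties of $\mathcal{M}^\Pap_{(n-1,1)}$ transfer directly to its product with $\mathcal{M}_{(1,0)}$, and hence to $\mathcal{S}_\Pap$ as an open and closed substack. The same argument gives regularity and flatness of $\mathcal{S}_\Kra$ from Theorem~\ref{thm:pappas model}(3). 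Because $\mathrm{Sing}\to \mathrm{Sing}_{(n-1,1)}$ is the base change of the finite, $0$-dimensional singular locus of $\mathcal{M}^\Pap_{(n-1,1)}$ along the smooth morphism $\mathcal{S}_\Pap\to \mathcal{M}^\Pap_{(n-1,1)}$, one gets that $\mathrm{Sing}$ is reduced, $0$-dimensional, and supported in characteristics dividing $D$. Surjectivity of $\mathcal{S}_\Kra\to \mathcal{S}_\Pap$ is inherited from the blow-up.

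Next I would analyze the exceptional locus by localizing at a geometric singular point $s\to \mathrm{Sing}$, of residue characteristic $p\mid D$. At such a point $\delta$ annihilates $\Lie(A_s)$, so the elements $\epsilon_S$ and $\overline{\epsilon}_S$ of \S\ref{ss:notation} induce the same endomorphism $N$ of the $n$-dimensional $k(s)$-vector space $\Lie(A_s)$, and Pappas's wedge condition $\bigwedge^2\overline{\epsilon}_S=0$ forces $\mathrm{rk}(N)\leq 1$; the singular condition $\delta\cdot \Lie(A)=0$ forces $N=0$ on the fiber. Hence Kr\"amer's constraint that $\co_\kk$ act via the structure map on $\mathcal{F}_A$ and via its conjugate on $\Lie(A)/\mathcal{F}_A$ becomes vacuous on the fiber: the choice of $\mathcal{F}_A$ is an arbitrary hyperplane in $\Lie(A_s)$. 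This represents $\mathbb{P}^{n-1}_{k(s)}$ and gives the claimed description of the fiber of $\mathrm{Exc}$; because $\mathrm{Sing}$ is $0$-dimensional, the components of $\mathrm{Exc}$ are disjoint, and Kr\"amer's local computation shows each is a smooth Cartier divisor (this is the content of the blow-up construction in Theorem~\ref{thm:pappas model}(3)).

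For the isomorphism (\ref{nonsingular locus}) and the identification of $\mathcal{F}_A$, I would argue that on the complement of $\mathrm{Sing}$ the subsheaf $\ker\bigl(\overline{\epsilon}\colon \Lie(A)\to \Lie(A)\bigr)$ is forced to be of constant rank $n-1$ and to coincide with any $\mathcal{F}_A$ satisfying Kr\"amer's conditions. Indeed, away from characteristics dividing $D$ the decomposition (\ref{idempotent decomp}) gives $\Lie(A)=\epsilon\Lie(A)\oplus\overline{\epsilon}\Lie(A)$, and the signature condition makes $\ker(\overline{\epsilon})=\epsilon\Lie(A)$ a local direct summand of rank $n-1$ on which $\co_\kk$ acts through $i_S$, with $\co_\kk$ acting on the quotient $\overline{\epsilon}\Lie(A)$ through conjugation. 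In residue characteristic $p\mid D$ but outside $\mathrm{Sing}$, $\delta$ acts nontrivially, so $N\neq 0$ and the wedge condition makes $\overline{\epsilon}$ have rank exactly $1$; the kernel is then the unique rank-$(n-1)$ local direct summand on which $\overline{\epsilon}-\epsilon=-\delta$ vanishes, hence on which $\co_\kk$ acts through the structure map, with the quotient supporting the conjugate action. Uniqueness of $\mathcal{F}_A$ in both situations produces a section of the forgetful morphism $\mathcal{S}_\Kra\to \mathcal{S}_\Pap$ over $\mathcal{S}_\Pap\smallsetminus \mathrm{Sing}$, and this is inverse to the restriction of $\mathcal{S}_\Kra\to \mathcal{S}_\Pap$.

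The main obstacle I expect is the local analysis around $\mathrm{Sing}$ showing that the Kr\"amer moduli problem really coincides with Pappas's blow-up, including the description of the exceptional fibers as full projective spaces rather than some subvariety cut out by residual constraints. This requires carefully writing the universal deformation ring at a singular geometric point, exhibiting explicit local coordinates in which the Pappas wedge conditions and the Kr\"amer hyperplane condition can be simultaneously resolved, and checking that the resulting scheme agrees with the blow-up of the ideal defining $\mathrm{Sing}$; this is exactly the content of Kr\"amer's thesis~\cite{Kr}, whose results can be invoked directly once the moduli-theoretic reduction above has been made.
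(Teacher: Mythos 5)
The paper offers no proof of Theorem \ref{thm:integral comparison} at all --- it is stated purely as a citation to Kr\"amer \cite{Kr} and Pappas \cite{Pa} --- and your proposal is a correct account of how the statement reduces to Theorem \ref{thm:pappas model} together with Kr\"amer's identification of the blow-up (\ref{blowup}) with his moduli problem, with the genuinely hard local analysis correctly delegated to \cite{Kr}. The only imprecision is that in residue characteristic $p \mid D$ one has $\epsilon = -\overline{\epsilon}$ rather than $\epsilon = \overline{\epsilon}$ in $\co_\kk \otimes_\Z k(s)$ (they generate the same ideal, and $\delta$ differs from $\overline{\epsilon}$ by a unit since $p$ is odd), so nothing in your argument breaks.
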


\begin{proof}
All of this follows from Theorems \ref{thm:pappas model} and \ref{thm:kramer model}, along with the fact that 
 $\mathcal{M}_{(1,0)} \to \Spec(\co_\kk)$ is finite \'etale.  
\end{proof}

\begin{remark}\label{rem:simple hodge}
Let $(A_0,A)$ be the universal pair over  $\mathcal{S}_\Pap$.
The vector bundle $H^\dR_1(A_0)$ is locally free of rank one over $\co_\kk \otimes_\Z \co_{\mathcal{S}_\Pap}$ and, by definition of the moduli problem defining $\mathcal{S}_\Pap$, its quotient $\Lie(A_0)$ is annihilated by $\overline{\epsilon}$. From this it is not hard to see that 
\[
 \Fil^0 H_1^\dR(A_0) = \overline{\epsilon} H^\dR_1(A_0).
\]
\end{remark}

%


\subsection{The line bundle of modular forms}
\label{ss:unitary bundle}


We now construct a line bundle of modular forms $\bm{\omega}$ on $\mathcal{S}_\Kra$, and consider the subtle question of whether or not it descends to $\mathcal{S}_\Pap$.  
The short answer is that it doesn't, but a more complete answer can be found in Theorems \ref{thm:weight two nonsingular} and \ref{thm:cartier error}.

By Remark \ref{rem:hodge},  every point $z \in \mathcal{D}$ determines  Hodge structures on $W_0$ and $W$ of weight $-1$, and hence a  Hodge structure of weight $0$  on $V=\Hom_\kk( W_0 ,  W )$.
Consider the holomorphic  line bundle $\bm{\omega}^{an}$ on  $\mathcal{D}$ whose fiber at $z$  is the complex line
$
\bm{\omega}^{an}_z =   \Fil^1 V(\C)  
$
determined by this Hodge structure.

\begin{remark}
It is useful to interpret $\bm{\omega}^{an}$ in the notation of Remark \ref{rem:to so}.  
The fiber of $\bm{\omega}^{an}$ at $z=(y_0,y)$ is the line 
\begin{equation}\label{hermitian bundle}
\bm{\omega}^{an}_z = \Hom_\C ( W_0(\C) / \overline{\epsilon} W_0(\C) ,  \mathrm{pr}_\epsilon  ( y )    )   \subset \epsilon V(\C),
\end{equation}
and hence $\bm{\omega}^{an}$ is  simply the restriction of the tautological bundle via the inclusion
\[
 \mathcal{D}   \iso \big\{ w \in   \epsilon V(\C) :   [w,\overline{w}] <0    \big\} / \C^\times \subset \mathbb{P}( \epsilon V(\C)).
\]
\end{remark}

There is  a natural action of $G(\R)$ on the total space of $\bm{\omega}^{an}$,  lifting the natural action on $\mathcal{D}$, and so $\bm{\omega}^{an}$ descends to  a line bundle on the complex orbifold $\mathrm{Sh}(G,\mathcal{D})(\C)$.
This descent  is algebraic,  has a canonical model over the reflex field, and extends in a natural way to the integral model $\mathcal{S}_\Kra$, as we now explain.

Let  $(A_0,A)$ be the universal object over  $\mathcal{S}_\Kra$, 
let $\mathcal{F}_A \subset \Lie(A)$ be the universal subsheaf of Kr\"amer's moduli problem, and let 
\[
\mathcal{F}_A^\perp \subset \Fil^0 H_1^\dR(A) 
\]
be the  orthogonal to $\mathcal{F}_A$ under the pairing (\ref{fil-lie dual}).
It is a rank one $\co_{\mathcal{S}_\Kra}$-module local direct summand on which $\co_\kk$ acts through
 the structure morphism $\co_\kk \to \co_{\mathcal{S}_\Kra}$.  
Define the  \emph{line bundle of weight one modular forms} on $\mathcal{S}_\Kra$  by 
\[
\bm{\omega} = \underline{\Hom}( \Lie(A_0) , \mathcal{F}_A^\perp ),
\]
or, equivalently, 
$
\bm{\omega}^{-1} =  \Lie(A_0) \otimes   \Lie(A) / \mathcal{F}_A .
$

\begin{proposition}
The line bundle $\bm{\omega}$ on $\mathcal{S}_\Kra$ just defined restricts to the already defined $\bm{\omega}^{an}$ in the complex fiber.
Moreover,  on the complement of the exceptional locus $\mathrm{Exc}\subset \mathcal{S}_\Kra$ we have
\[
\bm{\omega} = \underline{\Hom}( \Lie(A_0) ,  \epsilon \Fil^0 H_1^\dR(A)  ).
\]
\end{proposition}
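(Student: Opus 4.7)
I would verify both claims by computing $\mathcal{F}_A^\perp$ explicitly: first on the complex fiber, where the idempotents $\epsilon, \overline{\epsilon}$ yield an honest direct sum decomposition, and then on the non-singular locus $\mathcal{S}_\Kra \smallsetminus \mathrm{Exc}$, where Theorem \ref{thm:integral comparison}(3) gives the clean description $\mathcal{F}_A = \ker(\overline{\epsilon}\colon \Lie(A) \to \Lie(A))$. The crucial input throughout is the Rosati compatibility $\iota(\alpha)^\dagger = \iota(\overline{\alpha})$ of the polarization, which induces the adjunction
\[
\langle \epsilon x, y \rangle = \langle x, \overline{\epsilon} y \rangle
\]
for the perfect pairing $\Fil^0 H_1^\dR(A) \otimes \Lie(A) \to \co_{\mathcal{S}_\Kra}$ of \eqref{fil-lie dual}.

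For the first statement, I would use Proposition \ref{prop:shimura moduli} together with Remark \ref{rem:hodge} to identify, over a complex point $z = (y_0, y)$, the de Rham homology with $H_1^\dR(A_0) = W_0(\C)$ and $H_1^\dR(A) = W(\C)$ carrying the Hodge filtrations described there, so that $\Lie(A_0) \cong W_0(\C)/\overline{\epsilon} W_0(\C)$ and $\Fil^0 H_1^\dR(A) = \mathrm{pr}_\epsilon(y) \oplus \mathrm{pr}_{\overline{\epsilon}}(y^\perp)$. Because over $\C$ we have the genuine decomposition $\Lie(A) = \epsilon \Lie(A) \oplus \overline{\epsilon} \Lie(A)$ of ranks $n-1$ and $1$, Kr\"amer's condition forces $\mathcal{F}_A = \epsilon \Lie(A)$. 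The adjunction then yields $\mathcal{F}_A^\perp = \epsilon \Fil^0 H_1^\dR(A) = \mathrm{pr}_\epsilon(y)$, and substitution into the definition of $\bm{\omega}$ recovers \eqref{hermitian bundle}.

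For the second statement, applying the adjunction to $x' \in \Fil^0 H_1^\dR(A)$ and $y \in \mathcal{F}_A = \ker(\overline{\epsilon})$ gives $\langle \epsilon x', y \rangle = \langle x', \overline{\epsilon} y \rangle = 0$, proving the inclusion $\epsilon \Fil^0 H_1^\dR(A) \subset \mathcal{F}_A^\perp$. To promote this to an equality I would note that $\mathcal{F}_A^\perp$ is a line bundle (being the $\co_{\mathcal{S}_\Kra}$-dual of the line bundle $\Lie(A)/\mathcal{F}_A$ under the perfect pairing), and argue that $\epsilon \Fil^0 H_1^\dR(A)$ is likewise a line sub-bundle: by Rosati duality the rank of $\epsilon$ on $\Fil^0 H_1^\dR(A)$ equals the rank of $\overline{\epsilon}$ on $\Lie(A)$, which is one since its kernel $\mathcal{F}_A$ has rank $n-1$. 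A fibrewise Nakayama argument then upgrades the inclusion of rank-one coherent sheaves to the desired equality.

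The main obstacle is this final step: confirming that $\epsilon \Fil^0 H_1^\dR(A)$ is a genuine line sub-bundle of $\Fil^0 H_1^\dR(A)$ on the non-singular locus, rather than merely a coherent subsheaf of generic rank one. This is delicate in characteristics $p \mid D$, where the idempotent decomposition of $\co_\kk \otimes_\Z \co_{\mathcal{S}_\Kra}$ degenerates and one cannot simply project. The way around this is to observe that Pappas's wedge condition $\bigwedge^2 \overline{\epsilon}|_{\Lie(A)} = 0$ transports via the polarization pairing into the bound $\bigwedge^2 \epsilon|_{\Fil^0 H_1^\dR(A)} = 0$, and on the non-singular locus this bound is sharp because the corresponding rank on $\Lie(A)$ is attained there; hence the image of $\epsilon$ on $\Fil^0 H_1^\dR(A)$ has locally constant rank one and is a sub-bundle.
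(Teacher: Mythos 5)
Your route is the same as the paper's: the complex--fibre identification via the idempotent decomposition, the adjunction $\psi(\epsilon x,y)=\psi(x,\overline{\epsilon}y)$ coming from the Rosati condition, and the characterization $\mathcal{F}_A=\ker(\overline{\epsilon}\colon\Lie(A)\to\Lie(A))$ of Theorem \ref{thm:integral comparison} on the non-exceptional locus. The first claim and the inclusion $\epsilon\,\Fil^0 H_1^\dR(A)\subset\mathcal{F}_A^\perp$ are correct as you argue them.

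The gap is the lower bound on the fibrewise rank of $\epsilon$ on $\Fil^0 H_1^\dR(A)$ in residue characteristic $p\mid D$. Your second paragraph infers fibrewise rank one for $\overline{\epsilon}$ on $\Lie(A)$ from the fact that its kernel $\mathcal{F}_A$ is a rank-$(n-1)$ direct summand; this inference is false in general: the endomorphism $\left(\begin{smallmatrix}0&p\\0&0\end{smallmatrix}\right)$ of $\co^2$ over $\Z_p$ has kernel a rank-one direct summand yet vanishes on the special fibre, so the image need not be a sub-bundle. Your third paragraph concedes the point but then asserts that the bound from the wedge condition "is sharp because the corresponding rank on $\Lie(A)$ is attained there" --- which is exactly the statement needing proof, since Pappas's wedge condition only gives the upper bound $\le 1$. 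The missing input is Theorem \ref{thm:pappas model}(2), which identifies $\mathrm{Sing}$ with the reduced locus where $\delta\cdot\Lie(A)=0$. In residue characteristic $p\mid D$ one has $\epsilon+\overline{\epsilon}=1\otimes i_S(\pi-\overline{\pi})=0$ while $\epsilon-\overline{\epsilon}=\iota(\pi-\overline{\pi})$ is a unit multiple of $\iota(\delta)$, so $\iota(\delta)$ agrees with $\overline{\epsilon}$ on $\Lie(A)_s$ up to the unit $\pm 2$; hence $\overline{\epsilon}$ is nonzero on every fibre of $\Lie(A)$ over $\mathcal{S}_\Kra\smallsetminus\mathrm{Exc}\iso\mathcal{S}_\Pap\smallsetminus\mathrm{Sing}$. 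Combined with the wedge condition this gives constant fibrewise rank one, so $\epsilon\,\Fil^0 H_1^\dR(A)$ is a genuine line sub-bundle, and your concluding comparison of two line sub-bundles that agree on the ($\co_\kk$-flat) generic fibre closes the proof. Without this appeal to the description of the singular locus the argument does not go through.
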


\begin{proof}
The equality $\mathcal{F}_A^\perp = \epsilon \Fil^0 H_1^\dR(A)$ on the complement of $\mathrm{Exc}$ follows from the characterization 
\[
\mathcal{F}_A = \mathrm{ker}( \overline{\epsilon} : \Lie(A) \to \Lie(A) ) 
\]
of Theorem \ref{thm:integral comparison}, and all of the claims follow easily from this and examination of the proof of Proposition \ref{prop:shimura moduli}.
\end{proof}

The line bundle $\bm{\omega}$ does not descend to $\mathcal{S}_\Pap$, but it is closely related to another line bundle that does.  This is the content of the following theorem, whose proof will occupy the remainder of \S \ref{ss:unitary bundle}.  The result will be strengthened in Theorem \ref{thm:cartier error}.

\begin{theorem}\label{thm:weight two nonsingular}
There is a unique line bundle $\pure_\Pap$ on $\mathcal{S}_\Pap$ whose restriction to the nonsingular locus (\ref{nonsingular locus})
 is isomorphic to $\bm{\omega}^2$.  We denote by $\pure_\Kra$ its pullback via $\mathcal{S}_\Kra \to \mathcal{S}_\Pap$.
\end{theorem}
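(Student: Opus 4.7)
Plan. Uniqueness is immediate from normality: since $\mathcal{S}_\Pap$ is normal (Theorem~\ref{thm:integral comparison}) and $\mathrm{Sing}$ is zero-dimensional while $\dim\mathcal{S}_\Pap = n\ge 3$, the singular locus has codimension $\ge 3$, so the restriction map $\mathrm{Pic}(\mathcal{S}_\Pap)\to\mathrm{Pic}(\mathcal{S}_\Pap\smallsetminus\mathrm{Sing})$ is injective, and any line bundle extending $\bm{\omega}^2$ is unique up to canonical isomorphism.

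For existence I would define
\[
\pure_\Pap \;:=\; j_*\bigl(\bm{\omega}^2|_{\mathcal{S}_\Pap\smallsetminus\mathrm{Sing}}\bigr),
\]
where $j$ is the open immersion in (\ref{nonsingular locus}). Normality (hence $S_2$) of $\mathcal{S}_\Pap$ together with $\mathrm{codim}(\mathrm{Sing})\ge 2$ forces $\pure_\Pap$ to be a coherent reflexive sheaf of generic rank one with $j^*\pure_\Pap\cong\bm{\omega}^2$. The entire content of the theorem is then the assertion that $\pure_\Pap$ is locally free at each singular point $s\in\mathrm{Sing}$, equivalently, that the Weil divisor class of $\bm{\omega}^2$ is trivial in the local class group $\mathrm{Cl}(\widehat{\co}_{\mathcal{S}_\Pap,s})$.

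The principal obstacle is precisely this local class group computation. To carry it out, I would use Pappas's explicit \'etale local model for $\mathcal{M}^\Pap_{(n-1,1)}$ at $s$, which identifies $\widehat{\co}_{\mathcal{S}_\Pap,s}$ with the completion of an affine cone singularity whose class group is cyclic of order two and whose standard resolution is the Kr\"amer blow-up, with exceptional fiber $\mathrm{Exc}_s\cong\mathbb{P}^{n-1}_{k(s)}$ and generator of the class group restricting to the hyperplane class $\co_{\mathbb{P}^{n-1}}(1)$. One then computes $\bm{\omega}|_{\mathrm{Exc}_s}$ directly from the moduli description: at $s$ the relation $\delta\cdot\Lie(A)=0$ forces $\co_\kk$ to act on $\Lie(A)|_s$ through the residue field $k(s)$, so Kr\"amer's moduli problem allows $\mathcal{F}_A$ to range over all hyperplanes in $\Lie(A)|_s$; this identifies $\mathrm{Exc}_s\cong\mathbb{P}(\Lie(A)|_s^{\vee})$ with $\Lie(A)/\mathcal{F}_A$ becoming the tautological quotient $\co_{\mathbb{P}^{n-1}}(1)$, and since $\Lie(A_0)|_s$ is a constant line one obtains
\[
\bm{\omega}^{-1}\big|_{\mathrm{Exc}_s}\;\cong\;\co_{\mathbb{P}^{n-1}}(1).
\]
Thus $[\bm{\omega}]$ equals the nontrivial generator of $\mathrm{Cl}(\widehat{\co}_{\mathcal{S}_\Pap,s})\cong\Z/2\Z$, which explains why $\bm{\omega}$ itself fails to extend across the singular locus, while $[\bm{\omega}^2]=0$ in the class group. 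Hence $\pure_\Pap$ is locally free at each $s$, completing the proof. The delicate identification of Pappas's singularity (together with its class group and the behavior of the hyperplane class under the Kr\"amer resolution) is the main technical input; once this is known, everything else follows formally from the reflexive extension argument and the moduli description of $\bm{\omega}$.
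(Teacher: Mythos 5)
Your uniqueness argument is exactly the paper's: $\mathcal{S}_\Pap$ is normal and $\mathrm{Sing}$ has codimension $\ge 2$, so a line bundle on the complement admits at most one extension. For existence, however, you take a genuinely different route. The paper never touches the local structure of the singularity: it writes down the global line bundle
\[
\pure_\Pap^{-1} = \Lie(A_0)^{\otimes 2}\otimes \underline{\Hom}\Bigl(\bigwedge\nolimits^{n} H_1^{\dR}(A)/\overline{\epsilon}H_1^{\dR}(A),\ \bigwedge\nolimits^{n}\Lie(A)\Bigr)
\]
on all of $\mathcal{S}_\Pap$, together with an explicit morphism $P_{a\otimes b}$ built from the polarization pairing and Pappas's wedge condition, which is shown to descend to $(\Lie(A)/\mathcal{F}_A)^{\otimes 2}\to \mathcal{P}_\Kra$ and checked fiber-by-fiber to be an isomorphism off the exceptional locus (Lemma \ref{lem:bundle swindle}). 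Your reflexive pushforward from the nonsingular locus necessarily produces the same bundle by uniqueness, but the paper's explicit comparison map $\sigma:\bm{\omega}^{-2}\to\pure_\Kra^{-1}$ is not a luxury: it is reused in the proofs of Theorems \ref{thm:pure divisor} and \ref{thm:cartier error}, where its order of vanishing along $\mathrm{Exc}_s$ is what gets computed. So the two approaches buy different things: yours isolates the conceptual reason ($[\bm{\omega}]$ generates the local class group, so $[\bm{\omega}^2]$ dies), while the paper's produces the map needed downstream.

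The gap is that the entire mathematical content of your existence proof sits in the assertion that $\widehat{\co}_{\mathcal{S}_\Pap,s}$ is the completed cone over the quadratic Veronese of $\mathbb{P}^{n-1}$ --- equivalently that $\co(\mathrm{Exc}_s)|_{\mathrm{Exc}_s}\cong\co_{\mathbb{P}^{n-1}}(-2)$, equivalently that $\mathrm{Cl}(\widehat{\co}_{\mathcal{S}_\Pap,s})\cong\Z/2\Z$ with generator restricting to the hyperplane class on the resolution --- and this is asserted, not proved. It is not a quotable statement: Kr\"amer proves regularity of the blow-up and identifies the exceptional fiber as $\mathbb{P}^{n-1}$, but neither Pappas nor Kr\"amer records the degree of its normal bundle or the local class group. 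Note also that you cannot extract $N_{\mathrm{Exc}_s}\cong\co(-2)$ from the relation $\bm{\omega}^2\cong\pure_\Kra\otimes\co(\mathrm{Exc})$ of Theorem \ref{thm:cartier error}, since that relation presupposes the theorem you are proving; you would have to compute the tangent cone (or at least $\dim\mathfrak{m}_s/\mathfrak{m}_s^2$) directly from the local model, a computation of roughly the same weight as Lemma \ref{lem:bundle swindle}. Your computation $\bm{\omega}^{-1}|_{\mathrm{Exc}_s}\cong\co_{\mathbb{P}^{n-1}}(1)$ is correct and cleanly argued (Kr\"amer's condition is vacuous where $\delta\cdot\Lie(A)=0$, so $\mathcal{F}_A$ ranges over all hyperplanes and $\Lie(A)/\mathcal{F}_A$ is the universal quotient line), and once the normal bundle is pinned down the bookkeeping via $\mathrm{Pic}$ of the formal neighborhood of $\mathrm{Exc}_s$ is standard; but as written, the proof defers its only hard step to an unverified identification.
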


\begin{proof}
Let $(A_0,A)$ be the universal object over $\mathcal{S}_\Pap$, and recall the short  exact sequence 
\[
0 \to \Fil^0 H_1^\dR(A) \to H^\dR_1(A) \map{q} \Lie(A) \to 0
\]
of vector bundles on $\mathcal{S}_\Pap$.    As  $H^\dR_1(A)$ is a locally free  $\co_\kk \otimes_\Z \co_{\mathcal{S}_\Pap}$-module of rank $n$,
the quotient  $H^\dR_1(A)/\overline{\epsilon}H^\dR_1(A)$ is a rank $n$  vector bundle.

Define a line bundle  
\[
\mathcal{P}_\Pap =  \underline{\Hom} \Big(   
\bigwedge\nolimits^n H^\dR_1(A) / \overline{\epsilon} H^\dR_1(A) , \bigwedge\nolimits^n \Lie(A) \Big)
\]
on $\mathcal{S}_\Pap$, and denote by $\mathcal{P}_\Kra$ its pullback  via $\mathcal{S}_\Kra \to \mathcal{S}_\Pap$.
Let  
\[
\psi : H^\dR_1(A) \otimes  H^\dR_1(A) \to \co_{\mathcal{S}_\Pap}
\]
be the alternating pairing induced by the principal polarization on $A$.
If $a$ and $b$ are local sections of $H^\dR_1(A)$, define a local section $P_{a\otimes b}$ of $\mathcal{P}_\Pap$ by
\[
P_{a\otimes b} ( e_1 \wedge \cdots \wedge e_n ) = 
\sum_{k=1}^n (-1)^{k+1} \cdot \psi( \overline{\epsilon} a , e_k ) \cdot q( \overline{\epsilon} b) \wedge
 \underbrace{ q(e_1) \wedge \cdots \wedge q(e_n) }_{ \mathrm{omit}\, q(e_k) } .
\]

\begin{remark}
To see that $P_{a\otimes b}$ is well-defined, one must check that modifying any $e_k$ by a section of $\overline{\epsilon} H^\dR_1(A)$ leaves  the right hand side  unchanged.
This is an easy  consequence of the vanishing of
\[
\bigwedge\nolimits^2 \overline{\epsilon} : \bigwedge\nolimits^2 \Lie(A) \to \bigwedge\nolimits^2 \Lie(A)
\]
imposed in the moduli problem defining $\mathcal{S}_\Pap$.  
\end{remark}

\begin{lemma}\label{lem:bundle swindle}
The morphism 
\begin{equation}\label{det map}
P : H^\dR_1(A) \otimes H^\dR_1(A) \to \mathcal{P}_\Pap
\end{equation}
defined by $a\otimes b \mapsto P_{a\otimes b}$  factors through a morphism
\[
P : \Lie(A) \otimes \Lie(A) \to \mathcal{P}_\Pap.
\]
 After pullback to $\mathcal{S}_\Kra$  there is a further factorization 
\begin{equation}\label{factored swindle}
P :   \Lie(A)/\mathcal{F}_A  \otimes \Lie(A)/\mathcal{F}_A  \to \mathcal{P}_\Kra,
\end{equation}
and this map  becomes  an isomorphism after restriction to $\mathcal{S}_\Kra \smallsetminus \mathrm{Exc}$ .
\end{lemma}

\begin{proof}
Let $a$ and $b$ be local sections of $H^\dR_1(A)$.

Assume first that  $a$ is contained in $\Fil^0 H_1^\dR(A)$. As $\Fil^0 H_1^\dR(A)$ is isotropic under the pairing $\psi$, 
  $P_{a\otimes b}$ factors through a map
\[
\bigwedge\nolimits^n \Lie(A) / \overline{\epsilon} \Lie(A) \to \bigwedge\nolimits^n \Lie(A).
\]
In the generic fiber of $\mathcal{S}_\Pap$, the sheaf $\Lie(A) / \overline{\epsilon} \Lie(A)$ is a vector bundle of rank $n-1$.  
This proves that $P_{ a\otimes b}$ is trivial over the generic fiber. As $P_{a\otimes b}$ is a morphism of vector bundles on a  flat $\co_\kk$-stack, we deduce that $P_{a\otimes b}=0$ identically on $\mathcal{S}_\Pap$.

If instead $b$ is contained in  $\Fil^0 H_1^\dR(A)$ then  $q(\overline{\epsilon} b)=0$, and  again   $P_{a\otimes b}=0$.  
These calculations prove that $P$ factors through $\Lie(A) \otimes \Lie(A)$.

Now pullback to $\mathcal{S}_\Kra$.  We need to check that $P_{a\otimes b}$ vanishes if either of $a$ or $b$ lies in $\mathcal{F}_A$.  Once again it suffices to check this in the generic fiber, where it is clear from 
\begin{equation}\label{nonsingular subsheaf}
\mathcal{F}_A= \mathrm{ker}(\overline{\epsilon} : \Lie(A) \to \Lie(A)).
\end{equation}

Over $\mathcal{S}_\Kra$ we now have a factorization  (\ref{factored swindle}), and it only remains to check that its restriction  to  (\ref{nonsingular locus}) is an isomorphism.  For this, it suffices to verify that  (\ref{factored swindle}) is surjective on the fiber at any geometric point 
\[
s=  \Spec(\F) \to \mathcal{S}_\Kra \smallsetminus \mathrm{Exc}.
\]

First suppose that $\mathrm{char}(\F)$ is prime to $D$.  In this case $\epsilon, \overline{\epsilon} \in \co_\kk\otimes_\Z \F$ are (up to scaling by $\F^\times$) orthogonal idempotents, $\mathcal{F}_{A_s} = \epsilon\Lie(A_s)$, and we may choose an $\co_\kk \otimes_\Z \F$-basis $e_1,\ldots, e_n\in H^\dR_1(A_s)$ in such a way that 
\[
\epsilon e_1 ,\overline{\epsilon} e_2,\ldots, \overline{\epsilon} e_n \in \Fil^0 H_1^\dR (A_s)
\] 
and
\[
q(\overline{\epsilon} e_1) , q(\epsilon e_2) , \ldots, q(\epsilon e_n) \in \Lie (A_s)
\]
are $\F$-bases.  This implies that 
\[
P_{e_1 \otimes e_1}(e_1\wedge \cdots \wedge e_n) = 
\psi(\overline{\epsilon} e_1, \epsilon e_1) \cdot q(\overline{\epsilon} e_1)\wedge q(\epsilon e_2) \wedge \cdots \wedge q(\epsilon e_n) \neq 0,
\]
and so 
\[
P_{e_1\otimes e_1} \in \Hom\big( \bigwedge\nolimits^n H^\dR_1(A_s) /\overline{\epsilon} H^\dR_1(A_s) , \bigwedge\nolimits^n \Lie(A_s) \big)
\]
is a generator.  Thus $P$ is surjective in the fiber at $z$.

Now suppose that $\mathrm{char}(\F)$ divides $D$.   In this case  there is an isomorphism
\[
\F[x]/(x^2) \map{x\mapsto \epsilon = \overline{\epsilon}} \co_\kk \otimes_\Z \F.
\]
By Theorem \ref{thm:integral comparison} the relation (\ref{nonsingular subsheaf}) holds in an \'etale neighborhood of $s$, and it follows that 
we may choose an $\co_\kk\otimes_\Z \F$-basis  $e_1,\ldots, e_n\in H^\dR_1(A_s)$ in such a way that 
\[
 e_2 , \epsilon e_2 , \epsilon e_3,\ldots, \epsilon e_n \in \Fil^0 H_1^\dR (A_s)
\] 
and
\[
q( e_1) , q(\epsilon e_1) , q(e_3)  \ldots, q( e_n) \in \Lie (A_s)
\]
are $\F$-bases.  This implies that 
\[
P_{e_1 \otimes e_1}(e_1\wedge \cdots \wedge e_n) = 
\psi( \epsilon e_1,  e_2) \cdot q( \epsilon e_1)\wedge q(e_1) \wedge q( e_3) \wedge \cdots \wedge q( e_n) \neq 0,
\]
and so, as above,  $P$ is surjective in the fiber at $z$.
\end{proof}

We now complete the proof of Theorem \ref{thm:weight two nonsingular}.
To prove the existence part of the claim,  we define  $\pure_\Pap$ by
\[
\pure_\Pap^{-1} =  \Lie(A_0)^{\otimes 2} \otimes \mathcal{P}_\Pap,
\]
and let $\pure_\Kra$ be its pullback via $\mathcal{S}_\Kra\to \mathcal{S}_\Pap$.
 Tensoring both sides of (\ref{factored swindle}) with $\Lie(A_0)^{\otimes 2}$ defines a morphism
\[
\bm{\omega}^{-2}  \to  \pure_\Kra^{-1},
\]
whose restriction to $\mathcal{S}_\Kra \smallsetminus \mathrm{Exc}$ is an isomorphism.
In particular  $\bm{\omega}^2$  and  $\pure_\Pap$  are isomorphic over (\ref{nonsingular locus}).

The uniqueness of $\pure_\Pap$ is clear: as $\mathrm{Sing} \subset \mathcal{S}_\Pap$ is a codimension $\ge 2$ closed substack of a normal stack, any line bundle on the complement of $\mathrm{Sing}$ admits at most one extension to all of $\mathcal{S}_\Pap$. 

 \end{proof}





\subsection{Special divisors}
\label{ss:special divisors}


Suppose $S$ is a connected $\co_\kk$-scheme, and
\[
(A_0,A) \in \mathcal{S}_\Pap (S).
\]
Imitating the construction of (\ref{hom hermitian}), there is a positive definite hermitian form  on $\Hom_{\co_\kk}(A_0,A)$  defined by
\begin{equation}\label{moduli hom hermitian}
\langle x_1,x_2\rangle = x_2^\vee \circ x_1 \in \End_{\co_\kk}(A_0) \iso \co_\kk,
\end{equation}
where 
\[
\Hom_{\co_\kk}(A_0,A) \map{x\mapsto x^\vee} \Hom_{\co_\kk}(A,A_0)
\]
is the $\co_\kk$-conjugate-linear isomorphism induced by the principal polarizations on $A_0$ and $A$.

For any positive $m\in \Z$, define the $\co_\kk$-stack $\mathcal{Z}_\Pap(m)$ as the moduli stack assigning to a connected $\co_\kk$-scheme $S$ the groupoid of triples $(A_0,A,x)$, where
\begin{itemize}
\item $(A_0,A) \in \mathcal{S}_\Pap(S)$,
\item $x\in \Hom_{\co_\kk} (A_0 , A )$ satisfies $\langle x,x\rangle = m$.
\end{itemize}
Define a stack  $\mathcal{Z}_\Kra(m)$ in exactly the same way, but replacing $\mathcal{S}_\Pap$ by $\mathcal{S}_\Kra$. Thus we obtain a cartesian diagram
\[
\xymatrix{
{  \mathcal{Z}_\Kra(m)  }  \ar[r] \ar[d]  & { \mathcal{S}_\Kra} \ar[d]   \\
{  \mathcal{Z}_\Pap(m)  }  \ar[r]  & { \mathcal{S}_\Pap ,}
}
\]
in which the horizontal arrows are relatively representable, finite, and unramified.

Each $\mathcal{Z}_\Kra(m)$ is, \'etale locally on $\mathcal{S}_\Kra$, a disjoint union of Cartier divisors.  More precisely, around any geometric point of $\mathcal{S}_\Kra$ one can find an \'etale neighborhood $U$ with the property that the morphism $\mathcal{Z}_\Kra(m)_U \to U$ restricts to a closed immersion on every connected component $Z \subset \mathcal{Z}_\Kra(m)_U$, and $Z \subset U$ is  defined locally by one equation; this is \cite[Proposition 3.2.3]{Ho2}, but a cleaner argument (working on the Rapoport-Zink space corresponding to $\mathcal{S}_\Kra$) can be found in \cite[Proposition 4.3]{Ho3}.
Summing over all connected components $Z$  allows us to view  $\mathcal{Z}_\Kra(m)_U$ as a Cartier divisor on $U$, and glueing as $U$ varies over an \'etale cover defines a Cartier divisor on $\mathcal{S}_\Kra$, which we again denote by $\mathcal{Z}_\Kra(m)$.

\begin{remark}\label{rem:not cartier}
It follows from (\ref{nonsingular locus}) and the paragraph above  that   $\mathcal{Z}_\Pap(m)$ is locally defined by one equation away from the singular locus, and so defines a Cartier divisor on $\mathcal{S}_\Pap \smallsetminus \mathrm{Sing}$.
This  Cartier divisor does not extend to all of  $\mathcal{S}_\Pap$.  
\end{remark}

\begin{remark}\label{rem:divisor uniformization}
We can make the specal divisors more explicit in the complex fiber, as in \cite[Proposition 3.5]{KR2} or \cite[\S 3.8]{Ho1}.
Recall  from \S \ref{ss:shimura data}  that the $\Q$-vector space  $V=\Hom_\kk (W_0, W)$ carries a quadratic form. Using the description  
\[
 \mathcal{D}   \iso \big\{ z \in   \epsilon V(\C) :   [z,\overline{z}] <0    \big\} / \C^\times  \subset \mathbb{P}( \epsilon V(\C) )
 \]
of Remark \ref{rem:to so}, every $x\in V$ with $Q(x) >0$ determines an analytic divisor
  \[
 \mathcal{D}(x) = \{ z\in \mathcal{D} :  [ z,x] =0 \}.
 \]
A choice of $g\in G(\A_f)$ determines a connected component
\[
( G(\Q) \cap gKg^{-1} ) \backslash \mathcal{D} \map{ z\mapsto (z,g) }     G(\Q) \backslash \mathcal{D} \times G(\A_f) / K \iso   \mathcal{S}_\Kra(\C) ,
\]
and if we set \[ L = \Hom_{\co_\kk}(g\mathfrak{a}_0 , g\mathfrak{a} ) \subset V\]  
the restriction of $\mathcal{Z}_\Kra(m)(\C) \to \mathcal{S}_\Kra(\C)$ to this component is 
\[
( G(\Q) \cap gKg^{-1} ) \backslash  \bigsqcup_{ \substack{   x \in L   \\  Q(x) =m } } \mathcal{D} (x) 
\to
( G(\Q) \cap gKg^{-1} ) \backslash \mathcal{D} .
\]
\end{remark}

The following theorem, whose proof will occupy the remainder of \S \ref{ss:special divisors}, shows that $\mathcal{Z}_\Kra(m)$ is closely related to another Cartier divisor on $\mathcal{S}_\Kra$ that descends to $\mathcal{S}_\Pap$.    This result will be strengthened in Theorem \ref{thm:cartier error}.

\begin{theorem}\label{thm:pure divisor}
For every $m>0$ there is a unique Cartier divisor $\mathcal{Y}_\Pap(m)$ on $\mathcal{S}_\Pap$ whose restriction to
$\mathcal{S}_\Pap \smallsetminus \mathrm{Sing}$ agrees with  $2 \mathcal{Z}_\Pap(m)$.   
In particular its pullback $\mathcal{Y}_\Kra(m)$  via $\mathcal{S}_\Kra\to \mathcal{S}_\Pap$ agrees with $2 \mathcal{Z}_\Kra(m)$ over $\mathcal{S}_\Kra \smallsetminus \mathrm{Exc}$.
\end{theorem}

\begin{proof}
The map  $\mathcal{Z}_\Pap(m) \to \mathcal{S}_\Pap$ is finite, unramified, and relatively representable.  It follows that
every geometric point of  $\mathcal{S}_\Pap$ admits an \'etale neighborhood  $U\to \mathcal{S}_\Pap$ such that  $U$ is a scheme,  and the  morphism
\[
\mathcal{Z}_\Pap(m)_U \to U
\]
restricts to a closed immersion  on every connected component \[Z\subset \mathcal{Z}_\Pap(m)_U.\]   
We will construct a Cartier divisor on any such $U$, and then glue them together
as $U$ varies over an \'etale cover to obtain the divisor  $\mathcal{Y}_\Pap(m)$.

Fix $Z$ as above, let $\mathcal{I} \subset \co_U$ be its ideal sheaf,  and  let $Z'$ be the closed subscheme of $U$ defined by the ideal sheaf $\mathcal{I}^2$.  Thus we have closed immersions
\[
Z\subset Z' \subset U,
\] 
the first of which is a square-zero thickening.

By the very definition of $\mathcal{Z}_\Pap(m)$, along $Z$ there is a universal $\co_\kk$-linear map $x: A_{0 Z} \to A_Z$.  This map does not extend to 
a map $A_{0 Z'} \to A_{Z'}$, however, by deformation theory \cite[Chapter 2.1.6]{Lan} the induced  $\co_\kk$-linear  morphism of vector bundles
\[
x : H^\dR_1(A_{0Z} ) \to H^\dR_1(A_Z)
\]
admits a canonical extension  to 
\begin{equation}\label{crys realization}
x' : H^\dR_1( A_{0Z'}) \to H^\dR_1(A_{Z'}).
\end{equation}

Recalling the morphism (\ref{det map}),  define  $Y\subset Z'$ as the largest closed subscheme over which the composition
\begin{equation}\label{new obstruction}
H^\dR_1(A_{0Z'}) \otimes H^\dR_1(A_{0Z'}) \map{ x' \otimes x' }
H^\dR_1(A_{Z'} ) \otimes H^\dR_1(A_{Z'}) \map{P}  \mathcal{P}_\Pap|_{Z'}
\end{equation}
vanishes.

\begin{lemma}\label{lem:purified nonsingular}
If $U\to \mathcal{S}_\Pap$ factors through $\mathcal{S}_\Pap \smallsetminus \mathrm{Sing}$, then $Y=Z'$.
\end{lemma}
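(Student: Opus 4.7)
The plan is to exploit the assumption $U \subset \mathcal{S}_\Pap\smallsetminus\mathrm{Sing}$ (where Theorem~\ref{thm:integral comparison} gives $\mathcal{F}_A=\ker(\overline{\epsilon}:\Lie(A)\to\Lie(A))$), combined with the factorization of $P$ from Lemma~\ref{lem:bundle swindle} through $(\Lie(A)/\mathcal{F}_A)^{\otimes 2}$. It suffices to show that, after identifying $\mathcal{S}_\Pap\smallsetminus\mathrm{Sing}\simeq\mathcal{S}_\Kra\smallsetminus\mathrm{Exc}$, the composition
\[
H^{\dR}_1(A_{0Z'})\xrightarrow{x'} H^{\dR}_1(A_{Z'})\xrightarrow{q}\Lie(A_{Z'})\twoheadrightarrow\Lie(A_{Z'})/\mathcal{F}_{A,Z'}
\]
is the zero map; call this composition $\bar{x}'$. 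Applying $P$ then forces $P\circ(x'\otimes x')=0$ on $Z'$, hence $Z'\subseteq Y$, so $Y=Z'$.

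The key point is that $\bar{x}'$ vanishes after restriction from $Z'$ to $Z$. Indeed, on $Z$ the extension $x'$ is induced by the actual $\co_\kk$-linear homomorphism $x:A_{0Z}\to A_Z$, and by functoriality the composition $q\circ x'|_Z$ coincides with $\Lie(x)\circ q_0$, where $q_0:H^{\dR}_1(A_{0Z})\twoheadrightarrow \Lie(A_{0Z})$. Since $\Lie(A_{0Z})$ is an $\co_\kk\otimes\co_Z$-module on which $\co_\kk$ acts through the structure morphism (i.e.\ is annihilated by $\overline{\epsilon}$), $\co_\kk$-linearity of $x$ forces $\Lie(x)(\Lie(A_{0Z}))$ into $\ker(\overline{\epsilon}\mid\Lie(A_Z))=\mathcal{F}_{A,Z}$. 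Therefore $\bar{x}'|_Z=0$.

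Because $Z'\subset U$ is cut out by $\mathcal{I}^2$ while $Z$ is cut out by $\mathcal{I}$, a map of $\co_{Z'}$-modules that vanishes on $Z$ has image contained in $(\mathcal{I}/\mathcal{I}^2)\cdot\bigl(\Lie(A_{Z'})/\mathcal{F}_{A,Z'}\bigr)$. Consequently $\bar{x}'\otimes\bar{x}'$ takes values in $(\mathcal{I}/\mathcal{I}^2)^{2}\cdot\bigl(\Lie(A_{Z'})/\mathcal{F}_{A,Z'}\bigr)^{\otimes 2}$, and this is zero on $Z'$ since $\mathcal{I}^2=0$ there. Combining this with the factorization of $P$ through $(\Lie(A)/\mathcal{F}_A)^{\otimes 2}$ provided by Lemma~\ref{lem:bundle swindle} (valid on $U$ because of the identification \eqref{nonsingular locus}) yields the vanishing of \eqref{new obstruction} on all of $Z'$. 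The main subtlety to verify carefully is the functoriality statement $q_A\circ x=\Lie(x)\circ q_0$ that underlies step two—i.e.\ that the crystalline/de Rham extension $x'$ on $Z'$ really does recover $\Lie(x)$ on $Z$ after composition with $q$—but this is built into the Grothendieck--Messing deformation theory used to produce $x'$ in \cite[Chapter 2.1.6]{Lan}, so no new input is required.
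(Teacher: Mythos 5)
Your proof is correct and follows essentially the same route as the paper's: factor $P$ through $(\Lie(A)/\mathcal{F}_A)^{\otimes 2}$ via Lemma~\ref{lem:bundle swindle}, show the composite $\overline{x}'$ into $\Lie(A_{Z'})/\mathcal{F}_{A_{Z'}}$ vanishes modulo $\mathcal{I}$, and conclude that its tensor square vanishes modulo $\mathcal{I}^2$, i.e.\ on $Z'$. The only (harmless) difference is in how the vanishing of $\overline{x}'$ on $Z$ is obtained: the paper invokes the deformation-theoretic characterization of $Z$ as the locus where $x'$ respects Hodge filtrations together with Remark~\ref{rem:simple hodge}, whereas you deduce it directly from functoriality of $q\circ x'|_Z=\Lie(x)\circ q_0$ and the facts that $\overline{\epsilon}\,\Lie(A_0)=0$ and $\mathcal{F}_A=\ker(\overline{\epsilon})$ on the nonsingular locus.
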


\begin{proof}
Lemma \ref{lem:bundle swindle} provides us with a commutative diagram
 \[
\xymatrix{
{  H^\dR_1(A_{0 Z'})^{ \otimes  2} }  \ar[rr]^{  x' \otimes x' } \ar[drrrr]_{ (\ref{new obstruction}) }   &  & {  H^\dR_1(A_{ Z'})^{ \otimes  2} } \ar[rr]^{q\otimes q}  & &
   {   \big( \Lie(A_{Z'}) / \mathcal{F}_{A_{Z'} } \big)^{\otimes 2}   }  \ar[d]^{\iso}  \\
& &   & &   {    \mathcal{P}_\Pap|_{Z'}  , }  
}
\]
where   \[\mathcal{F}_{ A_{Z'} } = \mathrm{ker} ( \overline{\epsilon} : \Lie(A_{Z'}) \to \Lie(A_{Z'}))\] as in Theorem \ref{thm:integral comparison}.

By deformation theory, $Z\subset Z'$ is characterized as the largest closed subscheme over which (\ref{crys realization}) respects the Hodge filtrations.
Using Remark \ref{rem:simple hodge}, it is easily seen that $Z\subset Z'$ can also be characterized as the largest closed subscheme over which  
\[
H_1(A_{0 Z'}) \map{ q\circ x' }  \Lie(A_{Z'})/ \mathcal{F}_{A_{Z'}} 
\]
vanishes identically.   As $Z\subset Z'$ is a square zero thickening, it  follows first  that the horizontal composition in the above diagram vanishes identically, and then that (\ref{new obstruction}) vanishes identically.  In other words $Y=Z'$.
\end{proof}

\begin{lemma}\label{lem:pure cartier}
The closed subscheme $Y\subset U$ is defined locally by one equation.  
\end{lemma}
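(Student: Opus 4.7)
The strategy hinges on exploiting the $\co_\kk$-equivariance of the map $P$ together with the fact that $H_1^\dR(A_0)$ is locally free of rank one over $\co_\kk\otimes_\Z \co_{Z'}$. The key identity
\[
\overline{\epsilon}\cdot\pi \;=\; i_S(\overline{\pi})\cdot\overline{\epsilon} \qquad \text{in } \co_\kk\otimes \co_S
\]
is immediate from $\overline{\epsilon}\epsilon=0$ together with the characteristic polynomial $\overline{\pi}^2-\mathrm{Tr}(\pi)\overline{\pi}+N(\pi)=0$.  Combined with the $\co_\kk$-linearity of $x'$ and inspection of the formula defining $P$, this yields
\[
P_{\pi a\otimes b} \;=\; i_S(\overline{\pi})\,P_{a\otimes b} \;=\; P_{a\otimes \pi b}.
\]
Hence the composition \eqref{new obstruction} factors through the rank-one $\co_\kk\otimes \co_{Z'}$-module $H_1^\dR(A_{0Z'}) \otimes_{\co_\kk\otimes \co_{Z'}} H_1^\dR(A_{0Z'})$ (the target carries its $\co_\kk$-structure via the conjugate structure map, so is automatically rank one over $\co_{Z'}$). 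For any local $\co_\kk\otimes \co_{Z'}$-generator $\lambda$ of $H_1^\dR(A_{0Z'})$, the entire composition is controlled by the single section $P_{x'(\lambda)\otimes x'(\lambda)}$ of the line bundle $\mathcal{P}_\Pap|_{Z'}$.

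Next I verify that this section vanishes on $Z$.  There the morphism $x\colon A_{0Z}\to A_Z$ of abelian schemes has $\co_\kk$-linear derivative $x_*$, and since $\overline{\epsilon}$ annihilates $\Lie(A_{0Z})$ by the moduli condition for $\mathcal{M}_{(1,0)}$, we have $x_*(\Lie(A_{0Z}))\subset \ker(\overline{\epsilon}\colon\Lie(A_Z)\to \Lie(A_Z))$; thus $q(\overline{\epsilon}\,x(\lambda))=0$, and the formula for $P$ yields $P_{x(\lambda)\otimes x(\lambda)}=0$ on $Z$.  Consequently $P_{x'(\lambda)\otimes x'(\lambda)}\in \mathcal{I}\cdot \mathcal{P}_\Pap|_{Z'}$.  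To promote this to a single equation on $U$, I would choose any $\co_\kk\otimes \co_U$-linear lift $\tilde x\colon H_1^\dR(A_0)|_U\to H_1^\dR(A)|_U$ of the crystalline lift $x'$ (possible Zariski-locally by projectivity of the source), and consider the section $P_{\tilde x(\lambda)\otimes \tilde x(\lambda)}$ of $\mathcal{P}_\Pap|_U$.  It vanishes on $Z$ by the same argument, and Zariski-local trivialization of $\mathcal{P}_\Pap$ extracts an element $\tilde s\in \mathcal{I}\subset \co_U$ whose image in $\mathcal{I}/\mathcal{I}^2$ generates the ideal of $Y$ in $Z'$.  Hence $\mathcal{I}_Y = \mathcal{I}^2 + (\tilde s)$ in $\co_U$.

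The remaining task, which is the main obstacle, is to verify that $\mathcal{I}_Y$ itself is locally principal in $\co_U$.  At points of $U$ lying over the nonsingular locus this is automatic: Lemma~\ref{lem:purified nonsingular} forces $\tilde s\in\mathcal{I}^2$, while the isomorphism~\eqref{nonsingular locus} makes $\mathcal{I}$ (and hence $\mathcal{I}^2$) principal, so $\mathcal{I}_Y=\mathcal{I}^2$ is principal.  The genuine difficulty lies over $\mathrm{Sing}$, where $\mathcal{I}$ may require two generators and $\epsilon=\overline{\epsilon}$ acts nilpotently on $\Lie(A)$; there one must carry out a local computation in the explicit Pappas local model of $\mathcal{S}_\Pap$, using the specific form of $\tilde s$ coming from $P$ together with the Pappas wedge condition, to verify that $\mathcal{I}^2$ is absorbed into a principal ideal realizing $\mathcal{I}_Y$---the analogue of the relation $(x,z)^2=x\cdot \mathfrak{m}\subset(x)$ in the affine cone $\Spec\,\co_S[x,y,z]/(xy-z^2)$.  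Once this local algebra step is in hand, the generator of $\mathcal{I}_Y$ is automatically a non-zero-divisor (since $Y$ has pure codimension one by Lemma~\ref{lem:purified nonsingular} and the flatness of $\mathcal{S}_\Pap$ over $\co_\kk$), and $Y\subset U$ is cut out locally by a single equation.
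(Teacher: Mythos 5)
Your proposal reduces correctly to the statement that the ideal sheaf of $Y$ has the form $\mathcal{I}_Y=\mathcal{I}^2+(\tilde s)$, but it stops exactly where the real difficulty begins: at points of $\mathrm{Sing}$ you explicitly defer the principality of $\mathcal{I}_Y$ to an unexecuted ``local computation in the Pappas local model.'' That step is not a routine verification --- it is the entire content of the lemma, and the data you have assembled is genuinely insufficient to close it. Knowing only that $\mathcal{I}_Y=(\tilde s)+\mathcal{I}^2$, a Nakayama argument would require $\mathcal{I}^2\subset \mathfrak{m}\,\mathcal{I}_Y$, which you cannot extract from $\mathcal{I}^2\subset \mathcal{I}_Y\subset\mathcal{I}$ alone; and $\mathcal{I}$ itself is not principal at the singular points, so the nonsingular-locus argument does not extend.

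The paper closes this gap by a different device: it passes to the infinitesimal neighborhood $\bm{U}$ of a point (so that $p$ is nilpotent), and then to the \emph{cube}-zero thickening $\bm{Z}\subset\bm{Z}''$ cut out by $\mathcal{I}^3$. Because $p\mid D$ forces $p>2$, this thickening carries divided powers, so Grothendieck--Messing theory extends $x'$ to a map $x''$ over $\bm{Z}''$ and produces a locus $\bm{Y}'\subset\bm{Z}''$ cut out by \emph{one equation modulo $\mathcal{I}^3$}. A separate argument (the sub-lemma, using the auxiliary pairing $Q$ and the identity $P_{a\otimes b}(e_1\wedge\cdots\wedge e_n)=Q_{\overline{\epsilon}a\otimes\overline{\epsilon}b}(\cdots)$) shows $\bm{Y}'\subset\bm{Z}'$ and hence $\bm{Y}=\bm{Y}'$. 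Now the ideal satisfies $\bm{\mathcal{J}}=(f)+\bm{\mathcal{I}}^3$ together with $\bm{\mathcal{I}}^2\subset\bm{\mathcal{J}}$, whence $\bm{\mathcal{I}}^3\subset\bm{\mathcal{I}}\bm{\mathcal{J}}\subset\mathfrak{m}\bm{\mathcal{J}}$, and Nakayama applies. The upgrade from ``one equation mod $\mathcal{I}^2$'' to ``one equation mod $\mathcal{I}^3$'' is precisely the missing idea in your argument; without it (or a genuinely completed local-model computation in its place) the proof is incomplete.
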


\begin{proof}
Fix a closed point $y\in Y$ of characteristic $p$, let $\co_{U,y}$ be the local ring of $U$ at $y$, and let $\mathfrak{m} \subset \co_{U,y}$ be the maximal ideal.  For a fixed $k>0$,  let 
\[
\bm{U} =\Spec( \co_{U,y} / \mathfrak{m}^k ) \subset U
\]
be the $k^\mathrm{th}$-order infinitesimal neighborhood of $y$ in $U$.  The point of passing to the infinitesimal neighborhood is that $p$ is nilpotent in $\co_{\bm{U}}$, and so we may apply Grothendieck-Messing deformation theory.

By construction we have closed immersions
\[
\xymatrix{
& { Y }  \ar[d] \\
{  Z  }   \ar[r]  &   {  Z' } \ar[r]  & { U }.  
}
\]
Applying the fiber product $\times_U\bm{U}$ throughout the diagram, we obtain closed immersions
\[
\xymatrix{
& { \bm{Y} }  \ar[d] \\
{  \bm{Z}  }   \ar[r]  &   {  \bm{Z}' } \ar[r]  & { \bm{U} }
}
\]
of Artinian schemes.  As $k$ is arbitrary, it suffices to prove that $\bm{Y}\subset \bm{U}$ is defined by one equation.

First suppose that $p\nmid D$.  In this case $\bm{U} \to U \to \mathcal{S}_\Pap$  factors through the nonsingular locus (\ref{nonsingular locus}).  It follows from Remark \ref{rem:not cartier} that  $\bm{Z}\subset \bm{U}$ is defined by one equation, and $\bm{Z}'$ is defined by the square of that equation.  By Lemma \ref{lem:purified nonsingular}, $\bm{Y}\subset \bm{U}$ is also defined by one equation.

For the remainder of the proof we assume that  $p\mid D$.  In particular $p>2$. 
Consider the closed subscheme $Z''\hookrightarrow U$ with ideal sheaf $\mathcal{I}^3$, so that we have closed immersions
$
Z\subset Z' \subset Z'' \subset U.
$
Taking the fiber product with $\bm{U}$, the above diagram extends to 
\[
\xymatrix{
& { \bm{Y} }  \ar[d] \\
{  \bm{Z}  }   \ar[r]  &   {  \bm{Z}' } \ar[r]  &  {  \bm{Z}'' } \ar[r]  & { \bm{U} }.
}
\]

As $p>2$, the cube zero thickening $\bm{Z} \subset \bm{Z}''$ admits  divided powers extending the trivial divided powers on $\bm{Z} \subset \bm{Z}'$.  
Therefore, by Grothendieck-Messing theory,   the restriction of (\ref{crys realization})  to 
\[
x' : H^\dR_1( A_{0\bm{Z}'}) \to H^\dR_1(A_{\bm{Z}'}).
\]
 admits a canonical extension to
\[
x'' : H^\dR_1( A_{0\bm{Z}''}) \to H^\dR_1(A_{\bm{Z}''}).
\]
Define $\bm{Y}' \subset  \bm{Z}''$ as the largest closed subscheme over which 
\begin{equation}\label{over obstructed}
H^\dR_1(A_{0\bm{Z}''}) \otimes H^\dR_1(A_{0\bm{Z}''}) \map{ x'' \otimes x'' }
H^\dR_1(A_{\bm{Z}''} ) \otimes H^\dR_1(A_{\bm{Z}''}) \map{P}  \mathcal{P}_\Pap|_{\bm{Z}''}
\end{equation}
vanishes identically, so that there are closed immersions
\[
\xymatrix{
& { \bm{Y} }  \ar[d] \ar[r]& { \bm{Y}' }  \ar[d] \\
{  \bm{Z}  }   \ar[r]  &   {  \bm{Z}' } \ar[r]  &  {  \bm{Z}'' } \ar[r]  & { \bm{U} }.
}
\]

We pause the proof of Lemma \ref{lem:pure cartier} for a sub-lemma.

\begin{lemma}
We have $\bm{Y}=\bm{Y}'$.  
\end{lemma}

\begin{proof}
As in the proof of Lemma \ref{lem:purified nonsingular}, we may characterize $\bm{Z} \subset \bm{Z}''$ as the largest closed subscheme along which $x''$ respects the Hodge filtrations.  Equivalently, by  Remark \ref{rem:simple hodge}, $\bm{Z} \subset \bm{Z}''$ is the largest closed subscheme over which the composition 
\[
H^\dR_1(A_{0 \bm{Z}''}) \map{ x''  \circ \overline{\epsilon} } H^\dR_1(A_{ \bm{Z}''})  \map{q}   \Lie(A_{\bm{Z}''})
\]
vanishes identically.   This implies that $\bm{Z}'\subset \bm{Z}''$ is the  largest closed subscheme over  which 
\begin{equation}\label{over obstructed 2}
 H^\dR_1(A_{0 \bm{Z}''}) ^{\otimes 2}  \map{ ( x'' \circ \overline{\epsilon})^{\otimes 2 } } 
 H^\dR_1(A_{ \bm{Z}''}) ^{\otimes 2}  \map{q^{\otimes 2}}  
 \Lie(A_{\bm{Z}''})^{\otimes 2}
\end{equation}
vanishes identically.

It follows directly from the definitions that  $\bm{Y}= \bm{Y}' \cap  \bm{Z}'$,  and hence it suffices to show that $\bm{Y}' \subset \bm{Z}'$.  
In other words,  it suffices to show that the vanishing of (\ref{over obstructed}) implies the vanishing of (\ref{over obstructed 2}).

 For local sections $a$ and $b$ of $H_1(A_{\bm{Z}''})$, define 
\[
Q_{a\otimes b} :  \Fil^0 H_1^\dR(A_{\bm{Z}''})  \otimes \bigwedge\nolimits^{n-1} \Lie(A_{\bm{Z}''}) \to  \bigwedge\nolimits ^n \Lie(A_{\bm{Z}''})
\]
by
\[
Q_{a\otimes b} ( e_1\otimes q(e_2)\wedge \cdots \wedge q(e_n) ) = \psi(a,e_1)\cdot q(b) \wedge q(e_2) \wedge \cdots \wedge q(e_n).
\]
It is clear that $Q_{a\otimes b}$  depends only on the images of $a$ and $b$ in $\Lie(A_{\bm{Z}''})$, and that this construction defines an isomorphism
\begin{equation}\label{Q iso}
 \Lie(A_{\bm{Z}''})^{\otimes 2} \map{Q} \underline{\Hom} \Big(
 \Fil^0 H_1^\dR(A_{\bm{Z}''})  \otimes \bigwedge\nolimits^{n-1} \Lie(A_{\bm{Z}''}) ,   \bigwedge\nolimits ^n \Lie(A_{\bm{Z}''})
\Big).
\end{equation}
It is related to the  map
\[
 \Lie(A_{\bm{Z}''})^{\otimes 2} \map{P} \underline{\Hom}\Big(
\bigwedge\nolimits^n H^\dR_1(A_{\bm{Z}''}) / \overline{\epsilon} H^\dR_1(A_{\bm{Z}''}) , \bigwedge\nolimits^n \Lie(A_{\bm{Z}''})
\Big)
\]
of Lemma \ref{lem:bundle swindle} by
\[
P_{a\otimes b}(e_1\wedge \cdots \wedge e_n) = Q_{\overline{\epsilon}a \otimes \overline{\epsilon} b } ( e_1 \otimes q(e_2)\wedge\cdots \wedge q(e_n) )
\]
for any local section $e_1\otimes e_2\otimes \cdots \otimes e_n$ of 
\[
\Fil^0 H_1^\dR(A_{\bm{Z}''}) \otimes H^\dR_1(A_{\bm{Z}''}) \otimes \cdots \otimes H^\dR_1(A_{\bm{Z}''}).
\]

Putting everything together, if (\ref{over obstructed}) vanishes,  then 
$
P_{ x''(a_0) \otimes x''(b_0) } =0
$
for all local sections $a_0$ and $b_0$ of $H^\dR_1(A_{0\bm{Z}''})$.  Therefore 
\[
Q_{ x''( \overline{\epsilon} a_0) \otimes x''( \overline{\epsilon} b_0) } =0
\]
for all local sections $a_0$ and  $b_0$, which implies, as (\ref{Q iso}) is an isomorphism, that (\ref{over obstructed 2}) vanishes.
This proves that  $\bm{Y}' \subset \bm{Z}'$, and hence $\bm{Y}=\bm{Y}'$.
\end{proof}

Returning to the proof of Lemma \ref{lem:pure cartier}, the map (\ref{over obstructed}), whose vanishing defines $\bm{Y}' \subset \bm{Z}''$, factors through a morphism of line bundles
\[
H^\dR_1(A_{0\bm{Z}''}) / \epsilon H^\dR_1(A_{0\bm{Z}''}) \otimes H^\dR_1(A_{0\bm{Z}''}) / \epsilon H^\dR_1(A_{0\bm{Z}''}) \to \mathcal{P}_\Pap|_{\bm{Z}''},
\]
 and hence $\bm{Y}=\bm{Y}'$ is defined inside of $\bm{Z}''$ locally by one equation.
In other words,  if we denote by $\bm{\mathcal{I}} \subset \co_{\bm{U}}$ and $\bm{\mathcal{J}} \subset \co_{\bm{U}}$
the ideal sheaves of $\bm{Z}\subset \bm{U}$  and $\bm{Y} \subset \bm{U}$, respectively,  
then  $ \bm{\mathcal{I}}^3$ is the ideal sheaf of $\bm{Z}''\subset \bm{U}$, and 
\[
\bm{\mathcal{J}}=(f)+ \bm{\mathcal{I}}^3
\]
 for some $f\in \co_{\bm{U}}$.   But  $\bm{Y}\subset  \bm{Z}'$ implies that $\bm{\mathcal{I}}^2 \subset \bm{\mathcal{J}}$, 
 and hence $\bm{\mathcal{I}}^3 \subset \bm{\mathcal{I}\mathcal{J}}$.   It follows that the image of $f$ under the composition
 \[
 \bm{\mathcal{J}} /  \bm{\mathcal{I}}^3 \to \bm{\mathcal{J}} / \bm{\mathcal{I}\mathcal{J}} \to \bm{\mathcal{J}} / \mathfrak{m} \bm{ \mathcal{J}}
 \]
is an $\co_{\bm{U}}$-module generator, and   $\bm{\mathcal{J}}$ is principal by  Nakayama's lemma.
\end{proof}

At last we can complete the proof of Theorem \ref{thm:pure divisor}.
For each connected component $Z\subset \mathcal{Z}_\Pap(m)_U$ we have now defined a closed subscheme $Y\subset Z'$.  
By Lemma \ref{lem:pure cartier} it is an effective  Cartier divisor,   and summing these Cartier divisors as $Z$ varies over all connected components yields an effective Cartier divisor $\mathcal{Y}_\Pap(m)_U$ on $U$.
Letting $U$ vary over an \'etale cover and applying \'etale descent  defines an effective Cartier divisor $\mathcal{Y}_\Pap(m)$ on  $\mathcal{S}_\Pap$.

The Cartier divisor $\mathcal{Y}_\Pap(m)$ just defined agrees with $2\mathcal{Z}_\Pap(m)$ on  $\mathcal{S}_\Pap \smallsetminus \mathrm{Sing}$.   This is clear from  Lemma \ref{lem:purified nonsingular} and the definition of $\mathcal{Y}_\Pap(m)$. 
The uniqueness claim follows from the normality of $\mathcal{S}_\Pap$, exactly as  in the proof of Theorem \ref{thm:weight two nonsingular}.
\end{proof}


\subsection{Pullbacks of Cartier divisors}
\label{ss:pure pullback}


After Theorem \ref{thm:weight two nonsingular} we have  two line bundles   $\pure_\Kra$ and $\bm{\omega}^2$ on $\mathcal{S}_\Kra$, 
which  agree over the complement of  the exceptional locus $\mathrm{Exc}$.  We wish to pin down more precisely the relation between them.

Similarly, after Theorem \ref{thm:pure divisor} we have Cartier divisors $\mathcal{Y}_\Kra(m)$ and $2  \mathcal{Z}_\Kra(m)$.  These agree on the complement of $\mathrm{Exc}$, and again we wish to pin down more precisely the relation between them.

Denote by $\pi_0(\mathrm{Sing})$ the set of connected components of the singular locus $\mathrm{Sing} \subset \mathcal{S}_\Pap$.
  For each  $s\in \pi_0(\mathrm{Sing})$  there is a corresponding irreducible effective Cartier divisor
\[
\mathrm{Exc}_s = \mathrm{Exc} \times_{ \mathcal{S}_\Pap } s \hookrightarrow \mathcal{S}_\Kra
\]
supported in a single characteristic dividing $D$.  These satisfy  
\[
\mathrm{Exc} = \bigsqcup_{ s\in \pi_0(\mathrm{Sing}) }\mathrm{Exc}_s.
\]

\begin{remark}
As  $\mathrm{Sing}$ is a reduced $0$-dimensional stack of finite type over  $\co_\kk / \mathfrak{d}$, 
each $s\in \pi_0(\mathrm{Sing})$ can be realized as the stack quotient   
\[
s\iso G_s\backslash \Spec(\F_s)
\]
 for a finite field $\F_s$ of characteristic  $p\mid D$ acted on by  a finite group $G_s$.  
\end{remark}

Fix a geometric point $\Spec(\F) \to s$, and set $p=\mathrm{char}(\F)$.    
By mild abuse of notation this geometric point will again be denoted simply by $s$.   
It determines a pair 
\begin{equation}\label{base deformation}
(A_{0,s} , A_s) \in \mathcal{S}_\Pap(\F),
\end{equation}
 and hence a positive definite hermitian $\co_\kk$-module
\[
L_s = \Hom_{\co_\kk}(A_{0,s} , A_s)
\]
as in (\ref{moduli hom hermitian}).  This hermitian lattice depends only on $s\in \pi_0(\mathrm{Sing})$, not on the choice of geometric point above it.

\begin{proposition}\label{prop:supersingular}
For each $s\in \pi_0(\mathrm{Sing})$ the abelian varieties $A_{0s}$ and $A_s$ are supersingular, and  there is an $\co_\kk$-linear isomorphism of $p$-divisible groups
\begin{equation}\label{p-div splitting}
A_s[p^\infty] \iso   \underbrace{ A_{0s}[p^\infty]   \times   \cdots \times   A_{0s}[p^\infty] }_{n\, \mathrm{ times}}
\end{equation}
identifying the polarization on the left with the product polarization on the right.  
Moreover, the hermitian $\co_\kk$-module $L_s$ is self-dual of rank $n$.
\end{proposition}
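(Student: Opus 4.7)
The plan is to establish the three assertions in order: first supersingularity of $A_{0s}$, then supersingularity of $A_s$ together with the isomorphism (\ref{p-div splitting}), and finally the rank and self-duality of $L_s$.

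For $A_{0s}$: it is an elliptic curve equipped with an $\co_\kk$-action in residue characteristic $p \mid D$. Since $p \mid D$ is precisely the condition that $p$ ramifies in $\kk$, the classical theory of CM elliptic curves forces $A_{0s}$ to be supersingular. Its $p$-divisible group $X_0 := A_{0s}[p^\infty]$ is therefore a connected formal $\co_\kk$-module of $\co_\kk$-height one, and as such is uniquely determined up to $\co_\kk$-linear isomorphism over $\F$.

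For $A_s$: the key input is Theorem~\ref{thm:pappas model}(2), which characterizes $s \in \mathrm{Sing}$ as the locus where $\delta \cdot \Lie(A_s) = 0$. I would translate this condition, together with Pappas's wedge condition and the determinant/signature condition from the moduli problem for $\mathcal{M}^\Pap_{(n-1,1)}$, into constraints on the covariant Dieudonn\'e module $M$ of $A_s[p^\infty]$. Since $\delta$ generates the maximal ideal of $\co_\kk\otimes_\Z \F$, its vanishing on $\Lie(A_s) = M/\Fil^0 M$ is extremely restrictive; combined with $\bigwedge^2\overline{\epsilon}=0$ on $\Lie(A_s)$ and the principal polarization, it forces the Hodge filtration to be stable under the full $\co_\kk \otimes \F$-action and produces an isoclinic slope-$\tfrac12$ decomposition of $M\otimes\Q$. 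Hence $A_s$ is supersingular and $A_s[p^\infty]$ is $\co_\kk$-isogenous to $X_0^n$. To upgrade the isogeny to the polarization-preserving isomorphism (\ref{p-div splitting}), I would invoke a rigidity statement for polarized $\co_\kk$-linear $p$-divisible groups at a singular point: among all such groups satisfying Pappas's signature and wedge conditions together with $\delta\Lie=0$, any two are $\co_\kk$-linearly isomorphic compatibly with polarizations. Since $X_0^n$ with the product polarization satisfies all these conditions, (\ref{p-div splitting}) follows.

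Given (\ref{p-div splitting}), the hermitian $\co_\kk$-module $L_s$ has rank $n$: since $\kk$ is its own centralizer in the quaternion algebra $\End(A_{0s})\otimes\Q$, we have $\End_{\co_\kk}(A_{0s})\otimes\Q = \kk$, and the $\co_\kk$-linear isogeny $A_s \sim A_{0s}^n$ gives $\mathrm{rank}_{\co_\kk} L_s = n$. For self-duality I would proceed prime-by-prime. At $\ell \neq p$, the moduli definition of $\mathcal{S}_\Pap$, together with Lemma~\ref{lem:jac herm}, supplies an isomorphism $L_s \otimes \Z_\ell \cong \Hom_{\co_\kk}(\mathfrak{a}_0,\mathfrak{a})\otimes \Z_\ell$ of hermitian lattices, and the target is self-dual because $\mathfrak{a}_0$ and $\mathfrak{a}$ were chosen self-dual. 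At $\ell = p$, the isomorphism (\ref{p-div splitting}) together with Tate's theorem identifies $L_s\otimes\Z_p$ with $\Hom_{\co_\kk}(X_0,X_0^n)$, equipped with the hermitian form coming from the principal product polarization; a direct computation in $\End_{\co_\kk}(X_0)\otimes\Q = \kk_p$ then yields self-duality over $\co_\kk\otimes\Z_p$. The main obstacle in this plan is the classification step underlying (\ref{p-div splitting}): establishing uniqueness (not merely isogeny-uniqueness) of the polarized $\co_\kk$-linear $p$-divisible group of the prescribed type at a singular point. This is a local Dieudonn\'e-theoretic analysis at a ramified prime, closely tied to the structure of the Rapoport-Zink space for $\GU(n-1,1)$, and the Pappas wedge condition plays a crucial role in ruling out other components. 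The remaining steps are either standard (CM theory, Tate's theorem) or follow formally from the moduli setup.
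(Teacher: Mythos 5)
Your outline gets the easy parts right (supersingularity of $A_{0s}$ from ramification, the rank computation from $\End_{\co_\kk}(A_{0s})\otimes\Q=\kk$, self-duality at $\ell\neq p$ from the moduli problem), but the central step is missing: you reduce the polarization-preserving isomorphism (\ref{p-div splitting}) to a ``rigidity statement'' asserting that any two polarized $\co_\kk$-linear $p$-divisible groups satisfying Pappas's conditions with $\delta\cdot\Lie=0$ are isomorphic compatibly with polarizations --- and then you explicitly flag this as the main obstacle without proving it. That statement is exactly where the mathematical content of the proposition lives (it amounts to the superspecial point of the relevant Rapoport--Zink space being unique), so as written the proposal is circular: it assumes a classification result that is at least as hard as the proposition itself. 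Your intermediate claim that $\delta\cdot\Lie(A_s)=0$ ``forces the Hodge filtration to be stable under the full $\co_\kk\otimes\F$-action'' is also not quite the right mechanism; what the condition actually gives is the equality of submodules $\delta\cdot\mathbb{D}=V\mathbb{D}$ inside the Dieudonn\'e module (the inclusion $\delta\mathbb{D}\subset V\mathbb{D}$ from $\delta\cdot\Lie=0$, with equality by counting $\F$-dimensions, using $\ord_{\mathfrak p}(\delta)=1$ since $D$ is odd).

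The paper closes this gap constructively rather than by appealing to rigidity. From $\delta\cdot\mathbb{D}(W)=V\mathbb{D}(W)$ one gets a $\mathrm{Fr}$-semilinear bijection $V^{-1}\circ\delta$ of $\mathbb{D}(W)$; its fixed-point set $N$ is a free $\co_{\kk,\mathfrak p}$-module of rank $n$ with $\mathbb{D}(W)\iso N\otimes_{\Z_p}W$ and $V=\delta\otimes\mathrm{Fr}^{-1}$, and the polarization form descends to a \emph{perfect} hermitian pairing $h$ on $N$. Diagonalizing $h$ splits $N$ into rank-one hermitian $\co_{\kk,\mathfrak p}$-modules, and tensoring back with $W$ gives an honest orthogonal decomposition of $\mathbb{D}(W)$ into principally polarized height-two slope-$\tfrac12$ Dieudonn\'e modules, i.e.\ (\ref{p-div splitting}) on the nose, with no separate uniqueness theorem needed. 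If you want to salvage your route, you would have to prove the rigidity statement, and the natural way to do so is essentially this descent-and-diagonalization argument.
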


\begin{proof}
Certainly $A_{0s}$ is supersingular, as $p$ is ramified in $\co_\kk \subset \End(A_{0s})$.

Denote by  $\mathfrak{p}\subset \co_\kk$ be the unique prime above $p$.
Let $W=W(\F)$ be the Witt ring of $\F$, and let $\mathrm{Fr}\in \Aut(W)$ be the unique continuous lift of the $p$-power Frobenius on $\F$.
Let $\mathbb{D}(W)$ denote the covariant Dieudonn\'e module of $A_s$, endowed with its operators $F$ and $V$ satisfying $FV=p=VF$.   
The Dieudonn\'e module is  free of rank $n$ over $\co_\kk\otimes_\Z W$, and the short exact sequence
\[
0 \to \Fil^0 H_1^\dR(A_s) \to H_1^\dR(A_s) \to \Lie(A_s) \to 0
\]
of $\F$-modules is identified with
\[
0 \to V\mathbb{D}(W)/p\mathbb{D}(W) \to \mathbb{D}(W)/p\mathbb{D}(W) \to \mathbb{D}(W)/V\mathbb{D}(W) \to 0.
\]

As $D$ is odd, the element $\delta\in \co_\kk$ fixed in \S \ref{ss:notation} satisfies $\ord_\mathfrak{p}(\delta)=1$.
This implies that 
\[
\delta  \cdot  \mathbb{D}(W) = V\mathbb{D}(W).
\]
Indeed, by Theorem \ref{thm:pappas model} the Lie algebra $\Lie(A_s)$ is annihilated by $\delta$, and hence
$
\delta \cdot   \mathbb{D}(W) \subset V\mathbb{D}(W).
$
Equality holds as   
\[
\mathrm{dim}_\F \big( \mathbb{D}(W)/\delta \cdot  \mathbb{D}(W) \big) =n
 = \mathrm{dim}_\F \big( \mathbb{D}(W)/ V\mathbb{D}(W) \big) .
\]

Denote by $N \subset \mathbb{D}(W)$ the set of fixed points of the  $\mathrm{Fr}$-semilinear bijection  
\[
 V^{-1}  \circ \delta : \mathbb{D}(W)\to \mathbb{D}(W).
\]
It is a  free $\co_{\kk,\mathfrak{p}}$-module  of rank $n$ endowed with an isomorphism
\[
\mathbb{D}(W)\iso N\otimes_{\Z_p}W
\]
identifying   $V=\delta \otimes \mathrm{Fr}^{-1}$.  Moreover, the alternating form $\psi$ on $\mathbb{D}(W)$ induced by the polarization on $A_s$ has  the form
\[
\psi (n_1 \otimes w_1, n_2\otimes w_2) = w_1 w_2 \cdot \mathrm{Tr}_{\kk/\Q}  \left( \frac{h( n_1,n_2)}{\delta} \right)
\]
for a perfect hermitian pairing $h : N \times N \to \co_{\kk,\mathfrak{p}}$.  
By diagonalizing this hermitian form, we obtain an orthogonal decomposition of $N$ into rank one hermitian $\co_{\kk,\mathfrak{p}}$-modules,
and tensoring this decomposition with $W$ yields a decomoposition of $\mathbb{D}(W)$ as a direct sum of principally polarized Dieudonn\'e modules, each of height $2$ and slope $1/2$.  This corresponds to a decomposition  (\ref{p-div splitting}) on the level of $p$-divisible groups.  

In particular, $A_s$ is supersingular, and hence is isogenous to $n$ copies of $A_{0s}$.  
Using the Noether-Skolem theorem, this  isogeny may be chosen to be $\co_\kk$-linear.  
It follows first that  $L_s$ has $\co_\kk$-rank $n$, and then that the natural map
\[
L_s \otimes_\Z \Z_q  \iso \Hom_{\co_\kk}( A_{0s}[q^\infty] , A_s[q^\infty]) 
\]
is an isomorphism of hermitian $\co_{\kk,q}$-modules for every rational prime $q$. 
 It is easy to see, using (\ref{p-div splitting}) when $q=p$, that the hermitian module on the right is self-dual, and hence the same is true for $L_s\otimes_\Z\Z_q$.
\end{proof}

The remainder of \S \ref{ss:pure pullback} is devoted to proving the following result.

\begin{theorem}\label{thm:cartier error}
There is an isomorphism
\[
\bm{\omega}^2  \iso  \pure_\Kra \otimes \co(\mathrm{Exc})
\]
of line bundles on $\mathcal{S}_\Kra$, as well as an equality 
\[
2 \mathcal{Z}_\Kra(m) = \mathcal{Y}_\Kra(m) + \sum_{ s\in \pi_0(\mathrm{Sing}) }  \# \{ x\in L_s : \langle x,x\rangle =m \}  \cdot \mathrm{Exc}_s
\]
of Cartier divisors. 
\end{theorem}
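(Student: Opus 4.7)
The plan is to reduce both assertions to multiplicity computations at the generic points of the irreducible components $\mathrm{Exc}_s$ of the exceptional divisor. In both cases, Theorems~\ref{thm:weight two nonsingular} and~\ref{thm:pure divisor} already identify the two sides on the open complement $\mathcal{S}_\Kra \smallsetminus \mathrm{Exc}$, so the discrepancy is entirely supported on $\mathrm{Exc}$. Since $\mathcal{S}_\Kra$ is regular and each $\mathrm{Exc}_s$ is a smooth Cartier divisor by Theorem~\ref{thm:integral comparison}, it suffices to compute the multiplicity along each $\mathrm{Exc}_s$ in the discrete valuation ring obtained by localizing at its generic point, whose uniformizer $\pi$ cuts out $\mathrm{Exc}_s$.

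For the line bundle identity, the proof of Theorem~\ref{thm:weight two nonsingular} produced, by tensoring the map $P$ of Lemma~\ref{lem:bundle swindle} with $\Lie(A_0)^{\otimes 2}$, an injection $\bm{\omega}^{-2} \hookrightarrow \pure_\Kra^{-1}$ that is an isomorphism off $\mathrm{Exc}$. Hence $\bm{\omega}^2 \otimes \pure_\Kra^{-1} \iso \co(\sum_s n_s \mathrm{Exc}_s)$ for integers $n_s \ge 0$, and the task is to show $n_s = 1$. Along $\mathrm{Exc}_s$ one has $\delta \cdot \Lie(A) = 0$ by Theorem~\ref{thm:pappas model}, so $\co_\kk$ acts on $\Lie(A)$ through $\co_\kk/\mathfrak{p}$ and $\mathcal{F}_A$ is an arbitrary hyperplane; this is the reason $\mathrm{Exc}_s \iso \mathbb{P}^{n-1}_{k(s)}$ in Theorem~\ref{thm:integral comparison}. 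One chooses a local $\co_\kk \otimes \co_S$-basis $e_1,\ldots,e_n$ of $H_1^\dR(A)$ in a Zariski neighborhood of a generic point of $\mathrm{Exc}_s$ adapted to the Hodge filtration, to $\mathcal{F}_A$, and to the pairing $\psi$, and then mimics the surjectivity calculation at the end of the proof of Lemma~\ref{lem:bundle swindle} for $\mathrm{char}(\F)\mid D$. Tracking the position of $\mathcal{F}_A$ relative to $\mathrm{ker}(\overline{\epsilon}:\Lie(A)\to\Lie(A))$ shows that $P_{e_1\otimes e_1}$ acquires exactly one factor of $\pi$ along $\mathrm{Exc}_s$, giving $n_s = 1$.

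For the Cartier divisor identity, fix $s$ and choose an \'etale neighborhood $U \to \mathcal{S}_\Kra$ of a generic point of $\mathrm{Exc}_s$ over which the morphism $\mathcal{Z}_\Kra(m) \to \mathcal{S}_\Kra$ decomposes as a disjoint union of closed immersions indexed by the finite set $\{x \in L_s : \langle x,x \rangle = m\}$, with $L_s$ the self-dual hermitian lattice of Proposition~\ref{prop:supersingular}. For each such $x$, the corresponding component $Z_x \subset \mathcal{Z}_\Kra(m)_U$ is a Cartier divisor cut out by a single equation $f_x$, which by Grothendieck-Messing theory (in the spirit of Lemma~\ref{lem:pure cartier}) expresses the obstruction to the crystalline extension $x' : H_1^\dR(A_{0,U}) \to H_1^\dR(A_U)$ respecting the Hodge filtration. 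The corresponding component $Y_x$ of $\mathcal{Y}_\Pap(m)$ pulls back to $U$ as a Cartier divisor cut out by an equation $g_x$ defined by the $x$-summand of~\eqref{new obstruction}. Since $(A_{0,U},A_U)$ is constant along $\mathrm{Exc}_s$ (the Kr\"amer blow-up only varies $\mathcal{F}_A$), the homomorphism $x$ extends trivially in the $\mathrm{Exc}_s$-direction, so $Z_x$ contains $\mathrm{Exc}_s$ generically and $v(f_x) \ge 1$. Off $\mathrm{Exc}_s$, where $\mathcal{F}_A = \mathrm{ker}(\overline{\epsilon}:\Lie(A)\to\Lie(A))$, the commutative diagram in the proof of Lemma~\ref{lem:purified nonsingular} yields $g_x = u \cdot f_x^2$ for some unit $u$.

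Everything then reduces to a comparison of $g_x$ and $f_x^2$ in the local ring at the generic point of $\mathrm{Exc}_s$. Because $g_x$ is defined through the same map $P$ whose vanishing order along $\mathrm{Exc}_s$ was pinned down in the line bundle step, a parallel local computation yields the relation $\pi \cdot g_x = f_x^2 \cdot (\mathrm{unit})$, or equivalently $v(g_x) = 2 v(f_x) - 1$. The multiplicity of $\mathrm{Exc}_s$ in $2 Z_x - Y_x$ is therefore exactly $1$, and summing over $x \in L_s$ with $\langle x,x\rangle = m$ gives the claimed equality. The principal technical obstacle is to coordinate the two local calculations consistently: one must extract, from explicit formulas for $P$ and for the crystalline extension $x'$ at a generic point of $\mathrm{Exc}_s$, the single factor of $\pi$ that distinguishes the Kr\"amer geometry from the Pappas geometry, and propagate it correctly through the factorization $g_x = u \cdot f_x^2$ valid off $\mathrm{Exc}_s$.
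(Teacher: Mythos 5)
Your reduction is sound as far as it goes: the discrepancy $2\mathcal{Z}_\Kra(m)-\mathcal{Y}_\Kra(m)$ (resp.\ the line bundle $\bm{\omega}^2\otimes\pure_\Kra^{-1}$) is already known from Theorems \ref{thm:weight two nonsingular} and \ref{thm:pure divisor} to be supported on $\mathrm{Exc}$, so everything does come down to multiplicities along each $\mathrm{Exc}_s$. Note, though, that your route differs from the paper's: you propose to compute valuations at the generic point of $\mathrm{Exc}_s$, whereas the paper constructs an explicit CM lift $(\tilde A_{0s},\tilde A_s)\in\mathcal{S}_\Kra(\mathcal{W})$ over the ramified Witt ring via Dieudonn\'e theory (Lemma \ref{lem:filtration lift}), shows the resulting arc $\Spec(\mathcal{W})\to\mathcal{S}_\Kra$ meets $\mathrm{Exc}_s$ transversally, and reads off all multiplicities by intersecting with this arc (Lemmas \ref{lem:bundle plumb}, \ref{lem:Z intersect}, \ref{lem:Y intersect}). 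The arc has the advantage that Grothendieck--Messing theory is available over the complete DVR $\mathcal{W}$ with algebraically closed residue field; at the generic point of $\mathrm{Exc}_s$ the local ring is neither complete nor has perfect residue field, so the crystalline lifting arguments you invoke would need extra care there.

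The genuine gap is in the final step. You assert $\varpi\cdot g_x = f_x^2\cdot(\mathrm{unit})$, i.e.\ $v(g_x)=2v(f_x)-1$, and justify it by saying it follows from the same order-one vanishing of $P$ established in the line bundle step. But that heuristic, applied naively, gives the \emph{wrong sign}: since $P$ factors through $(\Lie(A)/\mathcal{F}_A)^{\otimes 2}\to\mathcal{P}_\Kra$ with the latter map vanishing to order $1$ along $\mathrm{Exc}_s$, one would conclude $v(g_x)=1+2\,v(\text{image of }x'\text{ in }\Lie(A)/\mathcal{F}_A)$, suggesting $2v(f_x)+1$. The point, which the paper's Lemma \ref{lem:Y intersect} establishes by explicit computation, is that the lifting obstruction $f_x$ defining $\mathcal{Z}_\Kra(m)$ is the image of $\Fil^0 H_1^\dR(A_0)=\overline{\epsilon}H_1^\dR(A_0)$, while the factorization of $P$ sees the image of all of $H_1^\dR(A_0)$; along $\mathrm{Exc}_s$ the element $\overline{\epsilon}$ acts on the line $\Lie(A)/\mathcal{F}_A$ by a uniformizer, which shifts the relevant valuation by exactly one and flips $+1$ to $-1$ (in the paper this is the identity $a\overline{\epsilon}=\varpi^{k(x)-1}\overline{\epsilon}$ extracted from (\ref{lifting ord 1}) and (\ref{lifting ord 2})). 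Without carrying out this computation --- which requires an explicit basis adapted to $V$, $\psi$, and the Hodge filtration, exactly what Lemma \ref{lem:filtration lift} provides --- the central relation $v(g_x)=2v(f_x)-1$ is unproven, and the plausibility argument you give points toward the incorrect relation. The same criticism applies, less severely, to the line bundle step, where ``tracking the position of $\mathcal{F}_A$ relative to $\ker(\overline{\epsilon})$'' is precisely the content of Lemma \ref{lem:bundle plumb} and is not a formal consequence of anything established earlier.
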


\begin{proof}
Recall from the proof of Theorem \ref{thm:weight two nonsingular} the morphism 
\[
\xymatrix{
{  \bm{\omega}^{-2}  } \ar@{=}[d]   &  & &   {  \pure_\Kra^{-1} }  \ar@{=}[d]     \\
 {    \Lie(A_0)^{\otimes 2} \otimes (  \Lie(A) / \mathcal{F}_A) ) ^{\otimes 2}  }  \ar[rrr]^{ (\ref{factored swindle})  }   &    & &   {      \Lie(A_0)^{\otimes 2} \otimes \mathcal{P}_\Kra   , }
}
\]
whose restriction to $\mathcal{S}_\Kra \smallsetminus \mathrm{Exc}$ is an isomorphism.  If we view this morphism as a global section 
\begin{equation}\label{sigma}
\sigma \in H^0( \mathcal{S}_\Kra , \bm{\omega}^{2} \otimes \pure_\Kra^{-1} ),
\end{equation}
then 
\begin{equation}\label{sigma divisor}
\mathrm{div}(\sigma) = \sum_{ s\in \pi_0 (\mathrm{Sing}) }  \ell_s(0)  \cdot \mathrm{Exc}_s
\end{equation}
for some integers $\ell_s(0)\ge 0$, and hence
\begin{equation}\label{bundle up to exc}
\bm{\omega}^2 \otimes \pure_\Kra^{-1} \iso \bigotimes_{  s\in \pi_0 (\mathrm{Sing})  }  \co( \mathrm{Exc}_s)^{\otimes \ell_s(0)} .
\end{equation}
  We must show that each $\ell_s(0)=1$.

Similarly, suppose $m>0$.  It follows from Theorem \ref{thm:pure divisor} that 
\begin{equation}\label{pure up to exc}
2  \mathcal{Z}_\Kra(m) = \mathcal{Y}_\Kra(m) + \sum_{ s\in \pi_0(\mathrm{Sing}) } \ell_s(m)  \cdot \mathrm{Exc}_s
\end{equation}
for some integers $\ell_s(m)$.  Moreover, it is clear from the construction of $\mathcal{Y}_\Kra(m)$ that $2 \mathcal{Z}_\Kra(m) - \mathcal{Y}_\Kra(m)$ is effective, and so $\ell_s(m)\ge 0$.  We must show that 
\[
\ell_s(m) =  \# \{ x\in L_s : \langle x,x\rangle =m \}  .
\]

Fix  $s\in \pi_0(\mathrm{Sing})$,  and let $\Spec(\F) \to s$, $p=\mathrm{char}(\F)$, and   $(A_{0 s} , A_s) \in \mathcal{S}_\Pap(\F)$ be as in (\ref{base deformation}).   Let $W=W(\F)$ be the Witt ring of $\F$, and set $\mathcal{W} = \co_\kk \otimes_\Z W$.  It is a complete discrete valuation ring of absolute ramification degree $2$.  Fix a uniformizer
$
\varpi\in \mathcal{W}.
$
As $p$ is odd, the quotient map 
\[
\mathcal{W} \to \mathcal{W}/\varpi \mathcal{W} = \F
\] 
admits  canonical  divided powers. 

Denote by $\mathbb{D}_0$ and $\mathbb{D}$ the Grothendieck-Messing crystals of $A_{0s}$ and $A_s$, respectively.
  Evaluation of the crystals\footnote{If $p=3$, the divided powers on $\mathcal{W}\to \F$ are not nilpotent, and so we cannot evaluate the usual Grothendieck-Messing crystals on this thickening.  However, Proposition \ref{prop:supersingular} implies that the $p$-divisible groups of $A_{0s}$ and $A_s$ are formal, and Zink's theory of displays \cite{Zink} can be used as a substitute.}  along the divided power thickening $\mathcal{W} \to \F$ yields free $\co_\kk \otimes_\Z \mathcal{W}$-modules $\mathbb{D}_0(\mathcal{W})$ and $\mathbb{D} (\mathcal{W})$  endowed with alternating $\mathcal{W}$-bilinear forms $\psi_0$ and $\psi$, and $\co_\kk$-linear isomorphisms
\[
\mathbb{D}_0(\mathcal{W}) / \varpi  \mathbb{D}_0(\mathcal{W})   \iso  \mathbb{D}_0(\F) \iso H_1^\dR(A_{0s} )
\]
and
\[
\mathbb{D} (\mathcal{W}) / \varpi  \mathbb{D}(\mathcal{W})   \iso \mathbb{D} (\F) \iso H_1^\dR(A_s ).
\]

The $W$-modules $\mathbb{D}_0(W)$ and $\mathbb{D}(W)$ are canonically identified with the covariant Dieudonn\'e modules of $A_{0s}$ and $A_s$, respectively.  The operators $F$ and $V$ on these Dieudonn\'e modules induce operators, denoted the same way, on 
\[
\mathbb{D}_0(\mathcal{W})  \iso \mathbb{D}_0(W) \otimes_W \mathcal{W},\quad 
\mathbb{D} (\mathcal{W})  \iso \mathbb{D}(W) \otimes_W \mathcal{W}.
\]

For any elements $y_1,\ldots, y_k$ in an $\co_\kk\otimes_\Z\mathcal{W}$-module, let  $\langle y_1,\ldots, y_k \rangle$ be the  $\co_\kk \otimes_\Z  \mathcal{W}$-submodule generated by them.     Recall from \S \ref{ss:notation} the elements
\[
\epsilon, \overline{\epsilon} \in\co_\kk\otimes_\Z\mathcal{W}.
\]

\begin{lemma}\label{lem:filtration lift}
There is an  $\co_\kk\otimes_\Z \mathcal{W}$-basis $e_0\in \mathbb{D}_0(\mathcal{W})$ such that
\[
\Fil\, \mathbb{D}_0(\mathcal{W}) \define  \langle \overline{\epsilon} e_0 \rangle \subset \mathbb{D}_0(\mathcal{W})
\]
is a totally isotropic $\mathcal{W}$-module direct summand lifting the Hodge filtration on $\mathbb{D}_0(\F)$, and such that $V e_0= \delta e_0$.

Similarly, there is an $\co_\kk\otimes_\Z \mathcal{W}$-basis $e_1,\ldots, e_n\in \mathbb{D}(\mathcal{W})$ such that 
\[
\Fil\,  \mathbb{D} (\mathcal{W}) \define  \langle \epsilon e_1 ,  \overline{\epsilon} e_2,  \ldots , \overline{\epsilon} e_{n} \rangle   \subset \mathbb{D} (\mathcal{W}) 
\]
is a totally isotropic $\mathcal{W}$-module direct summand lifting the Hodge filtration on $\mathbb{D}(\F)$.  
This basis may be chosen so that   $V e_{k+1} = \delta e_k$, where the indices are understood in $\Z/n\Z$, and also so that 
\[
\psi \big( \langle e_i \rangle  ,  \langle e_j \rangle \big) =
\begin{cases}
\mathcal{W} & \mbox{if }i=j \\
0 & \mbox{otherwise.}
\end{cases}
\]
\end{lemma}

\begin{proof}
As in the proof of Proposition \ref{prop:supersingular}, we may identify 
\[
\mathbb{D}_0(W) \iso N_0 \otimes_{\Z_p} W
\]
for some free $\co_{\kk,\mathfrak{p}}$-module $N_0$ of rank $1$, in such a way that $V=\delta \otimes \mathrm{Fr}^{-1}$, and 
the alternating form on $\mathbb{D}_0(W)$ arises as the $W$-bilinear extension of an alternating form $\psi_0$ on $N_0$.
Any $\co_{\kk,\mathfrak{p}}$-generator $e_0 \in N_0$ determines  a generator of the $\co_{\kk,\mathfrak{p}} \otimes_{\Z_p} \mathcal{W}$-module
\[
\mathbb{D}_0(\mathcal{W}) \iso N_0 \otimes_{\Z_p} \mathcal{W},
\]
which, using Remark \ref{rem:simple hodge} has the desired properties.

Now set $N=N_0 \oplus \cdots \oplus N_0$ ($n$ copies), so that, by Proposition \ref{prop:supersingular}, there is an  isomorphism
\[
\mathbb{D}(W) \iso N \otimes_{\Z_p} W
\]
identifying $V=\delta \otimes \mathrm{Fr}^{-1}$, 
and the alternating bilinear form on $\mathbb{D}(W)$ arises from an alternating form $\psi$ on $N$.
Let $\Z_{p^n}\subset W$ be the ring of integers in the unique unramified degree $n$ extension of $\Q_p$, and fix an action 
\[
 \iota: \Z_{p^n} \to \End_{\co_{\kk,\mathfrak{p}}}(N) 
 \]  
 in such a way that $\psi( \iota(\alpha) x,y) = \psi( x, \iota(\alpha) y)$ for all $\alpha\in \Z_{p^n}$.

 There is an induced decomposition
\[
\mathbb{D}(W) \iso \bigoplus_{  k \in \Z/n\Z } \mathbb{D}(W)_k,
\]
where
\[
\mathbb{D}(W)_k = \{ e\in \mathbb{D}(W) :  \forall \alpha\in \Z_{p^n},\, \iota(\alpha) \cdot e =  \mathrm{Fr}^k (\alpha)  \cdot e \}
\]
is free of rank one over $\co_\kk   \otimes_\Z W$.    Now pick any $\Z_{p^n}$-module generator $e \in N$, view it as an element of $\mathbb{D}(W)$, and let $e_k \in \mathbb{D}(W)_k$ be its projection to the $k^\mathrm{th}$ summand.  This gives an $\co_\kk\otimes_\Z W$-basis  $e_1,\ldots, e_n\in \mathbb{D}(W)$, which determines an $\co_\kk\otimes_\Z \mathcal{W}$-basis of $\mathbb{D}(\mathcal{W})$ with the required properties.
\end{proof}

By the Serre-Tate theorem and Grothendieck-Messing theory, the lifts of the Hodge filtrations specified in Lemma \ref{lem:filtration lift} determine a lift  
\begin{equation}\label{pappas lift}
( \tilde{A}_{0s} ,  \tilde{A}_s)  \in \mathcal{S}_\Pap(\mathcal{W})
\end{equation}
of the pair $(A_{0s}, A_s)$.   These come with canonical identifications
\[
H_1^\dR ( \tilde{A}_{0s})  \iso \mathbb{D}_0(\mathcal{W}) ,\quad H_1^\dR ( \tilde{A}_s) \iso \mathbb{D}(\mathcal{W})
\]  
under which the Hodge filtrations correspond to the filtrations chosen in Lemma \ref{lem:filtration lift}.
In particular, the Lie algebra  of  $\tilde{A}_s$ is 
\[
\Lie(\tilde{A}_s) \iso \mathbb{D} (\mathcal{W}) / \Fil\, \mathbb{D} (\mathcal{W}) =  
\langle e_1, e_2,\ldots,e_n  \rangle/  \langle   \epsilon e_1 ,  \overline{\epsilon} e_2,\ldots, \overline{\epsilon} e_n \rangle.
\]
The $\mathcal{W}$-module direct summand
\[
\mathcal{F}_{\tilde{A}_s} = \langle e_2,\ldots,e_n  \rangle/  \langle  \overline{\epsilon} e_2,\ldots, \overline{\epsilon} e_n \rangle
\]
satisfies  Kr\"amer's condition (\S \ref{ss:unitary integral models}), and so  determines a lift of (\ref{pappas lift}) to
\[
( \tilde{A}_{0s} ,  \tilde{A}_s)  \in \mathcal{S}_\Kra(\mathcal{W}).
\]

To summarize:  starting from a geometric point $\Spec(\F) \to s$,  we have used Lemma \ref{lem:filtration lift} to construct a commutative diagram
\begin{equation}\label{big cm lift}
\xymatrix{
{   \Spec(\F)    }  \ar[d] \ar[r]    & { \mathrm{Exc}_s }  \ar[d] \ar[r] &    {     s    } \ar[d] \\
{  \Spec(\mathcal{W})  }  \ar[r]     &  {   \mathcal{S}_\Kra   }  \ar[r] &     {   \mathcal{S}_\Pap   } .
}
\end{equation}

\begin{lemma}\label{lem:bundle plumb}
The pullback of the map (\ref{factored swindle}) via $\Spec(\mathcal{W}) \to \mathcal{S}_\Kra$ vanishes identically along the closed subscheme
$\Spec(\mathcal{W} / \varpi \mathcal{W})$, but not along $\Spec(\mathcal{W} / \varpi^2 \mathcal{W})$.
\end{lemma}

\begin{proof}
The $\mathcal{W}$-submodule of
\begin{equation}\label{D lie}
\Lie(\tilde{A}_s) \iso \mathbb{D}(\mathcal{W}) / \langle   \epsilon e_1 ,  \overline{\epsilon} e_2,  \ldots , \overline{\epsilon} e_{n} \rangle 
\end{equation}
generated by $e_1$ is $\co_\kk$-stable. The action of $\co_\kk\otimes_\Z\mathcal{W}$ on this $\mathcal{W}$-line is via 
\[
\co_\kk \otimes_\Z \mathcal{W} \map{\alpha \otimes x \mapsto i_\mathcal{W}(\overline{\alpha}) x } \mathcal{W}
\]
(where $i_\mathcal{W} : \co_\kk \to \mathcal{W}$ is the inclusion), and this map sends  $\overline{\epsilon}$ to a uniformizer of $\mathcal{W}$; see \S \ref{ss:notation}.  Thus  the quotient map  $q: \mathbb{D}(\mathcal{W})  \to  \Lie(\tilde{A}_s)$ satisfies
$
q( \overline{\epsilon} e_1 )   =   \varpi q( e_1 ) 
$
up to multiplication by an element of $\mathcal{W}^\times$.     It follows that 
\[
P_{e_1\otimes e_1} ( e_1 \wedge \cdots \wedge e_n )  = 
\varpi \cdot \psi( \overline{\epsilon} e_1 , e_1 ) \cdot q( e_1) \wedge q(e_2) \wedge \cdots \wedge q(e_n)
\]
up to scaling by $\mathcal{W}^\times$.

We claim that    $\psi( \overline{\epsilon} e_1 , e_1 ) \in \mathcal{W}^\times$.  Indeed, 
as $q(e_1)$ generates a  $\mathcal{W}$-module direct summand of (\ref{D lie}), there is some
\[
x\in \Fil\,  \mathbb{D} (\mathcal{W}) = \langle \epsilon e_1 ,  \overline{\epsilon} e_2,  \ldots , \overline{\epsilon} e_{n} \rangle   \subset \mathbb{D} (\mathcal{W}) 
\]
such that $\psi(x,e_1) \in \mathcal{W}^\times$.  We chose our basis in Lemma \ref{lem:filtration lift} in such a way that $\psi( \overline{\epsilon} e_i,e_1)=0$ for $i>1$. 
 It follows  that $\psi( \epsilon e_1,e_1)$ is a unit, and hence the same is true for
$
\psi( \overline{\epsilon} e_1,e_1) = \psi( e_1,  \epsilon e_1 )= - \psi( \epsilon e_1,e_1).
$

We have now proved that 
\[
P_{e_1\otimes e_1} ( e_1 \wedge \cdots \wedge e_n )  = 
\varpi \cdot q( e_1) \wedge q(e_2) \wedge \cdots \wedge q(e_n)
\]
up to scaling by $\mathcal{W}^\times$, from which it follows  that
\[
P_{e_1\otimes e_1} ( e_1 \wedge \cdots \wedge e_n )  \in   \bigwedge\nolimits^n \Lie( \tilde{A}_s ) 
\]
is divisible by $\varpi$, but not by $\varpi^2$.

The quotient 
\[
H_1^\dR(\tilde{A}_s) / \overline{\epsilon} H_1^\dR(\tilde{A}_s) \iso  
\mathbb{D}(\mathcal{W}) / \langle  \overline{ \epsilon }  e_1 ,\ldots, \overline{ \epsilon }  e_n  \rangle
\]
is generated as a $\mathcal{W}$-module by $e_1,\ldots, e_n$.   From the calculation of the previous paragraph, it now follows that
$
P_{e_1\otimes e_1} \in  \mathcal{P}_\Kra  | _{\Spec(\mathcal{W}) }
$
is divisible by $\varpi$ but not by $\varpi^2$.   The quotient
\[
\Lie (\tilde{A}_s) / \mathcal{F}_{\tilde{A}_s} \iso  \mathbb{D}(\mathcal{W}) / \langle \epsilon e_1 , e_2,\ldots, e_n  \rangle 
\]
is generated as a $\mathcal{W}$-module by the image of $e_1$,  and we at last deduce that 
\[
P \in   \underline{\Hom} \big( (\Lie(A)/\mathcal{F}_A)^{ \otimes 2}  , \mathcal{P}_\Kra \big) | _{\Spec(\mathcal{W}) }
\]
is divisible by $\varpi$ but not by  $\varpi^2$.
\end{proof}

Recall the global section $\sigma$ of (\ref{sigma}).  It follows immediately from Lemma \ref{lem:bundle plumb} that its pullback via $\Spec(\mathcal{W}) \to \mathcal{S}_\Kra$ has divisor $\Spec(\mathcal{W}/\varpi\mathcal{W})$, and hence   
\[
\Spec(\mathcal{W}) \times_{ \mathcal{S}_\Kra } \mathrm{div}(\sigma)  =  \Spec(\mathcal{W}/\varpi \mathcal{W}),
\]
  Comparison with (\ref{sigma divisor}) proves  both that $\ell_s(0)=1$, and that 
\begin{equation}\label{special fiber intersection}
\Spec(\mathcal{W}) \times_{ \mathcal{S}_\Kra } \mathrm{Exc}_s =\Spec(\mathcal{W}/\varpi \mathcal{W}).
\end{equation}
Recalling (\ref{bundle up to exc}), this completes the proof that
\[
\bm{\omega}^2  \iso    \pure_\Kra   \otimes  \co(\mathrm{Exc}).
\]

It remains to prove the second claim of Theorem \ref{thm:cartier error}.
Given any 
$
x\in L_s = \Hom_{\co_\kk} ( A_{0s},  A_s ),
$
denote by $k(x)$  the largest integer such that $x$ lifts to a morphism 
\[
 \tilde{A}_{0s} \otimes_{\mathcal{W}}  \mathcal{W} /  ( \varpi^{k(x)} ) \to  
 \tilde{A}_s  \otimes_{ \mathcal{W} }  \mathcal{W} /  (\varpi^{k(x)})  .
\]

\begin{lemma}\label{lem:Z intersect}
As Cartier divisors on $\Spec(\mathcal{W})$, we have
\[
  \mathcal{Z}_\Kra(m)   \times_{\mathcal{S}_\Kra}   \Spec(\mathcal{W})   
= \sum_{  \substack{ x\in L_s \\  \langle x,x\rangle =m }  } \Spec( \mathcal{W} / \varpi^{k(x)} \mathcal{W}).
\]
\end{lemma}

\begin{proof}
Each $x\in L_s$ with  $\langle x,x\rangle =m$ determines a geometric point  
\begin{equation}\label{def hom point}
( A_{0z},  A_z ,x)  \in \mathcal{Z}_\Kra(m)(\F).
\end{equation}
and surjective morphisms 
\[
\xymatrix{
&   {  \co_{\mathcal{S}_\Kra ,x}  } \ar[dl]  \ar[dr]  \\
{  \co_{\mathcal{Z}_\Kra(m) , x  } } &    &   {   \mathcal{W}  , } 
}
\]
where $\co_{\mathcal{Z}_\Kra(m) , x  }$ is the \'etale local ring  at  (\ref{def hom point}), 
$\co_{\mathcal{S}_\Kra ,x}$ is the \'etale local ring  at the point below it, and the arrow on the right is induced by the map $\Spec(\mathcal{W}) \to \mathcal{S}_\Kra$ of  (\ref{big cm lift}).   There is an induced isomorphism of $\mathcal{W}$-schemes
\[
 \co_{\mathcal{Z}_\Kra(m) , x  } \otimes_{ \co_{\mathcal{S}_\Kra ,x}  }  \mathcal{W} \iso \mathcal{W}/ ( \varpi^{k(x)}  ),
\]
and the claim follows by summing over $x$. 
\end{proof}

\begin{lemma}\label{lem:Y intersect}
As Cartier divisors on $\Spec(\mathcal{W})$, we have
\[
\mathcal{Y}_\Kra(m)   \times_{\mathcal{S}_\Kra}   \Spec(\mathcal{W})   
= \sum_{  \substack{ x\in L_s \\  \langle x,x\rangle  = m}  } \Spec( \mathcal{W} / \varpi^{2 k(x)-1} \mathcal{W}).
\]
\end{lemma}

\begin{proof}
Each   $x\in L_s = \Hom_{\co_\kk}( A_{0s} , A_s )$  with $\langle x,x\rangle=m$ 
induces a morphism of crystals $\mathbb{D}_0\to \mathbb{D}$, and hence a map
\[
 \mathbb{D}_0(\mathcal{W}) \map{ x }  \mathbb{D}(\mathcal{W})  
\]
respecting the $F$ and $V$ operators.  By Grothendieck-Messing deformation theory, the integer $k(x)$ is characterized as the largest integer such that the composition
 \[
 \xymatrix{
{  \Fil^0 H^\dR_1( \tilde{A}_{0s})  } \ar[r]^{\subset}  \ar@{=}[d] &  { H^\dR_1( \tilde{A}_{0s})  }  \ar[r]^{ x }\ar@{=}[d]
&   {   H^\dR_1( \tilde{A}_s ) }  \ar[r]^{q} \ar@{=}[d]& {   \Lie( \tilde{A}_s )  }  \ar@{=}[d] \\
{   \overline{\epsilon}  \mathbb{D}_0(\mathcal{W})  } \ar[r]^{\subset}  &  {  \mathbb{D}_0(\mathcal{W})  }  \ar[r]^{ x }  &    {   \mathbb{D}(\mathcal{W})  }  \ar[r] & 
  { \frac{  \mathbb{D} (\mathcal{W}) }{  \langle \epsilon e_1, \overline{\epsilon} e_2, \ldots, \overline{\epsilon} e_n \rangle} . }
}
\]
vanishes modulo $\varpi^{k(x)}$.  In other words the composition 
\[
H^\dR_1( \tilde{A}_{0s})   \map{  x  \circ \overline{\epsilon}   } 
   H^\dR_1( \tilde{A}_{s}) 
   \map{q }       \Lie( \tilde{A}_s) 
\]
vanishes modulo $\varpi^{k(x)}$, but not modulo $\varpi^{k(x)+1}$.

Using the bases of Lemma \ref{lem:filtration lift}, we expand
\[
x(e_0) = a_1 e_1 + \cdots + a_n e_n
\]
with $a_1,\ldots, a_n \in \co_\kk\otimes_\Z \mathcal{W}$.    
The condition that $x$ respects $V$ implies that $a_1=\cdots=a_n$.  
Let us call this common value $a$,  so that 
\[
q(x (\overline{\epsilon} e_0 ) )  = \overline{\epsilon} \cdot q(  a e_1 + \cdots + a e_n  ) = a   \overline{\epsilon} \cdot q(  e_1 ) 
\]
in  $ \Lie( \tilde{A}_s )$.   By the previous paragraph, this element is divisible by $\varpi^{k(x)}$  but not by $\varpi^{k(x)+1}$, and so
\begin{equation}\label{lifting ord 1}
q( a   \overline{\epsilon}   e_1 ) = \varpi^{k(x)} q(e_1)
\end{equation}
up to scaling by $\mathcal{W}^\times$.   

On the other hand, the submodule of $ \Lie( \tilde{A}_s )$ generated by $q(e_1)$ is isomorphic to 
$(\co_\kk \otimes_\Z \mathcal{W}) / \langle \epsilon\rangle \iso \mathcal{W}$, and $\overline{\epsilon}$ acts on this quotient by a uniformizer in $\mathcal{W}$.  Thus 
\begin{equation}\label{lifting ord 2}
\overline{\epsilon} q(e_1) = \varpi q(e_1)
\end{equation}
up to scaling by $\mathcal{W}^\times$.

Combining (\ref{lifting ord 1}) and (\ref{lifting ord 2}) shows that, up to scaling by $\mathcal{W}^\times$, 
\[
a   \overline{\epsilon}  = \varpi^{ k(x) -1 }\overline{\epsilon}
\]
in the quotient $(\co_\kk \otimes_\Z \mathcal{W}) / \langle \epsilon\rangle$.  By the injectivity of the quotient map
$\langle \overline{\epsilon} \rangle  \to (\co_\kk \otimes_\Z \mathcal{W}) / \langle \epsilon\rangle$,
this same equality holds in $\langle \overline{\epsilon} \rangle \subset \co_\kk\otimes_\Z\mathcal{W}$.
Using this and (\ref{lifting ord 1}), we  compute
\begin{eqnarray*}\lefteqn{
P_{ x(e_0)  \otimes  x(e_0) }  (e_1\wedge \cdots \wedge e_n)   } \\
&= & \psi( a \overline{\epsilon}   e_1 , e_1 ) \cdot q( a \overline{\epsilon} e_1  ) \wedge
q(e_2) \wedge \cdots \wedge q(e_n)  \\
&= & \varpi^{2k(x)-1} \cdot  \psi(  \overline{\epsilon}   e_1 , e_1 ) \cdot q( e_1  ) \wedge
q(e_2) \wedge \cdots \wedge q(e_n)  \\
&= & \varpi^{2k(x)-1} \cdot   q( e_1  ) \wedge
q(e_2) \wedge \cdots \wedge q(e_n) 
\end{eqnarray*}
up to scaling by $\mathcal{W}^\times$.   
Here, as in the proof of Lemma \ref{lem:bundle plumb},  we have  used $\psi(  \overline{\epsilon}   e_1 , e_1 )\in \mathcal{W}^\times$.

This calculation shows that the composition 
\[
\xymatrix{
{ H^\dR_1( \tilde{A}_{0s})^{\otimes 2} }     \ar[r]^{  x \otimes x }  & 
{   H^\dR_1( \tilde{A}_s ) ^{\otimes 2}  }  \ar[r]^P  &  {   \mathcal{P}|_{ \Spec(\mathcal{W}) }   }
}
\]
vanishes modulo $\varpi^{ 2 k(x) -1 }$, but not modulo $\varpi^{ 2 k(x) }$, and  
the remainder of the proof is the same as that of Lemma \ref{lem:Z intersect}:
comparing with the definition of $\mathcal{Y}_\Kra(m)$, see especially (\ref{new obstruction}), shows that
\[
\co_{\mathcal{Y}_\Kra(m),x} \otimes_{ \co_{\mathcal{S}_\Kra,x} } \mathcal{W} \iso \mathcal{W} / ( \varpi^{2k(x)-1} ),
\]
and summing over all $x$ proves the claim.
\end{proof}

Combining Lemmas \ref{lem:Z intersect} and \ref{lem:Y intersect} shows that 
\[
\Spec(\mathcal{W}) \times_{\mathcal{S}_\Kra} \big( 2 \mathcal{Z}_\Kra(m) - \mathcal{Y}_\Kra(m) \big)
=  \sum_{  \substack{ x\in L_s \\  \langle x,x\rangle =m }  } \Spec( \mathcal{W} / \varpi  \mathcal{W})
\]
as  Cartier divisors on $\Spec(\mathcal{W})$. We know from  (\ref{special fiber intersection}) that
\[
\Spec(\mathcal{W}) \times_{\mathcal{S}_\Kra} \mathrm{Exc}_t
= \begin{cases}
\Spec(\mathcal{W}/\varpi \mathcal{W}) & \mbox{if } t=s \\
0 & \mbox{if }t\neq s,
\end{cases}
\]
and comparison with (\ref{pure up to exc}) shows that 
\[
\ell_s(m)= \# \{ x\in L_s : \langle x,x\rangle =m \},
\]
completing the proof of Theorem \ref{thm:cartier error}.
\end{proof}


\section{Toroidal compactification}
\label{s:unitary compactification}


In this section we describe canonical toroidal compactifications
\[
\xymatrix{
{  \mathcal{S}_\Kra  }  \ar[r]  \ar[d] &  {  \mathcal{S}^*_\Kra }  \ar[d] \\
{  \mathcal{S}_\Pap  }  \ar[r]  &  {  \mathcal{S}^*_\Pap ,}
}
\]
and the structure of their formal completions along the boundary. Using this description,  we define Fourier-Jacobi expansions of modular forms.

The existence of  toroidal compactifications with reasonable properties is not a new result.  In fact the proof of Theorem \ref{thm:toroidal}, which asserts the existence of good compactifications of $\mathcal{S}_\Pap$ and $\mathcal{S}_\Kra$, simply refers to \cite{Ho2}.  Of course [\emph{loc.~cit.}] is itself a very modest addition to the established literature   \cite{FC, Lan, Lar, Ra}.  Because of this, the reader is perhaps owed a few words of explanation as to why \S \ref{s:unitary compactification} is so long.

It is well-known that the boundary charts used to construct  toroidal compactifications of PEL-type Shimura varieties are themselves moduli spaces of $1$-motives (or, what is nearly the same thing,  degeneration data in the sense of \cite{FC}).  This moduli interpretation is explained in \S \ref{ss:one-motives}.

It is a special feature of our  particular Shimura variety $\mathrm{Sh}(G,\mathcal{D})$ that the boundary charts have a second, very different, moduli interpretation.  This second moduli interpretation is explained in \S \ref{ss:second moduli}.  In some sense, the main result of \S \ref{s:unitary compactification} is not Theorem \ref{thm:toroidal} at all, but rather Proposition \ref{prop:second moduli iso}, which proves the equivalence of the two moduli problems.

The point  is that our goal is to eventually study the integrality and rationality properties of Fourier-Jacobi expansions of Borcherds products on the integral models of $\mathrm{Sh}(G,\mathcal{D})$.  A complex analytic description of these Fourier-Jacobi expansions can be deduced from   \cite{Ku:ABP}, but it is not a priori clear how to deduce  integrality and rationality properties from  these purely complex analytic formulas.

To do so, we will exploit the fact that the formulas of \cite{Ku:ABP} express the Fourier-Jacobi coefficients in terms of the classical Jacobi theta function.  The Jacobi theta function can be viewed as a section of a line bundle on the universal elliptic curve fibered over the modular curve, and when interpreted in this way it has known integrality and rationality properties (this is explained in \S \ref{ss:jacobi}).  

By converting the  moduli interpretation  of the boundary charts from $1$-motives to an  interpretation that makes explicit reference to the universal elliptic curve and the line bundles that live over it, the integrality and rationality properties of the Fourier-Jacobi coefficients can be deduced, ultimately, from those of the classical Jacobi theta function.


\subsection{Cusp label representatives}
\label{ss:cusp notation}


Recall that $W_0$ and $W$ are $\kk$-hermitian spaces of signatures $(1,0)$ and $(n-1,1)$, respectively, with $n\ge 2$.
Tautologically, the subgroup 
\[
G \subset \GU(W_0) \times \GU(W)
\] 
acts on both $W_0$ and $W$.    If $J\subset W$ is an isotropic $\kk$-line, its stabilizer   $P=\Stab_G (    J     )$ in $G$  is a parabolic subgroup.  
This establishes a bijection between  isotropic $\kk$-lines in $W$  and proper parabolic subgroups of $G$.
If  $n>2$ then such isotropic $\kk$-lines always exist.

\begin{definition}\label{def:clr}
A   \emph{cusp label representative} for $(G,\mathcal{D})$ is a pair 
$
\Phi = (P , g)
$
in which $g\in G(\A_f)$  and $P\subset G$ is a parabolic subgroup.    If  $P=\Stab_G(J)$ for an isotropic $\kk$-line $J\subset W$, we call $\Phi$ a  \emph{proper cusp label representative}.  If $P=G$ we call $\Phi$ an \emph{improper cusp label representative}. 
\end{definition}

For each  cusp label representative   $\Phi=(P,g)$ there is a distinguished normal subgroup $Q_\Phi \normal P$.  If $P=G$ we simply take $Q_\Phi = G$.    If $P= \Stab_G (    J     )$ for an isotropic $\kk$-line $J\subset W$ then,  following the recipe of \cite[\S 4.7]{Pink},  we define  $Q_\Phi$ as the fiber product
\begin{equation}\label{Q fiber}
\xymatrix{
{  Q_\Phi  } \ar[r]^{\nu_\Phi}\ar[d] & {   \mathrm{Res}_{\kk/\Q} \mathbb{G}_m  } \ar[d]^{ a \mapsto  (a,  \mathrm{Nm}(a)  , a ,\mathrm{id}  ) } \\
{ P }  \ar[r] & { \GU(W_0) \times  \GL(J ) \times  \GU(J^\perp/J ) \times \GL(W / J^\perp) .}
}
\end{equation}
The  morphism $G\to \GU(W)$ restricts to an injection
$
Q_\Phi \hookrightarrow \GU(W),
$
as  the action of $Q_\Phi$ on  $J^\perp/J$ determines its action on $W_0$.

Let $K\subset G(\A_f)$ be the compact open subgroup  (\ref{K choice}).  Any cusp label representative $\Phi=(P,g)$  determines compact open subgroups
\[
K_\Phi   = gKg^{-1}  \cap Q_\Phi(\A_f),\quad 
\tilde{K}_\Phi   =  gKg^{-1} \cap P(\A_f) ,
\]
and a finite group
\begin{equation}\label{delta group}
\Delta_\Phi   =  \big( P  (\Q)  \cap Q_\Phi  (\A_f) \tilde{K}_\Phi \big) / Q_\Phi  (\Q).
\end{equation}

\begin{definition}
 Two  cusp label representatives 
$\Phi = (P,g)$ and $\Phi'=( P',g')$ are \emph{$K$-equivalent} if   there exist $\gamma \in G(\Q)$, $h  \in Q_\Phi(\A_f)$, and  $k \in K$ such that  
\[
( P'  , g') =   (  \gamma P \gamma^{-1}  , \gamma  h g k).
\]
One may easily verify that this is an equivalence relation.  Obviously, there is a unique $K$-equivalence class of improper cusp label representatives.   
\end{definition}

From now through \S \ref{ss:mixed special divisors}, we fix a proper cusp label representative $\Phi=(P,g)$, with $P\subset G$ the stabilizer of an isotropic $\kk$-line $J\subset W$.  There is an induced weight filtration  $\mathrm{wt}_iW\subset W$ defined by
\[
\xymatrix{
{ 0  } \ar@{=}[d]  \ar@{}[r] | \subset& { J  } \ar@{=}[d]  \ar@{}[r] | \subset &  {  J^\perp  } \ar@{=}[d]  \ar@{}[r] | \subset&  {   W } \ar@{=}[d]  \\
{ \mathrm{wt}_{-3}W}  \ar@{}[r] | \subset & {   \mathrm{wt}_{-2} W  }   \ar@{}[r] | \subset&  {   \mathrm{wt}_{-1} W  }  \ar@{}[r] | \subset&   {  \mathrm{wt}_0 W  },
}
\]
and  an induced weight filtration on $V=\Hom_\kk(W_0,W)$ defined by
\[
\xymatrix{
{  \Hom_{\kk} (W_0 , 0  )   } \ar@{=}[d]  \ar@{}[r] | \subset& {  \Hom_{\kk} (W_0 , J  )  } \ar@{=}[d]  \ar@{}[r] | \subset &  {   \Hom_{\kk} (W_0 , J^\perp )  } \ar@{=}[d]  \ar@{}[r] | \subset&  {   \Hom_\kk(W_0,W)  } \ar@{=}[d]  \\
{ \mathrm{wt}_{-2}V}  \ar@{}[r] | \subset & {   \mathrm{wt}_{-1} V  }   \ar@{}[r] | \subset&  {   \mathrm{wt}_{0} V  }  \ar@{}[r] | \subset&   {  \mathrm{wt}_1 V  },
}
\]
It is easy to see that $\mathrm{wt}_{-1}V$ is an isotropic  $\kk$-line, whose orthogonal   with respect to (\ref{hom hermitian}) is $\mathrm{wt}_0 V$.
Denote by 
$
\mathrm{gr}_i W =\mathrm{wt}_i W  / \mathrm{wt}_{i-1} W 
$
the graded pieces, and similarly for $V$.  

The  $\co_\kk$-lattice $g \mathfrak{a}\subset W$  determines an $\co_\kk$-lattice 
\[
\mathrm{gr}_i (g\mathfrak{a}  ) =    \big(  g\mathfrak{a} \cap \mathrm{wt}_i W  \big)  / \big(  g \mathfrak{a} \cap \mathrm{wt}_{i-1} W \big)     \subset \mathrm{gr}_i W . 
\]
The middle graded piece $\mathrm{gr}_{-1}(g\mathfrak{a})$ is endowed with a positive definite self-dual hermitian form, inherited from the self-dual hermitian form on $g\mathfrak{a}$ appearing in the proof of Proposition \ref{prop:component count}.
 The outer graded pieces
\begin{equation}\label{easy graded}
\mathfrak{m} =  \mathrm{gr}_{-2} ( g \mathfrak{a}  ) ,\quad   \mathfrak{n} =    \mathrm{gr}_{0} ( g \mathfrak{a}  ) 
\end{equation}
are   projective rank one $\co_\kk$-modules\footnote{In fact $\mathfrak{m}\iso \mathfrak{n}$ as $\co_\kk$-modules, but identifying them can only lead to confusion.}, endowed with a perfect $\Z$-bilinear pairing
$
\mathfrak{m} \otimes_\Z \mathfrak{n} \to \Z
$
inherited from the perfect symplectic form on $g \mathfrak{a}$ appearing in the proof of Proposition \ref{prop:shimura moduli}.

\begin{remark}\label{rem:determined by grade}
The isometry class of  $g\mathfrak{a}$ as a hermitian lattice is determined by the isomorphism classes of $ \mathfrak{m} $ and $ \mathfrak{n}  $ as $\co_\kk$-modules and the isometry class of $\mathrm{gr}_{-1}(g\mathfrak{a})$ as a hermitian lattice.  
This follows from the proof of \cite[Proposition 2.6.3]{Ho2}, which shows that one can find a splitting\footnote{This  uses our standing assumption that $\kk$ has odd discriminant.}  
\[
g\mathfrak{a} \iso \mathrm{gr}_{-2} ( g \mathfrak{a}  )  \oplus \mathrm{gr}_{-1} ( g \mathfrak{a}  ) \oplus\mathrm{gr}_{0} ( g \mathfrak{a}  ) ,
\]
in such a way that the outer summands are totally isotropic, and each is orthogonal to the middle summand.
\end{remark}

Exactly as in (\ref{flippy map}), there is a  $\kk$-conjugate linear isomorphism
\[
\Hom_\kk( W_0, \mathrm{gr}_{-1}W ) \map{x\mapsto x^\vee} \Hom_\kk(  \mathrm{gr}_{-1}W ,W_0).
\]
If we define 
\begin{align}\label{boundary herm}
L_0 &= \Hom_{\co_\kk}( g\mathfrak{a}_0 , \mathrm{gr}_{-1}(g \mathfrak{a}) ) \\
\Lambda_0 &= \Hom_{\co_\kk}  ( \mathrm{gr}_{-1} ( g \mathfrak{a}  ) , g \mathfrak{a}_0  ), \nonumber
\end{align}
then $x\mapsto x^\vee$ restricts to an $\co_\kk$-conjugate linear isomorphism $L_0 \iso \Lambda_0$.
These are, in a natural way, positive definite self-dual hermitian lattices.  
For $x_1,x_2\in L_0$ the hermitian form on $L_0$ is defined, as in (\ref{hom hermitian}), by
\[
\langle  x_1 , x_2 \rangle = x_1^\vee\circ x_2 \in \End_{\co_\kk}(g \mathfrak{a}_0) \iso \co_\kk,
\]
while the hermitian form on $\Lambda_0$ is defined by 
\[\langle x^\vee_2, x^\vee_1 \rangle  = \langle x_1,x_2\rangle.\]

\begin{lemma}\label{lem:unitary delta moduli}
Two proper cusp label representatives  $\Phi$ and $\Phi'$ are $K$-equivalent if and only if  $\Lambda_0 \iso \Lambda_0'$ as hermitian $\co_\kk$-modules and $\mathfrak{n} \iso \mathfrak{n}'$ as $\co_\kk$-modules.  Moreover, the finite group (\ref{delta group}) satisfies
\begin{equation}\label{delta iso}
\Delta_\Phi \iso \mathrm{U}(\Lambda_0  ) \times \GL_{\co_\kk}(\mathfrak{n}  ).
\end{equation}
\end{lemma}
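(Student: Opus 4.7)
The plan is to verify the classification first, and then to identify $\Delta_\Phi$ from its action on the graded pieces of the filtered hermitian lattice $g\mathfrak{a}$.

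For the forward direction, suppose $(P',g') = (\gamma P\gamma^{-1},\gamma h g k)$ with $\gamma\in G(\Q)$, $h\in Q_\Phi(\A_f)$, $k\in K$. The element $k$ preserves the lattices $\widehat{\mathfrak{a}}_0$ and $\widehat{\mathfrak{a}}$, while $h$ preserves the weight filtration on $W$ defined by $J$ and acts as fixed scalars on its graded pieces, by the fiber product description (\ref{Q fiber}); the conjugation by $\gamma$ is a $\kk$-similitude carrying $J$ to $J'$. Passing to graded pieces and renormalizing by $\mathrm{rat}(\nu(g))$ as in the proof of Proposition \ref{prop:component reciprocity} then produces the required $\co_\kk$-module isomorphism $\mathfrak{n}\iso \mathfrak{n}'$ and hermitian isomorphism $\Lambda_0\iso \Lambda_0'$.

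For the converse, I would start from such isomorphisms; note that $\mathfrak{m}\iso\mathfrak{m}'$ is then automatic via the perfect $\Z$-pairing between $\mathfrak{m}$ and $\mathfrak{n}$. Remark \ref{rem:determined by grade} provides an isometry of filtered hermitian $\co_\kk$-lattices $g\mathfrak{a}\iso g'\mathfrak{a}$, together with a compatible isometry $g\mathfrak{a}_0\iso g'\mathfrak{a}_0$. Extending $\Q$-linearly and matching similitude factors, this defines $\gamma\in G(\Q)$ with $\gamma J=J'$, hence $\gamma P\gamma^{-1}=P'$. At every finite place the self-dual lattices $\gamma g\widehat{\mathfrak{a}}$ and $g'\widehat{\mathfrak{a}}$ now preserve the same filtration, so they differ by an element of $\tilde{K}_{\Phi'}$ modified by a factor in $Q_{\Phi'}(\A_f)$ that corrects the three graded pieces, yielding the factorization $g' = \gamma h g k$ after reindexing.

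For the description of $\Delta_\Phi$, the condition $\gamma\in Q_\Phi(\A_f)\tilde K_\Phi$ translates, after taking the quotient by $Q_\Phi(\Q)$, into the statement that $\gamma\in P(\Q)/Q_\Phi(\Q)$ stabilizes the $\widehat{\co}_\kk$-lattice $\widehat{g\mathfrak{a}}$. By the fiber product (\ref{Q fiber}), after killing the diagonal scalar $\mathrm{Res}_{\kk/\Q}\mathbb{G}_m$, the group $P/Q_\Phi$ is identified with $\mathrm{U}(\mathrm{gr}_{-1} W)\times \GL_\kk(\mathrm{gr}_0 W)$, and the constraint of preserving the lattice cuts out $\mathrm{U}(\mathrm{gr}_{-1}(g\mathfrak{a}))\times \GL_{\co_\kk}(\mathfrak{n})$. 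The $\co_\kk$-conjugate linear isomorphism $L_0\iso \Lambda_0$ introduced after (\ref{boundary herm}) identifies the unitary group of $\mathrm{gr}_{-1}(g\mathfrak{a})$ with $\mathrm{U}(\Lambda_0)$, giving the claimed product decomposition. The main obstacle will be precise bookkeeping with similitude factors to confirm that the residual central $\mathrm{Res}_{\kk/\Q}\mathbb{G}_m$-twist in (\ref{Q fiber}) contributes no spurious factor; a related technical step is a strong approximation argument needed in the converse direction to realize integral equivalence simultaneously at all finite places.
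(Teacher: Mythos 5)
Your overall strategy coincides with the one the paper intends: the paper declares the first claim ``an elementary exercise,'' and for the second it does exactly what you do, namely observe that $P(\Q)$ acts on $\Hom_\kk(\mathrm{gr}_{-1}W,W_0)$ and on $\mathrm{gr}_0W$ with $Q_\Phi$ acting trivially, and that intersecting with $Q_\Phi(\A_f)\tilde K_\Phi$ cuts out the stabilizers of the lattices $\Lambda_0$ and $\mathfrak{n}$. Your forward direction and your identification of $\Delta_\Phi$ are sound.

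There is, however, one step in your converse argument that fails as written: the assertion that $\Lambda_0\iso\Lambda_0'$ and $\mathfrak{n}\iso\mathfrak{n}'$ yield, via Remark \ref{rem:determined by grade}, an isometry $g\mathfrak{a}\iso g'\mathfrak{a}$ \emph{together with a compatible isometry} $g\mathfrak{a}_0\iso g'\mathfrak{a}_0$. The rank-one self-dual hermitian lattices $g\mathfrak{a}_0$ and $g'\mathfrak{a}_0$ need not be isometric at all: their isometry classes are a torsor under the class group of $\kk$, and only the ``difference'' $\Lambda_0=\Hom_{\co_\kk}(\mathrm{gr}_{-1}(g\mathfrak{a}),g\mathfrak{a}_0)$ is an invariant of the $K$-equivalence class. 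Concretely, take $g'=s(a)g$ with $s$ the section of Lemma \ref{lem:mixed section} and $a\in\widehat\kk^\times$ representing a nontrivial ideal class: then $\Lambda_0'=\Lambda_0$ and $\mathfrak{n}'=\mathfrak{n}$, and $(P,g)$, $(P,g')$ are $K$-equivalent (via $h=s(a)$, $\gamma=1$, $k=1$), yet $g'\mathfrak{a}_0=a\cdot g\mathfrak{a}_0\not\iso g\mathfrak{a}_0$ and $\mathrm{gr}_{-1}(g'\mathfrak{a})\not\iso\mathrm{gr}_{-1}(g\mathfrak{a})$. The repair is to spend the $Q_\Phi(\A_f)$-freedom \emph{before} constructing $\gamma$: either replace $(P',g')$ by the equivalent $(P',s(a)^{-1}g')$ for suitable $a$ so that $g'\mathfrak{a}_0$ lands in the class of $g\mathfrak{a}_0$ (after which your appeal to Remark \ref{rem:determined by grade} is legitimate), or else build $\gamma\in G(\Q)$ only at the level of $\Q$-hermitian spaces (Witt's theorem carries $J$ to $J'$) and push the entire lattice comparison into the adelic factor, checking prime by prime that $\gamma^{-1}g'\in Q_\Phi(\A_f)\,g K$; note that since $h$ is allowed to be an arbitrary adelic element, this last step is purely local and needs no strong approximation.
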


\begin{proof}
The first claim is an elementary exercise, left to the reader.  
For the second claim we only define the isomorphism (\ref{delta iso}),  and again leave the   details to the reader. The group $P(\Q)$ acts on both $W_0$ and $W$, preserving their weight filtrations, and so acts on both the hermitian space $\Hom_\kk(\mathrm{gr}_{-1}W , W_0)$ and the $\kk$-vector space $\mathrm{gr}_0 W$.  The subgroup $P(\Q) \cap Q_\Phi(\A_f) \tilde{K}_\Phi$ preserves the lattices 
\[
\Lambda_0 \subset \Hom_\kk(\mathrm{gr}_{-1}W , W_0)
\]
and $\mathfrak{n} \subset \mathrm{gr}_0 W$, inducing  (\ref{delta iso}).
\end{proof}


\subsection{Mixed Shimura varieties}
\label{ss:mixed data}


The subgroup $Q_\Phi(\R) \subset G(\R)$ acts on 
\[
\mathcal{D}_\Phi(W) = \{ \mbox{$\kk$-stable $\R$-planes $y\subset W(\R)$} :  W(\R)= J^\perp(\R) \oplus y \},
\]
and so also acts on 
\[
\mathcal{D}_\Phi = \mathcal{D}(W_0) \times \mathcal{D}_\Phi(W).
\]
The hermitian domain of (\ref{GU hermitian}) satisifies $\mathcal{D}(W) \subset \mathcal{D}_\Phi(W)$, and hence there is a canonical  $Q_\Phi(\R)$-equivariant inclusion 
$
\mathcal{D}\subset \mathcal{D}_\Phi.
$

The mixed Shimura variety 
\begin{equation}
\label{mixed shimura}
\mathrm{Sh} (Q_\Phi , \mathcal{D}_\Phi)(\C)  = Q_\Phi(\Q) \backslash \mathcal{D}_\Phi \times Q_\Phi(\A_f) / K_\Phi 
\end{equation}
admits a canonical model  $\mathrm{Sh}( Q_\Phi, \mathcal{D}_\Phi)$ over $\kk$ by the general results of \cite{Pink}.  By rewriting the double quotient as
\[
\mathrm{Sh} (Q_\Phi , \mathcal{D}_\Phi)(\C)  \iso   Q_\Phi(\Q) \backslash   \mathcal{D}_\Phi \times  Q_\Phi (\A_f) \tilde{K}_\Phi / \tilde{K}_\Phi ,
\]
we see that (\ref{mixed shimura}) admits an action of the finite group $\Delta_\Phi$ of (\ref{delta group}), induced by the  action of 
  $P  (\Q)  \cap Q_\Phi  (\A_f) \tilde{K}_\Phi$ on both factors of $\mathcal{D}_\Phi \times  Q_\Phi (\A_f) \tilde{K}_\Phi$.  This action descends to an action on the canonical model.

\begin{proposition}\label{prop:mixed connected}
The morphism $\nu_\Phi$ of (\ref{Q fiber}) induces a surjection
\[
\mathrm{Sh} (Q_\Phi , \mathcal{D}_\Phi)(\C)  \map{ ( z,h) \mapsto \nu_\Phi(h) }   \kk^\times \backslash \widehat{\kk}^\times / \widehat{\co}_{\kk}^\times
\]
with connected fibers.  This map is $\Delta_\Phi$-equivariant, where $\Delta_\Phi$ acts trivially on the target.  In particular, the number of connected components of  (\ref{mixed shimura}) is equal to the class number of $\kk$, and the same is true of its orbifold quotient by the action of $\Delta_\Phi$.
\end{proposition}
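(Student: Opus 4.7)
The plan is to count components of (\ref{mixed shimura}) by combining connectedness of the mixed symmetric domain with strong approximation on the kernel of $\nu_\Phi$.

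First I would check that $\mathcal{D}_\Phi$ is connected. The condition $W(\R)=J^\perp(\R)\oplus y$ identifies $\mathcal{D}_\Phi(W)$ with the set of $\kk$-linear splittings of $W(\R)\to W(\R)/J^\perp(\R)$, which is a torsor over the real vector space $\Hom_\kk(W/J^\perp,J^\perp)\otimes_\Q\R$, hence connected; since $\mathcal{D}(W_0)$ is a point, $\mathcal{D}_\Phi$ is connected, and so components of (\ref{mixed shimura}) biject with $Q_\Phi(\Q)\backslash Q_\Phi(\A_f)/K_\Phi$.

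Next I would analyze the exact sequence $1\to U\to Q_\Phi\map{\nu_\Phi}T\to 1$, with $T=\mathrm{Res}_{\kk/\Q}\mathbb{G}_m$ and $U=\ker(\nu_\Phi)$; an element of $U$ acts trivially on every graded piece of the weight filtration on $W$, so $U$ is connected unipotent. Because $U$ is unipotent, $\nu_\Phi$ is surjective on $\Q$-points and on $\A_f$-points, and using a splitting of $g\widehat{\mathfrak{a}}$ as in Remark \ref{rem:determined by grade} one checks $\nu_\Phi(K_\Phi)=\widehat{\co}_\kk^\times$. Together these give a well-defined surjection
\[
\nu_\Phi\colon Q_\Phi(\Q)\backslash Q_\Phi(\A_f)/K_\Phi\twoheadrightarrow \kk^\times\backslash\widehat{\kk}^\times/\widehat{\co}_\kk^\times.
\]
For injectivity---equivalently, the connectedness of the fibers of the map on (\ref{mixed shimura})---any class in the preimage of the identity coset admits a representative in $U(\A_f)$, with two representatives differing by $U(\Q)\cdot(K_\Phi\cap U(\A_f))$. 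The key input is strong approximation for the connected unipotent $\Q$-group $U$, which yields $U(\Q)\cdot(K_\Phi\cap U(\A_f))=U(\A_f)$ and collapses the preimage to a single double coset; the main obstacle of the proof lies right here, though it is standard.

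Finally, for $\Delta_\Phi$-equivariance I would extend $\nu_\Phi$ to $P$ via the character $\chi_0\colon P\to T$ describing the action on $W_0$; by (\ref{Q fiber}) it restricts to $\nu_\Phi$ on $Q_\Phi$. For any $p\in P(\Q)\cap Q_\Phi(\A_f)\tilde{K}_\Phi$ written as $p=q_0k_0$, the induced action on the target is translation by $\nu_\Phi(q_0)=\chi_0(p)\chi_0(k_0)^{-1}\in\kk^\times\cdot\widehat{\co}_\kk^\times$, which is trivial in the class group. This triviality together with connectedness of fibers forces $\Delta_\Phi$ to preserve each connected component of (\ref{mixed shimura}) setwise, so the orbifold quotient also has exactly $h$ components.
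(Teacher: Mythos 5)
Your proof is correct and follows essentially the same route as the paper: connectedness of $\mathcal{D}_\Phi$ reduces the count to $Q_\Phi(\Q)\backslash Q_\Phi(\A_f)/K_\Phi$, strong approximation for the unipotent kernel of $\nu_\Phi$ identifies this with $\kk^\times\backslash\widehat{\kk}^\times/\nu_\Phi(K_\Phi)$, and one computes $\nu_\Phi(K_\Phi)=\widehat{\co}_\kk^\times$. You simply spell out more of the details (including the $\Delta_\Phi$-equivariance, which the paper leaves implicit) than the paper's two-line proof does.
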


\begin{proof}
The space $\mathcal{D}_\Phi$ is connected, and the kernel of $\nu_\Phi : Q_\Phi \to \mathrm{Res}_{\kk/\Q} \mathbb{G}_m$ is unipotent (so satisfies strong approximation).  Therefore 
\[
\pi_0\big( \mathrm{Sh} (Q_\Phi , \mathcal{D}_\Phi)(\C)  \big) 
\iso Q_\Phi(\Q) \backslash Q_\Phi(\A_f) / K_\Phi \iso  \kk^\times \backslash \widehat{\kk}^\times / \nu_\Phi(K_\Phi),
\]
and an easy calculation shows that $\nu_\Phi(K_\Phi) = \widehat{\co}_\kk^\times$.
\end{proof}

It will be useful to have other interpretations of  $\mathcal{D}_\Phi$.

\begin{remark}\label{rem:mixed hodge}
Any point $y\in \mathcal{D}_\Phi(W)$  determines a  mixed Hodge structure on $W$ whose  weight filtration $\mathrm{wt}_i W\subset W$ was defined above,  and whose Hodge filtration is defined exactly as in Remark \ref{rem:hodge}.  As in \cite[p.~64]{PS} or \cite[Proposition 1.2]{Pink} there is an induced bigrading  $W(\C) = \bigoplus W^{(p,q)}$,
and this bigrading is induced by a morphism $\mathbb{S}_\C \to  \GU(W) _\C$ taking values in the stabilizer of $J(\C)$.  
The product  of  this morphism with the morphism $\mathbb{S}_\C \to \GU(W_0)_{\C} $ of Remark \ref{rem:hodge} defines a map  $z  :  \mathbb{S}_\C \to Q_{\Phi \C}$, and this realizes   $ \mathcal{D}_\Phi   \subset \Hom(\mathbb{S}_\C, Q_{\Phi \C}).$
\end{remark}

\begin{remark}\label{rem:mixed to so}
Imitating the construction of Remark \ref{rem:to so} identifies
\[
\mathcal{D}_\Phi  \iso  \big\{ w \in \epsilon V(\C) :    V(\C) =  \mathrm{wt}_0V(\C)  \oplus \C w  \oplus \C \overline{w}   \big\} / \C^\times 
\subset \mathbb{P}( \epsilon V(\C)) 
\]
as an open subset of projective space.
\end{remark}


\subsection{The first moduli interpretation}
\label{ss:one-motives}


Using the pair  $( \Lambda_0 , \mathfrak{n})$ defined in \S \ref{ss:cusp notation}, we now construct a smooth integral model of the mixed Shimura variety (\ref{mixed shimura}).  
Following the general recipes of the theory of arithmetic toroidal compactifications, as in \cite{FC,Ho2, MP, Lan}, this integral model will be defined as the top layer of a tower of morphisms
\[
\mathcal{C}_\Phi \to \mathcal{B}_\Phi \to \mathcal{A}_\Phi \to \Spec(\co_\kk),
\]
smooth of relative dimensions $1$, $n-2$, and $0$, respectively.  

Recall from  \S \ref{ss:unitary integral models} the  smooth $\co_\kk$-stack 
\[
\mathcal{M}_{(1,0)} \times_{\co_\kk} \mathcal{M}_{(n-2,0)} \to \Spec(\co_\kk)
\] 
of relative dimension $0$ parametrizing  certain pairs $(A_0, B)$ of    polarized abelian schemes over $S$ with $\co_\kk$-actions.     
The \'etale sheaf $\underline{\Hom}_{\co_\kk} ( B  , A_0  )$  on $S$ is locally constant; this is a consequence of \cite[Theorem 5.1]{BHY}.

Define $\mathcal{A}_\Phi$ as  the  moduli space of triples $(A_0, B, \varrho )$ over  $\co_\kk$-schemes $S$, in which  $(A_0,B) $  is an $S$-point of $\mathcal{M}_{(1,0)} \times_{\co_\kk} \mathcal{M}_{(n-2,0)}$, and 
\[
\varrho :    \underline{\Lambda}_0  \iso \underline{\Hom}_{\co_\kk} (  B , A_0  )
\]  
is an isomorphism of \'etale sheaves of hermitian $\co_\kk$-modules.  

Define  $\mathcal{B}_\Phi$  as the  moduli space of quadruples  $(A_0, B, \varrho , c )$ over  $\co_\kk$-schemes $S$, in which $(A_0,B,\varrho ) $  is an $S$-point of $\mathcal{A}_\Phi$, and $c: \mathfrak{n} \to B$  is an $\co_\kk$-linear homomorphism of group schemes over $S$.    In other words, if $(A_0,B,\varrho)$ is the universal object over $\mathcal{A}_\Phi$, then
\[
\mathcal{B}_\Phi =  \underline{\Hom}_{\co_\kk}( \mathfrak{n} , B).
\]

Suppose we fix  $\mu,\nu \in \mathfrak{n}$.  For any scheme $U$ and any morphism $U\to \mathcal{B}_\Phi$, there is a corresponding quadruple $(A_0,B,\varrho, c)$ over $U$.   Evaluating the morphism of $U$-group schemes $c : \mathfrak{n} \to B$  at $\mu$ and $\nu$ determines $U$-points $c(\mu), c(\nu) \in B(U)$, and hence determines a morphism of $U$-schemes
\[
U  \map{c(\mu) \times  c (\nu) }   B \times B  \iso B  \times B^\vee.
\]
Denote by $\mathcal{L}(\mu,\nu)_U$ the pullback of the Poincar\'e bundle via this morphism.  
As $U$ varies, these line bundles are obtained as the pullback of a single line bundle     $\mathcal{L}(\mu,\nu)$ on $\mathcal{B}_\Phi$.

It follows from standard bilinearity properties of the Poincar\'e bundle that $\mathcal{L}(\mu,\nu)$
 depends, up to canonical isomorphism, only on the image of $\mu\otimes \nu$ in 
\[
\mathrm{Sym}_\Phi  = \mathrm{Sym}^2_\Z (\mathfrak{n} )/  \big\langle (x\mu)\otimes \nu- \mu\otimes(\overline{x}\nu)
: x\in \co_\kk,\, \mu,\nu \in \mathfrak{n}  \big\rangle.
\]
Thus we may  associate to every $\chi\in \mathrm{Sym}_\Phi$ a line bundle $\mathcal{L}(\chi)$ on $\mathcal{B}_\Phi$, and there   are canonical isomorphisms
\[
\mathcal{L}( \chi )  \otimes \mathcal{L}( \chi') \iso \mathcal{L}( \chi+\chi').
\]
Our assumption that $D$ is odd implies that $\mathrm{Sym}_\Phi$ is a free $\Z$-module of rank one.   Moreover, there is positive cone in  $\mathrm{Sym}_\Phi \otimes_\Z\R$ uniquely determined by the condition $\mu\otimes \mu \ge 0$ for all $\mu \in \mathfrak{n}$.     Thus  all of the line bundles $\mathcal{L}(\chi)$ are  powers of the distinguished line bundle  
 \begin{equation}\label{boundary bundle}
 \mathcal{L}_\Phi=\mathcal{L}(\chi_0)
 \end{equation}
determined by the unique positive generator  $\chi_0 \in \mathrm{Sym}_\Phi$.

At last,  define   $\mathcal{B}_\Phi$-stacks
\[
\mathcal{C}_\Phi  = \underline{\mathrm{Iso}} (  \mathcal{L}_\Phi , \co_{\mathcal{B}_\Phi} ) ,\quad 
\mathcal{C}_\Phi^* = \underline{\Hom} (  \mathcal{L}_\Phi ,  \co_{\mathcal{B}_\Phi} ) .
\]
In other words, $\mathcal{C}_\Phi^*$ is the total space of the line bundle $\mathcal{L}_\Phi^{-1}$, and  $\mathcal{C}_\Phi$ is the complement of the zero section
$\mathcal{B}_\Phi \hookrightarrow  \mathcal{C}_\Phi^*  .
$
In slightly fancier language,
\[
\mathcal{C}_\Phi
= \underline{\Spec}_{\mathcal{B}_\Phi }  \Big( \bigoplus_{ \ell \in \Z } \mathcal{L}_\Phi^\ell  \Big),
\quad 
\mathcal{C}_\Phi^*
= \underline{\Spec}_{  \mathcal{B}_\Phi  }  \Big(  \bigoplus_{ \ell \ge 0  } \mathcal{L}_\Phi^\ell \Big),
\]
and the zero section  $\mathcal{B}_\Phi \hookrightarrow  \mathcal{C}_\Phi^*$ is defined by the ideal sheaf  $\bigoplus_{ \ell > 0  } \mathcal{L}_\Phi^\ell$.

\begin{remark}
When $n=2$ the situation is a bit degenerate.  In this case 
\[
\mathcal{B}_\Phi = \mathcal{A}_\Phi = \mathcal{M}_{(1,0)},
\]
$\mathcal{L}_\Phi$ is the trivial bundle, and  $\mathcal{C}_\Phi \to \mathcal{B}_\Phi$ is  the trivial $\mathbb{G}_m$-torsor.
\end{remark}

\begin{remark}
Using the isomorphism of Lemma \ref{lem:unitary delta moduli},  the group $\Delta_\Phi$ acts on  $\mathcal{B}_\Phi$  via
\[
(u, t ) \action (A_0,B,\varrho, c ) = (A_0, B, \varrho \circ u^{-1}, c \circ t^{-1}),
\]
for  $(u,t) \in \mathrm{U}(\Lambda_0  ) \times \GL_{\co_\kk}(\mathfrak{n}  )$.   The  line bundle $\mathcal{L}_\Phi$ is invariant under  $\Delta_\Phi$, and hence the action of $\Delta_\Phi$  lifts to  both  $ \mathcal{C}_\Phi $ and $\mathcal{C}_\Phi^*$.
\end{remark}

\begin{proposition}\label{prop:boundary uniformization}
There is a $\Delta_\Phi$-equivariant isomorphism
\[
 \mathrm{Sh}  (Q_\Phi , \mathcal{D}_\Phi) \iso \mathcal{C}_{\Phi/\kk}  .
\]
\end{proposition}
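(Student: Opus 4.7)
The strategy is to construct the isomorphism directly on complex points by unwinding each piece of data in $\mathcal{C}_\Phi(\C)$ from the mixed Hodge structure on $W$ and $W_0$ attached to a point $(z,h) \in \mathcal{D}_\Phi \times Q_\Phi(\A_f)$, and then to invoke Pink's general theory together with the universal property of canonical models to deduce that the map so constructed descends to the canonical model over $\kk$.

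First, I will unwind the moduli interpretation.  Given a pair $(z,h)$, the lattice $h\mathfrak{a} \subset W$ inherits a weight filtration from the filtration associated to $\Phi$, and Remark \ref{rem:mixed hodge} equips $W$ with a mixed Hodge structure of weights $\{-2,-1,0\}$; similarly, $h\mathfrak{a}_0\subset W_0$ receives the Hodge structure of Remark \ref{rem:hodge}.  The middle graded piece $\mathrm{gr}_{-1}(h\mathfrak{a})$ carries a polarized Hodge structure of weight $-1$ and type $(n-2,0)$, giving an abelian variety $B/\C \in \mathcal{M}_{(n-2,0)}(\C)$, while $h\mathfrak{a}_0$ yields $A_0 \in \mathcal{M}_{(1,0)}(\C)$.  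The tautological identification $\Lambda_0 = \Hom_{\co_\kk}(\mathrm{gr}_{-1}(h\mathfrak{a}) , h\mathfrak{a}_0)$ (compare (\ref{boundary herm})) provides the trivialization $\varrho$, so we have produced a point of $\mathcal{A}_\Phi(\C)$.

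Next, I will read off $c$ and the trivialization that upgrades the $\mathcal{A}_\Phi$-point to a $\mathcal{C}_\Phi$-point from the extension classes of the weight filtration.  The short exact sequence
\[
0 \to \mathrm{gr}_{-1}W \to W/\mathrm{wt}_{-2}W \to \mathrm{gr}_0 W \to 0
\]
of mixed Hodge structures, together with the lattices coming from $h\mathfrak{a}$, classifies a $\C$-point of $\underline{\Ext}^1(\mathfrak{n},B) = \underline{\Hom}_{\co_\kk}(\mathfrak{n}, B)(\C) = \mathcal{B}_\Phi(\C)$, which is precisely the datum $c$.  The remaining piece of the weight filtration, namely the extension
\[
0 \to \mathrm{wt}_{-2}W \to W \to W/\mathrm{wt}_{-2}W \to 0,
\]
is a biextension of $(\mathfrak{n},\mathfrak{n})$ by $\mathbb{G}_m$ in the sense of Mumford, and the Appell-Humbert / Poincar\'e-bundle formalism identifies the data of this biextension (with its lattice) with a trivialization of $\mathcal{L}_\Phi$ at the underlying $\mathcal{B}_\Phi$-point; that is, with a lift to $\mathcal{C}_\Phi(\C)$.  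This yields a map of complex orbifolds $\mathrm{Sh}(Q_\Phi,\mathcal{D}_\Phi)(\C) \to \mathcal{C}_\Phi(\C)$, and the $\Delta_\Phi$-equivariance follows by inspection from the descriptions of the $\Delta_\Phi$-actions on both sides in terms of the isomorphism $\Delta_\Phi \iso \mathrm{U}(\Lambda_0) \times \GL_{\co_\kk}(\mathfrak{n})$ of Lemma \ref{lem:unitary delta moduli}.

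Finally, I will verify that the map is an isomorphism and that it descends to $\kk$.  For bijectivity on $\C$-points, I would compare the fibers of both sides over $\kk^\times\bs\widehat{\kk}^\times/\widehat{\co}_\kk^\times$:  the left side's fibers are connected of the expected dimension $n$ by Proposition \ref{prop:mixed connected}, while on the right, the tower $\mathcal{C}_\Phi \to \mathcal{B}_\Phi \to \mathcal{A}_\Phi$ is smooth of relative dimensions $1$, $n-2$, $0$, and $\mathcal{A}_\Phi$ has precisely the class number of $\kk$ geometric components (from the classification of self-dual hermitian lattices in the fixed genus).  An explicit uniformization of each connected component by the nilpotent radical of $Q_\Phi(\R)$ acting on the universal cover of $\mathcal{C}_\Phi$ then exhibits bijectivity.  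To descend to $\kk$, I would appeal to Pink's characterization of the canonical model of a mixed Shimura variety by its special points:  the CM points on the left correspond under the construction above to tuples of CM abelian varieties with trivialized extensions, whose reflex fields lie in $\kk^{\mathrm{ab}}$, and a direct check of Shimura-Taniyama reciprocity against Deligne's reciprocity law (as in the proof of Proposition \ref{prop:component reciprocity}) shows the map is Galois-equivariant on the set of CM points, forcing descent to $\kk$.  The main technical obstacle here is the biextension/Poincar\'e-bundle identification in the third paragraph:  matching the analytic normalization of $\mathcal{L}_\Phi$ coming from the mixed Hodge theory with its algebraic definition via the universal Poincar\'e bundle on $B\times B^\vee$ requires unwinding sign conventions for the polarization and the isomorphism $B \iso B^\vee$ induced by the principal polarization and the $\co_\kk$-action.
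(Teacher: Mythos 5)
Your proposal is correct and takes essentially the same route as the paper: the paper interprets a point of $\mathrm{Sh}(Q_\Phi,\mathcal{D}_\Phi)(\C)$ as an integral mixed Hodge structure, invokes Deligne's equivalence with $1$-motives and the ``description plus sym\'etrique'' to identify the resulting moduli problem with $\mathcal{C}_\Phi$, and cites the Siegel case for descent to $\kk$ --- your step-by-step unwinding of the weight filtration (graded pieces giving $A_0$, $B$, $\varrho$; the first extension giving $c\in\underline{\Hom}_{\co_\kk}(\mathfrak{n},B)$; the second extension, viewed as a biextension, giving the trivialization of $\mathcal{L}_\Phi$) is precisely the explicit content of that equivalence. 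The only quibbles are minor: the fibers of $\mathrm{Sh}(Q_\Phi,\mathcal{D}_\Phi)(\C)$ over the class group have dimension $n-1$, not $n$, and your descent argument via special points is a legitimate substitute for the paper's citation of the Siegel mixed Shimura case.
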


\begin{proof}
This is a special case of the general fact that mixed Shimura varieties appearing at the boundary of PEL Shimura varieties are themselves moduli spaces of  $1$-motives endowed with polarizations, endomorphisms, and level structure.  The core of this is Deligne's theorem \cite[\S 10]{De} that the category of $1$-motives over $\C$ is equivalent to the category of integral mixed Hodge structures of types $(-1,-1)$, $(-1,0)$, $(0,-1)$, $(0,0)$.
See  \cite{MP}, where this is explained for Siegel modular varieties, and also \cite{Bry}.  A good introduction to $1$-motives is \cite{ABV}.

To make this a bit more explicit in our case, denote by $\mathcal{X}_\Phi$ the $\co_\kk$-stack whose functor of points assigns to an $\co_\kk$-scheme $S$ the groupoid $\mathcal{X}_\Phi(S)$ of  principally polarized $1$-motives $A$ consisting of diagrams
\[
\xymatrix{
 & & { \mathfrak{n}  }  \ar[d] \\
0 \ar[r]  &  \mathfrak{m} \otimes_\Z \mathbb{G}_m \ar[r] & \mathbb{B} \ar[r]  & B  \ar[r] & 0
}
\]
in which $B\in \mathcal{M}_{(n-2,0)}(S)$,   $\mathbb{B}$ is an extension of $B$ by the rank two torus $\mathfrak{m} \otimes_\Z \mathbb{G}_m$ in the category of group schemes with $\co_\kk$-action, and the arrows are morphisms of fppf sheaves of $\co_\kk$-modules.

To explain what it means to have a principal polarization of such a $1$-motive $A$, 
set $\mathfrak{m}^\vee = \Hom(\mathfrak{m} ,\Z)$ and $\mathfrak{n}^\vee =\Hom(\mathfrak{n} ,\Z)$, and recall from \cite[\S 10]{De} that $A$ has  a dual $1$-motive $A^\vee$ consisting of a diagram
\[
\xymatrix{
 & & { \mathfrak{m}^\vee  }  \ar[d] \\
0 \ar[r]  &  \mathfrak{n}^\vee \otimes_\Z \mathbb{G}_m \ar[r] & \mathbb{B}^\vee \ar[r]  & B^\vee  \ar[r] & 0.
}
\]
A principal polarization is an $\co_\kk$-linear isomorphism $\mathbb{B}\iso \mathbb{B}^\vee$ compatible with the given polarization $B\iso B^\vee$, and with the isomorphisms $\mathfrak{m}\iso \mathfrak{n}^\vee$ and $\mathfrak{n}\iso \mathfrak{m}^\vee$ determined by the perfect pairing $\mathfrak{m}\otimes_\Z \mathfrak{n} \to \Z$ defined after (\ref{easy graded}).

Using the ``description plus sym\'etrique''  of $1$-motives \cite[(10.2.12)]{De},
the $\co_\kk$-stack $\mathcal{C}_\Phi$ defined above can  be identified with the moduli space whose $S$-points are triples $(A_0,A,\varrho)$ in which 
\begin{itemize}
\item
$(A_0, A ) \in \mathcal{M}_{(1,0)}(S) \times \mathcal{X}_\Phi (S)$,
\item
$\varrho  :  \underline{\Lambda}_0 \iso \underline{\Hom}_{\co_\kk}( B ,A_0)$ is an  isomorphism of \'etale sheaves of hermitian $\co_\kk$-modules, where $B\in \mathcal{M}_{(n-2,0)}(S)$ is the abelian scheme part of $A$.
\end{itemize}

To verify that  $\mathrm{Sh}(Q_\Phi , \mathcal{D}_\Phi)$ has the same functor of points, one uses Remark \ref{rem:mixed hodge} to interpret $\mathrm{Sh}(Q_\Phi , \mathcal{D}_\Phi)(\C)$ as a moduli space of mixed Hodge structures on $W_0$ and $W$, and uses the theorem of Deligne cited above to interpret these mixed Hodge structures as $1$-motives.  This defines an  isomorphism $\mathrm{Sh}(Q_\Phi , \mathcal{D}_\Phi)(\C) \iso \mathcal{C}_\Phi(\C)$.   The proof that it descends to the reflex field is identical to the proof for Siegel mixed Shimura varieties   \cite{MP}.

We remark in passing that  any triple $(A_0,A,\varrho)$ as above automatically satisfies  (\ref{tate genus}) for every prime $\ell$.  Indeed,  both sides of (\ref{tate genus}) are now endowed with weight filtrations, analogous to the weight filtration on $\Hom_\kk(W_0,W)$ defined in \S \ref{ss:cusp notation}.  The isomorphism $\varrho$ induces an isomorphism (as hermitian $\co_{\kk,\ell}$-lattices) between the $\mathrm{gr}_0$ pieces on either side.  The $\mathrm{gr}_{-1}$ and $\mathrm{gr}_1$ pieces have no structure other then projective $\co_{\kk,\ell}$-modules of rank $1$, so are isomorphic.  These isomorphisms of graded pieces imply the existence of an isomorphism (\ref{tate genus}), exactly as in Remark \ref{rem:determined by grade}.
\end{proof}


\subsection{The second moduli interpretation}
\label{ss:second moduli}


In order to make explicit calculations,  it will be useful to interpret the moduli spaces 
\[
\mathcal{C}_\Phi \to \mathcal{B}_\Phi \to \mathcal{A}_\Phi \to \Spec(\co_\kk)
\]  
in a different way.

Suppose $E\to S$ is an elliptic curve over any base scheme, and denote by $\mathcal{P}_E$ the Poincar\'e bundle on 
\[
E\times_S E \iso E \times_S E^\vee.
\]
  If $U$ is any $S$-scheme and $a,b \in  E(U)$, we obtain an $\co_U$-module 
$\mathcal{P}_E(a,b)$  by pulling back the Poincare bundle via  
\[
U \map{(a,b)} E\times_S E \iso E \times_S E^\vee.
\]
 The notation is intended to remind the reader of the bilinearity properties of the Poincar\'e bundle, as expressed by  canonical $\co_U$-module isomorphisms
\begin{align}
\mathcal{P}_E(a+b,c)  &  \iso \mathcal{P}_E(a,c) \otimes \mathcal{P}_E(b,c) \label{poincare bilinear}\\
\mathcal{P}_E(a,b+c)  &  \iso \mathcal{P}_E(a,b) \otimes \mathcal{P}_E(a,c) \nonumber \\
\mathcal{P}_E(a,b) &  \iso \mathcal{P}_E(b,a), \nonumber
\end{align}
along with
$
\mathcal{P}_E(e,b) \iso \co_U \iso \mathcal{P}_E(a,e).
$  
Here $e\in E(U)$ is the zero section.


Let $E\to \mathcal{M}_{(1,0)}$ be the universal elliptic curve with complex multiplication by $\co_\kk$.  Its Poincar\'e bundle satisfies, for all $\alpha\in \co_\kk$,  the additional relation
$
\mathcal{P}_E( \alpha   a,b) \iso \mathcal{P}_E(  a, \overline{ \alpha}   b).
$

Recall the positive definite self-dual hermitian lattice $L_0$ of (\ref{boundary herm}).
Using Serre's tensor construction, we define an abelian scheme 
\begin{equation}\label{serre twist}
E\otimes L_0 = E\otimes_{\co_\kk}L_0
\end{equation}
over $\mathcal{M}_{(1,0)}$.  
As explained in detail in \cite{zavosh}, the principal polarization on $E$ and the hermitian form on $L_0$ can be combined to define a principal polarization on $E \otimes L_0$, and we denote by  $\mathcal{P}_{E\otimes L_0}$  the Poincar\'e bundle on
\[
(E\otimes L_0)\times_{\mathcal{M}_{(1,0)}} (E\otimes L_0)  \iso (E\otimes L_0)\times_{\mathcal{M}_{(1,0)}} (E\otimes L_0)^\vee. 
\]
The Poincar\'e bundle $\mathcal{P}_{E \otimes L_0}$ can be expressed in terms of $\mathcal{P}_E$.   
  If $U$ is a scheme, a morphism
\[
U \to (E\otimes L_0)\times_{\mathcal{M}_{(1,0)}} (E\otimes L_0)
\]
is given by a pair of $U$-valued points 
\[
c = \sum s_i \otimes x_i  \in E(U) \otimes L_0,\quad c' = \sum s_j' \otimes x_j'  \in E(U) \otimes L_0,
\]
and the pullback of $\mathcal{P}_{E\otimes L_0}$ to $U$ is 
\[
\mathcal{P}_{E\otimes L_0}(c,c') = \bigotimes_{i,j} \mathcal{P}_E ( \langle x_i,x_j' \rangle s_i, s_j').
\]

Define $\mathcal{Q}_{E\otimes L_0}$ to be the line bundle on $E\otimes L_0$ whose 
restriction to the $U$-valued point  $c= \sum s_i \otimes x_i  $  is 
\begin{equation}\label{Q def}
  \mathcal{Q}_{E\otimes L_0}(c)  =    \bigotimes_{ i < j } \mathcal{P}_E (    \langle x_i , x_j \rangle    s_i  ,   s_j  ) \otimes  \bigotimes_{ i } \mathcal{P}_{E } (   \gamma   \langle x_i , x_i \rangle s_i,      s_i   )  ,
\end{equation}
where
\[
\gamma=\frac{ 1+\delta }{2} \in \co_\kk.
\]
  It is related to $\mathcal{P}_{E\otimes L_0}$ by  canonical isomorphisms
\begin{align}\label{mumford sheaf}
\mathcal{P}_{E\otimes L_0} (a,b) &  \iso  \mathcal{Q}_{E\otimes L_0}(a+b)  \otimes  \mathcal{Q}_{E\otimes L_0}(a)^{-1} \otimes  \mathcal{Q}_{E\otimes L_0}(b)^{-1} \\
\mathcal{P}_{E\otimes L_0} (a,a) & \iso  \mathcal{Q}_{E\otimes L_0}(a)^{\otimes 2}. \nonumber
\end{align}
for all  $U$-valued points $a,b\in E(U)\otimes L_0$.  

\begin{remark}
As in the constructions of \cite[\S 1.3.2]{Lan} or \cite[\S 6.2]{MFK},  the line bundle $\mathcal{Q}_{E\otimes L_0}$ 
determines a morphism $E\otimes L_0 \to (E \otimes L_0)^\vee$. The relations (\ref{mumford sheaf}) amount to saying that this morphism is the principal polarization constructed in \cite{zavosh}.
\end{remark}

\begin{remark}
The line bundle $\mathcal{P}_{E\otimes L_0}( \delta a ,a)$ is canonically trivial.  This follows by comparing
\[
\mathcal{P}_{E\otimes L_0}( \gamma a ,a)^{\otimes 2}  \iso \mathcal{P}_{E\otimes L_0}(  a ,a) \otimes \mathcal{P}_{E\otimes L_0}( \delta a ,a)
\]
with
\[
\mathcal{P}_{E\otimes L_0}( \gamma a ,a)^{\otimes 2} 
\iso \mathcal{P}_{E\otimes L_0}( \gamma a ,a) \otimes \mathcal{P}_{E\otimes L_0}(  \overline{\gamma} a ,a)
\iso \mathcal{P}_{E\otimes L_0}(a,a) .
\]
\end{remark}

\begin{remark}
In the slightly degenerate case of $n=2$,
$E\otimes L_0$ is the trivial group scheme over $\mathcal{M}_{(1,0)}$, and 
$\mathcal{P}_{ E\otimes L_0}$ is the trivial bundle on $\mathcal{M}_{(1,0)}$.
\end{remark}

\begin{proposition}\label{prop:second moduli iso}
As above, let $E\to \mathcal{M}_{(1,0)}$ be the universal object.
 There are canonical isomorphisms
\[
\xymatrix{
{  \mathcal{C}_\Phi   }  \ar[r] \ar[d]^{\iso}    & {  \mathcal{B}_\Phi   }  \ar[r] \ar[d]^{\iso} &   {   \mathcal{A}_{\Phi}    }  \ar[d]^{\iso}  \\
{   \underline{\mathrm{Iso}}( \mathcal{Q}_{ E\otimes L_0 } , \co_{E\otimes L_0} )  }   \ar[r]  & {   E\otimes L_0   }   \ar[r]  &  {    \mathcal{M}_{(1,0)}   ,}
}
\]
and the middle vertical arrow identifies $\mathcal{L}_\Phi \iso \mathcal{Q}_{E\otimes L_0}$.
\end{proposition}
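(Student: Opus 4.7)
The plan is to verify the three columns of the claimed diagram from right to left, building each identification on the previous one.

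For the base, I would exhibit $\mathcal{A}_\Phi \iso \mathcal{M}_{(1,0)}$ via Serre's tensor construction. In one direction, send $A_0 \mapsto (A_0,\, A_0 \otimes_{\co_\kk} L_0,\, \varrho_{\mathrm{taut}})$, where $B = A_0 \otimes_{\co_\kk} L_0$ lies in $\mathcal{M}_{(n-2,0)}$ by virtue of the positive definite self-dual hermitian form on $L_0$, and $\varrho_{\mathrm{taut}} : \underline{\Lambda}_0 \iso \underline{\Hom}_{\co_\kk}(B, A_0)$ is the tautological isomorphism induced from the $\co_\kk$-conjugate-linear duality $L_0 \iso \Lambda_0$ of (\ref{boundary herm}). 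Conversely, given $(A_0, B, \varrho) \in \mathcal{A}_\Phi$, the datum $\varrho$ combined with the principal polarization and $\co_\kk$-action on $B$ recovers $B \iso A_0 \otimes_{\co_\kk} L_0$ canonically. Checking that these constructions are mutually quasi-inverse is a routine unwinding.

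For the middle layer, having identified the universal $B$ over $\mathcal{A}_\Phi$ with $E \otimes_{\co_\kk} L_0$, the moduli datum $c : \mathfrak{n} \to B$ becomes a point of $\underline{\Hom}_{\co_\kk}(\mathfrak{n}, E \otimes_{\co_\kk} L_0)$. Using compatibility of Serre's tensor construction with $\Hom_{\co_\kk}$, together with the fact that $\mathfrak{n}$ is a projective rank-one $\co_\kk$-module, this is canonically identified with $E \otimes_{\co_\kk} L_0$, so that $\mathcal{B}_\Phi \iso E \otimes L_0$ as $\mathcal{A}_\Phi$-schemes.

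The top layer is the real content. The isomorphism $\mathcal{C}_\Phi \iso \underline{\mathrm{Iso}}(\mathcal{Q}_{E\otimes L_0}, \co_{E\otimes L_0})$ follows formally from the identification $\mathcal{L}_\Phi \iso \mathcal{Q}_{E\otimes L_0}$. To establish the latter, I would fix a test point $c = \sum_i s_i \otimes x_i \in (E \otimes L_0)(U)$, under which the map $c : \mathfrak{n} \to B$ sends $\mu \mapsto \sum_i \mu s_i \otimes x_i$, and then expand $\mathcal{L}(\mu \otimes \nu)_c = \mathcal{P}_B(c(\mu), c(\nu))$ using the bilinearity relations (\ref{poincare bilinear}) for the Poincar\'e bundle on $B = E \otimes_{\co_\kk} L_0$. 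Moving the $\co_\kk$-action across $\mathcal{P}_E$ via the CM symmetry $\mathcal{P}_E(\alpha a, b) \iso \mathcal{P}_E(a, \overline{\alpha} b)$, one obtains
\[
\mathcal{L}(\mu \otimes \nu)_c \iso \bigotimes_{i, j} \mathcal{P}_E(\langle x_i, x_j\rangle \mu s_i,\, \nu s_j).
\]
Comparing with (\ref{Q def}) and matching the two sides across the relations defining $\mathrm{Sym}_\Phi$ yields $\mathcal{L}(\chi_0) \iso \mathcal{Q}_{E \otimes L_0}$.

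The main obstacle is bookkeeping: tracking the $\co_\kk$-action, the passage from symmetrization over ordered pairs $(i,j)$ to unordered pairs $i < j$, and pinning down the diagonal terms. The latter is the origin of the factor $\gamma = (1+\delta)/2$ in (\ref{Q def}): for a single $a = s \otimes x$ with $m = \langle x, x\rangle \in \Z$, one has
\[
\mathcal{P}_E(\gamma m s,\, s)^{\otimes 2} \iso \mathcal{P}_E(\gamma m s,\, s) \otimes \mathcal{P}_E(\overline{\gamma} m s,\, s) \iso \mathcal{P}_E((\gamma + \overline{\gamma}) m s,\, s) = \mathcal{P}_E(m s,\, s)
\]
using $\gamma + \overline{\gamma} = 1$, thereby reproducing the compatibility (\ref{mumford sheaf}) between $\mathcal{Q}$ and the self-pairing of $\mathcal{P}$. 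Once matched for a single $\chi = \mu \otimes \nu$, the identification extends by bilinearity of $\mathcal{L}(\cdot)$ to all of $\mathrm{Sym}_\Phi$, and the construction is manifestly natural in the base.
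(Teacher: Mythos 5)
There is a genuine gap, and it sits in your first two paragraphs. You identify $\mathcal{A}_\Phi \iso \mathcal{M}_{(1,0)}$ by matching the universal elliptic curve $E$ with $A_0$ itself (i.e.\ $(A_0,B,\varrho)\mapsto A_0$, with inverse $E\mapsto (E, E\otimes_{\co_\kk}L_0,\varrho_{\mathrm{taut}})$). That is a perfectly good isomorphism of stacks, but it is the wrong one for the diagram: with this choice the middle layer becomes
\[
\mathcal{B}_\Phi = \underline{\Hom}_{\co_\kk}(\mathfrak{n}, B) \iso \underline{\Hom}_{\co_\kk}(\mathfrak{n}, E\otimes_{\co_\kk}L_0) \iso E\otimes_{\co_\kk}\mathfrak{n}^\vee\otimes_{\co_\kk}L_0 ,
\]
and your assertion that this is ``canonically identified with $E\otimes_{\co_\kk}L_0$'' because $\mathfrak{n}$ is projective of rank one is false: projectivity of rank one gives you a twist by $\mathfrak{n}^\vee=\Hom_{\co_\kk}(\mathfrak{n},\co_\kk)$, and $\mathfrak{n}=\mathrm{gr}_0(g\mathfrak{a})$ as in (\ref{easy graded}) is not canonically (nor in general even abstractly) trivial --- indeed its isomorphism class is part of the data classifying the cusp, cf.\ Lemma \ref{lem:unitary delta moduli}. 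So your route produces $E'\otimes L_0$ for the twisted curve $E'=E\otimes\mathfrak{n}^\vee$, not the $E\otimes L_0$ asserted in the proposition, and the error then propagates into the line bundle computation.

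The fix is to build the $\mathfrak{n}$-twist into the base identification from the start: the paper sends $(A_0,B,\varrho)$ to $E=\underline{\Hom}_{\co_\kk}(\mathfrak{n},A_0)$, equivalently $A_0=E\otimes_{\co_\kk}\mathfrak{n}$ and $B=\underline{\Hom}_{\co_\kk}(\Lambda_0,A_0)$ (with the polarization on $B$ constructed explicitly from $\mathcal{P}_{A_0}$ and the hermitian form on $\Lambda_0$). Then
\[
\mathcal{B}_\Phi=\underline{\Hom}_{\co_\kk}(\mathfrak{n},B)\iso \underline{\Hom}_{\co_\kk}(\mathfrak{n}\otimes_{\co_\kk}\Lambda_0,A_0)\iso \underline{\Hom}_{\co_\kk}(\Lambda_0,E)\iso E\otimes_{\co_\kk}L_0,
\]
the last map being $a\otimes x\mapsto \langle\,\cdot\,,x^\vee\rangle a$ via the conjugate-linear isomorphism $L_0\iso\Lambda_0$ of (\ref{boundary herm}). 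The subsequent $\mathcal{L}_\Phi\iso\mathcal{Q}_{E\otimes L_0}$ computation must then be carried out relative to this $E$; it requires tracking how $\mathcal{P}_B$, $\mathcal{P}_{A_0}$ and $\mathcal{P}_E$ are related through the pairing $\mathfrak{m}\times\mathfrak{n}\to\Z$ and the positive generator $\chi_0=\mu\otimes\nu$, which is where the hermitian form $\langle\ell_1,\ell_2\rangle=\ell_1(\mu)\overline{\ell_2(\nu)}+\ell_1(\nu)\overline{\ell_2(\mu)}$ on $\Hom_{\co_\kk}(\mathfrak{n},\co_\kk)$ enters. Your treatment of the diagonal terms via $\gamma+\overline{\gamma}=1$ is the right idea for reconciling (\ref{Q def}) with (\ref{mumford sheaf}), but it does not repair the missing twist.
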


\begin{proof}
Define a morphism $\mathcal{A}_\Phi \to \mathcal{M}_{(1,0)}$ by sending a triple $(A_0,B,\varrho)$ to the CM elliptic curve
\begin{equation}\label{elliptic twist}
E =  \underline{\Hom}_{\co_\kk} ( \mathfrak{n} , A_0 ).
\end{equation}
To show that this map is an isomorphism we will construct the inverse.

If  $S$ is any $\co_\kk$-scheme and  $E\in \mathcal{M}_{(1,0)}(S)$, we may define  $(A_0,B,\varrho)\in \mathcal{A}_\Phi(S)$  by setting
\[
A_0= E\otimes_{\co_\kk}\mathfrak{n} ,\quad
B =  \underline{\Hom}_{\co_\kk} (\Lambda_0 , A_0),
\]
and taking for $\varrho:  \underline{\Lambda}_0 \iso \underline{\Hom}_{\co_\kk} (B,A_0)$ the tautological isomorphism.  The principal polarization on $B$ is defined  using the $\co_\kk$-linear isomorphism 
\[
A_0 \otimes_{\co_\kk} L_0 \map{ a\otimes x \mapsto \langle\, \cdot\,  , x^\vee \rangle a} \underline{\Hom}_{\co_\kk}(\Lambda_0 , A_0)
\]
and the principal polarization on $A_0 \otimes_{\co_\kk} L_0$  constructed in \cite{zavosh}, exactly as in the discussion following (\ref{serre twist}).   The construction $E\mapsto (A_0,B,\varrho)$ is inverse to the above morphism  $\mathcal{A}_\Phi \to \mathcal{M}_{(1,0)}$.

Now identify $\mathcal{A}_\Phi \iso \mathcal{M}_{(1,0)}$ using the above isomorphism, and denote by $(A_0,B,\varrho)$ and $E$ the universal objects on the source and target.  They are related by canonical isomorphisms
\begin{equation}\label{boundary nonsense}
\xymatrix{
&  { \mathcal{B}_\Phi=  \underline{\Hom}_{\co_\kk} ( \mathfrak{n} , B  ) }    \\
 {   \underline{\Hom}_{\co_\kk} ( \mathfrak{n} \otimes_{\co_\kk} \Lambda_0 , A_0 ) }  \ar[ur]^{\iso} \ar[dr]_{ \iso } \\
     & {  \underline{\Hom}_{\co_\kk}(\Lambda_0 , E  ) .}
}
\end{equation}
Combining this with the $\co_\kk$-linear isomorphism
\[
E \otimes L_0 \map{ a\otimes x \mapsto \langle\, \cdot\,  , x^\vee \rangle a} \underline{\Hom}_{\co_\kk}(\Lambda_0 , E)
\]
defines  $\mathcal{B}_\Phi \iso E \otimes L_0$.  All that remains is to prove that this isomorphism identifies $\mathcal{L}_\Phi$ with $\mathcal{Q}_{E\otimes L_0}$, which amounts to carefully keeping track of the relations between the three Poincar\'e bundles $\mathcal{P}_B$, $\mathcal{P}_E$, and $\mathcal{P}_{A_0}$.

Any fractional ideal $\mathfrak{b} \subset \kk$ admits a unique positive definite self-dual hermitian form, given explicitly by $\langle b_1,b_2\rangle = b_1\overline{b}_2 / \mathrm{N}(\mathfrak{b})$.  It follows that any rank one projective $\co_\kk$-module admits a unique positive definite self-dual hermitian form.    For the $\co_\kk$-module $\Hom_{\co_\kk}(\mathfrak{n} , \co_\kk)$, this hermitian form is 
\[
\langle \ell_1 , \ell_2 \rangle = \ell_1(\mu ) \overline{\ell_2(\nu )} + \ell_1(\nu) \overline{\ell_2(\mu)},
\]
where $\mu\otimes \nu= \chi_0 \in \mathrm{Sym}_\Phi$ is the positive generator appearing in (\ref{boundary bundle}).

The relation (\ref{elliptic twist})  implies a relation between the line bundles  $\mathcal{P}_E$ and  $\mathcal{P}_{A_0}$.  If $U$ is any $\mathcal{A}_\Phi$-scheme and   we are given points 
\[
s , s' \in E(U)  = \Hom_{\co_\kk}( \mathfrak{n} , A_{ 0 U} ) 
\]
of the form  $s = \ell  (\cdot) a$ and   $s' =\ell'  (\cdot) a'$  with $\ell ,\ell'  \in \Hom_{\co_\kk}(\mathfrak{n} ,\co_\kk)$ and  $a , a' \in A_0(U)$, then 
\begin{align*}
\mathcal{P}_E(s , s' )   & \iso   \mathcal{P}_{A_0} \big(   \langle \ell  , \ell' \rangle   a ,   a' \big) \\
\mathcal{P}_E( \gamma s, s) &  \iso  \mathcal{P}_{A_0} \big(   \ell (\mu )    a ,  \ell  (\nu )  a  \big) .
\end{align*}

Similarly, the isomorphism $B \iso   \underline{\Hom}_{\co_\kk} (\Lambda_0 , A_0)$
implies a relation between $\mathcal{P}_B$ and $\mathcal{P}_{A_0}$.
  If $U$ is an $S$-scheme, a morphism
$
U \to B\times_{\mathcal{A}_\Phi}  B
$
is given by a pair of points
\[
b, b' \in B(U) = \Hom_{\co_\kk}( \Lambda_0  , A_{ 0 U } )  
\]
of the form  $b = \langle \cdot, \lambda \rangle a$  and  $b' =\langle \cdot, \lambda' \rangle a'$  with $ \lambda ,\lambda' \in \Lambda_0$ and  $a, a' \in A_0(U)$.   
The pullback of $\mathcal{P}_B$ to $U$ is the line bundle
\[
\mathcal{P}_B(b,b')   =  \mathcal{P}_{A_0} ( a , \langle \lambda  ,\lambda'  \rangle a' ) .
\]

 Using the isomorphisms (\ref{boundary nonsense}), a point $c\in \mathcal{B}_\Phi(U)$  admits three different interpretations.  In one of them, $c$ has the form
\[
 c= \sum  \ell_i (\cdot ) \langle \cdot , \lambda_i \rangle a_i   \in   \Hom_{\co_\kk} ( \mathfrak{n} \otimes_{\co_\kk} \Lambda , A_{ 0 U } )  .
\]
By setting
\begin{align*}
b_i  = \langle \cdot , \lambda_i \rangle a_i  & \in \Hom_{\co_\kk}( \Lambda_0  , A_{0 U }) = B(U)\\
s_i = \ell_i ( \cdot) a_i & \in  \Hom_{\co_\kk}( \mathfrak{n} , A_{0 U }) = E(U) , 
\end{align*}
we find the other two interpretations
\begin{align*}
c  & = \sum  \ell_i(\cdot) b_i    \in \Hom_{\co_\kk} (\mathfrak{n} , B_U)  \\ 
 c & = \sum  \langle \cdot  , \lambda_i\rangle s_i    \in \Hom_{\co_\kk}(\Lambda_0, E_U) .
\end{align*}
The above relations between $\mathcal{P}_B$, $\mathcal{P}_E$, and $\mathcal{P}_{A_0}$  imply
\begin{eqnarray*}\lefteqn{
 \mathcal{P}_B (    c(\mu )   ,   c(\nu )  )    } \\
 & \iso & \bigotimes_{i,j}   \mathcal{P}_B (    \ell_i (\mu  ) b_i    ,    \ell_j (\nu  )b_j   )    \\
&  \iso & \bigotimes_{ i ,j }  \mathcal{P}_{A_0}  (    \ell_i (\mu )     a_i , \langle \lambda_i,\lambda_j\rangle  \ell_j(\nu ) a_j  ) \\
&  \iso & \bigotimes_{ i < j }  \mathcal{P}_{A_0}  (   \langle \ell_i , \ell_j \rangle   a_i , \langle \lambda_i,\lambda_j\rangle  a_j  )  
  \otimes  \bigotimes_{ i }  \mathcal{P}_{A_0}  (    \ell_i (\mu )    a_i   ,  \ell_i(\nu )    \langle \lambda_i,\lambda_i\rangle   a_i )   \\
& \iso & \bigotimes_{ i < j } \mathcal{P}_E  (    s_i ,   \langle \lambda_i , \lambda_j \rangle  s_j )    \otimes  \bigotimes_{ i } \mathcal{P}_{E} (    \gamma   s_i   ,      \langle \lambda_i,\lambda_i\rangle    s_i  )   .
\end{eqnarray*}
Now write  $\lambda_i=x_i^\vee$ with  $x_i\in L_0$, and use the relation 
\[
\mathcal{P}_E (    s_i ,   \langle \lambda_i , \lambda_j \rangle  s_j  ) 
= \mathcal{P}_E (      \langle \lambda_j , \lambda_i \rangle  s_i ,   s_j  ) 
=\mathcal{P}_E (     \langle x_i , x_j  \rangle  s_i ,   s_j  ) 
\]
to obtain an isomorphism 
$
\mathcal{P}_B (    c(\mu )   ,   c(\nu )  )  \iso \mathcal{Q}_{E\otimes L_0}(c).
$
The line bundle on the left is precisely  the pullback of $\mathcal{L}_\Phi$ via $c$, and letting $c$ vary  we obtain an isomorphism $\mathcal{L}_\Phi \iso  \mathcal{Q}_{E\otimes L_0}$.
\end{proof}


\subsection{The line bundle of modular forms}
\label{ss:mfbundle}


We now define a line bundle of weight one modular forms on our mixed Shimura variety, analogous to the one on the pure Shimura variety defined in \S \ref{ss:unitary bundle}.

The holomorphic line bundle $\bm{\omega}^{an}$ on $\mathcal{D}$  defined in \S \ref{ss:unitary bundle} admits a canonical extension to 
\[
\mathcal{D}_\Phi= \mathcal{D}(W_0) \times \mathcal{D}_\Phi(W),
\]  
which we denote by $\bm{\omega}^{an}_\Phi$.
Indeed, recalling that $\mathcal{D}(W_0) = \{y_0\}$ is a one-point set, an element  $z\in \mathcal{D}_\Phi$ is represented by a pair $(y_0,y)$ in which $y$ is a $\kk$-stable $\R$-plane in $W(\R)$ such that 
$W(\R)  = J^\perp(\R) \oplus y.$   The fiber of $\bm{\omega}^{an}_\Phi$  at $z$ is the line
\[
\Hom_\C ( W_0(\C) / \overline{\epsilon} W_0(\C) ,  \mathrm{pr}_\epsilon  ( y )    )   \subset \epsilon V(\C) ,
\]
exactly as in Remark \ref{rem:to so} and  (\ref{hermitian bundle}).  

If we embed $\mathcal{D}_\Phi$ into projective space over $\epsilon V(\C)$ as in Remark \ref{rem:mixed to so},  then $\bm{\omega}^{an}_\Phi$ is simply the restriction of the tautological bundle.   
 There is  an obvious action of $Q_\Phi(\R)$ on the total space of $\bm{\omega}^{an}_\Phi$,  lifting the natural action on $\mathcal{D}_\Phi$, and so $\bm{\omega}^{an}_\Phi$ determines    a  holomorphic line bundle on the complex orbifold $\mathrm{Sh}(Q_\Phi , \mathcal{D}_\Phi)(\C)$. 

As in \S \ref{ss:unitary bundle}, the holomorphic line bundle $\bm{\omega}^{an}_\Phi$ is algebraic and descends to the canonical model $\mathrm{Sh}(Q_\Phi , \mathcal{D}_\Phi)$.  In fact, it admits a canonical extension to the integral model $\mathcal{C}_\Phi$, as we  now explain.   

Recalling the $\co_\kk$-modules $\mathfrak{m}$ and $\mathfrak{n}$ of (\ref{easy graded}), 
define  rank two vector bundles on $\mathcal{A}_\Phi$ by
\[
\mathfrak{M} = \mathfrak{m}\otimes_\Z \co_{\mathcal{A}_\Phi},\quad \mathfrak{N} = \mathfrak{n}\otimes_\Z \co_{\mathcal{A}_\Phi}.
\]
Each is locally free of rank one over $\co_\kk \otimes_\Z \co_{\mathcal{A}_\Phi}$, and the perfect pairing between $\mathfrak{m}$ and $\mathfrak{n}$ defined after (\ref{easy graded}) induces a perfect bilinear pairing  $\mathfrak{M} \otimes  \mathfrak{N} \to \co_{\mathcal{A}_\Phi}$.  Using the almost idempotents $\epsilon,\overline{\epsilon} \in \co_\kk \otimes_\Z \co_{\mathcal{A}_\Phi}$ of \S \ref{ss:notation}, there is an induced 
isomorphism  of line bundles
\[
(  \mathfrak{M}/\epsilon \mathfrak{M} )   \otimes ( \epsilon \mathfrak{N})   \iso  \co_{\mathcal{A}_\Phi}.
\]

Recalling that $\mathcal{A}_\Phi$ carries over it a universal triple $(A_0,B,\varrho)$, in which $A_0$ is an elliptic curve with $\co_\kk$-action, we now define a line bundle on $\mathcal{A}_\Phi$ by 
\[
\bm{\omega}_\Phi = \underline{\Hom}(\Lie(A_0) ,  \epsilon \mathfrak{N}),
\]  
or, equivalently,
\[
\bm{\omega}_\Phi^{-1} = \Lie(A_0) \otimes_{\co_{\mathcal{A}_\Phi}} \mathfrak{M}/  \epsilon \mathfrak{M}.
\]
Denote in the same way its pullback  to any step in the tower
\[
\mathcal{C}_\Phi^* \to \mathcal{B}_\Phi\to \mathcal{A}_\Phi.
\]

The above definition of $\bm{\omega}_\Phi$ is a bit unmotivated, and so  we explain why $\bm{\omega}_\Phi$ is  analogous  to the line bundle  $\bm{\omega}$  on $\mathcal{S}_\Kra$ defined in \S \ref{ss:unitary bundle}.  Recall from the proof of Proposition \ref{prop:boundary uniformization} that $\mathcal{C}_\Phi$ carries over it a universal $1$-motive $A$.  This $1$-motive has a de Rham realization $H_1^\dR(A)$, defined as the Lie algebra of the universal vector extension of $A$, as in \cite[(10.1.7)]{De}.  It is a rank $2n$-vector bundle on $\mathcal{C}_\Phi$, locally free of rank $n$ over $\co_\kk\otimes_\Z \co_{\mathcal{C}_\Phi}$, and sits in a diagram of vector bundles
\[
\xymatrix{
 & {  0  }  \ar[d]   &    &  { 0  }  \ar[d]   \\
 & {  \Fil^0 H_1^\dR(B)  }  \ar[d]   &    &  {  \mathfrak{M}  }  \ar[d]   \\
0  \ar[r]  & {  \Fil^0 H_1^\dR(A)  } \ar[r]  \ar[d]  &  {   H_1^\dR(A) } \ar[r]  &  {  \Lie(A)  } \ar[r]   \ar[d] &  {  0  } \\
  & {  \mathfrak{N}  }  \ar[d]  &   &  {  \Lie(B)  } \ar[d]  \\
  & {  0  }     &    &  { 0  }  
}
\]
with exact rows and columns.  
The polarization on $A$ induces a perfect symplectic form on $H_1^\dR(A)$.  This induces  a perfect pairing 
\begin{equation}\label{fake fil-lie}
\Fil^0H_1^\dR(A) \otimes  \Lie(A) \to \co_{\mathcal{C}_\Phi}
\end{equation}
as in (\ref{fil-lie dual}), which is compatible (in the obvious sense) with the pairings 
\[
\Fil^0H_1^\dR(B) \otimes  \Lie(B) \to \co_{\mathcal{C}_\Phi}
\]
and $\mathfrak{N}\otimes \mathfrak{M} \to \co_{\mathcal{C}_\Phi}$ that we already have.

The signature condition on $B$ implies that $\epsilon \Fil^0H_1^\dR(B) =0$ and   $\overline{\epsilon} \Lie(B)=0$.
Using this, and arguing as in \cite[Lemma 2.3.6]{Ho2}, it is not difficult to see that 
\[
\mathcal{F}_A = \mathrm{ker}( \overline{\epsilon} : \Lie(A) \to \Lie(A) ) 
\]
is the unique codimension one local direct summand of $\Lie(A)$ satisfying Kramer's condition as in \S \ref{ss:unitary integral models},
and that its orthogonal under the pairing (\ref{fake fil-lie}) is
$
\mathcal{F}_A^\perp  = \epsilon \Fil^0 H_1^\dR(A).
$
Moreover, the natural maps
\[
\mathfrak{M}/\epsilon\mathfrak{M} \to \Lie(A) / \mathcal{F}_A,\quad 
\mathcal{F}_A^\perp \to \epsilon \mathfrak{N}
\]
are isomorphisms. 
These latter isomorphisms allow us to identify
\[
\bm{\omega}_\Phi = \underline{\Hom}(\Lie(A_0) ,  \mathcal{F}_A^\perp), \quad \bm{\omega}_\Phi^{-1} = \Lie(A_0) \otimes \Lie(A) / \mathcal{F}_A
\]
in perfect analogy with \S \ref{ss:unitary bundle}.

\begin{proposition}\label{prop:mixed modular forms}
The isomorphism 
\[ 
\mathcal{C}_\Phi(\C) \iso \mathrm{Sh}(Q_\Phi , \mathcal{D}_\Phi)(\C)
\] 
of Proposition \ref{prop:boundary uniformization} identifies the analytification of $\bm{\omega}_\Phi$ with the already defined $\bm{\omega}^{an}_\Phi$.  Moreover, the isomorphism $\mathcal{A}_\Phi \iso \mathcal{M}_{(1,0)}$ of Proposition \ref{prop:second moduli iso}  identifies
\[
\bm{\omega}_\Phi \iso  \mathfrak{d}\cdot \Lie( E)^{-1} \subset \Lie( E)^{-1}
\]
where $\mathfrak{d}=\delta\co_\kk$ is the different of $\co_\kk$, and $E \to \mathcal{M}_{(1,0)}$ is the universal elliptic curve with CM by $\co_\kk$.
\end{proposition}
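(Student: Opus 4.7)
The proposition asserts two independent claims, and I would prove them separately.

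\emph{Analytic identification.} The first step is to unpack the moduli interpretation of $\mathcal{C}_\Phi$ from the proof of Proposition~\ref{prop:boundary uniformization}, via Deligne's equivalence between $1$-motives over $\C$ and integral mixed Hodge structures. Under this equivalence, the de Rham realization $H_1^\dR(A)$ of the universal $1$-motive over $\mathcal{C}_\Phi(\C)$ at a point with image $z=(y_0,y)\in\mathcal{D}_\Phi$ is canonically identified with $W(\C)$ equipped with the mixed Hodge structure of Remark~\ref{rem:mixed hodge}; analogously $H_1^\dR(A_0)\iso W_0(\C)$ with the pure Hodge structure of Remark~\ref{rem:hodge}. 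From this one reads off $\Lie(A_0)\iso W_0(\C)/\overline{\epsilon}W_0(\C)$, and the line $\mathcal{F}_A^\perp=\epsilon\Fil^0H_1^\dR(A)$ (see the discussion following~(\ref{fake fil-lie})) corresponds precisely to $\mathrm{pr}_\epsilon(y)$ by the definition of the Hodge filtration in Remark~\ref{rem:mixed hodge}. Substituting into $\bm{\omega}_\Phi=\underline{\Hom}(\Lie(A_0),\mathcal{F}_A^\perp)$ recovers the formula~(\ref{hermitian bundle}) defining $\bm{\omega}^{an}_\Phi$ fiberwise. All identifications are natural and $Q_\Phi(\R)$-equivariant, so they descend to the orbifold quotient.

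\emph{Integral identification.} The proof of Proposition~\ref{prop:second moduli iso} exhibits the universal $A_0$ over $\mathcal{A}_\Phi$ as $E\otimes_{\co_\kk}\mathfrak{n}$, so Serre's tensor construction yields a canonical $\co_\kk$-linear isomorphism $\Lie(A_0)\iso \Lie(E)\otimes_{\co_\kk}\mathfrak{n}$, hence $\Lie(A_0)^{-1}\iso\Lie(E)^{-1}\otimes_{\co_\kk}\mathfrak{n}^{-1}$. Next, the perfect $\co_\kk\otimes_\Z\co_{\mathcal{A}_\Phi}$-bilinear pairing $\mathfrak{M}\otimes\mathfrak{N}\to\co_{\mathcal{A}_\Phi}$ coming from $\mathfrak{m}\times\mathfrak{n}\to\Z$ satisfies $\langle\gamma m,n\rangle=\langle m,\overline{\gamma}n\rangle$ for $\gamma\in\co_\kk\otimes_\Z\co_{\mathcal{A}_\Phi}$. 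Using $\epsilon\overline{\epsilon}=0$ it descends to a perfect $\co_{\mathcal{A}_\Phi}$-bilinear pairing $(\mathfrak{M}/\epsilon\mathfrak{M})\otimes\epsilon\mathfrak{N}\to\co_{\mathcal{A}_\Phi}$, giving $\mathfrak{M}/\epsilon\mathfrak{M}\iso(\epsilon\mathfrak{N})^{-1}$. Combining with $\bm{\omega}_\Phi^{-1}=\Lie(A_0)\otimes(\mathfrak{M}/\epsilon\mathfrak{M})$ produces
\[
\bm{\omega}_\Phi \iso \Lie(E)^{-1}\otimes_{\co_\kk}\bigl(\mathfrak{n}^{-1}\otimes_{\co_{\mathcal{A}_\Phi}}\epsilon\mathfrak{N}\bigr).
\]
The final step is to compute the parenthesized factor. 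Writing $\mathfrak{N}=(\co_\kk\otimes_\Z\co_{\mathcal{A}_\Phi})\otimes_{\co_\kk}\mathfrak{n}$, the canonical isomorphism $(\epsilon)\iso\mathfrak{d}\co_{\mathcal{A}_\Phi}$ of~(\ref{different bundle}) yields $\epsilon\mathfrak{N}\iso\mathfrak{d}\co_{\mathcal{A}_\Phi}\otimes_{\co_\kk}\mathfrak{n}$, and then $\mathfrak{n}^{-1}\otimes_{\co_\kk}\mathfrak{n}\iso\co_\kk$ reduces the parenthesized factor to $\mathfrak{d}\co_{\mathcal{A}_\Phi}$. The asserted identification $\bm{\omega}_\Phi\iso\mathfrak{d}\cdot\Lie(E)^{-1}$ follows, where the inclusion into $\Lie(E)^{-1}$ is induced by $\mathfrak{d}\hookrightarrow\co_\kk$.

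\emph{Main obstacle.} The conceptually substantive step is verifying that Deligne's equivalence, applied to the universal $1$-motive over $\mathcal{C}_\Phi(\C)$, really does intertwine the algebraic Hodge filtration on $H_1^\dR(A)$ with the complex-analytic filtration of Remark~\ref{rem:mixed hodge}; this is where the compatibility between the two moduli interpretations must be checked in detail. The remaining difficulties are bookkeeping: carefully tracking which $\co_\kk$-action (structure morphism versus its conjugate) is at play in each tensor factor, and verifying that all local identifications glue to the global isomorphism.
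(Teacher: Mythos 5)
Your proposal is correct and follows essentially the same route as the paper: the analytic claim is reduced to matching $\underline{\Hom}(\Lie(A_0),\epsilon\Fil^0H_1^\dR(A))$ against $\Hom_\C(W_0(\C)/\overline{\epsilon}W_0(\C),\epsilon\Fil^0W(\C))$ by tracing Deligne's equivalence, and the integral claim rests on $A_0\iso E\otimes_{\co_\kk}\mathfrak{n}$ together with the isomorphism $(\epsilon)\iso\mathfrak{d}\co_{\mathcal{A}_\Phi}$ of~(\ref{different bundle}). Your dual phrasing via $\mathfrak{M}/\epsilon\mathfrak{M}\iso(\epsilon\mathfrak{N})^{-1}$ is only a cosmetic variant of the paper's direct computation of $\underline{\Hom}(\mathfrak{N}/\overline{\epsilon}\mathfrak{N},\epsilon\mathfrak{N})$.
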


\begin{proof}
Any point $z=(y_0,y)\in  \mathcal{D}_\Phi$ determines, by Remarks \ref{rem:hodge} and \ref{rem:mixed hodge},   a pure Hodge structure on $W_0$ and a mixed Hodge structure on  $W$, these induce a mixed Hodge structure on $V=\Hom_\kk(W_0,W)$, and the fiber of $\bm{\omega}_\Phi^{an}$ at $z$ is 
\[
\bm{\omega}_{\Phi,z}^{an} = \Fil^1 V(\C)= \Hom_\C( W_0(\C) / \overline{\epsilon} W_0(\C) , \epsilon \Fil^0 W(\C) ).
\]
On the other hand, we have just seen that 
\[
\bm{\omega}_\Phi = \underline{\Hom}(\Lie(A_0) ,  \mathcal{F}_A^\perp)
=\underline{\Hom}(\Lie(A_0) ,  \epsilon \Fil^0 H_1^\dR(A) ).
\]
With these identifications, the proof of the first claim amounts to carefully tracing through the construction of the isomorphism of Proposition \ref{prop:boundary uniformization}.

For the second claim, the isomorphism $A_0 \iso E\otimes_{\co_\kk} \mathfrak{n}$ induces a canonical isomorphism
\[
\Lie(A_0) \iso \Lie(E) \otimes_{\co_\kk} \mathfrak{n} \iso \Lie(E) \otimes \mathfrak{N}/\overline{\epsilon}\mathfrak{N},
\]
where we have used the fact that 
$
  \mathfrak{n} \otimes_{\co_\kk} \co_{\mathcal{A}_\Phi} = \mathfrak{N}/\overline{\epsilon}\mathfrak{N}
$ 
is the largest quotient of $\mathfrak{N}$ on which $\co_\kk$ acts via the structure morphism $\co_\kk \to \co_{\mathcal{A}_\Phi}$.
Thus
\begin{align*}
\bm{\omega}_\Phi &= \underline{\Hom}(\Lie(A) , \epsilon \mathfrak{N})  \\ 
& \iso  \underline{\Hom}(\Lie(E) \otimes \mathfrak{N}/\overline{\epsilon}\mathfrak{N} , \epsilon \mathfrak{N})  \\
& \iso \Lie(E)^{-1} \otimes_{ \co_{\mathcal{A}_\Phi } }  \underline{\Hom}( \mathfrak{N}/\overline{\epsilon}\mathfrak{N} , \epsilon \mathfrak{N}).
\end{align*}

Now recall the ideal sheaf  $(\epsilon) \subset \co_\kk \otimes_\Z \co_{\mathcal{A}_\Phi}$
of \S \ref{ss:notation}. There are canonical  isomorphisms of line bundles
\[
\mathfrak{d} \co_{\mathcal{A}_\Phi} \iso (\epsilon) \iso \underline{\Hom}( \mathfrak{N}/\overline{\epsilon}\mathfrak{N} , \epsilon \mathfrak{N}),
\]
where the first is (\ref{different bundle}) and the second is the tautological isomorphism
sending $\epsilon$ to the multiplication-by-$\epsilon$ map $\mathfrak{N}/\overline{\epsilon}\mathfrak{N} \to  \epsilon \mathfrak{N}$.
These constructions determine the desired isomorphism
\[
\bm{\omega}_\Phi \iso \Lie(E)^{-1} \otimes_{ \co_{\mathcal{A}_\Phi } }  \mathfrak{d} \co_{\mathcal{A}_\Phi }. \qedhere
\]
\end{proof}


\subsection{Special divisors}
\label{ss:mixed special divisors}


Let $\mathcal{Y}_0(D)$ be the moduli stack  over $\co_\kk$ parametrizing cyclic $D$-isogenies of elliptic curves over $\co_\kk$-schemes, and let $\mathcal{E} \to \mathcal{E}'$ be the universal object.   
See \cite[Chapter 3]{KM} for the definitions.

Let $(A_0,B,\varrho,c)$ be the universal object over $\mathcal{B}_\Phi$.  Recalling the $\co_\kk$-conjugate linear isomorphism $L_0\iso \Lambda_0$ defined after (\ref{boundary herm}), each $x \in L_0$ defines a morphism
\[
 \mathfrak{n} \map{c} B\map{ \varrho(x^\vee) } A_0
\]
of sheaves of $\co_\kk$-modules on $\mathcal{B}_\Phi$. Define  $\mathcal{Z}_\Phi(x) \subset \mathcal{B}_\Phi$
as the largest closed substack over which this morphism is trivial.   We will see in a moment that this closed substack is defined locally by one equation.   For any $m>0$  define a stack over $\mathcal{B}_\Phi$ by 
\begin{equation}\label{boundary special}
\mathcal{Z}_\Phi(m) = \bigsqcup_{ \substack{   x \in L_0 \\ \langle x,x\rangle =m    }  }   \mathcal{Z}_\Phi (x) .
\end{equation}
We  also view $\mathcal{Z}_\Phi(m)$ as a divisor on $\mathcal{B}_\Phi$,   and denote in the same way the pullback of this divisor via $ \mathcal{C}_\Phi^* \to \mathcal{B}_\Phi$.

\begin{remark}
In the slightly degenerate case $n=2$ we have   $L_0=0$, and every special divisor $\mathcal{Z}_\Phi(m)$ is empty.
\end{remark}

We will now reformulate the definition of $\mathcal{Z}_\Phi(x)$ in terms of the moduli problem of \S \ref{ss:second moduli}.
Recalling the isomorphisms of  Proposition \ref{prop:second moduli iso}, every $x\in L_0$ determines a  commutative diagram
\[
\xymatrix{
{  \mathcal{B}_\Phi  } \ar[r]^\iso  \ar[d]  &  {  E\otimes L_0  }  \ar[r]^{  \langle \cdot, x\rangle  } \ar[d] & {  E }  \ar[r] \ar[d]&   {  \mathcal{E} } \ar[d]  \\
{   \mathcal{A}_\Phi    }  \ar[r]^{\iso} & {  \mathcal{M}_{(1,0)}     }  \ar@{=}[r]  &  {\mathcal{M}_{(1,0)}   }  \ar[r]  &   { \mathcal{Y}_0(D) ,}
}
\]
where $\mathcal{M}_{(1,0)} \to \mathcal{Y}_0(D)$ sends $E$ to the cyclic $D$-isogeny 
\[
E\to E\otimes_{\co_\kk} \mathfrak{d}^{-1},
\] 
and the rightmost square is cartesian.  The upper and lower horizontal compositions are denoted $j_x$ and $j$, giving the diagram
\begin{equation}\label{boundary collapse}
\xymatrix{
{  \mathcal{B}_\Phi  } \ar[r]^{j_x}\ar[d]  &  {  \mathcal{E} } \ar[d]  \\
{   \mathcal{A}_\Phi    }  \ar[r]^{j} &   { \mathcal{Y}_0(D) .}
}
\end{equation}

\begin{proposition}\label{prop:boundary divisors}
For any nonzero $x \in L_0$, the closed substack $\mathcal{Z}_\Phi(x) \subset \mathcal{B}_\Phi$ is equal to the pullback of the zero section along $j_x$.  It is   an effective Cartier divisor,   flat over $\mathcal{A}_\Phi$.  In particular, as  $\mathcal{A}_\Phi$ is flat over $\co_\kk$, so is each divisor $\mathcal{Z}_\Phi(x)$.
\end{proposition}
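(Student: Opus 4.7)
The approach is to reinterpret $\mathcal{Z}_\Phi(x)$ via the second moduli description as the pullback of the zero section along an explicit homomorphism of relative abelian schemes, and then deduce the Cartier and flatness claims from standard properties of that homomorphism.

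First I will verify the identification with the pullback of the zero section. Unwinding the three descriptions of a section $c\in \mathcal{B}_\Phi(U)$ used in the proof of Proposition \ref{prop:second moduli iso}, the universal morphism $\mathfrak{n}\to A_0$ defined by $\varrho(x^\vee)\circ c$—whose vanishing cuts out $\mathcal{Z}_\Phi(x)$—is represented under the identifications $E=\underline{\Hom}_{\co_\kk}(\mathfrak{n},A_0)$ and $\mathcal{B}_\Phi\cong E\otimes L_0$ of Proposition \ref{prop:second moduli iso} precisely by the morphism $\langle\cdot,x\rangle: E\otimes L_0\to E$. Thus $\mathcal{Z}_\Phi(x)$ is the fibered product of $\mathcal{B}_\Phi\to E$ with the zero section $\mathcal{A}_\Phi\hookrightarrow E$. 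Since the right-hand square of (\ref{boundary collapse}) is cartesian, the zero section of $\mathcal{E}/\mathcal{Y}_0(D)$ pulls back to the zero section of $E/\mathcal{A}_\Phi$, identifying $\mathcal{Z}_\Phi(x)$ with the pullback of the former along $j_x$, as claimed.

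Next I will decompose $j_x$ into flat pieces. Because $x\neq 0$ and the hermitian form on $L_0$ is non-degenerate, $\langle\cdot,x\rangle: L_0\to \co_\kk$ is a nonzero $\co_\kk$-linear map with image an invertible ideal $I\subset \co_\kk$. The short exact sequence $0\to K\to L_0\to I\to 0$ splits over the Dedekind domain $\co_\kk$, giving $L_0\cong K\oplus I'$ with $I'\cong I$ and $K$ projective of rank $n-3$. Applying Serre's tensor construction yields $\mathcal{B}_\Phi\cong (E\otimes K)\times_{\mathcal{A}_\Phi}(E\otimes I')$, through which $j_x$ factors as the smooth projection to $E\otimes I'$ followed by the isogeny $\phi: E\otimes I'\to E$ induced by the inclusion $I\hookrightarrow \co_\kk$. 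Since $\phi$ is finite locally free, $j_x$ is flat, and its kernel decomposes as $(E\otimes K)\times_{\mathcal{A}_\Phi}\ker(\phi)$.

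The Cartier and flatness assertions now follow. The zero section of $E/\mathcal{A}_\Phi$ is an effective Cartier divisor (as $E\to \mathcal{A}_\Phi$ is smooth of relative dimension one with a section), so its pullback along the flat morphism $j_x$ is an effective Cartier divisor on $\mathcal{B}_\Phi$, which equals $\mathcal{Z}_\Phi(x)$. Moreover, the decomposition $\mathcal{Z}_\Phi(x)\cong (E\otimes K)\times_{\mathcal{A}_\Phi}\ker(\phi)$ presents $\mathcal{Z}_\Phi(x)$ as a fibered product over $\mathcal{A}_\Phi$ of a smooth abelian scheme and a finite locally free group scheme, each flat over $\mathcal{A}_\Phi$, hence $\mathcal{Z}_\Phi(x)$ is flat over $\mathcal{A}_\Phi$. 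Composition with the smooth structure morphism $\mathcal{A}_\Phi\to \Spec(\co_\kk)$ yields the claimed flatness over $\co_\kk$. The main obstacle is the first step, where the identification of $\varrho(x^\vee)\circ c$ with $\langle\cdot,x\rangle$ requires tracing each element through the chain of isomorphisms of Proposition \ref{prop:second moduli iso}; once this compatibility is pinned down, the remaining assertions are routine.
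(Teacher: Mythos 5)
Your proposal is correct and follows the same skeleton as the paper's proof: identify $\mathcal{Z}_\Phi(x)$ with the pullback of the zero section under the chain of isomorphisms from Proposition \ref{prop:second moduli iso} (the paper does this via the identification $A_0\otimes_{\co_\kk}L_0\iso B$, $a\otimes y\mapsto \langle\cdot,y^\vee\rangle a$, which is exactly the compatibility you flag as the main point to pin down), and then deduce the Cartier and flatness claims from the flatness of $\langle\cdot,x\rangle : E\otimes L_0\to E$. The one genuine divergence is how that flatness is established. The paper simply quotes \cite[Lemma 6.12]{MFK} for the flatness of the composite $\mathcal{B}_\Phi\to E$ and then gets flatness of $\mathcal{Z}_\Phi(x)\to\mathcal{A}_\Phi$ by base change in the cartesian square; you instead split $0\to K\to L_0\to I\to 0$ over the Dedekind ring $\co_\kk$ and factor $\langle\cdot,x\rangle$ as a smooth projection $(E\otimes K)\times_{\mathcal{A}_\Phi}(E\otimes I')\to E\otimes I'$ followed by the isogeny $E\otimes I'\to E$ induced by $I\hookrightarrow\co_\kk$. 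Your route is more self-contained and buys extra information for free: the explicit presentation $\mathcal{Z}_\Phi(x)\cong (E\otimes K)\times_{\mathcal{A}_\Phi}\ker(\phi)$ as an extension of a finite locally free group scheme by an abelian scheme, from which flatness over $\mathcal{A}_\Phi$ (and hence over $\co_\kk$) is immediate without invoking base change, and which also makes the degree of $\mathcal{Z}_\Phi(x)$ over $\mathcal{A}_\Phi$ visible. The paper's version is shorter and avoids the (harmless) case analysis implicit in choosing the splitting. Both correctly handle the effective Cartier claim: the paper via the local-one-equation description of the zero section from \cite[Lemma 1.2.2]{KM}, you via flat pullback of an effective Cartier divisor; these are equivalent here.
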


\begin{proof}
Recall the isomorphisms 
\[
E \iso \underline{\Hom}_{\co_\kk}   (\mathfrak{n} , A_0) ,\quad B\iso \underline{\Hom}_{\co_\kk}(\Lambda_0, A_0)
\]
from the proof of Proposition \ref{prop:second moduli iso}.  If we identify $A_0\otimes_{\co_\kk}L_0  \iso B$ using
\[
A_0\otimes_{\co_\kk}L_0 \map{ a \otimes x \mapsto \langle \cdot , x^\vee \rangle a } \underline{\Hom}_{\co_\kk}(\Lambda_0, A_0) \iso B,
\]
we obtain a commutative diagram of $\mathcal{A}_\Phi$-stacks
\[
\xymatrix{
{  E\otimes_{\co_\kk} L_0 } \ar[r] \ar[d]_{ \langle \cdot,x \rangle} &  {  \underline{ \Hom}_{\co_\kk}(\mathfrak{n}, A_0 \otimes_{\co_\kk}L_0 )   } \ar[r]  & {   \underline{\Hom}_{\co_\kk}(\mathfrak{n},B)  = \mathcal{B}_\Phi}   \ar[d]^{ \varrho(x^\vee )} \\
{  E  } \ar[rr]    &  & { \underline{\Hom}_{\co_\kk}(\mathfrak{n}  , A_0) ,}
}
\]
in which all horizontal arrows are isomorphisms.  The first claim follows immediately.

The remaining claims now follow from the cartesian diagram
\[
\xymatrix{
{  \mathcal{Z}_\Phi(x)  } \ar[rr]\ar[d]  &{  }  &   { \mathcal{M}_{(1,0)} }  \ar[d]^{e} \\
{\mathcal{B}_\Phi} \ar[r]^{\iso}  & { E\otimes L_0  }  \ar[r]^{\langle \cdot,x \rangle }   &  { E. }  
}
\]
  The zero section $e: \mathcal{M}_{(1,0)} \hookrightarrow E$  is locally defined by a single  nonzero equation \cite[Lemma 1.2.2]{KM}, and so the same is true of its pullback $\mathcal{Z}_\Phi(x) \hookrightarrow \mathcal{B}_\Phi$.  Composition along the bottom row  is flat by  \cite[Lemma 6.12]{MFK}, and hence so is the top horizontal arrow.  
\end{proof}

\begin{remark}
For those who prefer the language of $1$-motives:  As in the proof of Proposition \ref{prop:boundary uniformization}, there is a universal triple $(A_0,A,\varrho)$ over $\mathcal{C}_\Phi$ in which $A_0$ is an elliptic curve with $\co_\kk$-action and $A$ is a principally polarized $1$-motive with $\co_\kk$-action.   The functor of points of $\mathcal{Z}_\Phi(m)$ assigns to any scheme $S\to \mathcal{C}_\Phi$ the  set 
\[
\mathcal{Z}_\Phi(m)(S) =\{ x\in \Hom_{\co_\kk}(A_{0,S} , A_S ) : \langle x,x\rangle =m\},
\]
where  the positive definite hermitian form $\langle\cdot,\cdot\rangle$ is defined  as in  (\ref{moduli hom hermitian}).  Thus  our special divisors are the exact  analogues  of the special divisors on $\mathcal{S}_\Kra$ defined in \S \ref{ss:special divisors}.
\end{remark}


\subsection{The toroidal compactification}
\label{ss:main toroidal}


We describe the canonical toroidal compactification of the integral models $\mathcal{S}_\Kra \to \mathcal{S}_\Pap$  of   \S \ref{ss:unitary integral models}.

\begin{theorem}\label{thm:toroidal}
Let $\mathcal{S}_\boxempty$ denote either $\mathcal{S}_\Kra$ or $\mathcal{S}_\Pap$.
 There is a  canonical toroidal compactification
$
\mathcal{S}_\boxempty\hookrightarrow \mathcal{S}_\boxempty^*,
$
 flat over $\co_\kk$ of relative dimension $n-1$.  It admits  a stratification
 \[
 \mathcal{S}^*_\boxempty=  \bigsqcup_{  \Phi  }  \mathcal{S}^*_\boxempty(\Phi)
 \]
as a disjoint union of locally closed substacks, indexed by the $K$-equivalence classes of cusp label representatives 
(defined in \S \ref{ss:cusp notation}).

 \begin{enumerate}
 \item
The $\co_\kk$-stack $\mathcal{S}^*_\Kra$ is regular.

\item
The $\co_\kk$-stack $\mathcal{S}^*_\Pap$ is Cohen-Macaulay and normal, with Cohen-Macaulay fibers.
If $n>2$ its fibers are  geometrically  normal.

\item
The open dense substack $\mathcal{S}_\boxempty \subset \mathcal{S}_\boxempty^*$ is the stratum indexed by the unique equivalence class of improper cusp label representatives.  Its complement 
 \[
\partial  \mathcal{S}^*_\boxempty =  \bigsqcup_{  \Phi \mathrm{\ proper} }  \mathcal{S}_\boxempty^*(\Phi)
 \]
 is a smooth divisor, flat over $\co_\kk$.   
 \item
 For each proper  $\Phi$ the stratum $\mathcal{S}^*_\boxempty(\Phi)$ is closed.  
All  components  of  $\mathcal{S}^*_\boxempty(\Phi)_{/\C}$ are  defined over the Hilbert class field $\kk^\mathrm{Hilb}$, and they are permuted simply transitively by $\mathrm{Gal}(\kk^\mathrm{Hilb}/\kk)$.    Moreover,  there is  a canonical identification of $\co_\kk$-stacks
\[
\xymatrix{
{   \Delta_\Phi  \backslash  \mathcal{B}_\Phi  }  \ar@{=}[rr] \ar[d] & &  {\mathcal{S}^*_\boxempty(\Phi)  } \ar[d] \\
{ \Delta_\Phi  \backslash \mathcal{C}_\Phi^*  }
& & { \mathcal{S}^*_\boxempty }
}
\]
such that the two stacks in the bottom row become isomorphic after completion along their common closed substack in the top row.  In other words, there is a canonical isomorphism of formal stacks
\begin{equation}\label{formal boundary iso}
\Delta_\Phi  \backslash (\mathcal{C}_\Phi^* )^\wedge_{\mathcal{B}_\Phi} 
\iso  ( \mathcal{S}^*_\boxempty)^\wedge_{   \mathcal{S}_\boxempty^*(\Phi) }.
 \end{equation}

 \item
 The morphism  $\mathcal{S}_\Kra\to \mathcal{S}_\Pap$ extends uniquely to a stratum preserving morphism of toroidal compactifications.  This extension restricts to an isomorphism
 \begin{equation}\label{compact nonsingular}
\mathcal{S}^*_\Kra  \smallsetminus  \mathrm{Exc} \iso  \mathcal{S}^*_\Pap \smallsetminus \mathrm{Sing},
\end{equation}
compatible with (\ref{formal boundary iso}) for any proper $\Phi$.

 \item
The line bundle  $\bm{\omega}$  on $\mathcal{S}_\Kra$ defined in \S \ref{ss:unitary bundle} admits a unique extension (denoted the same way) to the toroidal compactification in such a way that    (\ref{formal boundary iso}) identifies it with the line bundle $\bm{\omega}_\Phi$ on $\mathcal{C}_\Phi^*$.
A similar statement holds for $\bm{\Omega}_\Kra$, and these two extensions are related by 
\[
\bm{\omega}^2  \iso   \bm{\Omega}_\Kra  \otimes \co(\mathrm{Exc}).
\]

 \item
The line bundle  $\pure_\Pap$  on $\mathcal{S}_\Pap$ defined in \S \ref{ss:unitary bundle} admits a unique extension (denoted the same way) to the toroidal compactification, in such a way that  (\ref{formal boundary iso}) identifies it with  $\bm{\omega}^2_\Phi$.

\item
For any $m> 0$, define  $\mathcal{Z}_\Kra^*(m)$  as the Zariski closure of  $\mathcal{Z}_\Kra(m)$ in  $\mathcal{S}_\Kra^*$.   The  isomorphism (\ref{formal boundary iso}) identifies it   with the Cartier divisor $\mathcal{Z}_\Phi(m)$ on $\mathcal{C}_\Phi^*$.

\item
For any $m> 0$, define   $\mathcal{Y}_\Pap^*(m)$  as the Zariski closure of  $\mathcal{Y}_\Pap(m)$ in $\mathcal{S}_\Pap^*$.   The  isomorphism (\ref{formal boundary iso}) identifies it   with  $2 \mathcal{Z}_\Phi(m)$.  Moreover, the pullback of $\mathcal{Y}_\Pap^*(m)$ to  $\mathcal{S}_\Kra^*$, denoted $ \mathcal{Y}_\Kra^*(m)$, satisfies 
\[
 2 \mathcal{Z}_\Kra^*(m) = \mathcal{Y}_\Kra^*(m) +  \sum_{s\in \pi_0(\mathrm{Sing}) } \# \{ x\in L_s : \langle x,x\rangle=m\} \cdot \mathrm{Exc}_s.
\]

\end{enumerate}
\end{theorem}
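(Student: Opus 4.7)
The plan is to construct $\mathcal{S}_\boxempty^*$ by the standard toroidal-compactification machinery, using the smooth boundary charts $\mathcal{C}_\Phi^*$ of \S \ref{ss:one-motives} as local models along each cusp label stratum, and then to deduce all the claims about line bundles and divisors from their analogues on the boundary charts. Crucially, in our signature $(n-1,1)$ situation the relevant cone at each cusp is one-dimensional, so there is no combinatorial choice of fan: the compactification is canonical. Existence of $\mathcal{S}_\boxempty^*$ together with properties (1)--(5), the stratification, the flatness of the boundary divisor, and the formal isomorphism (\ref{formal boundary iso}) are essentially contained in \cite{Ho2}, which adapts the arguments of \cite{FC,Lan} to precisely this setting. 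The Galois action on components of the boundary stratum is then read off from Proposition~\ref{prop:mixed connected} combined with Deligne's reciprocity law (as in the proof of Proposition~\ref{prop:component reciprocity}), while the extension of the morphism $\mathcal{S}_\Kra \to \mathcal{S}_\Pap$ and the isomorphism (\ref{compact nonsingular}) follow from the normality of $\mathcal{S}_\Pap^*$ together with the compatible identifications of formal neighborhoods at each boundary stratum.

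The line bundle claims in (6) and (7) are the first places where serious input from this paper is used. First I would observe that on each boundary chart $\mathcal{C}_\Phi^*$ we have already constructed the bundle $\bm{\omega}_\Phi$ in \S \ref{ss:mfbundle} from the universal $1$-motive. The discussion following the definition of $\bm{\omega}_\Phi$ shows that on the open stratum $\mathcal{C}_\Phi \subset \mathcal{C}_\Phi^*$ this matches the previously defined $\bm{\omega}$ under the isomorphism of Proposition~\ref{prop:boundary uniformization}. Applying this on each chart and glueing via (\ref{formal boundary iso}), we obtain an extension of $\bm{\omega}$ to a formal neighborhood of the boundary; combined with the existing $\bm{\omega}$ on the interior, and using that $\mathcal{S}_\Kra^*$ is regular (hence normal, so extensions across codimension $\ge 2$ are unique), this produces a unique global extension. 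The relation $\bm{\omega}^2 \iso \pure_\Kra \otimes \co(\mathrm{Exc})$ of Theorem~\ref{thm:cartier error} then extends to the compactification by the same normality argument, since $\mathrm{Exc}$ is disjoint from the boundary (both the exceptional and the boundary loci are proper, and $\mathrm{Exc}$ is supported in the fibers over primes $\mathfrak{p}\mid\delta$ where, by Theorem~\ref{thm:integral comparison}, it lies entirely over the singular points in the interior of $\mathcal{S}_\Pap$). The uniqueness of $\pure_\Pap$ as an extension is immediate from normality of $\mathcal{S}_\Pap^*$ and the codimension of $\mathrm{Sing}$.

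For claims (8) and (9) about divisors, I would take $\mathcal{Z}_\Kra^*(m)$ by definition to be the Zariski closure of $\mathcal{Z}_\Kra(m)$, and then verify the identification with $\mathcal{Z}_\Phi(m)$ along (\ref{formal boundary iso}) by unwinding the moduli interpretations: a point of the formal completion of $\mathcal{S}_\Kra^*$ along its boundary corresponds, via the identification of \S \ref{ss:one-motives}, to a $1$-motive $A$ together with $A_0$, and a section $x : A_0 \to A$ landing in the semiabelian part is precisely the datum cut out by $\mathcal{Z}_\Phi(x)$. The Cartier property in $\mathcal{C}_\Phi^*$ is already Proposition~\ref{prop:boundary divisors}. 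Likewise $\mathcal{Y}_\Pap^*(m)$ is defined as the Zariski closure, and the identification with the square of $\mathcal{Z}_\Phi(m)$ on the formal neighborhood comes from Theorem~\ref{thm:pure divisor} together with the fact that boundary points are non-singular in $\mathcal{S}_\Pap$, so the relation $\mathcal{Y}_\Pap(m) = 2\mathcal{Z}_\Pap(m)$ already holds there. The final equality $2\,\mathcal{Z}_\Kra^*(m) = \mathcal{Y}_\Kra^*(m) + \sum_s \#\{x\in L_s: \langle x,x\rangle = m\}\cdot \mathrm{Exc}_s$ follows from Theorem~\ref{thm:cartier error}, because both sides agree on the open $\mathcal{S}_\Kra$ and neither side acquires any additional boundary contribution: the right-hand-side correction is supported on $\mathrm{Exc}$, which is disjoint from $\partial \mathcal{S}_\Kra^*$.

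The hardest step in practice is the verification that the formal completion of the global compactification along each boundary stratum really agrees with $\Delta_\Phi \backslash (\mathcal{C}_\Phi^*)^\wedge_{\mathcal{B}_\Phi}$ in a way that is simultaneously compatible with the moduli interpretation of $(A_0,A)$ near the boundary by the $1$-motive $(A_0, A)$ described in the proof of Proposition~\ref{prop:boundary uniformization}. This compatibility, which is exactly what makes the bundle and divisor identifications on the formal neighborhood propagate from the boundary chart to $\mathcal{S}_\boxempty^*$, ultimately reduces to the general theory of degenerating families of abelian varieties with PEL structure in the sense of \cite{FC,Lan}, specialized to the present signature $(n-1,1)$ unitary case as done in \cite{Ho2}. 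Once this compatibility is in place, all remaining assertions are either formal consequences of normality of the ambient stacks or direct translations of results proved in \S \ref{s:unitary} and in \S \ref{ss:one-motives}--\S \ref{ss:mixed special divisors}.
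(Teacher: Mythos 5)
Your proposal is correct and follows essentially the same route as the paper: the paper's own proof simply defines $\mathcal{S}_\boxempty^*$ as the Zariski closure of $\mathcal{S}_\boxempty$ inside $\mathcal{M}_{(1,0)}\times\mathcal{M}_{(n-1,1)}^{\boxempty,*}$ using the canonical compactification of \cite[\S 2]{Ho2}, and then asserts that all claims follow by examining that construction together with Theorem~\ref{thm:cartier error}, which is exactly the skeleton you have fleshed out. (One small caveat: the uniqueness of the extensions in (6)--(7) is across the codimension-one boundary, so it comes from the gluing of the interior with the prescribed formal boundary data rather than from a ``codimension $\ge 2$'' normality argument, which is only relevant for extending across $\mathrm{Sing}$.)
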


\begin{proof}
Briefly, in \cite[\S 2]{Ho2} one finds the construction of a canonical toroidal compactification 
\[
\mathcal{M}^\boxempty_{(n-1,1)} \hookrightarrow \mathcal{M}^{\boxempty,*}_{(n-1,1)}.
\]
Using the open and closed immersion 
\[
\mathcal{S}_\boxempty \hookrightarrow \mathcal{M}_{(1,0)} \times \mathcal{M}^\boxempty_{(n-1,1)},
\] 
the toroidal compactification $\mathcal{S}_\boxempty^*$ is defined  as the Zariski closure of $\mathcal{S}_\boxempty$ in 
$\mathcal{M}_{(1,0)} \times \mathcal{M}^{\boxempty,*}_{(n-1,1)}$.    All of the  claims  follow by examination of the construction  of the compactification, along with Theorem \ref{thm:cartier error}.
\end{proof}

\begin{remark}
If $W$ is anisotropic, so that $(G,\mathcal{D})$ has no proper cusp label representatives, the only new information in the theorem is that $\mathcal{S}_\Pap$ and $\mathcal{S}_\Kra$ are already proper over $\co_\kk$, so that
\[
\mathcal{S}_\Pap=\mathcal{S}^*_\Pap ,\qquad \mathcal{S}_\Kra=\mathcal{S}^*_\Kra.
\]
\end{remark}

\begin{corollary} \label{cor:weak divisor flatness}  
Assume that $n> 2$.
The Cartier divisor $\mathcal{Y}^*_\Pap(m)$ on  $\mathcal{S}^*_\Pap$  is $\co_\kk$-flat, as is the restriction of  $\mathcal{Z}_\Kra^*(m)$ to $ \mathcal{S}^*_\Kra \smallsetminus \mathrm{Exc}$.
\end{corollary}

\begin{proof}
Fix a prime $\mathfrak{p}\subset \co_\kk$, and let $\F_\mathfrak{p}$ be its residue field.
To prove the first claim, it suffices to show that the support of the Cartier divisor  $\mathcal{Y}^*_\Pap(m)$  contains no irreducible components of the  reduction
$\mathcal{S}^*_{\Pap/ \F_\mathfrak{p}}$.

By way of contradiction, suppose $\mathcal{E}_\mathfrak{p} \subset \mathcal{S}^*_{\Pap/ \F_\mathfrak{p}}$ is an irreducible component contained in $\mathcal{Y}^*_\Pap(m)$, and let $\mathcal{E} \subset \mathcal{S}^*_\Pap$ be the connected component containing it.  
Properness of $ \mathcal{S}^*_\Pap$ over $\co_{ \kk,\mathfrak{p} }$ implies that the reduction $\mathcal{E}_{/\F_\mathfrak{p}}$ is connected \cite[Corollary 8.2.18]{FGA}.
The reduction $\mathcal{E}_{/\F_\mathfrak{p}}$  is  normal by Theorem \ref{thm:toroidal} and our assumption that $n>2$, and hence is irreducible.  Thus
\[
\mathcal{E}_\mathfrak{p} = \mathcal{E}_{/\F_\mathfrak{p}}.
\]

Our assumption that $n>2$ also guarantees that $W$ contains a nonzero isotropic vector, from which it follows that the  boundary
  \[
  \partial \mathcal{C} = \mathcal{C} \cap \partial \mathcal{S}^*_\Pap
  \]
is nonempty (one can check this in the complex fiber).

 Proposition \ref{prop:boundary divisors} implies that $\mathcal{Z}_\Phi(m)$ is $\co_\kk$-flat for  every proper cusp label representative $\Phi$, and so it follows from  Theorem \ref{thm:toroidal}  that $\mathcal{Y}^*_\Pap(m)$ is $\co_\kk$-flat when restricted to some \'etale neighborhood $U \to \mathcal{C}$ of $ \partial \mathcal{C}$.
On the other hand, the closed immersion 
\[
 U  _{/\F_\mathfrak{p} }  \iso \mathcal{C}_\mathfrak{p}  \times_{\mathcal{S}_\Pap^*} U     \to  \mathcal{Y}^*_\Pap(m) \times_{\mathcal{S}_\Pap^*} U
\] 
 shows that the divisor   $ \mathcal{Y}^*_\Pap(m) |_U \to U$  contains the special fiber  $U_{/\F_\mathfrak{p}}$, so is not $\co_\kk$-flat.  This  contradiction completes the proof that  $\mathcal{Y}^*_\Pap(m)$ is flat.

As the isomorphism (\ref{compact nonsingular}) identifies $\mathcal{Y}_\Pap^* (m)$ with  $2 \mathcal{Z}_\Kra^*(m)$, it follows that the restriction of 
$\mathcal{Z}_\Kra^*(m)$ to the complement of $\mathrm{Exc}$ is also flat.
\end{proof}


\subsection{Fourier-Jacobi expansions}
\label{ss:abstract FJ}


We now  define Fourier-Jacobi expansions of sections of the line bundle $\bm{\omega}^k$ of weight $k$ modular forms on $\mathcal{S}^*_\Kra$.

Fix a proper cusp label representative $\Phi=(P,g)$.   Suppose $\psi$ is a rational function on $\mathcal{S}^*_\Kra$, regular on an open neighborhood of the closed stratum $\mathcal{S}^*_\Kra(\Phi)$.   Using the isomorphism (\ref{formal boundary iso}) we obtain a  formal function, again denoted $\psi$, on the formal completion 
\[
( \mathcal{C}_\Phi^* )^\wedge_{\mathcal{B}_\Phi} = 
\underline{\mathrm{Spf}}_{  \mathcal{B}_\Phi  }  \Big(  \prod_{ \ell  \ge 0  } \mathcal{L}_\Phi^\ell \Big).
\]
  Tautologically, there is a formal  Fourier-Jacobi expansion
\begin{equation}\label{FJ expansion}
\psi = \sum_{\ell \ge 0} \mathrm{FJ}_{\ell} (\psi) \cdot q^\ell
\end{equation}
with coefficients $\mathrm{FJ}_\ell (\psi) \in H^0( \mathcal{B}_\Phi , \mathcal{L}_\Phi^\ell )$.
In the same way, any rational section $\psi$ of $\bm{\omega}^k$ on $\mathcal{S}^*_\Kra$, regular on an open neighborhood of $\mathcal{S}^*_\Kra(\Phi)$,  admits a Fourier-Jacobi expansion (\ref{FJ expansion}), but now with coefficients 
\[
\mathrm{FJ}_\ell (\psi) \in H^0( \mathcal{B}_\Phi ,  \bm{\omega}_\Phi^k \otimes \mathcal{L}_\Phi^{ \ell } ).
\]

\begin{remark}\label{rem:q}
Let $\pi :\mathcal{C}_\Phi^* \to \mathcal{B}_\Phi$ be the natural map.  The formal symbol $q$ can be understood as follows.  
As  $\mathcal{C}_\Phi^*$ is the total space of the line bundle $\mathcal{L}_\Phi^{-1}$, there is a tautological section
\[
q\in H^0( \mathcal{C}_\Phi^* , \pi^* \mathcal{L}_\Phi^{-1} )
\]
whose divisor is the zero section $\mathcal{B}_\Phi \hookrightarrow \mathcal{C}_\Phi^*$.   Any  $\mathrm{FJ}_\ell  \in H^0( \mathcal{B}_\Phi , \mathcal{L}_\Phi^\ell )$   pulls back to a section of $\pi^*\mathcal{L}_\Phi^{ \ell }$, and so defines a  function $\mathrm{FJ}_\ell  \cdot q^\ell$  on $\mathcal{C}_\Phi^*$.
\end{remark}


\subsection{Explicit coordinates}
\label{ss:explicit boundary}


Once again, let $\Phi=(P,g)$ be a proper cusp label representative.   
The algebraic  theory of  \S \ref{ss:abstract FJ}  realizes the Fourier-Jacobi coefficients of 
\begin{equation}\label{test form}
\psi \in H^0( \mathcal{S}_\Kra^*, \bm{\omega}^k)
\end{equation}
 as sections of line bundles on the stack 
\[
\mathcal{B}_\Phi \iso E\otimes L_0.
\]   
Here $E \to \mathcal{M}_{(1,0)}$ is the universal CM elliptic curve, the tensor product is over $\co_\kk$, and we are using the isomorphism of Proposition \ref{prop:second moduli iso}.  Our goal is to relate this to the classical analytic theory of Fourier-Jacobi expansions  by choosing explicit complex coordinates, so as to identify each coefficient $\mathrm{FJ}_\ell(\psi)$  with a holomorphic function on a complex vector space satisfying a particular transformation law.

The point of this discussion is to allow us, eventually, to show that the  Fourier-Jacobi coefficients of Borcherds products, expressed in the classical way as holomorphic functions satisfying certain transformation laws, have algebraic meaning.  More precisely, the following discussion  will be used to deduce  the algebraic statement of Proposition \ref{prop:algebraic BFJ} from the analytic statement of Proposition \ref{prop:ortho FJ formula}.

Consider the commutative diagram 
\[
\xymatrix{
{  \mathrm{Sh}(Q_\Phi , \mathcal{D}_\Phi)(\C)  } \ar[r]^{\qquad \iso}  \ar[d] & {\mathcal{C}_\Phi(\C) } \ar[r] & {\mathcal{B}_\Phi(\C) } \ar[r] & {\mathcal{A}_\Phi(\C) } \ar[d]^{\iso}     \\
{   \kk^\times \backslash \widehat{\kk}^\times / \widehat{\co}_\kk^\times } \ar[rrr]_{ a \mapsto E^{(a)}  } & & & \mathcal{M}_{(1,0)}(\C) .
}
\]
Here the isomorphisms are those of Propositions \ref{prop:boundary uniformization} and \ref{prop:second moduli iso}, and the vertical arrow on the left is the surjection of Proposition \ref{prop:mixed connected}.  The bottom horizontal arrow is defined as the unique function making the diagram commute.  It is a bijection, and is given explicitly by the following recipe: recalling the $\co_\kk$-module $\mathfrak{n}$ of (\ref{easy graded}), each $a\in \widehat{\kk}^\times$ determines a projective $\co_\kk$-module
\[
\mathfrak{b} =  a \cdot \Hom_{\co_\kk}(   \mathfrak{n} , g \mathfrak{a}_0 ) 
\]
of rank one, and  the elliptic curve $E^{(a)}$ has complex points
\begin{equation}\label{twisty CM curve}
E^{(a)} (\C) =   \mathfrak{b} \backslash  (\mathfrak{b}\otimes_{\co_\kk} \C).
\end{equation}
For each $a\in \widehat{\kk}^\times$ there is a cartesian diagram 
\[
\xymatrix{
{   E^{(a)}  \otimes L_0   }  \ar[r]\ar[d]  &    {  E  \otimes L_0   }  \ar[d]  \\
{  \Spec(\C)  }  \ar[r]^{ E^{(a)} }    &    {  \mathcal{M}_{(1,0)} }  .  
}
\]

Now suppose we have a section $\psi$ as in (\ref{test form}).  Using the isomorphisms $\mathcal{B}_\Phi\iso E\otimes L_0$ and $\bm{\omega_\Phi} \iso \mathfrak{d}\cdot \Lie(E)^{-1}$ of Propositions \ref{prop:second moduli iso} and \ref{prop:mixed modular forms}, we view its Fourier-Jacobi coefficients
\[
\mathrm{FJ}_\ell (\psi) \in H^0( \mathcal{B}_\Phi ,  \bm{\omega}_\Phi^k \otimes \mathcal{L}_\Phi^{ \ell } )
\]
as sections
\[
\mathrm{FJ}_\ell (\psi) \in  H^0\big( E\otimes L_0 ,  \mathfrak{d}^k\cdot \Lie(E)^{-k}\otimes \mathcal{Q}_{E\otimes L_0}^\ell\big),
\]
which we pull back  along the top map in the above diagram to obtain a  section 
\begin{equation}\label{algan preFJ}
\mathrm{FJ}^{(a)}_\ell (\psi) \in  H^0\big( E^{(a)} \otimes L_0 ,  \Lie(E^{(a)})^{-k}\otimes \mathcal{Q}_{E^{(a)}\otimes L_0}^\ell \big).
\end{equation}

\begin{remark}
Recalling that $\mathfrak{d}=\delta\co_\kk$ is the different of $\kk$,  we are using the inclusion $\mathfrak{d}^k \subset \kk \subset \C$ to identify 
\[
\mathfrak{d}^k\cdot\Lie(E^{(a)})^{-k} \iso \Lie(E^{(a)})^{-k}.
\] 
In particular, this isomorphism is \emph{not} multiplication by $\delta^{-k}$.
\end{remark}

The explicit coordinates we will use to express (\ref{algan preFJ}) as a holomorphic function arise from a choice of Witt decomposition of the hermitian space $V = \Hom_\kk(W_0,W)$.  The following lemma will allow us to choose this decomposition in a particularly nice way.

\begin{lemma}\label{lem:mixed section}
The homomorphism $\nu_\Phi$ of (\ref{Q fiber}) admits a section 
\[
\xymatrix{
{  Q_\Phi }   \ar[rr]_{\nu_\Phi}  & &  {  \mathrm{Res}_{\kk/\Q}\mathbb{G}_m   }  \ar@/_1pc/[ll]_s.
}
\]
This section may be chosen so that $s(\widehat{\co}_\kk^\times) \subset K_\Phi$, and such a choice determines a decomposition
\begin{equation}\label{mixed component}
\bigsqcup_{ a \in \kk^\times \backslash \widehat{\kk}^\times / \widehat{\co}_\kk^\times }  ( Q_\Phi(\Q) \cap s(a)  K_\Phi s(a)^{-1}) \backslash \mathcal{D}_\Phi \iso \mathrm{Sh}(Q_\Phi , \mathcal{D}_\Phi)(\C),
\end{equation}
where the isomorphism is $z\mapsto (z,s(a))$ on the copy of $\mathcal{D}_\Phi$ indexed by $a$.
\end{lemma}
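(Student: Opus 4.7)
The plan is to produce the section $s$ geometrically from a Witt decomposition of $W$ that is compatible with the lattice $g\mathfrak{a}$, and then convert the double coset description of $\mathrm{Sh}(Q_\Phi,\mathcal{D}_\Phi)(\C)$ into the stated disjoint union via the standard fiber-over-components argument, using that the kernel of $\nu_\Phi$ is unipotent.

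First I would analyze the map $\nu_\Phi$ itself. By the definition of the fiber product (\ref{Q fiber}), the kernel of $\nu_\Phi$ consists of those elements of $P$ acting trivially on every graded piece of the weight filtration $J \subset J^\perp \subset W$, i.e.\ the unipotent radical $U_\Phi$ of $P$. Thus we have an exact sequence $1 \to U_\Phi \to Q_\Phi \xrightarrow{\nu_\Phi} \mathrm{Res}_{\kk/\Q}\mathbb{G}_m \to 1$, and a section will amount to choosing a Levi. I would fix a Witt decomposition $W = J \oplus W_1 \oplus J'$ with $J'$ isotropic and $W_1 = (J\oplus J')^\perp$, and define $s(a) \in \GU(W_0) \times \GU(W)$ to act by $a$ on $W_0$, by $\mathrm{Nm}(a)$ on $J$, by $a$ on $W_1$, and by the identity on $J'$. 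A short calculation using a hyperbolic basis $e\in J,\, f\in J'$ with $H(e,f)=1$ shows both similitudes equal $\mathrm{Nm}(a)$, so $s(a)\in G(\Q)$; and reading off its action on $W_0$, $J$, $J^\perp/J \iso W_1$, and $W/J^\perp \iso J'$ shows $s(a) \in Q_\Phi(\Q)$ with $\nu_\Phi(s(a)) = a$.

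Next I would arrange the integrality condition. Remark~\ref{rem:determined by grade}, invoking \cite[Proposition 2.6.3]{Ho2}, provides a $\Q$-rational splitting
\[
g\mathfrak{a} \iso \mathrm{gr}_{-2}(g\mathfrak{a}) \oplus \mathrm{gr}_{-1}(g\mathfrak{a}) \oplus \mathrm{gr}_{0}(g\mathfrak{a})
\]
of $g\mathfrak{a}$ as an $\co_\kk$-hermitian lattice with the outer summands totally isotropic and orthogonal to the middle. I would take the Witt decomposition above to be the tensor product with $\Q$ of this splitting. Then $g\widehat{\mathfrak{a}} = (g\mathfrak{a})\otimes_\Z \widehat{\Z}$ splits accordingly, and for $a\in \widehat{\co}_\kk^\times$ the element $s(a)$ acts by adelic units on each graded piece, hence preserves both $g\widehat{\mathfrak{a}}_0$ and $g\widehat{\mathfrak{a}}$. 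This places $s(\widehat{\co}_\kk^\times)$ inside $gKg^{-1} \cap Q_\Phi(\A_f) = K_\Phi$.

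Finally, for the component decomposition I would use the standard rewriting
\[
\mathrm{Sh}(Q_\Phi,\mathcal{D}_\Phi)(\C) \iso \bigsqcup_{q \in Q_\Phi(\Q)\backslash Q_\Phi(\A_f)/K_\Phi} (Q_\Phi(\Q) \cap q K_\Phi q^{-1}) \backslash \mathcal{D}_\Phi,
\]
which is valid since $\mathcal{D}_\Phi$ is connected. Strong approximation for the unipotent group $U_\Phi = \mathrm{ker}(\nu_\Phi)$, combined with the identity $\nu_\Phi(K_\Phi) = \widehat{\co}_\kk^\times$ from the proof of Proposition~\ref{prop:mixed connected}, shows that $\nu_\Phi$ induces a bijection
\[
Q_\Phi(\Q)\backslash Q_\Phi(\A_f)/K_\Phi \iso \kk^\times \backslash \widehat{\kk}^\times / \widehat{\co}_\kk^\times.
\]
The section $s$ now provides canonical representatives $\{s(a)\}$ for these double cosets, yielding the asserted decomposition (\ref{mixed component}).

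The main obstacle is the compatibility in the middle paragraph: finding a Witt decomposition of $W$ that is simultaneously rational over $\Q$ and respected by $g\widehat{\mathfrak{a}}$. Without the existence of a global splitting of hermitian lattices one would only obtain $s(\widehat{\co}_\kk^\times) \subset K_\Phi$ up to an adelic error, which would force replacing $K_\Phi$ by a slightly smaller subgroup and complicate the bookkeeping; fortunately \cite[Proposition 2.6.3]{Ho2} (whose proof relies on $\mathrm{disc}(\kk)$ being odd) furnishes the required $\Q$-rational splitting at once.
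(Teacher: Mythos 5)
Your proposal is correct and follows essentially the same route as the paper: both define $s(a)$ by letting it act as $(a,\mathrm{Nm}(a),a,1)$ on the graded pieces of the splitting of $g\mathfrak{a}_0\oplus g\mathfrak{a}$ furnished by Remark \ref{rem:determined by grade}, and both obtain the decomposition (\ref{mixed component}) from Proposition \ref{prop:mixed connected}. The extra detail you supply (the similitude check, strong approximation for the unipotent kernel, and $\nu_\Phi(K_\Phi)=\widehat{\co}_\kk^\times$) is exactly what the paper's terser proof leaves implicit.
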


\begin{proof}
Fix an isomorphism of hermitian $\co_\kk$-modules
\[
g\mathfrak{a}_0 \oplus g\mathfrak{a} \iso g\mathfrak{a}_0 \oplus \mathrm{gr}_{-2}(ga) \oplus \mathrm{gr}_{-1}(ga) \oplus\mathrm{gr}_0(ga) 
\]
as in Remark \ref{rem:determined by grade}.  After tensoring with $\Q$, we let $\kk^\times$ act on the right hand side by 
$a\mapsto ( a, \mathrm{Nm}(a) , a, 1)$.  This defines a morphism $\kk^\times \to G(\Q)$, which, using (\ref{Q fiber}), is easily seen to take values in the subgroup $Q_\Phi(\Q)$.  This defines the desired section $s$, and  the decomposition (\ref{mixed component}) is immediate from Proposition \ref{prop:mixed connected}.
\end{proof}

Fix a section $s$ as in Lemma \ref{lem:mixed section}.
Recall from \S \ref{ss:cusp notation}  the weight filtration $\mathrm{wt}_i V \subset V$ whose graded pieces
\begin{align*}
\mathrm{gr}_{-1}  V & =\Hom_\kk( W_0,\mathrm{gr}_{-2}W) \\
 \mathrm{gr}_{0} V& =\Hom_\kk( W_0,\mathrm{gr}_{-1}W) \\
 \mathrm{gr}_{1}V & =\Hom_\kk( W_0,\mathrm{gr}_{0}W)
\end{align*}
have $\kk$-dimensions $1$, $n-2$, and $1$, respectively.   Recalling  (\ref{Q fiber}), which describes the action of $Q_\Phi$ on the graded pieces of $V$,
the section $s$ determines a splitting  $V = V_{-1}\oplus V_0 \oplus V_1$ of the weight filtration by 
\begin{align*}
V_{-1} &= \{ v\in V :  \forall\,  a \in \kk^\times, s(a)  v = \overline{a} v  \} \\
V_0 &= \{ v\in V :  \forall\,  a \in \kk^\times, s(a)  v =  v  \} \\
V_1 &= \{ v\in V :  \forall\,  a \in \kk^\times, s(a)  v = a^{-1} v  \} .
\end{align*}
The summands  $V_{-1}$ and $V_1$ are  isotropic $\kk$-lines, and  $V_0$ is the  orthogonal complement of $V_{-1} + V_1$ with respect to the hermitian form on $V$.  In particular, the restriction of the hermitian form to $V_0 \subset V$ is positive definite.

Fix an $a\in \widehat{\kk}^\times$ and define  an $\co_\kk$-lattice
\[
L  = \Hom_{\co_\kk}( s(a) g\mathfrak{a}_0  ,  s(a) g\mathfrak{a} ) \subset V.
\]
Using the assumption $s(\widehat{\co}_\kk^\times) \subset K_\Phi$, we obtain a decomposition 
\[
L  =L_{-1}\oplus L_0 \oplus L_1 
\]
with $L_i = L \cap V_i$. 
The images of the lattices $L_i$ in the graded pieces $\mathrm{gr}_i V$ are given by
\begin{align*}
L_{-1} &=   \overline{a}  \cdot  \Hom_{\co_\kk} (  g \mathfrak{a}_0 , \mathrm{gr}_{-2}( g \mathfrak{a}) ) \\
L_0 & =   \Hom_{\co_\kk} (  g \mathfrak{a}_0 , \mathrm{gr}_{-1}(  g \mathfrak{a}) )   \\ 
L_1 & =  a^{-1} \cdot  \Hom_{\co_\kk} (  g \mathfrak{a}_0 , \mathrm{gr}_{0}(  g \mathfrak{a}) ). 
\end{align*}
In particular,  $L_0$ is independent of $a$ and agrees with  (\ref{boundary herm}).

Choose  a $\Z$-basis $\eee_{-1} , \fff_{-1} \in L_{-1}$, and let $\eee_1, \fff_1 \in \mathfrak{d}^{-1}L_1$ be the dual basis with respect to the (perfect) $\Z$-bilinear pairing
\[
 [\, \cdot\, , \, \cdot\, ] : L_{-1} \times \mathfrak{d}^{-1} L_1 \to \Z
\]
obtained by restricting (\ref{hom quadratic}).   This basis may be chosen so that 
\begin{equation}\label{def-Lpm}
\begin{array}{lcl}
L_{-1} = \Z \eee_{-1}+\Z \fff_{-1}   &  &     \mathfrak{d}^{-1} L_{-1} = \Z \eee_{-1}+D^{-1}\Z \fff_{-1} \\ \\
  L_1 = \Z \eee_1 + D\Z \fff_1    &  &   \mathfrak{d}^{-1}L_1=\Z \eee_1 + \Z \fff_1. 
  \end{array}
\end{equation}

As $\epsilon V_1(\C) \subset V_1(\C)$ is a line, there is a unique $\tau\in \C$ satisfying
\begin{equation}\label{the tau}
\tau\eee_1 + \fff_1 \in \epsilon V_1(\C).
\end{equation}
After  possibly replacing both $\eee_1$ and $\eee_{-1}$ by their negatives, we may assume that $\mathrm{Im}(\tau)>0$.

\begin{proposition}\label{prop:explicit FJ}
The $\Z$-lattice $\mathfrak{b}  = \Z\tau+\Z$ is contained in $\kk$, and is a fractional $\co_\kk$-ideal.  The  elliptic curve 
\begin{equation}\label{elliptic param}
E^{(a)}(\C) =  \mathfrak{b} \backslash \C
\end{equation}
is isomorphic to  (\ref{twisty CM curve}), and  there is an $\co_\kk$-linear isomorphism of complex abelian varieties
\begin{equation}\label{KS uniform}
E^{(a)}(\C) \otimes L_0 \iso  \mathfrak{b} L_0 \backslash V_0(\R)  .
\end{equation}
Under this isomorphism the inverse of  the line bundle (\ref{Q def}) has the form
\begin{equation}\label{bundle param}
 \mathcal{Q}^{-1}_{E^{(a)}(\C) \otimes L_0 }  \iso   \mathfrak{b} L_0\backslash ( V_0(\R) \times \C ) ,
\end{equation}
where the action of $y_0 \in \mathfrak{b} L_0$ on $V_0(\R) \times \C$  is  
\[
 y_0 \cdot (w_0, q )  
  = \big(   w_0 + \epsilon y_0, q \cdot 
  e^{ \pi i  \frac{ \langle y_0 , y_0 \rangle  }{  \mathrm{N}(\mathfrak{b})  } }
 e^{  -  \pi   \frac{  \langle w_0,  y_0\rangle  }{ \mathrm{Im}(\tau)  }    -  \pi    \frac{   \langle y_0 , y_0 \rangle }{  2 \mathrm{Im}(\tau)} } \big). 
\]
\end{proposition}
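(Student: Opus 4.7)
The proof proceeds in four steps matching the four assertions, and amounts to unwinding the complex uniformization of the tower $\mathcal{C}_\Phi\to\mathcal{B}_\Phi\to\mathcal{A}_\Phi$ restricted to the component indexed by $a$, followed by an explicit computation of the Appell--Humbert datum of the line bundle $\mathcal{Q}_{E^{(a)}\otimes L_0}$.

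For the first claim, the defining property $\tau\eee_1+\fff_1\in\epsilon V_1(\C)$ exhibits this element as a generator of the complex line $\epsilon V_1(\C)$, on which $\co_\kk$ acts through the structure map $\co_\kk\hookrightarrow\C$. The real-linear isomorphism $\mathrm{pr}_\epsilon:V_1(\R)\iso\epsilon V_1(\C)$ of (\ref{idem proj}) is then $\co_\kk$-linear, and under it the $\Z$-lattice spanned by $\eee_1,\fff_1$ maps isomorphically onto a scalar multiple of $\Z\tau+\Z\subset\C$. Since the source lattice is $\co_\kk$-stable, so is $\mathfrak{b}=\Z\tau+\Z$, proving that $\mathfrak{b}\subset\kk$ is a fractional $\co_\kk$-ideal.

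For the second and third claims, I would use Lemma \ref{lem:mixed section} to write the component of $\mathrm{Sh}(Q_\Phi,\mathcal{D}_\Phi)(\C)$ indexed by $a$ as $\Gamma_a\backslash\mathcal{D}_\Phi$, where $\Gamma_a=Q_\Phi(\Q)\cap s(a)K_\Phi s(a)^{-1}$. Remark \ref{rem:mixed to so} combined with the Witt decomposition $V=V_{-1}\oplus V_0\oplus V_1$ realizes $\mathcal{D}_\Phi$ via normalized projective representatives $\tau\eee_1+\fff_1+w_0\in\epsilon V(\C)$, with $\tau$ in the upper half plane and $w_0\in\epsilon V_0(\C)\iso V_0(\R)$. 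Projecting to the bottom layer forgets $w_0$ and recovers the map to $\mathcal{M}_{(1,0)}$ of Proposition \ref{prop:second moduli iso}; tracking the $\co_\kk$-module structures identifies the image with the elliptic curve $\mathfrak{b}\backslash\C$, matching (\ref{twisty CM curve}) via the natural $\co_\kk$-linear identification between $\mathfrak{b}$ and $a\cdot\Hom_{\co_\kk}(\mathfrak{n},g\mathfrak{a}_0)$ obtained by projection to $\epsilon V_1(\C)$. The unipotent radical of $Q_\Phi$ acts on the fiber coordinate $w_0$ by translation, and the subgroup of $\Gamma_a$ fixing $\tau$ acts as translation by $\mathfrak{b} L_0$ under the natural isomorphism $L_0\otimes_{\co_\kk}\mathfrak{b}\iso\mathfrak{b} L_0$, yielding (\ref{KS uniform}).

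The fourth claim is the main technical obstacle. By bilinearity (\ref{mumford sheaf}) and the definition (\ref{Q def}), it suffices to compute the analytic automorphy factors of $\mathcal{P}_E(\langle x_i,x_j\rangle s_i,s_j)$ and $\mathcal{P}_E(\gamma\langle x_i,x_i\rangle s_i,s_i)$ on the universal cover. After reducing to a single rank-one summand $L_0=\co_\kk\cdot x$, one trivializes the Poincar\'e bundle on $E^{(a)}\times E^{(a)}$ over the universal cover $\C\times\C$ by Mumford's classical formula, and reads off the Appell--Humbert data: the hermitian form $H(u,v)=\langle u,v\rangle/\mathrm{Im}(\tau)$ on $V_0(\C)$ coming from the imaginary part of the polarization, and the semi-character on $\mathfrak{b} L_0$ determined by $\gamma=(1+\delta)/2$. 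Reassembling the contributions and combining exponents produces the stated cocycle, the prefactor $e^{\pi i\langle y_0,y_0\rangle/\mathrm{N}(\mathfrak{b})}$ arising from evaluation of the semi-character at the lattice point $y_0$. The delicate point is the bookkeeping of signs and normalization conventions when passing between the algebraic Poincar\'e bundle of (\ref{Q def}) and the classical analytic theta cocycle.
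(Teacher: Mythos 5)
Your overall strategy for the last claim --- realize the Poincar\'e bundle on $E^{(a)}(\C)\times E^{(a)}(\C)$ as a quotient of $\C\times\C\times\C$ by the classical cocycle and then compute the automorphy factor of $\mathcal{Q}_{E^{(a)}(\C)\otimes L_0}$ directly from the definition (\ref{Q def}) --- is exactly the paper's, and your treatment of the first three claims is an acceptable variant of it.  The paper transfers $\co_\kk$-stability to $\mathfrak{b}$ through the conjugate-linear evaluation map $\alpha\eee_{-1}+\beta\fff_{-1}\mapsto\alpha\tau+\beta$ on $L_{-1}$ rather than through $\mathrm{pr}_\epsilon$ on $V_1$ (beware that your projection identifies $\Z\eee_1+\Z\fff_1$ with a multiple of $\Z\bar\tau+\Z$ rather than $\Z\tau+\Z$; harmless for the fractional-ideal claim, but a conjugation you must track), and it derives (\ref{KS uniform}) purely from the Serre tensor construction rather than from the $N_\Phi(\Q)$-action on $\mathcal{D}_\Phi$, which is really the content of the separate Proposition \ref{prop:supersiegel}.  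Do note that identifying (\ref{elliptic param}) with (\ref{twisty CM curve}) requires exhibiting an actual $\co_\kk$-linear isomorphism between two rank-one projective modules --- this is a statement about ideal classes, not automatic --- and the paper builds it as a composite of two conjugate-linear maps; your ``projection to $\epsilon V_1(\C)$'' would need to be spelled out to that level.

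The genuine gap is the step ``after reducing to a single rank-one summand $L_0=\co_\kk\cdot x$.''  That reduction is not available: $L_0$ is a projective hermitian $\co_\kk$-module of rank $n-2$, in general neither free nor an orthogonal sum of free rank-one modules, and even when such a splitting exists the definition (\ref{Q def}) couples the summands through the off-diagonal factors $\mathcal{P}_E(\langle x_i,x_j\rangle s_i,s_j)$ for $i<j$.  The entire content of (\ref{bundle param}) is that, upon expanding the cocycle of $\mathcal{Q}_{E^{(a)}(\C)\otimes L_0}$ over a set of $\co_\kk$-module generators $x_1,\dots,x_n$ of $L_0$, all of these cross terms together with the diagonal terms weighted by $\gamma=(1+\delta)/2$ recombine into the single factor
$e^{\pi i\langle y_0,y_0\rangle/\mathrm{N}(\mathfrak{b})}\,e^{-\pi\langle w_0,y_0\rangle/\mathrm{Im}(\tau)-\pi\langle y_0,y_0\rangle/(2\mathrm{Im}(\tau))}$
up to terms lying in $2\pi i\Z$; verifying that the discarded terms really are integral multiples of $2\pi i$ uses the integrality and self-duality of the hermitian form on $L_0$, the identity $2i\,\mathrm{Im}(\tau)=\delta\,\mathrm{N}(\mathfrak{b})$, and the specific choice of $\gamma$.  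That verification \emph{is} the proof; labelling it a reduction to rank one skips it.  Relatedly, you never explain why the semicharacter comes out to be $y_0\mapsto e^{\pi i\langle y_0,y_0\rangle/\mathrm{N}(\mathfrak{b})}$ rather than a twist of it by some character of $\mathfrak{b}L_0$ --- pinning this down is exactly what the $\gamma\langle x_i,x_i\rangle$ terms in (\ref{Q def}) accomplish --- nor do you account for the sign reversal coming from the fact that (\ref{bundle param}) describes $\mathcal{Q}^{-1}$ rather than $\mathcal{Q}$.
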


\begin{proof}
Consider the $\Q$-linear map
 \begin{equation}\label{prewee conj}
 V_{-1} \map{ \alpha \mathrm{e}_{-1} + \beta \mathrm{f}_{-1} \mapsto \alpha \tau +\beta}  \C.
 \end{equation}
 Its $\C$-linear extension $V_{-1}(\C) \to \C$ kills the vector
 $
 \mathrm{e}_{-1} -\tau \mathrm{f}_{-1}  \in \epsilon V_{-1}(\C),
 $
and hence   factors through an isomorphism
 $
V_{-1}(\C) / \epsilon V_{-1}(\C) \iso \C.
$
 This implies that  (\ref{prewee conj}) is $\kk$-conjugate linear.  As this map identifies 
 $L_{-1}\iso \mathfrak{b}$, we find that  the $\Z$-lattice $\mathfrak{b} \subset\C$  is $\co_\kk$-stable.  From   $1\in \mathfrak{b}$ we then deduce that $\mathfrak{b} \subset\kk$, and is   a fractional $\co_\kk$-ideal.   Moreover, we have just shown that
 \begin{equation}\label{wee conj}
 L_{-1} \map{ \alpha \mathrm{e}_{-1} + \beta \mathrm{f}_{-1} \mapsto \alpha \tau +\beta}  \mathfrak{b}.
 \end{equation}
is an $\co_\kk$-conjugate linear isomorphism.

Exactly as in (\ref{flippy map}), the self-dual hermitian forms on $g\mathfrak{a}_0$ and $g\mathfrak{a}$ induce an $\co_\kk$-conjugate-linear isomorphism
\[
\Hom_{\co_\kk} (  g \mathfrak{a}_0 , \mathrm{gr}_{-2}( g \mathfrak{a}) )  \iso \Hom_{\co_\kk} (  \mathrm{gr}_0( g \mathfrak{a}), g \mathfrak{a}_0  ),
\]
and hence determine an $\co_\kk$-conjugate-linear isomorphism
\begin{align*}
L_{-1}  & = \overline{a}  \cdot \Hom_{\co_\kk} (  g \mathfrak{a}_0 , \mathrm{gr}_{-2}( g \mathfrak{a}) )  \\
& \iso 
a\cdot  \Hom_{\co_\kk} (  \mathrm{gr}_0( g \mathfrak{a}), g \mathfrak{a}_0  )\\
& =
a\cdot \Hom_{\co_\kk} ( \mathfrak{n}  , g \mathfrak{a}_0  ) .
\end{align*}

The composition 
\[
 a\cdot \Hom_{\co_\kk} ( \mathfrak{n}  , g \mathfrak{a}_0  ) \iso L_{-1}  \map{(\ref{wee conj})}  \mathfrak{b}
\]
is  an $\co_\kk$-linear isomorphism, which identifies the fractional ideal $\mathfrak{b}$ with the projective $\co_\kk$-module used in the definition of (\ref{twisty CM curve}). In particular it   identifies the elliptic curves   (\ref{twisty CM curve}) and (\ref{elliptic param}), and   also identifies
\[
E^{(a)} (\C) \otimes  L_0 =  (  \mathfrak{b}  \backslash \C ) \otimes  L_0 \iso 
 (\mathfrak{b} \otimes L_0 ) \backslash (\C\otimes  L_0 ) .
\]
Here, and throughout the remainder of the proof, all tensor products are over $\co_\kk$.
Identifying  $\C\otimes  L_0  \iso V_0(\R)$ proves (\ref{KS uniform}).

It remains to explain the isomorphism (\ref{bundle param}).  First consider the Poincar\'e bundle on the product
\[
E^{(a)} (\C) \times E^{(a)} (\C) \iso (\mathfrak{b} \times \mathfrak{b}) \backslash ( \C \times \C).
\]
Using classical formulas, the space of this line bundle can be identified with the quotient 
\[
\mathcal{P}_{E^{(a)}(\C) } =  (\mathfrak{b} \times \mathfrak{b}) \backslash ( \C \times \C \times \C),
\]
where the action is given by 
\[
(b_1,b_2) \cdot (z_1,z_2, q ) =
\left(
 z_1+b_1, z_2+b_2 ,   q\cdot e^{   \pi H_\tau(z_1,b_2) + \pi H_\tau(z_2,b_1) + \pi  H_\tau(b_1,b_2)  } 
 \right),
\]
and we have set $H_\tau( w,z) = w\overline{z}/\mathrm{Im}(\tau)$ for complex numbers $w$ and $z$.

  Directly from the definition, the line bundle  (\ref{Q def}) on
\[ 
E^{(a)} (\C) \otimes L_0 \iso  ( \mathfrak{b} \otimes  L_0 ) \backslash ( \C \otimes  L_0 )
\]
is given by 
\[
\mathcal{Q}_{E^{(a)} (\C) \otimes L_0} \iso   ( \mathfrak{b} \otimes L_0 ) \backslash \big(  ( \C \otimes  L_0 ) \times \C  \big) ,
\]
where the action of $\mathfrak{b} \otimes  L_0$  on $( \C \otimes  L_0 ) \times \C$ is given as follows:
Choose any set $x_1,\ldots, x_n \in L_0$ of $\co_\kk$-module generators, and extend the $\co_\kk$-hermitian form on $L_0$ to a $\C$-hermitian form on $\C\otimes L_0$.   If
\[
y_0 = \sum_i b_i \otimes x_i \in \mathfrak{b} \otimes  L_0
\]
and 
\[
w_0 =  \sum_i  z_i \otimes x_i  \in \C \otimes   L_0
\]
then 
\[
y_0  \cdot (w_0   ,  q )  = ( w_0 + y_0 , q \cdot  e^{ \pi X +\pi Y} ),
\]
where the factors $X$ and $Y$ are  
\begin{align*}
X& =    \sum_{i<j}  \Big( H_\tau ( \langle x_i,x_j\rangle z_i , b_j )   
+ H_\tau (  z_j ,  \langle x_i , x_j\rangle b_i)   
+ H_\tau(\langle x_i,x_j\rangle b_i , b_j )  \Big) \\
& =  \frac{1}{\mathrm{Im}(\tau) } \sum_{i\neq j}  \langle  z_i  \otimes x_i, b_j  \otimes x_j\rangle   
+  \frac{1}{\mathrm{Im}(\tau) } \sum_{i < j}    \langle b_i   \otimes x_i, b_j  \otimes x_j\rangle 
\end{align*}
and, recalling $\gamma = (1+\delta)/2$, 
\begin{align*}
Y&= \sum_i  \Big( H_\tau (  \gamma \langle x_i , x_i \rangle z_i , b_i)   
+ H_\tau (  z_i ,  \gamma \langle x_i , x_i \rangle b_i)   
+ H_\tau( \gamma \langle x_i,x_i \rangle b_i , b_i )  \Big) \\
& =   \frac{1}{\mathrm{Im}(\tau) } \sum_i   \langle z_i \otimes x_i, b_i \otimes x_i \rangle  
  +  \frac{1}{\mathrm{Im}(\tau) } \sum_i  \gamma \langle b_i \otimes x_i, b_i  \otimes x_i \rangle    .
\end{align*}

For elements $y_1,y_2\in \mathfrak{b} \otimes  L_0$, we abbreviate
\[
\alpha ( y_1, y_1 ) = \frac{\langle y_1,y_2\rangle}{\delta \mathrm{N}(\mathfrak{b}) } - \frac{\langle y_2,y_1\rangle}{\delta \mathrm{N}(\mathfrak{b}) }  \in \Z.
\]
Using $2 i \mathrm{Im}(\tau) = \delta \mathrm{N}(\mathfrak{b})$, some elementary calculations show that 
\begin{eqnarray*}\lefteqn{
\pi X+\pi Y  -   \frac{ \pi \langle w_0,y_0\rangle }{\mathrm{Im}(\tau) }  } \\
&= &
 \frac{2 \pi i }{\delta \mathrm{N}(\mathfrak{b} )} \sum_{i < j}    \langle b_i  \otimes x_i, b_j \otimes x_j\rangle
+ \frac{2 \pi  i }{\delta \mathrm{N}(\mathfrak{b} ) } \sum_i   \langle  \gamma b_i \otimes x_i, b_i \otimes x_i \rangle \\
   &= &
  \frac{  \pi  }{ 2  \mathrm{Im}(\tau) } \sum_{i , j}    \langle  b_i   \otimes x_i, b_j   \otimes   x_j\rangle  
   - \frac{ \pi i }{ \mathrm{N}(\mathfrak{b} )} \sum_{i , j}    \langle   b_i   \otimes   x_i, b_j   \otimes   x_j\rangle  \\
 & &
 +  2 \pi i \sum_{i < j}  \alpha(  \gamma b_i  \otimes    x_i, b_j   \otimes   x_j ) 
 + \frac{  2\pi  i }{ \mathrm{N}(\mathfrak{b} ) } \sum_i   \langle b_i  \otimes   x_i, b_i   \otimes  x_i \rangle .
\end{eqnarray*}
All terms in the final line lie in $2\pi i \Z$, and so 
\[
e^{\pi X + \pi Y} = e^{    \frac{ \pi \langle w_0,y_0\rangle }{\mathrm{Im}(\tau) }  } 
  e^{   \frac{  \pi   \langle  y_0 ,  y_0 \rangle  }{ 2  \mathrm{Im}(\tau) }    }
   e^{  - \frac{ \pi i  \langle   y_0 ,  y_0 \rangle }{ \mathrm{N}(\mathfrak{b} )}     } .
\]
The relation  (\ref{bundle param}) follows immediately.
\end{proof}

Proposition  \ref{prop:explicit FJ} allows us to express Fourier-Jacobi coefficients explicitly as  functions on $V_0(\R)$ satisfying certain transformation laws.   Suppose we start with a global section 
\begin{equation}\label{general section}
\psi \in H^0\big( \mathcal{S}^*_{\Kra /\C}   ,   \bm{\omega}^k \big).
\end{equation}
 For each $a\in \widehat{\kk}^\times$ and $\ell\ge 0$ we have the algebraically defined Fourier-Jacobi coefficient  
\begin{equation}\label{algan FJ}
\mathrm{FJ}_\ell^{(a)}(\psi) \in H^0 \big(    E^{(a)}   \otimes L_0   ,   \mathcal{Q}_{E^{(a)} \otimes L_0}^\ell   \big)
\end{equation}
of (\ref{algan preFJ}), where we have trivialized $\Lie(E^{(a)})$ using (\ref{elliptic param}).   
The isomorphism (\ref{bundle param}) now identifies (\ref{algan FJ}) with  a function on $V_0(\R)$  satisfying the transformation law
\begin{equation}\label{FJ-trans}
\mathrm{FJ}^{(a)}_\ell(\psi)(w_0 + y_0) = 
\mathrm{FJ}^{(a)}_\ell(\psi)(w_0 ) \cdot 
e^{  i \pi \ell   \frac{ \langle y_0 ,y_0 \rangle  }{  \mathrm{N}(\mathfrak{b})  } }
 e^{    \pi \ell  \frac{    \langle w_0,  y_0\rangle  }{ \mathrm{Im}(\tau)  }    +  \pi  \ell  \frac{ \langle y_0 , y_0 \rangle }{ 2  \mathrm{Im}(\tau)} }
 \end{equation}
for all  $y_0\in  \mathfrak{b} L_0$.

\begin{remark}\label{rem:switch to unitary}
If we  use  the isomorphism $\mathrm{pr}_\epsilon: V_0(\R) \iso   \epsilon V_0(\C)$  of (\ref{idem proj}) to view (\ref{algan FJ}) as a function of $w_0\in \epsilon V_0(\C)$,   the transformation law can be expressed in terms of the $\C$-bilinear form  $[\cdot,\cdot]$  as
\[
\mathrm{FJ}^{(a)}_\ell(\psi)(w_0 +  \mathrm{pr}_\epsilon (y_0) ) 
= \mathrm{FJ}^{(a)}_\ell(\psi)(w_0 ) \cdot   
e^{ i \pi \ell   \frac{ Q( y_0) }{  \mathrm{N}(\mathfrak{b})  } }
 e^{   \pi \ell   \frac{   [w_0,  y_0]  }{ \mathrm{Im}(\tau)  }    +  \pi  \ell   \frac{ Q( y_0)}{  2 \mathrm{Im}(\tau)} }
\]
for all  $y_0\in  \mathfrak{b} L_0$.  This uses the (slightly confusing) commutativity of 
\[
\xymatrix{
  V_0(\R)   \ar[r]^{\mathrm{pr}_\epsilon} \ar[d]_{ \langle \cdot, y_0 \rangle } &   \epsilon V_0(\C) \ar[r]^{\subset}   &    V_0(\C) \ar[d]^{  [ \cdot, y_0 ]  }      \\ 
 \kk\otimes_\Q \R \ar@{=}[rr] & & \C .
}
\]
\end{remark}

In order to give another interpretation of our explicit coordinates,  let $N_\Phi \subset Q_\Phi$ be the unipotent radical, and  let $U_\Phi \subset N_\Phi$ be its center.     The unipotent radical may be characterized as the kernel of the morphism
$\nu_\Phi$  of (\ref{Q fiber}), or, equivalently, as the largest subgroup acting trivially on all graded pieces $\mathrm{gr}_i V$.

\begin{proposition}\label{prop:supersiegel}
There is a commutative diagram
\begin{equation}\label{coord diagram}
\xymatrix{
{ ( U_\Phi(\Q) \cap s(a) K_\Phi s(a)^{-1}) \backslash \mathcal{D}_\Phi    }  \ar[rr]^{  \qquad z \mapsto ( w_0 , q )   } \ar[d]   & &   { \epsilon V_0(\C) \times \C^\times       }   \ar[d]  \\
{    ( N_\Phi(\Q) \cap s(a) K_\Phi s(a)^{-1}) \backslash \mathcal{D}_\Phi     }  \ar[rr]   & &    {    \mathfrak{b} L_0\backslash ( \epsilon V_0(\C) \times \C^\times )     }    
}
\end{equation}
in which the  horizontal arrows are  holomorphic isomorphisms, and the action of $\mathfrak{b} L_0$ on  
\[
\epsilon V_0(\C) \times \C^\times    \iso V_0(\R) \times \C^\times    
\]
 is the same  as in Proposition \ref{prop:explicit FJ}.
\end{proposition}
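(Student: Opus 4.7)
The plan is to parametrize $\mathcal{D}_\Phi$ explicitly using the embedding of Remark \ref{rem:mixed to so} and the Witt decomposition $V = V_{-1} \oplus V_0 \oplus V_1$ determined by the section $s$ of Lemma \ref{lem:mixed section}, and then read off the actions of $U_\Phi$ and $N_\Phi$ in these coordinates. The central input is that the commutator pairing in the Heisenberg-type extension $1\to U_\Phi\to N_\Phi\to L_0\to 1$ is exactly the hermitian form on $L_0$, so that after exponentiating the $U_\Phi$-coordinate the resulting automorphy factor will be forced to match the one already computed in Proposition \ref{prop:explicit FJ}.

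First I would parametrize $\mathcal{D}_\Phi$. Each point $z\in\mathcal{D}_\Phi$ is a line in $\epsilon V(\C)$, and relative to the splitting $\epsilon V(\C) = \epsilon V_{-1}(\C) \oplus \epsilon V_0(\C) \oplus \epsilon V_1(\C)$ (with outer summands one-dimensional) the defining condition $W(\R) = J^\perp(\R)\oplus y$ of $\mathcal{D}_\Phi$ translates to the non-vanishing of the $\epsilon V_1(\C)$-component. So every $z$ has a unique representative
\[
w = \lambda\cdot u_{-1} + w_0 + (\tau \eee_1 + \fff_1)
\]
where $u_{-1}$ is a fixed generator of $\epsilon V_{-1}(\C)$, the vector $\tau \eee_1 + \fff_1$ is the distinguished generator of $\epsilon V_1(\C)$ from (\ref{the tau}), and $(w_0,\lambda)\in \epsilon V_0(\C)\times \C$. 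This produces a biholomorphism $\mathcal{D}_\Phi \iso \epsilon V_0(\C)\times\C$.

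Next I would identify the actions. The center $U_\Phi$ is the one-dimensional $\R$-subgroup of $Q_\Phi$ acting trivially on every $\mathrm{wt}_iV/\mathrm{wt}_{i-2}V$, hence it  only translates the $\lambda$-coordinate and fixes $w_0$. Using (\ref{def-Lpm}) and the duality between $L_{-1}$ and $\mathfrak{d}^{-1}L_1$, the intersection $U_\Phi(\Q)\cap s(a)K_\Phi s(a)^{-1}$ is a rank-one lattice in $U_\Phi(\R)\iso\R$ whose quotient converts $\lambda\in\C$ into $q\in\C^\times$ via an exponential map normalized by $\mathrm{N}(\mathfrak{b})$; this yields the top horizontal isomorphism of (\ref{coord diagram}). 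For the bottom row, the quotient $N_\Phi/U_\Phi$ embeds into $\Hom_\kk(\mathrm{gr}_{-1}V,\mathrm{gr}_1V)$, which, via the pairings fixed in \S\ref{ss:explicit boundary}, is canonically isomorphic to $L_0\otimes_\Z\R\iso V_0(\R)\iso \epsilon V_0(\C)$. A short computation identifies the image of $N_\Phi(\Q)\cap s(a)K_\Phi s(a)^{-1}$ in this quotient with $\mathfrak{b} L_0$; a chosen lift of $y_0\in\mathfrak{b} L_0$ to $N_\Phi(\Q)$ translates the $w_0$-coordinate by $\mathrm{pr}_\epsilon(y_0)$, and produces a multiplicative twist on $q$ coming from the extension cocycle.

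The main obstacle is reconciling this cocycle with the explicit factor
\[
e^{i\pi\langle y_0,y_0\rangle/\mathrm{N}(\mathfrak{b})} \cdot e^{-\pi \langle w_0,y_0\rangle/\mathrm{Im}(\tau)-\pi\langle y_0,y_0\rangle/(2\mathrm{Im}(\tau))}
\]
appearing in Proposition \ref{prop:explicit FJ}. Because the commutator pairing in $N_\Phi$ is the hermitian form on $L_0$ (via the identification of the extension constant $U_\Phi$ with $\R$ in which the lattice is generated by elements of norm $\mathrm{N}(\mathfrak{b})$), exponentiating yields a factor of the form $e^{i\pi\langle y_0,y_0\rangle/\mathrm{N}(\mathfrak{b})}$; the remaining Gaussian factor is forced by the requirement that the action descend to a $(\mathfrak{b} L_0)$-action on the line bundle (\ref{bundle param}), using the relation $2i\mathrm{Im}(\tau) = \delta\mathrm{N}(\mathfrak{b})$ and the link between the $\Q$-bilinear form $[\cdot,\cdot]$ and the hermitian form $\langle\cdot,\cdot\rangle$ noted in Remark \ref{rem:switch to unitary}. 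This is essentially the same bookkeeping calculation already performed at the end of the proof of Proposition \ref{prop:explicit FJ}, so commutativity of (\ref{coord diagram}) follows, and the horizontal arrows are holomorphic isomorphisms by construction.
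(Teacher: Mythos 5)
Your overall strategy is the paper's: parametrize $\mathcal{D}_\Phi$ by a normalized representative $w_0 + \lambda\cdot(\text{generator of }\epsilon V_{-1}(\C)) + (\tau\eee_1+\fff_1)$, observe that $U_\Phi$ translates only the $\lambda$-coordinate so that exponentiating yields the top row, identify $N_\Phi/U_\Phi$ with $V_0$ and the image of the integral points with $\mathfrak{b} L_0$, and then match automorphy factors against Proposition \ref{prop:explicit FJ}. One small slip: $\Hom_\kk(\mathrm{gr}_{-1}V,\mathrm{gr}_1V)$ is a $\kk$-line, not an $(n-2)$-dimensional space, so it cannot receive $N_\Phi/U_\Phi$; the correct identification is with $\overline{V}_{-1}\otimes_\kk V_0$ (equivalently with $\Hom_\kk(V_1,V_0)$, the $\phi$-entry of the matrices $n(\phi,\phi^*,u)$ the paper writes down), which is then sent to $V_0$ by $(\alpha\eee_{-1}+\beta\fff_{-1})\otimes y_0\mapsto(\alpha\tau+\beta)y_0$.

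The more substantive issue is the last step. Saying the Gaussian factor is ``forced by the requirement that the action descend to a $(\mathfrak{b} L_0)$-action on the line bundle (\ref{bundle param})'' is circular as written: that the geometric action induced by $N_\Phi(\Q)\cap s(a)K_\Phi s(a)^{-1}$ \emph{equals} the line-bundle action of Proposition \ref{prop:explicit FJ} is precisely the content of the proposition, so it cannot be used as a constraint. Read charitably as an Appell--Humbert argument (the commutator pairing in the Heisenberg group fixes the hermitian form, hence the canonical factor of automorphy), it determines the automorphy factor only up to a coboundary and a semicharacter, whereas the statement asserts an on-the-nose equality in the chosen coordinates, including the root-of-unity part $e^{i\pi\langle y_0,y_0\rangle/\mathrm{N}(\mathfrak{b})}$ which depends on the specific integral lift of $y_0$. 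What is actually needed (and what the paper does, while admitting the bookkeeping is tedious and partly left to the reader) is the direct computation: apply a lift $n(\phi,\phi^*,u)$ of $y_0$ to the normalized representative, renormalize the $\epsilon V_1(\C)$-component back to $\tau\eee_1+\fff_1$, and read off the shift $w_0\mapsto w_0+\epsilon y_0$ together with the change in $\xi$, whose linear part comes from $\phi^*$ applied to $w_0$ and whose quadratic part comes from the $\tfrac{1}{2}\,\phi^*\circ\phi$ entry. That computation cannot be replaced by the descent argument if one wants the exact factor claimed.
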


\begin{proof}
 Recall from Remark \ref{rem:mixed to so} the isomorphism
\[
\mathcal{D}_\Phi  \iso  \big\{ w \in \epsilon V(\C) :   \epsilon V(\C) =  \epsilon V_{-1}(\C) \oplus \epsilon V_0(\C)  \oplus \C w    \big\} / \C^\times .
\]
As  $\epsilon V(\C)$ is totally isotropic with respect to  $[\cdot,\cdot]$, a simple calculation shows that every line $w \in \mathcal{D}_\Phi$ has a unique representative of the form 
\[
 - \xi ( \eee_{-1} -  \tau    \fff_{-1} )  + w_0 +  ( \tau \eee_1 + \fff_1 ) \in \epsilon V_{-1}(\C) \oplus \epsilon V_0(\C)  \oplus \epsilon V_1(\C)
\]
with  $\xi\in \C$ and $w_0 \in   \epsilon V_0(\C) = V_0(\R) $.   These coordinates   define  an isomorphism of complex manifolds
\begin{equation}\label{siegel coords}
\mathcal{D}_\Phi \map{ w \mapsto (  w_0  ,   \xi  ) }   \epsilon V_0(\C) \times \C.
\end{equation}

The action of $G$ on $V$ restricts to a faithful action of  $N_\Phi$, allowing us to express elements of $N_\Phi(\Q) $ as matrices
\[
n(\phi , \phi^* , u) = \left( \begin{matrix}
1 & \phi^* & u + \frac{1}{2} \phi^*\circ \phi \\
& 1 & \phi \\
& & 1
\end{matrix}\right)   \in N_\Phi(\Q)
\]
for maps
\[
\phi   \in \Hom_\kk ( V_1 , V_0 ) ,\quad
\phi^*  \in \Hom_\kk(V_0, V_{-1}) ,\quad
u    \in \Hom_\kk( V_1, V_{-1} ) 
\]
satisfying the relations
\begin{align*}
0 & = \langle \phi(x_1) , y_0 \rangle + \langle x_1 , \phi^*(y_0) \rangle  \\
0 & = \langle u(x_1), y_1 \rangle + \langle x_1,u(y_1) \rangle
\end{align*}
for  $x_i , y_i \in V_i$. The subgroup $U_\Phi(\Q)$ is defined by $\phi=0=\phi^*$.

The group $U_\Phi(\Q) \cap s(a) K_\Phi s(a)^{-1}$ is cyclic, and   generated by the element  $n(0,0,u)$ defined by 
\[
u(x_1) =  \frac{ \langle x_1 ,a  \rangle }{  [  L_{-1} : \co_\kk a ]  } \cdot \delta a
\]
for any  $a\in L_{-1}$.  In terms of the bilinear form, this can be rewritten as
\[
u(x_1) =  -[ x_1,  \mathrm{f}_{-1} ] \mathrm{e}_{-1} + [  x_1  ,  \mathrm{e}_{-1} ] \mathrm{f}_{-1} .
\]
  In the coordinates of (\ref{siegel coords}), the action of $n(0,0,u)$ on   $\mathcal{D}_\Phi$ becomes  
\[
(w_0,\xi) \mapsto  (w_0 , \xi+1),
\]
and  setting $q=e^{2\pi i \xi}$ defines the top horizontal isomorphism in (\ref{coord diagram}).

Let $\overline{V}_{-1}=V_{-1}$ with its conjugate action of $\kk$.  There are group isomorphisms
\begin{equation}\label{mixed domain action}
N_\Phi(\Q)/U_\Phi(\Q)  \iso   \overline{V}_{-1} \otimes_{\kk} V_0 \iso  V_0.
\end{equation}
 The first sends
\[
n(\phi , \phi^* ,u) \mapsto y_{-1}\otimes y_0,
\]
where $y_{-1}$ and $y_0$ are defined by the relation  $\phi(x_1) = \langle x_1 , y_{-1} \rangle  \cdot y_0$, and the second sends 
 \[
 (\alpha \mathrm{e}_{-1}+\beta \mathrm{f}_{-1}) \otimes y_0 \mapsto ( \alpha \tau +\beta) y_0.
 \]
 Compare with  (\ref{wee conj}).

A slightly tedious calculation shows that (\ref{mixed domain action}) identifies
 \[
  ( N_\Phi(\Q) \cap s(a) K_\Phi s(a)^{-1})  /   ( U_\Phi(\Q) \cap s(a) K_\Phi s(a)^{-1})   \iso \mathfrak{b} L_0,
 \]
 defining the bottom horizontal arrow in (\ref{coord diagram}), and that the resulting action of $\mathfrak{b} L_0$ on $\epsilon V_0(\C) \times \C^\times$ agrees with the one defined in Proposition \ref{prop:explicit FJ}.    We leave this to the reader.
\end{proof}

Any section (\ref{general section}) may now be   pulled  back via
\[
  ( N_\Phi(\Q) \cap s(a) K_\Phi s(a)^{-1}) \backslash \mathcal{D} \map{  z\mapsto ( z, s(a)g ) }   \mathrm{Sh}(G , \mathcal{D} )(\C) 
\]
to define a holomorphic  section  of  $(\bm{\omega}^{an})^k$, the $k^\mathrm{th}$ power of the tautological bundle  on  
\[
 \mathcal{D}  \iso  \big\{ w \in \epsilon V(\C) : [w,\overline{w}] <0   \big\} / \C^\times.
\]
The tautological bundle admits a natural $N_\Phi(\R)$-equivariant trivialization:  any  line $w\in \mathcal{D}$ must satisfy $[w, \mathrm{f}_{-1}]\neq 0$, yielding an isomorphism
\[
[\,\cdot\, ,\mathrm{f}_{-1}]  : \bm{\omega}^{an} \iso \co_\mathcal{D}.
\]
 This trivialization allows us to identify $\psi$ with a holomorphic function on $\mathcal{D} \subset \mathcal{D}_\Phi$, which then has an \emph{analytic} Fourier-Jacobi expansion 
\begin{equation}\label{analytic FJ}
\psi = \sum_{\ell} \mathrm{FJ}^{(a)}_\ell(\psi)(w_0) \cdot q^\ell
\end{equation}
defined using the coordinates of Proposition \ref{prop:supersiegel}.  The fact that the coefficients here agree with (\ref{algan FJ}) is a special case of the main results of   \cite{Lan12}, which compare algebraic and analytic Fourier-Jacobi coefficients on general PEL-type Shimura varieties.


\section{Classical modular forms}
\label{s:modular forms}


 Throughout \S \ref{s:modular forms} we let $D$ be any odd squarefree positive integer, and abbreviate $\Gamma=\Gamma_0(D)$.    Let $k$ be any positive integer.  


\subsection{Weakly holomorphic forms}


  The positive divisors  of $D$ are in bijection with the cusps of the complex modular curve $X_0(D)(\C)$,  by sending $r\mid D$ to 
\[
\infty_r = \frac{r}{D} \in  \Gamma  \backslash \mathbb{P}^1(\Q) .
\]
Note that $r=1$ corresponds to the usual cusp at infinity, and so we sometimes abbreviate $\infty=\infty_1$.

Fix a positive divisor $r\mid D$, set $s=D/r$ and choose
\[
R_r =\left( \begin{matrix} \alpha & \beta \\ s\gamma & r \delta \end{matrix} \right) \in  \Gamma_0(s)
\]
with $\alpha,\beta,\gamma,\delta \in\Z$. The corresponding Aktin-Lehner operator is defined by the matrix
\[
W_r =\begin{pmatrix}  r \alpha & \beta \\ D \gamma & r \delta \end{pmatrix} =
R_r \begin{pmatrix}  r   \\  & 1 \end{pmatrix}.
\]
The matrix $W_r$ normalizes  $\Gamma$, and so acts on the cusps of $X_0(D)(\C)$.  This action satisfies $W_r  \cdot \infty =\infty_r$.

Let $\chi$ be a quadratic Dirichlet character modulo $D$, and let 
\[
\chi=\chi_r\cdot \chi_{s}
\]   
be the unique factorization as a product of quadratic Dirichlet characters  $\chi_r$ and $\chi_{s}$  modulo $r$ and $s$, respectively.  Write 
\[
M_k(D,\chi) \subset M^!_k(D,\chi)
\]
for the spaces of holomorphic modular forms and  weakly holomorphic modular forms  of weight $k$, level $\Gamma$, and character $\chi$.
We assume that $\chi(-1) = (-1)^k$, since otherwise $M^!_k(D,\chi)=0$.  
 
Denote by  $\GL_2^+(\R)\subset \GL_2(\R)$ the subgroup of elements with positive determinant. It  acts on functions  on the upper half plane  by the usual weight $k$ slash operator
\[
(f\mid_k \gamma)(\tau) = \det(\gamma)^{k/2} (c\tau + d)^{-k} f(\gamma \tau),\quad \gamma=\abcd\in \GL_2^+(\R),
\]
  and  $f\mapsto f\mid_k W_r$ defines an endomorphism of  $M^!_k(D,\chi)$ satisfying
\[
f\mid_k W_r^2 = \chi_r(-1)\chi_{s}(r)  \cdot f.
\]
 In particular, $W_r$ is an involution when $\chi$ is trivial.

Any weakly holomorphic modular form
\[
f(\tau)= \sum_{m \gg -\infty} c(m) \cdot  q^m \in M^!_k(D,\chi)
\]
determines another weakly holomorphic modular form 
\[
 \chi_r(\beta)\chi_{s}(\alpha) \cdot ( f\mid_k W_r)\in M^!_k(D,\chi),
\]
which is easily seen to be independent of the choice of  parameters $\alpha,\beta,\gamma,\delta$ in the definition of $W_r$.  This second modular form has a $q$-expansion at $\infty$,  denoted
\begin{equation}\label{other cusps}
 \chi_r(\beta)\chi_{s}(\alpha) \cdot ( f\mid_k W_r) = \sum_{m \gg -\infty} c_r(m) \cdot q^m.
\end{equation}

\begin{definition}\label{def:constant at cusp}
We call (\ref{other cusps}) the \emph{$q$-expansion of $f$ at $\infty_r$}.  
Of special interest is $c_r(0)$, the \emph{constant term of $f$ at $\infty_r$.}
\end{definition}

\begin{remark}
If $\chi$ is nontrivial,  the coefficients of (\ref{other cusps}) need not lie in the subfield of $\C$ generated by the Fourier coefficients of $f$.
\end{remark}


\subsection{Eisenstein series and the modularity criterion}
\label{ss:eisenstein}


Fix an integer $k\ge 2$.   Denote by
\[
M^{!,\infty} _{2-k} (D,\chi) \subset M^!_{2-k} (D,\chi)
\]
the subspace of weakly holomorphic forms that are holomorphic outside the cusp  $\infty$, and by 
\[
M^\infty_{k}(D,\chi)\subset M_{k}(D,\chi)
\]
 the subspace of holomorphic modular forms that vanish at all cusps different from $\infty$.

 If $k>2$ there is a decomposition
\[
M^\infty_{k}(D,\chi) = \C E \oplus S_k(D,\chi),
\] 
where $E$ is the Eisenstein series
\[
E  =\sum_{\gamma\in \Gamma_\infty\backslash \Gamma} \chi(d) \cdot ( 1\mid_k \gamma ) \in M_k(D,\chi).
\]
Here   $\Gamma_\infty\subset\Gamma$ is the stabilizer of $\infty\in \mathbb{P}^1(\Q)$, and  
$
\gamma = \left(\begin{smallmatrix} a & b \\ c & d \end{smallmatrix}\right) \in \Gamma.
$

We also define the (normalized) Eisenstein series for the cusp $\infty_r$ by
\[
E_r = \chi_r(-\beta) \chi_{s}(\alpha r) \cdot ( E \mid_k W_r ) \in M_k(D,\chi).
\]
It is independent of the choice of the parameters in $W_r$, and we denote by 
\[
E_r(\tau) = \sum_{m \ge 0} e_r(m) \cdot q^m
\]
its $q$-expansion at $\infty$. 

\begin{remark}\label{rem:eisenstein constant}
Our notation for the $q$-expansion of $E_r$ is slightly at odds with (\ref{other cusps}), as the $q$-expansion of $E$ at $\infty_r$ is not $\sum e_r(m) q^m$.  Instead,   the $q$-expansion of $E$ at $\infty_r$ is  $\chi_r(-1)\chi_s(r) \sum e_r(m)q^m$, while the $q$-expansion of $E_r$ at $\infty_r$ is $\sum e_1(m) q^m$.   In any case, what matters most is that 
\[
\mbox{constant term of $E_r$ at $\infty_s$} = \begin{cases}
1& \mbox{if $s=r$} \\
0 &\mbox{otherwise.}
\end{cases}
\]
\end{remark}


 The constant terms of weakly holomorphic modular forms in $M_{2-k}^{!,\infty}(D,\chi)$ can be computed using the above Eisenstein series.  

\begin{proposition}
\label{prop:distribute cusps}
Assume $k>2$.  Suppose $r\mid D$ and 
\[
f(\tau) = \sum_{ m\gg -\infty} c (m) \cdot q^m    \in M_{2-k}^{!,\infty}(D,\chi).
\] 
The constant term of $f$ at the cusp $\infty_r$, in the sense of Definition \ref{def:constant at cusp}, satisfies
\[
c_r(0)  + \sum_{m > 0}  c(-m)  e_r(m) =0.
\]
\end{proposition}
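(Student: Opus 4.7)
The proof will be an application of the residue theorem. Let $g := f E_r$, which lies in $M^!_2(D, \chi^2) = M^!_2(D, \mathbf{1})$ because $\chi$ is quadratic; the $1$-form $\omega := g(\tau)\, d\tau$ therefore descends to a meromorphic differential on the compactified modular curve $X_0(D)(\mathbb{C})$, and the sum of its residues over the cusps must vanish.

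The first step is to localize the nonzero residues. Since $E_r$ is holomorphic at every cusp and $f$ is holomorphic away from $\infty$, $\omega$ has poles only possibly at $\infty$. By Remark~\ref{rem:eisenstein constant}, $E_r$ vanishes at every cusp $\infty_t$ with $t \ne r$, so $g$ itself vanishes at $\infty_t$ for $t \notin \{1, r\}$, and the residue theorem collapses to
\[
\mathrm{Res}_\infty(\omega) + \mathrm{Res}_{\infty_r}(\omega) = 0,
\]
the two terms coinciding into a single contribution at $\infty$ when $r = 1$.

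The residue at $\infty$ is read off directly from the product of the $q$-expansions of $f$ and $E_r$: it equals $\tfrac{1}{2\pi i}\bigl(c(0)e_r(0) + \sum_{m>0}c(-m)e_r(m)\bigr)$. For the residue at $\infty_r$ I will pull back via $W_r$: from $(g|_2 W_r)(\tau)\,d\tau = g(W_r\tau)\,d(W_r\tau)$, and because $W_r^{-1}\Gamma_0(D) W_r = \Gamma_0(D)$ (so the stabilizer of $\infty$ in both frames is $\langle T\rangle$), the residue at $\infty_r$ equals $\tfrac{1}{2\pi i}$ times the constant term at $\infty$ of $g|_2 W_r = (f|_{2-k}W_r)(E_r|_k W_r)$. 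The width of the cusp $\infty_r$ is absorbed into the $\det(W_r) = r$ hidden in the weight-$2$ slash operator, so no spurious width factor appears.

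To evaluate that constant term, Definition~\ref{def:constant at cusp} gives $f|_{2-k}W_r = \chi_r(\beta)^{-1}\chi_s(\alpha)^{-1}\sum_{m\geq 0}c_r(m)q^m$ (only nonnegative powers of $q$ since $f$ is holomorphic at $\infty_r$), while combining $E_r = \chi_r(-\beta)\chi_s(\alpha r)(E|_k W_r)$ with $E|_k W_r^2 = \chi_r(-1)\chi_s(r) E$ yields $E_r|_k W_r = \chi_r(\beta)\chi_s(\alpha) E$ (after simplifying with $\chi_s(r)^2 = 1$). Multiplying the two series and using that $E$ has constant term $1$, all character factors cancel (as $\chi$ is quadratic) and the constant term of $g|_2 W_r$ is exactly $c_r(0)$. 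Substituting into the residue identity and using $e_r(0) = \delta_{r,1}$ (Remark~\ref{rem:eisenstein constant}), together with $c_1(0) = c(0)$ in the case $r=1$, produces the claimed formula uniformly in $r$. The main technical subtlety is the bookkeeping of character twists and slash-operator normalizations; the precise cancellations that produce a clean $c_r(0)$ rather than an $r$- or $\chi$-decorated version are exactly those already built into Definition~\ref{def:constant at cusp}.
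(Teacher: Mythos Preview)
Your proof is correct and follows the same approach as the paper: apply the residue theorem to the meromorphic differential $f(\tau)E_r(\tau)\,d\tau$ on $X_0(D)(\C)$, which has poles only at the cusps $\infty$ and $\infty_r$. The paper's proof is a two-sentence sketch of exactly this argument; you have simply unpacked the character bookkeeping and the residue computation at $\infty_r$ in detail.
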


\begin{proof}
The meromorphic differential form $f(\tau) E_r(\tau)\, d\tau$ on $X_0(D)(\C)$ is holomorphic away from the  cusps $\infty$ and $\infty_r$.  Summing its residues at these cusps gives the desired equality.
\end{proof}

\begin{theorem}[Modularity criterion]
\label{thm:modularity criterion}
Suppose $k\geq 2$.  For a formal power series
\begin{equation}\label{formal q}
 \sum_{m\ge 0} d (m)q^m\in \C[[q]],
\end{equation}
the following are equivalent.
\begin{enumerate}

\item
The relation $\sum_{m\ge 0} c(-m ) d(m)=0$ holds for every weakly holomorphic form 
\[
 \sum_{ m\gg -\infty} c (m) \cdot q^m   \in M^{!,\infty} _{2-k} (D,\chi).
\]

\item
The formal power series (\ref{formal q}) is the $q$-expansion of a modular form in $M^\infty_{k}(D,\chi)$.
\end{enumerate}
\end{theorem}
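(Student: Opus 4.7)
The easy direction is $(2)\Rightarrow(1)$: given $f=\sum c(m)q^m\in M^{!,\infty}_{2-k}(D,\chi)$ and $g=\sum d(m)q^m\in M^\infty_k(D,\chi)$, the product $fg$ has weight $2$ and trivial character (since $\chi$ is quadratic). Since $f$ is holomorphic outside $\infty$ and $g$ vanishes at every cusp other than $\infty$, the differential $fg\,d\tau$ is a meromorphic differential on $X_0(D)$ whose only possible pole is at $\infty$. By the residue theorem its residue at $\infty$ vanishes, and a direct calculation identifies this residue with $\tfrac{1}{2\pi i}\sum_{m\ge 0}c(-m)d(m)$.

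For the harder direction $(1)\Rightarrow (2)$, the strategy is to invoke Serre duality on $X_0(D)$ (with coefficients in the line bundle of weight-$k$ modular forms twisted by $\chi$) to characterise the principal parts that arise from $M^{!,\infty}_{2-k}(D,\chi)$. Specifically, since $M^\infty_{2-k}(D,\chi)=0$ for $k\ge 2$, the principal-part map
\[
\pi\colon M^{!,\infty}_{2-k}(D,\chi)\longrightarrow q^{-1}\C[q^{-1}],\qquad f\longmapsto \sum_{m>0}c(-m)q^{-m}
\]
is injective, and Serre duality identifies its image with
\[
V\;=\;\Bigl\{P=\tsum p(m)q^{-m}\,:\,\tsum_{m>0}p(m)d'(m)=0\text{ for all }\tsum d'(m)q^m\in S_k(D,\chi)\Bigr\}.
\]

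Now fix $h(\tau)=\sum_{m\ge 0}d(m)q^m$ satisfying (1). For each $f\in M^{!,\infty}_{2-k}(D,\chi)$ use Proposition~\ref{prop:distribute cusps} applied at $r=1$ to eliminate the constant term: $c(0)=-\sum_{m>0}c(-m)e(m)$, where $E=\sum e(m)q^m$ is the normalised weight-$k$ Eisenstein series for the cusp $\infty$. Substituting into (1) rewrites the hypothesis as
\[
\sum_{m>0}c(-m)\bigl(d(m)-d(0)e(m)\bigr)=0\qquad\text{for every }P\in V.
\]
In other words, the sequence $(d(m)-d(0)e(m))_{m>0}$ lies in the annihilator $V^\perp$ inside $\C[[q]]_{>0}$ under the natural pairing with $q^{-1}\C[q^{-1}]$. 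Since $V=W^\perp$ with $W$ the (finite-dimensional) space of $q$-expansions of cusp forms in $S_k(D,\chi)$, a standard biduality argument (using that this pairing is topologically perfect and $W$ is finite-dimensional) yields $V^\perp=W$. Thus there exists $\tilde g\in S_k(D,\chi)$ with $\tilde g=\sum_{m>0}(d(m)-d(0)e(m))q^m$, whence
\[
h \;=\; d(0)\cdot E \;+\;\tilde g \;\in\; \C E\oplus S_k(D,\chi)\;\subset\; M^\infty_k(D,\chi),
\]
as required.

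The principal obstacle is the Serre-duality description of the image of $\pi$: although this is classical on $X_0(D)$, some care is needed to handle the character $\chi$ and to verify that requiring vanishing principal parts at cusps $\infty_r$ with $r>1$ lets one ignore the contribution from those cusps in the duality pairing. A minor additional point is the edge case $k=2$, where Proposition~\ref{prop:distribute cusps} as stated requires $k>2$; this is handled by noting that for $D>1$ the weight-$2$ Eisenstein series $E\in M^\infty_2(D,\chi)$ still exists (so the same residue calculation on $X_0(D)$ as in the proof of Proposition~\ref{prop:distribute cusps} supplies the identity $c(0)=-\sum c(-m)e(m)$), after which the argument above goes through verbatim.
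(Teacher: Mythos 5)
Your main argument is essentially the paper's: the paper disposes of this theorem in a few lines by citing Borcherds' modularity criterion, whose proof is precisely the Serre-duality/residue-pairing argument you spell out. For $k>2$ everything you write is sound — the residue computation for $(2)\Rightarrow(1)$, the identification of the image of the principal-part map with the annihilator of $S_k(D,\chi)$, the elimination of $c(0)$ via Proposition \ref{prop:distribute cusps} at $r=1$, and the biduality step using finite-dimensionality of $S_k(D,\chi)$.

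The one genuine error is your treatment of $k=2$ when $\chi$ is trivial: in that case there is \emph{no} Eisenstein series in $M^\infty_2(D,\mathbf{1})$. Indeed, if $g$ is a holomorphic weight-two form of trivial character vanishing at every cusp other than $\infty$, then $g\,d\tau$ is a differential on $X_0(D)$ holomorphic away from $\infty$ with at worst a simple pole there, so the residue theorem forces its residue at $\infty$ — i.e.\ the constant term of $g$ — to vanish; hence $M^\infty_2(D,\mathbf{1})=S_2(D)$ and the identity $c(0)=-\sum_{m>0}c(-m)e(m)$ has no $E$ to refer to. The theorem is still true there, but the repair is different: the constant function $1$ lies in $M^{!,\infty}_0(D,\mathbf{1})$, so hypothesis (1) applied to $f=1$ gives $d(0)=0$ directly, after which your annihilator argument applied to $(d(m))_{m>0}$ lands in $S_2(D)=M^\infty_2(D,\mathbf{1})$. (For $k=2$ and nontrivial $\chi$ your fix is fine, since the Hecke-regularized weight-two Eisenstein series is holomorphic and does lie in $M^\infty_2(D,\chi)$; note also that for $k=2$ and trivial $\chi$ the principal-part map has the constants in its kernel, though this does not affect your argument, which uses only the description of its image.)
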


\begin{proof}
As we assume $k\ge 2$,  that the map sending a weakly holomorphic modular form $f$ to its principal part at $\infty$ identifies 
\[ 
M^{!,\infty} _{2-k} (D,\chi) \subset \C[q^{-1}]. 
\]  
On the other hand, the map sending a holomorphic modular form to its $q$-expansion identifies
\[ 
M^\infty_{k}(D,\chi) \subset \C[[q]]. 
\]  
A slight variant of the modularity criterion of \cite[Theorem 3.1]{Bo2} shows that  each  subspace is the exact  annihilator of the other under  the  bilinear pairing
$
\C[q^{-1}]\otimes \C[[q]]\longrightarrow  \C
$
sending $P\otimes g$ to the constant term of $P\cdot g$.  The claim follows.
\end{proof}


\section{Unitary Borcherds products}
\label{s:divisor calc}


The goal of \S \ref{s:divisor calc} is to state Theorems \ref{thm:unitary borcherds I}, \ref{thm:unitary borcherds II}, and  \ref{thm:unitary borcherds III}, which assert the existence of Borcherds products on $\mathcal{S}^*_\Kra$ and $\mathcal{S}^*_\Pap$ having prescribed divisors and  prescribed leading Fourier-Jacobi coefficients. 
 These theorems are the technical core of this work, and their proofs  will occupy all of  \S \ref{s:analytic borcherds}.

We assume $n\ge 3$ throughout \S \ref{s:divisor calc}.


\subsection{Jacobi forms}
\label{ss:jacobi}


In this section we recall some of the rudiments of the arithmetic theory of Jacobi forms.  A more systematic treatment can be found in the work of Kramer \cite{Kra1, Kra2}.

Let $\mathcal{Y}$ be the moduli stack over $\Z$ classifying elliptic curves, and let $\pi : \mathcal{E}\to \mathcal{Y}$ be the universal elliptic curve.   Abbreviate $\Gamma=\SL_2(\Z)$, and let $\mathfrak{H}$ be the complex upper half-plane.  The groups $\Gamma$ and $\Z^2$ each act on   $\mathfrak{H} \times \C$   by 
 \begin{align*}
 \left(\begin{matrix} a & b\\ c & d \end{matrix}\right)   \cdot  ( \tau, z)     = \left(  \frac{a\tau + b}{c \tau +d}  ,  \frac{z}{c\tau+d}   \right) , \quad 
 \left[ \begin{matrix}  \alpha \\ \beta  \end{matrix} \right]   \cdot   ( \tau, z)   = \left( \tau   ,  z+ \alpha \tau+ \beta    \right),
 \end{align*}
and this defines an action of the semi-direct product  $\Gamma^* = \Gamma \imes \Z^2$.
We identify the commutative diagrams (of complex orbifolds)
 \begin{equation}\label{modular parametrization}
 \xymatrix{
  {   \Gamma  \backslash (\mathfrak{H} \times \C)   }   \ar[d]  \ar[dr]    &  & { \Lie(\mathcal{E}(\C)) } \ar[d]_{\exp} \ar[dr] \\
{  \Gamma^* \backslash (\mathfrak{H} \times \C) } \ar[r] &  \Gamma \backslash \mathfrak{H} & { \mathcal{E}(\C)  } \ar[r]  &  { \mathcal{Y}(\C)  }
 }
 \end{equation}
 by sending $( \tau ,z)\in \mathfrak{H} \times \C$ to the vector $z$ in the Lie algebra of  $\C/ (\Z\tau + \Z)$.

 Define a line bundle  $\co(e)$ on $\mathcal{E}$ as  the inverse  ideal sheaf of the zero section  $ e : \mathcal{Y} \to \mathcal{E}$.   The Lie algebra  $\Lie(\mathcal{E})$ is (by definition) $e^* \co(e)$, and  $\bm{\omega}_\mathcal{Y}=\Lie(\mathcal{E})^{-1}$ is  the usual line bundle of weight one modular forms on $\mathcal{Y}$ (see Remark \ref{rem:q-expansion} below).     In particular, the line bundle 
\[
 \mathcal{Q} =  \co(e) \otimes \pi^*\bm{\omega}_\mathcal{Y}
\] 
on $\mathcal{E}$ is canonically trivialized along the zero section.     
By the constructions of \cite[\S 1.3.2]{Lan} and \cite[\S 6.2]{MFK}, this line bundle induces a homomorphism
\begin{equation}\label{elliptic polarization}
\mathcal{E} \to \mathcal{E}^\vee,
\end{equation}
which is none other than the unique principal polarization of $\mathcal{E}$ (one can verify this fiber-by-fiber over geometric points of $\mathcal{Y}$, reducing the claim to standard properties of elliptic curves over fields).
 Denote by $\mathcal{P}$ the pullback of the Poincar\'e bundle via 
\[
\mathcal{E}\times_\mathcal{Y} \mathcal{E}  \iso \mathcal{E}\times_\mathcal{Y} \mathcal{E}^\vee.
\]

For a scheme $U$ and   points $a,b \in  \mathcal{E}(U)$,  denote by $\mathcal{Q}(a)$ the pullback of $\mathcal{Q}$ via  $a:U\to \mathcal{E}$, and by $\mathcal{P}(a,b)$ the pullback of $\mathcal{P}$ via 
$
(a,b) : U \to \mathcal{E} \times_{\mathcal{Y}} \mathcal{E}.
$
There are canonical isomorphisms
\[
\mathcal{P}(a,b) \iso \mathcal{Q}(a+b) \otimes \mathcal{Q}(a)^{-1}\otimes \mathcal{Q}(b)^{-1}
\]
and 
\[
\mathcal{P}(a,a) \iso \mathcal{Q}(a) \otimes \mathcal{Q}(a).
\] 
Given the way that  (\ref{elliptic polarization}) is constructed from $\mathcal{Q}$,  the first isomorphism is essentially a tautology.  The second is a consequence of the isomorphisms
\[
 \mathcal{Q}(2a) \iso   \mathcal{Q}(a)^{\otimes 3}  \otimes  \mathcal{Q}(-a) \iso   \mathcal{Q}(a)^{\otimes 4},
\]
which follow from the theorem of the cube \cite[Theorem I.1.3]{FC} and  the invariance of $\mathcal{Q}$ under pullback by $[-1]: \mathcal{E} \to \mathcal{E}$, respectively.

\begin{definition}
The diagonal restriction
\[ 
\mathcal{J}_{0,1} = (\mathrm{diag})^* \mathcal{P} \iso \mathcal{Q}^{2}
\] 
 is the line bundle of \emph{Jacobi forms of weight $0$ and index $1$} on $\mathcal{E}$.   More generally, 
\[
\mathcal{J}_{k,m} = \mathcal{J}_{0,1}^{m} \otimes \pi^* \bm{\omega}_\mathcal{Y}^k
\]
 is the line bundle of \emph{Jacobi forms of weight $k$ and index $m$} on $\mathcal{E}$.
\end{definition}

The isomorphism of the following proposition is presumably well-known.  We include the proof in order to make  explicit the normalization of the isomorphism  (see Remark \ref{rem:q-expansion} below, for example). 

\begin{proposition}\label{prop:analytic jacobi}
 Let $p : \mathfrak{H} \times \C \to \mathcal{E}(\C)$ be the quotient map.   The holomorphic line bundle $\mathcal{J}^{an}_{k,m}$  on $\mathcal{E}(\C)$ is  isomorphic to the holomorphic line bundle whose sections over an open set $\mathscr{U} \subset \mathcal{E}(\C)$  are  holomorphic functions $F(\tau, z)$ on $p^{-1}(\mathscr{U})$ satisfying the transformation laws
\[
F \left( \frac{a \tau  + b}{ c \tau + d } , \frac{z}{ c\tau+ d}  \right)   =F( \tau , z)   \cdot (c\tau +d)^k \cdot e^{    2\pi i m c z^2 / (  c\tau +d )    } 
\]
and
\begin{equation}\label{JF second law}
F ( \tau, z+ \alpha \tau + \beta )   = F( \tau , z)  \cdot  e^{  - 2  \pi i  m ( \alpha^2    \tau     + 2 \alpha  z )   } .
\end{equation}
\end{proposition}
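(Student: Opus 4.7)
The plan is to compute the automorphy factor of $\mathcal{J}^{an}_{k,m}$ on the universal cover $\mathfrak{H}\times\C$ of $\mathcal{E}(\C)$ by exhibiting an explicit (possibly meromorphic) trivialization of $p^*\mathcal{J}^{an}_{k,m}$ and checking that it agrees with the automorphy factor encoded in the proposition's transformation laws. Since two holomorphic line bundles on $\Gamma^{*}\backslash(\mathfrak{H}\times\C) = \mathcal{E}(\C)$ are isomorphic precisely when their automorphy factors agree, this will suffice. By the multiplicativity of both $\mathcal{J}_{k,m} = \mathcal{J}^{\otimes m}_{0,1}\otimes\pi^{*}\bm{\omega}_{\mathcal{Y}}^{k}$ and the stated transformation laws in $(k,m)$, I would first reduce to the two special cases $(k,0)$ and $(0,1)$.

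For $(k,0)$ the line bundle is $\pi^{*}\bm{\omega}_{\mathcal{Y}}^{k}$, and the standard trivialization by $(dz)^{k}$ immediately does the job: under $\gamma\in\Gamma$ the fiberwise coordinate satisfies $z\mapsto z/(c\tau+d)$, whence $(dz)^{k}\mapsto (c\tau+d)^{-k}(dz)^{k}$, while $(dz)^{k}$ is $\Z^{2}$-invariant as it is pulled back from $\mathcal{Y}$. For the main case $(0,1)$, exploiting that the Jacobi theta function $\vartheta_{11}(\tau,z)$ has simple zeros exactly along $p^{-1}(e) = \{(\tau,z):z\in\Z\tau+\Z\}$, I would use the meromorphic trivialization
\[
t(\tau,z)\;=\;\eta(\tau)^{6}\,\vartheta_{11}(\tau,z)^{-2}\,(dz)^{2}
\]
of $p^{*}\mathcal{J}_{0,1} = p^{*}\bigl(\co(e)^{\otimes 2}\otimes\pi^{*}\bm{\omega}_{\mathcal{Y}}^{\otimes 2}\bigr)$. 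The classical transformation laws of $\vartheta_{11}$ and $\eta$ under $\Gamma^{*}$ then produce
\[
t(\gamma\cdot(\tau,z))\;=\;\epsilon_{\eta}(\gamma)^{6}\,\epsilon(\gamma)^{-2}\,e^{-2\pi i c z^{2}/(c\tau+d)}\,t(\tau,z)\qquad(\gamma\in\Gamma),
\]
together with $t(\tau,z+\alpha\tau+\beta)=e^{2\pi i(\alpha^{2}\tau+2\alpha z)}\,t(\tau,z)$ for $(\alpha,\beta)\in\Z^{2}$, where $\epsilon$ and $\epsilon_{\eta}$ denote the multiplier systems of $\vartheta_{11}$ and $\eta$.

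The crux, and main obstacle, is then the identity $\epsilon(\gamma)^{2}=\epsilon_{\eta}(\gamma)^{6}$ for all $\gamma\in\SL_{2}(\Z)$---equivalently, the classical fact that $\vartheta_{11}^{2}/\eta^{6}$ is a Jacobi form of weight $-2$ and index $1$ with trivial multiplier system. Granting this, the $\Gamma$-factor of $t$ collapses to $e^{-2\pi i c z^{2}/(c\tau+d)}$, matching the $(0,1)$-case of the proposition. I would verify the identity on the standard generators $T=\kzxz{1}{1}{0}{1}$ and $S=\kzxz{0}{-1}{1}{0}$ of $\SL_{2}(\Z)$: inspection of the $q$-series gives $\epsilon(T)=e^{\pi i/4}$ and $\epsilon_{\eta}(T)=e^{\pi i/12}$, so $\epsilon(T)^{2}=i=\epsilon_{\eta}(T)^{6}$, while Poisson summation applied to $\vartheta_{11}$ together with the classical $\eta(-1/\tau)=\sqrt{-i\tau}\,\eta(\tau)$ gives $\epsilon(S)^{2}=i=\epsilon_{\eta}(S)^{6}$. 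The poles of $t$ along $p^{-1}(e)$ are no obstruction, since a meromorphic trivialization still determines a well-defined automorphy factor and hence fixes the isomorphism class of the holomorphic line bundle. Finally, the general case $(k,m)$ follows by taking the $m$-th tensor power of $t$ and multiplying by $(dz)^{k}$, which produces exactly the automorphy factor required by the proposition.
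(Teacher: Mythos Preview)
Your argument is correct and takes a genuinely different route from the paper. The paper proceeds more abstractly: for the $(0,1)$ case it restricts to a single fiber $E_\tau$, identifies $\mathcal{J}_{0,1}^{an}|_{E_\tau}$ via Appell--Humbert data $(2H_\tau,\mathrm{trivial})$ for the diagonal pullback of the Poincar\'e bundle, matches this with the second transformation law after the substitution $F(\tau,z)=g_\tau(z)e^{-\pi H_\tau(z,\bar z)}$, and then invokes the Seesaw Theorem (using the canonical trivializations of both bundles along the zero section) to globalize the fiberwise isomorphism. Your approach instead produces an explicit global trivialization $t=\eta^6\vartheta_{11}^{-2}(dz)^2$ and reads off the automorphy factor from classical theta transformation laws, reducing the question to the multiplier identity $\epsilon^2=\epsilon_\eta^6$. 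This is more hands-on and avoids the Seesaw machinery, at the cost of a multiplier-system check; the paper's method is more conceptual and dovetails with its later use of canonical trivializations along $e$, but is less explicit.

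Two small expositional points. First, your $t$ is not really meromorphic: once $\vartheta_{11}^{-2}$ is paired with the canonical section of $p^*\co(e)^{\otimes 2}$, the poles cancel and $t$ is a nowhere-vanishing holomorphic section of $p^*\mathcal{J}_{0,1}$, so the disclaimer about poles is unnecessary. Second, it is worth saying explicitly why checking $\epsilon^2=\epsilon_\eta^6$ on $T$ and $S$ suffices: since $\vartheta_{11}^2$ and $\eta^6$ have \emph{integer} weights (as Jacobi form and modular form respectively), the ratio $\epsilon^2/\epsilon_\eta^6$ is a genuine character of $\SL_2(\Z)$, hence determined by its values on generators.
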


\begin{proof}
Let $J_{k,m}$ be the  holomorphic line bundle  on $\mathcal{E}(\C)$ defined by the above transformation laws.  

 By identifying the diagrams (\ref{modular parametrization}),  a function $f$,  defined on a $\Gamma$-invariant open subset of  $\mathfrak{H}$ and satisfying the transformation law 
\[
f \left( \frac{a \tau  + b}{ c \tau + d } \right)  =f( \tau )   \cdot (c\tau +d)^{-1}
\]
of a weight $-1$ modular form,   defines a section $\tau \mapsto ( \tau , f(\tau))$ of the line bundle
\[
 \Gamma\backslash (\mathfrak{H} \times \C) \iso \Lie(\mathcal{E}(\C) ) \iso ( \bm{\omega}_{\mathcal{Y}}^{an})^{-1}
\]
on $\Gamma\backslash\mathfrak{H}$.  This determines an isomorphism
$J_{1,0} \iso \mathcal{J}_{1,0}^{an}.$  It now suffices to construct an isomorphism $J_{0,1} \iso \mathcal{J}_{0,1}^{an}$, and then take tensor products.

Fix $\tau \in \mathfrak{H}$, set $E_\tau =\C/  (\Z \tau +\Z)$, and restrict both  $\mathcal{J}^{an}_{0,1}$ and $J_{0,1}$ to  line bundles on  $E_\tau\subset \mathcal{E}(\C)$.  The imaginary part of the hermitian form
\[
H_\tau( z_1, z_2) = \frac{z_1 \overline{z_2} }{ \mathrm{Im}(\tau) }
\]
on $\C$ restricts to a Riemann form on  $\Z\tau+\Z $.  
Using  classical formulas for the Poincar\'e bundle on complex abelian varieties, as found in the proof of \cite[Theorem 2.5.1]{BL}, the restriction of $\mathcal{J}_{0,1}^{an}$ to the fiber $E_\tau$  is isomorphic to the holomorphic line bundle determined by  the Appell-Humbert data  $2H_\tau$ and  the trivial character $\Z\tau+\Z\to \C^\times$.  The sections of this holomorphic line bundle  are, by definition,   holomorphic functions $g_\tau$ on $\C$ satisfying the transformation law
\[
g_\tau( z+\ell) = g_\tau(z) \cdot e^{2\pi H_\tau(z,\ell) + \pi H_\tau(\ell ,\ell )} 
\]
for all  $\ell \in \Z\tau+\Z$.  If we set
\[
F ( \tau , z) = g_\tau(z) \cdot e^{ - \pi H_\tau(  z , \overline{z} )  },
\]
this transformation law becomes (\ref{JF second law}).  

The above shows that $\mathcal{J}_{0,1}^{an}$ and $J_{0,1}$ are isomorphic when restricted to the fiber over any point of $\mathcal{Y}(\C)$, but such an isomorphism is only determined up to scaling by $\C^\times$.   To pin down the scalars, and to get an isomorphism over the total space, use the fact that both $\mathcal{J}^{an}_{0,1}$ and $J_{0,1}$ come (by construction) with     canonical  trivializations along the zero section. By the Seesaw Theorem \cite[Appendix A]{BL}, there is a unique isomorphism $\mathcal{J}^{an}_{0,1}\iso J_{0,1}$ compatible with these trivializations.
\end{proof}

\begin{remark}\label{rem:q-expansion}
The proof of Proposition \ref{prop:analytic jacobi} identifies a classical modular form $f(\tau)=\sum c(m) q^m$ of weight $k$ and level $\Gamma$ with a holomorphic section  of  $(\bm{\omega}_\mathcal{Y}^{an})^k$,  again denoted $f$, satisfying an additional growth condition at the cusp.   Under our identification, the $q$-expansion principle takes the following form: if $R\subset \C$ is any subring, then $f$ is the analytification of a global section 
$
f\in H^0(\mathcal{Y}_{/R} , \bm{\omega}_{\mathcal{Y}/R}^k )
$
if and only if $c(m) \in   (2\pi i )^k R$ for all $m$.
\end{remark}

For $\tau\in \mathfrak{H}$ and $z\in \C$, we denote by
\[
\vartheta_1(\tau,z)= \sum_{n\in \Z} e^{ \pi i \left(n+\frac{1}{2}\right)^2\tau+ 2\pi i \left(n+\frac{1}{2}\right)\left(z-\frac{1}{2}\right) }
\]
the classical Jacobi theta function, and by 
\[
\eta(\tau) = e^{ \pi i  \tau/ 12 }\prod_{n=1}^\infty (1-e^{2n \pi i  \tau})
\]
 Dedekind's eta function.   Set 
 \[
 \Theta(\tau , z) \define  i   \frac{ \vartheta_1 (\tau,z) }{  \eta(\tau)} = 
 q^{1/12} (\zeta^{1/2} - \zeta^{-1/2})  \prod_{n=1}^\infty (1-\zeta q^n)(1-\zeta^{-1}q^n)
 \]
where $q=e^{2\pi i \tau}$ and $\zeta=e^{2\pi i z}$.

\begin{proposition}\label{prop:integral jacobi}
The  Jacobi form $\Theta^{24}$  defines a global section
\[
\Theta^{24} \in  H^0( \mathcal{E} , \mathcal{J}_{0,12} )
\]
 with divisor $24 e$, while $(2\pi i\eta^2)^{12}$ determines a nowhere vanishing section
 \[
(2\pi i\eta^2)^{12} \in H^0( \mathcal{Y} , \bm{\omega}_\mathcal{Y}^{ 12} ).
\]
\end{proposition}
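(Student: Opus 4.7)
The plan is to first verify the analytic transformation laws, then upgrade to algebraic/integral sections via $q$-expansions and the Tate curve, and finally read off the divisors from the classical zero sets of $\vartheta_1$ and $\eta$.

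First, I would check that $\Theta^{24}$ satisfies the transformation laws of Proposition~\ref{prop:analytic jacobi} with $k=0$ and $m=12$. The classical transformation formulas for $\vartheta_1$ and $\eta$ under the generators $\tau\mapsto\tau+1$, $\tau\mapsto-1/\tau$, $z\mapsto z+1$, $z\mapsto z+\tau$ show that $\Theta = i\vartheta_1/\eta$ transforms with weight $0$ and index $\tfrac12$, up to an $8$th root of unity multiplier; raising to the $24$th power eliminates the multiplier and yields exactly the transformation laws of $\mathcal{J}^{an}_{0,12}$, so $\Theta^{24}$ defines a holomorphic section of $\mathcal{J}^{an}_{0,12}$ on $\mathcal{E}(\C)$. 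Similarly, $(2\pi i\eta^2)^{12} = (2\pi i)^{12}\eta^{24}$ transforms as a weight $12$ modular form for $\SL_2(\Z)$ with trivial multiplier, hence defines a section of $(\bm{\omega}_\mathcal{Y}^{an})^{12}$ via Proposition~\ref{prop:analytic jacobi}.

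Next I would establish integrality, and hence descent to the algebraic line bundles over $\Z$. For the eta factor, the $q$-expansion $(2\pi i\eta^2)^{12} = (2\pi i)^{12}q\prod_{n\ge 1}(1-q^n)^{24}$ has coefficients in $(2\pi i)^{12}\Z$, so by the $q$-expansion principle (Remark~\ref{rem:q-expansion}) it is the analytification of a global section of $\bm{\omega}_\mathcal{Y}^{12}$ over $\Z$, namely the modular discriminant $\Delta$. For $\Theta^{24}$, using the Jacobi triple product and the identity $(\zeta^{1/2}-\zeta^{-1/2})^{24} = \zeta^{-12}(\zeta-1)^{24}$, one obtains
\[
\Theta^{24} = q^{2}\zeta^{-12}(1-\zeta)^{24}\prod_{n\ge 1}(1-\zeta q^n)^{24}(1-\zeta^{-1}q^n)^{24},
\]
which has integer coefficients in $\Z[\![q]\!][\zeta,\zeta^{-1}]$. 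Interpreted via the Tate curve $\mathbb{G}_m/q^\Z$, this expansion realises $\Theta^{24}$ as an algebraic section of $\mathcal{J}_{0,12}=\mathcal{Q}^{24}$ over $\Z$ in a neighbourhood of the cusp; since the analytic section is already defined everywhere on $\mathcal{E}(\C)$, this suffices to extend it globally over $\mathcal{E}$ (say, by comparing with an auxiliary trivialisation $\Delta^2$ of $\pi^*\bm{\omega}_\mathcal{Y}^{24}$, which reduces the question to checking that $\Theta^{24}/\Delta^2$ is a section of $\co(24e)$ with integral $q$-expansion). I expect this step---making precise the $q$-expansion principle for Jacobi forms and matching it to the algebraic bundle $\mathcal{J}_{0,12}$---to be the main obstacle.

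Finally, I would read off the divisors from classical facts. The function $\eta(\tau)$ is nowhere vanishing on $\mathfrak{H}$, and $\vartheta_1(\tau,z)$ has simple zeros exactly along the lattice $z\in\Z\tau+\Z$. Thus, as an analytic section of $\mathcal{J}^{an}_{0,12}$ on $\mathcal{E}(\C)$, $\Theta^{24}$ has divisor $24e$; by the $q$-expansion principle this equality persists for the algebraic section, giving $\operatorname{div}(\Theta^{24})=24e$ on $\mathcal{E}$. For $(2\pi i\eta^2)^{12}$, the nonvanishing of $\eta$ on $\mathfrak{H}$ shows the section is nowhere vanishing on $\mathcal{Y}(\C)$; since the section coincides with the discriminant $\Delta$, whose vanishing locus is precisely the complement of $\mathcal{Y}$ in its compactification, it is nowhere vanishing on $\mathcal{Y}$ over $\Z$. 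This completes the proof.
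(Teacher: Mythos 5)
Your outline is correct in its analytic and divisor-theoretic parts, but for the crucial step---algebraicity, rationality and integrality of $\Theta^{24}$ as a section of $\mathcal{J}_{0,12}$---you have chosen exactly the route the paper deliberately avoids. The paper remarks that ``one could presumably deduce this from the $q$-expansion principle for Jacobi forms as in \cite{Kra1,Kra2},'' which is your Tate-curve argument, and then sidesteps it because that principle (for sections of line bundles on the total space $\mathcal{E}$, rather than for functions) is the delicate point; your own phrase ``I expect this step to be the main obstacle'' lands precisely on the gap. Your auxiliary reduction via $\Delta^2$ to a section of $\co(24e)$ does not remove the difficulty: one still has to know that a holomorphic analytic section of $\co(24e)^{an}$ with integral Tate expansion is the analytification of an integral algebraic section, which is again the Jacobi-form $q$-expansion principle.

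The paper's trick, borrowed from Scholl, is worth adopting. For each $d>1$ the bilinear relations (\ref{poincare bilinear}) canonically trivialize $\mathcal{J}_{0,1}^{d^2}\otimes[d]^*\mathcal{J}_{0,1}^{-1}$, so $\theta_d^{24}=\Theta^{24d^2}\otimes[d]^*\Theta^{-24}$ is a genuine meromorphic \emph{function} on $\mathcal{E}(\C)$; its explicit $q$-expansion and the elementary $q$-expansion principle for functions on $\mathcal{E}$ show it is a rational function on the integral model with divisor $24(d^2\mathcal{E}[1]-\mathcal{E}[d])$. Since $\bm{\omega}_\mathcal{Y}^{12}$ is trivial one has $\mathcal{J}_{0,12}\iso\co(e)^{24}$, so \emph{some} algebraic section $\tilde\Theta^{24}$ with divisor $24e$ exists; the ratio $\rho=\Theta^{24}/\tilde\Theta^{24}$ is a nowhere vanishing holomorphic function on $\mathcal{E}(\C)$, hence constant, and the identity $\rho^{d^2-1}=\theta_d^{24}/\tilde\theta_d^{24}$, valid for all $d$, forces $\rho=\pm1$. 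This yields algebraicity, integrality and the divisor $24e$ in one stroke, using only the $q$-expansion principle for functions. If you prefer to keep your Tate-curve route, you must either invoke Kramer's $q$-expansion principle for Jacobi forms as a black box or prove the relevant instance of it, and in either case that step cannot be left as an expectation.
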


\begin{proof}
It is a classical fact that $(2\pi i\eta^2)^{12}$ is a nowhere vanishing modular form of weight $12$.  
Noting Remark \ref{rem:q-expansion}, the $q$-expansion principle shows that it descends to a section on $\mathcal{Y}_{ / \Q}$, and thus may be viewed as a rational section on $\mathcal{Y}$.  Another application of the $q$-expansion principle shows that its divisor has no vertical components.    Thus its divisor is trivial.

Classical formulas show that $\Theta^{24}$ defines a holomorphic section of $\mathcal{J}^{an}_{0,12}$ with divisor $24 e$, and so the problem is to show that $\Theta^{24}$ is defined over $\Q$, and extends to a section on the integral model with the stated divisor.  
One could presumably deduce this from the $q$-expansion principle for Jacobi forms as in  \cite{Kra1, Kra2}.  We instead borrow an argument from \cite[\S 1.2]{Scholl}, which  requires only the more elementary $q$-expansion principle for \emph{functions} on $\mathcal{E}$.

Let $d$ be any positive integer.    The bilinear relations (\ref{poincare bilinear})  imply that the line bundle   $\mathcal{J}_{0,1}^{d^2}\otimes [d]^*\mathcal{J}_{0,1}^{-1}$ on $\mathcal{E}$ is canonically trivial, and so 
 \[
 \theta_d^{24} =  \Theta^{24 d^2}  \otimes   [d]^* \Theta^{-24 }
 \]
 defines a meromorphic function on $\mathcal{E}(\C)$.  The crucial point is that $\theta_d^{24}$ is actually a rational function defined over $\Q$, and extends to a rational function on the integral model $\mathcal{E}$ with  divisor 
 \begin{equation}\label{theta shift divisor}
 \mathrm{div}(\theta_d^{24}) = 24 \big( d^2 \mathcal{E}[1] -   \mathcal{E}[d] \big).
 \end{equation}
 As in  \cite[p.~387]{Scholl},  this follows by computing the divisor first in the complex fiber, then using   the explicit  formula
\[
 \theta_d^{24} ( \tau,z) = q^{ 2(d^2-1)} \zeta^{ -12 d(d-1) } 
 \left(  \prod_{n\ge 0}\frac{(1-q^n\zeta)^{  d^2} }{  1-q^n \zeta^d } 
 \prod_{n>0}  \frac{(1-q^n\zeta^{-1})^{  d^2} }{ 1-q^n \zeta^{-d}  }  \right)^{24}
\]
and  the $q$-expansion principle on $\mathcal{E}$ to see that the divisor has no vertical components.

The line bundle  $\bm{\omega}_\mathcal{Y}^{12}$ is trivial, and hence there are  isomorphisms
\[
\mathcal{J}_{0,12} \iso  \mathcal{Q}^{24} \iso \co(e)^{24} \otimes \pi^*\bm{\omega}_\mathcal{Y}^{12} \iso  \co(e)^{24}.
\]
 Thus there is \emph{some}
$
\tilde{\Theta}^{24} \in H^0(\mathcal{E} , \mathcal{J}_{0,12} )
$
with divisor $24 e$, and   the rational function
 \[
\tilde{\theta}_d^{24} =  \tilde{\Theta}^{24 d^2}  \otimes   [d]^* \tilde{\Theta}^{-24 }  
 \]
 on $\mathcal{E}$ also has divisor (\ref{theta shift divisor}).

Consider the meromorphic function  
$
\rho = \Theta^{24}/  \tilde{\Theta}^{24}
$
on $\mathcal{E}(\C)$.  By computing the divisor in the complex fiber, we see that $\rho$ is a nowhere vanishing holomorphic function, and hence is constant.  But this implies that 
 \[
\rho^{ d^2 -1 }  =  \theta_d^{24} / \tilde{\theta}_d^{24}.
 \]
By what was said above, the right hand side is (the analytification  of) a nowhere vanishing function on $\mathcal{E}$.  This implies that $\rho^{d^2-1}= \pm 1$, and the only way this can hold for all $d>1$ is if $\rho= \pm 1$.
 \end{proof}

Now consider the tower of stacks 
\[
\mathcal{Y}_1(D) \to \mathcal{Y}_0(D) \to \mathcal{Y}
\] 
over $\Spec(\Z)$ parametrizing elliptic curves with Drinfeld $\Gamma_1(D)$-level structure, $\Gamma_0(D)$-level structure, and no level structure, respectively.  See  \cite[Chapter 3]{KM} or \cite{DR} for the definitions.
We denote by $\mathcal{E}$ the universal elliptic curve over any one of these bases, and  view the line bundle of Jacobi forms $\mathcal{J}_{0,12}$ as a line bundle on any one of the three universal elliptic curves.  Similarly, we view the Jacobi forms  $\Theta^{24}$ and $(2\pi i \eta^2)^{12}$ of Proposition \ref{prop:integral jacobi} as being defined over any one of these bases.

The following lemma will be needed in \S \ref{ss:unitary borcherds}.

\begin{lemma}\label{lem:torsion section}
Let $ Q :\mathcal{Y}_1(D) \to \mathcal{E}$ be the universal $D$-torsion point.  For any $r\mid D$ the line bundle 
\begin{equation}\label{torsion jacobi twist}
\bigotimes_{ \substack{  b \in \Z/ D \Z \\ b\neq 0 \\ rb=0   }} (b Q)^*\mathcal{J}_{0,12}
\end{equation}
on $\mathcal{Y}_1(D)$ is canonically trivial, and its section 
\[
F_r^{24} =  \bigotimes_{ \substack{  b \in \Z/D \Z \\ b\neq 0 \\ rb=0   }} (b Q )^*\Theta^{24} 
\]
 admits a canonical descent, denoted the same way, to a section of the trivial bundle on   $\mathcal{Y}_0(D)$.
\end{lemma}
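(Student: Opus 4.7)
The plan is to reduce the assertion to a numerical congruence using the biadditivity of the Poincar\'e bundle, and then to handle the descent via the evident Galois equivariance of the resulting section.

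First I would unpack the definitions. Since $\mathcal{J}_{0,12} \cong \mathcal{Q}^{24}$, and the diagonal restriction formula $a^*\mathcal{Q}^{2} \cong \mathcal{P}(a,a)$ (the analogue for $\mathcal{E}$ of the second line of \eqref{mumford sheaf}) gives $(bQ)^*\mathcal{J}_{0,12} \cong \mathcal{P}(bQ,bQ)^{12}$, the theorem of the cube together with the canonical rigidification $\mathcal{P}(e,\cdot) \cong \co$ yields the canonical isomorphism $\mathcal{P}(bQ,bQ) \cong \mathcal{P}(Q,Q)^{b^{2}}$ for any integer lift of $b \in \Z/D\Z$. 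Tensoring over the $r-1$ nonzero elements of $(D/r)\Z/D\Z$, the line bundle \eqref{torsion jacobi twist} becomes $\mathcal{P}(Q,Q)^{12\sum b^{2}}$.

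Next, the relation $DQ = e$ and biadditivity give the canonical trivialization
$$\mathcal{P}(Q,Q)^{D} \cong \mathcal{P}(DQ,Q) = \mathcal{P}(e,Q) \cong \co_{\mathcal{Y}_{1}(D)}.$$
Writing $b = (D/r)k$ for $k=1,\dotsc,r-1$, a direct computation gives
$$12\sum_{b} b^{2} \;=\; 12\,(D/r)^{2}\cdot\tfrac{(r-1)r(2r-1)}{6} \;=\; 2(D/r)(r-1)(2r-1)\cdot D,$$
a multiple of $D$, which (in particular) makes the dependence on the chosen integer lifts of the $b$'s vacuous. Hence \eqref{torsion jacobi twist} is canonically a power of $\mathcal{P}(Q,Q)^{D} \cong \co$, and so canonically trivial. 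Under this trivialization the global section $F_{r}^{24} = \bigotimes_{b}(bQ)^{*}\Theta^{24}$ becomes a regular function on $\mathcal{Y}_{1}(D)$ (nonvanishing on the generic fiber, since each $bQ$ is disjoint from $e$ there).

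For the descent to $\mathcal{Y}_{0}(D)$, the cyclic subgroup scheme $G_{r} = \langle (D/r)Q\rangle \subset \mathcal{E}$ on $\mathcal{Y}_{1}(D)$ is the unique order-$r$ subscheme of the cyclic order-$D$ subgroup generated by $Q$, and hence coincides with the unique order-$r$ subscheme of the universal cyclic subgroup $C\subset \mathcal{E}$ classified by $\mathcal{Y}_{0}(D)$. Thus $G_{r}$ descends to a finite locally free subgroup scheme of $\mathcal{E}$ over $\mathcal{Y}_{0}(D)$. Expressing
$$F_{r}^{24} \;=\; \mathrm{Nm}_{(G_{r}\smallsetminus\{e\})/\mathcal{Y}_{0}(D)}\bigl(\Theta^{24}|_{G_{r}\smallsetminus\{e\}}\bigr),$$
the norm along the degree-$(r-1)$ finite locally free map $G_{r}\smallsetminus\{e\}\to\mathcal{Y}_{0}(D)$, produces the required descended section; pulling back to $\mathcal{Y}_{1}(D)$ (where $G_{r}\smallsetminus\{e\}$ breaks up into the sections $bQ$) recovers the original product, while the norm of the canonical trivialization of Step~2 descends the trivialization as well.

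The main obstacle will be to maintain the canonical (rather than merely up-to-scalar) nature of each step, so that $F_{r}^{24}$ descends as an honest global section and not just modulo a unit. This is pure bookkeeping via the rigidified biextension structure of $\mathcal{P}$ and the canonical identification $\mathcal{Q}^{2}\cong\mathcal{P}|_{\Delta}$; all of the relevant isomorphisms in the paper have been stated canonically, so no auxiliary normalization enters.
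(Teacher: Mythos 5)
Your trivialization argument is the same as the paper's: both reduce $(bQ)^*\mathcal{J}_{0,12}$ to $\mathcal{P}(bQ,bQ)^{12}$, use bilinearity of $\mathcal{P}$ to collect everything into a power of $\mathcal{P}(Q,Q)$ (resp.\ into $\mathcal{P}(Q,12\sum b^2\,Q)$ in the paper), and conclude from the congruence $D \mid 6\sum b^2$ together with $DQ=e$ and the rigidification $\mathcal{P}(e,\cdot)\cong\co$. Where you differ is the descent: the paper simply observes that $F_r^{24}$ is invariant under the diamond operators on $\mathcal{Y}_1(D)$ (which permute the sections $bQ$) and descends by the quotient description of $\mathcal{Y}_0(D)$, whereas you construct the descended section directly as a norm along the degree-$(r-1)$ finite locally free scheme of ``nonzero $r$-torsion points of $C$'' over $\mathcal{Y}_0(D)$. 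Your route is equivalent and arguably more explicit, but one technical point needs repair: in characteristic $p\mid r$ the open complement $G_r\smallsetminus\{e\}$ is \emph{not} finite locally free over the base (e.g.\ when $C[p]$ is connected it is empty or non-proper), so the norm must be taken along the effective Cartier divisor $C[r]-e$ on $\mathcal{E}$ (equivalently the closed subscheme cut out by the ideal quotient $I_{C[r]}:I_e$), which is finite locally free of rank $r-1$ and whose pullback to $\mathcal{Y}_1(D)$ is the sum of the Cartier divisors $[bQ]$. This is exactly the object the paper itself uses later when computing $\mathrm{div}(F_r^{24})$, and with that substitution your argument goes through.
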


\begin{proof}
If  $x_1,\ldots, x_r$ are integers representing the $r$-torsion subgroup of  $\Z/D\Z$, then
$ 6 \sum x_i^2 \equiv 0 \pmod{D}$. The bilinear relations (\ref{poincare bilinear}) therefore provide a canonical isomorphism 
\[
\bigotimes _{ \substack{  b \in \Z/D\Z \\ b\neq 0 \\ rb=0  }} \mathcal{P}( b Q ,b Q)^{\otimes 12} \iso
\bigotimes _{ \substack{  b \in \Z/D\Z \\ b\neq 0  \\rb=0  }} \mathcal{P}(  Q , 12 b^2 Q) \iso \mathcal{P}(Q, e) \iso \co_{\mathcal{Y}_1(D) }
\]
of line bundles on $\mathcal{Y}_1(D)$.  This is the desired trivialization of (\ref{torsion jacobi twist}).  The section $F^{24}_r$ is obviously invariant under the action of the diamond operators on $\mathcal{Y}_1(D)$, and so descends to $\mathcal{Y}_0(D)$. 
\end{proof}


\subsection{Borcherds' quadratic identity}
\label{ss:borcherdsquad}


For the remainder of \S \ref{s:divisor calc} we denote by $\chi_\kk : (\Z/D\Z)^\times \to \{\pm 1\}$  the Dirichlet character determined by the extension $\kk/\Q$,  abbreviate
\begin{equation}\label{quad character}
\chi = \chi_\kk^{n-2},
\end{equation}
and fix a weakly holomorphic form
\begin{equation}\label{input form}
f (\tau) = \sum_{  m\gg -\infty } c(m) q^m  \in M^{!,\infty}_{2-n}(D,\chi)
\end{equation}
with $c(m)\in \Z$ for all $m\le 0$.

For a proper cusp label representative $\Phi$ as in Definition \ref{def:clr},  recall the self-dual hermitian $\co_\kk$-lattice $L_0$ of signature $(n-2,0)$ defined by (\ref{boundary herm}).  The hermitian form  on $L_0$  determines  a quadratic form $Q(x) = \langle x, x \rangle$, with associated $\Z$-bilinear form
$
[ x_1 ,x_2 ] = \mathrm{Tr}_{\kk/\Q} \langle x_1 , x_2 \rangle
$
of signature $(2n-4,0)$.

The modularity criterion of Theorem \ref{thm:modularity criterion} implies the following identity of quadratic forms on $L_0\otimes \R$.

\begin{proposition}[Borcherds' quadratic identity] \label{prop:Bquad}
For all $u \in L_0\otimes \R$,
\[
\sum_{\substack{x \in L_0}} c(-Q(x)) \cdot [u,x]^2 =\frac{[u,u] }{2n-4} \sum_{\substack{x \in L_0}} c(-Q(x))\cdot [x,x].
\]

\end{proposition}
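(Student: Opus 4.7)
The plan is to deduce Borcherds' quadratic identity from the modularity criterion (Theorem~\ref{thm:modularity criterion}), applied to a classical theta series attached to a harmonic polynomial on the positive definite lattice $L_0$. First I would rewrite the desired equality as the single vanishing
\[
\sum_{x\in L_0} c(-Q(x))\,P(x) \;=\;0,\qquad P(x) = [u,x]^2-\frac{[u,u]}{2n-4}[x,x],
\]
and then group by $m = Q(x)$ to put it in the form $\sum_{m\ge 0} c(-m)\,d(m) = 0$, where $d(m)=\sum_{Q(x)=m}P(x)$ and in particular $d(0)=P(0)=0$. All sums are finite because $c(-m)$ vanishes for $m\gg 0$ and $Q$ is positive definite on $L_0$.

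The polynomial $P$ is visibly harmonic with respect to the form $[\cdot,\cdot]$: in any $\R$-basis of $L_0\otimes\R$ orthonormal for $[\cdot,\cdot]$, one has $\Delta[u,x]^2=2[u,u]$ and $\Delta[x,x]=2\dim_\R(L_0\otimes\R)=2(2n-4)$, so $\Delta P=0$. Hence
\[
\Theta_P(\tau)=\sum_{m\ge 0}d(m)\,q^m=\sum_{x\in L_0}P(x)\,q^{Q(x)}
\]
is a classical theta series weighted by a degree-two spherical harmonic. Viewed as a $\Z$-lattice under $[\cdot,\cdot]$, $L_0$ has even rank $2n-4$, discriminant $D^{n-2}$ (from self-duality of the hermitian form), and level $D$ (since the $\Z$-dual of $L_0$ is $\mathfrak{d}^{-1}L_0$, and the smallest $N>0$ with $\delta\mid N$ in $\co_\kk$ is $N=D$). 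The standard theory of even-rank lattice theta series with spherical harmonics then places $\Theta_P$ in $M_n(\Gamma_0(D),\psi)$ with character
\[
\psi(m)=\left(\tfrac{(-1)^{n-2}D^{n-2}}{m}\right)=\left(\tfrac{-D}{m}\right)^{n-2}=\chi_\kk^{n-2},
\]
which agrees with $\chi_\kk^n$ since $\chi_\kk^2=1$. Moreover, because $P$ is harmonic of positive degree and $L_0$ is positive definite, $\Theta_P$ is in fact a cusp form: the theta transformation formula expresses its Fourier expansion at every cusp as a theta-type series for a rescaled/translated version of $L_0$, again weighted by (a scalar multiple of) $P$, whose value at $0$ is $P(0)=0$.

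Consequently $\Theta_P$ lies in $M_n^\infty(D,\chi_\kk^{n-2})$, and Theorem~\ref{thm:modularity criterion}, applied with $k=n$ and $\chi=\chi_\kk^{n-2}$, yields $\sum_{m\ge 0}c(-m)\,d(m)=0$ for every $f\in M^{!,\infty}_{2-n}(D,\chi_\kk^{n-2})$. This is exactly Borcherds' quadratic identity. The main obstacle is the verification of the precise weight, level, character, and cuspidality of $\Theta_P$: although all are classical consequences of Poisson summation and the theory of theta series with spherical harmonics, passing carefully from the hermitian $\co_\kk$-structure on $L_0$ to the underlying $\Z$-form (to compute its level, discriminant, and Kronecker character) and verifying the vanishing of constant terms at every cusp of $\Gamma_0(D)$ deserve explicit bookkeeping.
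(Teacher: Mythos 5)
Your proposal is correct and follows essentially the same route as the paper: both form the harmonic polynomial $P(u,x)=[u,x]^2-\tfrac{[u,u][x,x]}{2n-4}$, observe that the associated theta series lies in $S_n(D,\chi)$ with $\chi=\chi_\kk^{n-2}$, and apply the modularity criterion of Theorem~\ref{thm:modularity criterion}. The only difference is that you spell out the verification of the weight, level, character, and cuspidality of the theta series, which the paper simply asserts as classical.
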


\begin{proof}
The homogeneous polynomial
\[
P(u,v)= [u,v]^2-\frac{[u,u]\cdot [v,v]}{2n-4}
\]
on $L_0\otimes \R$ is harmonic in both variables $u$ and $v$.
For any fixed $u\in L_0\otimes \R$ there is a corresponding theta series
\[
\theta(\tau,u,P)= \sum_{x\in L_0} P(u,x)\cdot q^{Q(x)} \in S_n(D,\chi) .
\]
The modularity criterion of Theorem \ref{thm:modularity criterion} therefore shows that
\[
\sum_{m>0} c(-m)\sum_{\substack{x \in L_0\\ Q(x)=m}} \left([u,x]^2-\frac{[u,u]\cdot [x,x]}{2n-4}\right) =0
\]
for all $u\in L_0\otimes \R$. This implies the assertion.
\end{proof}

 Recall from (\ref{boundary collapse}) that every  $x \in L_0$ determines a diagram 
\begin{equation}\label{late boundary collapse}
\xymatrix{
{  \mathcal{B}_\Phi  } \ar[r]^{j_x}  \ar[d] & {  \mathcal{E} } \ar[d]  \\
{   \mathcal{A}_\Phi        }  \ar[r]^j & { \mathcal{Y}_0(D) ,}
}
\end{equation}
where, changing notation slightly from \S \ref{ss:jacobi}, $\mathcal{Y}_0(D)$  is now  the open modular curve over $\co_\kk$. 
 Recall also that  $\mathcal{B}_\Phi$ carries a distinguished line bundle $\mathcal{L}_\Phi$ defined by (\ref{boundary bundle}), used to define the Fourier-Jacobi expansions of (\ref{FJ expansion}).  We will use Borcherds' quadratic identity to relate the line bundle $\mathcal{L}_\Phi$  to the line bundle $\mathcal{J}_{0,1}$ of Jacobi forms on $\mathcal{E}$.

\begin{proposition}\label{prop:quadratic bundles}
The rational number
\begin{equation}\label{f boundary mult}
\mathrm{mult}_\Phi(f) =  \sum_{ m >0 }   \frac{  m\cdot  c(-m)  }{n-2}    \cdot \#\{ x \in L_0 :  Q( x) =m \}
\end{equation}
lies in $\Z$, and there is a canonical isomorphism 
\[
\mathcal{L}_\Phi^{  2\cdot  \mathrm{mult}_\Phi(f)  } \iso \bigotimes_{m > 0} \bigotimes_{  \substack{ x \in L_0 \\ Q(x) =m }  } j_x^*\mathcal{J}_{0,1}^{ c(-m) }
\]
of line bundles on $\mathcal{B}_\Phi$.   
\end{proposition}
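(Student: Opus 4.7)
My plan is to prove the proposition by first establishing a hermitian refinement of Borcherds' quadratic identity (Proposition~\ref{prop:Bquad}), and then upgrading it to a canonical isomorphism of line bundles by exploiting the bilinearity of the Poincar\'e bundle together with the moduli description of Proposition~\ref{prop:second moduli iso}.

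The key analytic input will be the identity
\[
\sum_{x \in L_0} c(-Q(x))\, \langle u, x\rangle\, \langle x, u'\rangle \;=\; \mathrm{mult}_\Phi(f) \cdot \langle u, u'\rangle \qquad (\ast)
\]
in $\kk \otimes_\Q \R$, for all $u, u' \in L_0 \otimes \R$. To prove $(\ast)$ I would verify directly that the polynomial $P_{u,u'}(x) = \langle u, x\rangle \langle x, u'\rangle - \tfrac{\langle u, u'\rangle}{n-2}\, Q(x)$ is harmonic of degree $2$ on $(L_0 \otimes \R, Q)$. Viewing $L_0 \otimes \R$ as a complex vector space via the $\co_\kk$-action, $\langle u, x\rangle$ is antilinear in $x$ and $\langle x, u'\rangle$ is linear in $x$, so $\langle u, x\rangle\langle x, u'\rangle$ is a polynomial of bidegree $(1,1)$ in $(x, \bar{x})$; a short calculation shows its Laplacian contributes $4\langle u, u'\rangle$, while $\Delta Q = 4(n-2)$, giving $\Delta P_{u,u'} = 0$. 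The associated theta series $\sum_x P_{u,u'}(x)\, q^{Q(x)}$ then lies in $S_n(D,\chi)$ by the same reasoning used in Proposition~\ref{prop:Bquad}, and the modularity criterion Theorem~\ref{thm:modularity criterion} forces $\sum_x c(-Q(x)) P_{u,u'}(x) = 0$. Rearranging and using $\sum_x c(-Q(x))\, Q(x) = (n-2)\,\mathrm{mult}_\Phi(f)$ yields $(\ast)$.

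Integrality of $\mathrm{mult}_\Phi(f)$ will follow immediately from $(\ast)$: since $L_0$ is self-dual, the pairing $L_0 \times L_0 \to \co_\kk$ is perfect, so one can choose $u, u' \in L_0$ with $\langle u, u'\rangle = 1$. Then $\mathrm{mult}_\Phi(f) = \sum_x c(-Q(x))\, \langle u, x\rangle \langle x, u'\rangle \in \co_\kk$, which combined with its a priori rationality forces $\mathrm{mult}_\Phi(f) \in \co_\kk \cap \Q = \Z$.

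For the bundle isomorphism, let $c = \sum_i s_i \otimes x_i$ denote the universal section of $\mathcal{B}_\Phi \iso E \otimes L_0$. The identification $\mathcal{L}_\Phi \iso \mathcal{Q}_{E \otimes L_0}$ of Proposition~\ref{prop:second moduli iso} together with the diagonal restriction (\ref{mumford sheaf}) gives $\mathcal{L}_\Phi^2 \iso \mathcal{P}_{E \otimes L_0}(c,c) \iso \bigotimes_{i,j} \mathcal{P}_E(\langle x_i, x_j\rangle s_i, s_j)$, while bilinearity of $\mathcal{P}_E$ together with the conjugate-linearity relation $\mathcal{P}_E(\alpha a, b) \iso \mathcal{P}_E(a, \bar{\alpha} b)$ gives
\[
j_x^* \mathcal{J}_{0,1} \;\iso\; \mathcal{P}_E(j_x(c), j_x(c)) \;\iso\; \bigotimes_{i,j} \mathcal{P}_E\bigl(\langle x_i, x\rangle\, \langle x, x_j\rangle\, s_i,\, s_j\bigr).
\]
For each fixed $(i,j)$, the assignment $\alpha \mapsto \mathcal{P}_E(\alpha s_i, s_j)$ is an additive group homomorphism from $\co_\kk$ into the Picard groupoid of $\mathcal{B}_\Phi$. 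Applying it to the integer-coefficient equality in $\co_\kk$ obtained from $(\ast)$ by taking $u = x_i$, $u' = x_j$ produces a canonical isomorphism
\[
\mathcal{P}_E(\langle x_i, x_j\rangle s_i, s_j)^{\mathrm{mult}_\Phi(f)} \;\iso\; \bigotimes_{x \in L_0} \mathcal{P}_E\bigl(\langle x_i, x\rangle\, \langle x, x_j\rangle\, s_i,\, s_j\bigr)^{c(-Q(x))},
\]
and tensoring these over all $(i,j)$ yields the isomorphism claimed in the proposition. The main obstacle is $(\ast)$: once the correct harmonic polynomial $P_{u,u'}$ is identified and the cusp-form character check carried out, all that remains is a formal manipulation of the canonical bilinearity isomorphisms of the Poincar\'e bundle.
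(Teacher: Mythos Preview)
Your proposal is correct and follows essentially the same route as the paper. The paper also reduces everything to the hermitian identity $(\ast)$, then uses self-duality of $L_0$ for integrality and the bilinearity of the Poincar\'e bundle together with Proposition~\ref{prop:second moduli iso} for the line bundle isomorphism; the only cosmetic difference is that the paper deduces $(\ast)$ from the real-bilinear Borcherds identity (Proposition~\ref{prop:Bquad}) via an $\co_\kk$-equivariance/polarization step rather than writing down the harmonic polynomial $P_{u,u'}$ directly as you do.
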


\begin{proof}
Proposition \ref{prop:Bquad} implies the equality of hermitian forms
\begin{align*}
\sum_{\substack{x \in L_0}} c(-Q(x)) \cdot \langle u,x\rangle \cdot \langle x,v\rangle
& =  \frac{\langle u,v\rangle }{2n-4} \sum_{\substack{x \in L_0}} c(-Q(x))\cdot [x,x] \\
& =  \langle u,v\rangle \cdot \mathrm{mult}_\Phi(f)
\end{align*}
for all $u,v\in L_0$.  As $L_0$ is self-dual, we may choose $u$ and $v$ so that $\langle u,v\rangle=1$, and the integrality of $\mathrm{mult}_\Phi(f)$ follows from the integrality of $c(-m)$.

Set $E= \mathcal{E}\times_{\mathcal{Y}_0(D) } \mathcal{A}_\Phi $, and use Proposition \ref{prop:second moduli iso} to identify $\mathcal{B}_\Phi \iso E\otimes L_0$.   The pullback of the line bundle
\[
\bigotimes_{m > 0} \bigotimes_{  \substack{ x \in L_0 \\ Q(x)  =m }  } j_x^*\mathcal{J}_{0,1}^{ \otimes c(-m) } 
\iso \bigotimes_{ x \in L_0  }  j_x^*\mathcal{J}_{0,1}^{ \otimes c(- Q(x) ) }
\]
via any $T$-valued point 
$
a=\sum t_i \otimes y_i \in E(T)\otimes L_0 
$
is, in the notation of \S \ref{ss:second moduli},
\begin{eqnarray*}\lefteqn{ 
 \bigotimes_{ x \in L_0  }  \mathcal{P}_E\Big(  \sum_i \langle y_i, x\rangle   t_i  , \sum_j \langle  y_j, x \rangle   t_j    \Big)^{ \otimes c(- Q(x)) }  } \\
 &  \iso &
 \bigotimes_{i,j}  \bigotimes_{ x \in L_0  }  \mathcal{P}_E \big(  c(- Q(x)) \cdot \langle  y_i , x \rangle \cdot   \langle x,  y_j \rangle    \cdot  t_i  ,  t_j  \big) \\
 & \iso & 
 \bigotimes_{i,j}   \mathcal{P}_E \big(    \langle y_i,y_j\rangle \cdot t_i , t_j      \big)^{ \otimes  \mathrm{mult}_\Phi(f)} \\
 &\iso &  \mathcal{P}_{E\otimes L_0}(a,a)^{ \otimes  \mathrm{mult}_\Phi(f)} \\
 &\iso &  \mathcal{Q}_{E\otimes L_0}(a)^{\otimes 2 \cdot  \mathrm{mult}_\Phi(f)  }.
\end{eqnarray*}
This, along with  the isomorphism $ \mathcal{Q}_{E\otimes L_0} \iso \mathcal{L}_\Phi$  of Proposition \ref{prop:second moduli iso}, proves that 
\[
 \mathcal{L}_\Phi ^{  2\cdot  \mathrm{mult}_\Phi(f)  } \iso  \mathcal{Q}_{E\otimes L_0}^{  2\cdot  \mathrm{mult}_\Phi(f)  } \iso \bigotimes_{m > 0} \bigotimes_{  \substack{ x \in L_0 \\ Q(x) =m }  } j_x^*\mathcal{J}_{0,1}^{ c(-m) }.
\]
\end{proof}


\subsection{The unitary Borcherds product}
\label{ss:unitary borcherds}


We now state our main results on Borcherds products.

For a prime $p$ dividing $D$  define 
\begin{equation}\label{def-gamma-p}
\gamma_p = 
 \epsilon_p^{-n} \cdot (D,p)_p^n \cdot \operatorname{inv}_p(V_p) \in \{ \pm 1, \pm i\} ,
\end{equation}
where $\operatorname{inv}_p(V_p)$ is the invariant  of $V_p=\Hom_\kk(W_0,W)\otimes_\Q\Q_p$ in the sense of  \eqref{eq:locinv}, and 
\[
\epsilon_p=\begin{cases}1 & \mbox{if }p\equiv 1\pmod{4}  \\
i & \mbox{if } p\equiv 3\pmod{4}.
\end{cases}
\]
It is equal to the local Weil index of the Weil representation of $\SL_2(\Z_p)$ on $S_{L_p}\subset S(V_p)$, where $V_p$ is viewed as a quadratic space as in (\ref{hom quadratic}).  
This is explained in more detail in \S \ref{ss:weil indices}. For any $r$ dividing $D$ we define 
\begin{equation}\label{gamma def}
\gamma_r  = \prod_{p\mid r} \gamma_p .
\end{equation}

Let  $c_r(0)$ denote the constant term of $f$ at the cusp $\infty_r$, as in  Definition \ref{def:constant at cusp}, and 
define
\[
k = \sum_{r\mid D} \gamma_r  \cdot c_r(0).
\]
We will see later in Corollary \ref{cor:vector rational} that all $\gamma_r\cdot c_r(0)\in \Q$.

For every $m>0$ define a divisor 
\begin{equation}\label{m boundary mult}
\mathcal{B}_\Kra(m) = \frac{ m }{n-2}   \sum_\Phi  \#\{ x \in L_0 :  \langle x , x \rangle  =m \} \cdot \mathcal{S}^*_\Kra(\Phi)
\end{equation}
with rational coefficients on $\mathcal{S}^*_\Kra$.  
Here the sum is over all $K$-equivalence classes of proper cusp label representatives $\Phi$ in the sense of \S \ref{ss:mixed data},  $L_0$ is the hermitian $\co_\kk$-module of signature $(n-2,0)$ defined by (\ref{boundary herm}), and $\mathcal{S}^*_\Kra(\Phi)$ is the boundary divisor of Theorem \ref{thm:toroidal}.   
 It follows immediately from the definition (\ref{f boundary mult}) that
\[
\sum_{m > 0 } c(-m) \cdot  \mathcal{B}_\Kra(m) = \sum_\Phi \mathrm{mult}_\Phi(f) \cdot \mathcal{S}^*_\Kra(\Phi).
\]
For $m>0$ define the \emph{total special divisor}
\[
\mathcal{Z}_\Kra^\tot(m) = \mathcal{Z}_\Kra^*(m)  +  \mathcal{B}_\Kra(m),
\]
where $\mathcal{Z}_\Kra^*(m)$ is the special divisor defined on the open Shimura variety in \S \ref{ss:special divisors}, and extended to the toroidal compactification in  Theorem \ref{thm:toroidal}.

The following theorems assert the existence of   Borcherds products on $\mathcal{S}^*_\Kra$ and $\mathcal{S}^*_\Pap$ having prescribed divisors and prescribed leading Fourier-Jacobi coefficients.   Their proofs  will occupy all of  \S \ref{s:analytic borcherds}.

\begin{theorem}\label{thm:unitary borcherds I}
After possibly replacing the form $f$ of (\ref{input form}) by a positive integer multiple, there is a  rational section $\bm{\psi}(f)$ of the line bundle $\bm{\omega}^k$ on $\mathcal{S}^*_\Kra$ with the following properties.

\begin{enumerate}
\item
In the generic fiber, the divisor of $\bm{\psi}(f)$ is
\[
\mathrm{div}( \bm{\psi}  (f) )_{/\kk}   
 =   \sum_{m>0} c(-m) \cdot \mathcal{Z}_\Kra^\tot(m)_{/\kk}.
\]

\item
For every proper cusp label representative $\Phi$,  the Fourier-Jacobi expansion of $\bm{\psi}(f)$, in the sense of (\ref{FJ expansion}),  along the boundary divisor 
\[
\Delta_\Phi \backslash \mathcal{B}_\Phi \iso \mathcal{S}^*_\Kra(\Phi)
\]
 has the form
\[
\bm{\psi}(f) = q^{ \mathrm{mult}_\Phi(f) } \sum_{ \ell\ge 0} \bm{\psi}_\ell \cdot q^\ell,
\]
where $\bm{\psi}_\ell$ is a rational section of  $\bm{\omega}_\Phi^k \otimes \mathcal{L}_\Phi^{ \mathrm{mult}_\Phi(f) + \ell }$ over $\mathcal{B}_\Phi$.

\item
For any $\Phi$ as above, the leading coefficient $\bm{\psi}_0$  admits a factorization
\[
\bm{\psi}_0 =    P_\Phi^\eta \otimes P^{hor}_\Phi \otimes P^{vert}_\Phi,
\]
where the three terms on the right are defined  as follows.

\begin{enumerate}
\item
Proposition \ref{prop:mixed modular forms} provides us with an isomorphism 
\[
\mathfrak{d}^{-1} \bm{\omega}_\Phi \iso   j^* \bm{\omega}_{\mathcal{Y}} 
\] 
of line bundles on $\mathcal{A}_\Phi$, where $j :\mathcal{A}_\Phi \to  \mathcal{Y}_0(D) $ is the morphism of (\ref{late boundary collapse}), 
and $\bm{\omega}_\mathcal{Y}=\Lie(\mathcal{E})^{-1}$ is the pullback via $\mathcal{Y}_0(D) \to \mathcal{Y}$ of the line bundle of weight one modular forms.
Pulling back the modular form $(2\pi i \eta^2)^{12}$ of  Proposition \ref{prop:integral jacobi}   defines a nowhere vanishing section 
\[
j^*(2\pi i \eta^2)^k \in H^0( \mathcal{A}_\Phi ,  \mathfrak{d}^{-k} \bm{\omega}_\Phi^k ).
\] 
Using the canonical  inclusion $\bm{\omega}_\Phi  \subset   \mathfrak{d}^{-1} \bm{\omega}_\Phi$, define
\[
P_\Phi^\eta = j^*(2\pi i \eta^2)^k,
\] 
but viewed as a rational section of $\bm{\omega}_\Phi^k$ over $\mathcal{A}_\Phi$.  Denote in the same way its pullback to $\mathcal{B}_\Phi$.
\item
Recalling the function
\[
F_r^{24} =  \bigotimes_{ \substack{  b \in \Z/D \Z \\ b\neq 0 \\ rb=0   }} (b Q )^*\Theta^{24} 
\]
on $\mathcal{Y}_0(D)$ of  Lemma \ref{lem:torsion section}, define  a rational function 
\[
P^{vert}_\Phi =  \bigotimes_{ \substack{ r \mid D \\ r>1 } } j^* F_r^{\gamma_r  c_r(0) }
\]
 on $\mathcal{A}_\Phi$, and again pull back to $\mathcal{B}_\Phi$.   
 \item
Using Proposition \ref{prop:quadratic bundles},  define a rational section
\[
P^{hor}_\Phi =  \bigotimes_{m>0} \bigotimes_{  \substack{ x \in L_0 \\ \langle x, x\rangle =m } }  j_x^* \Theta^{ c(- m )}
\]
 of the line bundle $\mathcal{L}_\Phi^{ \mathrm{mult}_\Phi(f)}$ on $\mathcal{B}_\Phi$. 
 \end{enumerate}
\end{enumerate}
These properties determine $\bm{\psi}(f)$ uniquely.
\end{theorem}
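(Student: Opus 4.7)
The plan is to construct $\bm{\psi}(f)$ as the pullback of a classical Borcherds form on an orthogonal Shimura variety, and then to verify the three assertions in turn. First, from the scalar-valued input $f \in M^{!,\infty}_{2-n}(D,\chi_\kk^{n-2})$ I would produce a vector-valued weakly holomorphic form $\tilde f$ of weight $2-n$ valued in the Weil representation attached to the even lattice $(\Hom_\kk(W_0,W),Q)$ of signature $(2n-2,2)$, whose Fourier coefficients may be written explicitly in terms of those of $f$ and the numbers $\gamma_r$ of \eqref{gamma def}; after multiplying $f$ by a positive integer, these coefficients can be arranged to be integral at the singular components of the discriminant form. Applying Borcherds' singular theta lift \cite{Bo1} to $\tilde f$ yields a meromorphic modular form $\tilde{\bm{\psi}}(f)$ of weight $k$ on the orthogonal Shimura variety associated with $\SO(V)$, and I would define $\bm{\psi}(f)$ as the pullback of $\tilde{\bm{\psi}}(f)$ along $\mathrm{Sh}(G,\mathcal{D})\to \mathrm{Sh}(\SO(V))$, a construction that \emph{a priori} produces only a meromorphic section of $(\bm{\omega}^{an})^k$ on the complex fiber.

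Second, I would establish algebraicity and descent: using the explicit $q$-expansion principle of Borcherds on the orthogonal side (as in \cite{Hor}) together with the known rationality of the characters of the Weil representation, one sees that, after rescaling by a complex number of absolute value one on each connected component, $\tilde{\bm{\psi}}(f)$ arises from a rational section of the descended line bundle over the reflex field, which pulls back to a rational section of $\bm{\omega}^k$ on $\mathrm{Sh}(G,\mathcal{D})$ defined over $\kk$; this then extends to a rational section on the integral model $\mathcal{S}^*_\Kra$ by normality. The generic fiber divisor computation (assertion (1)) then follows from Borcherds' description of the divisor of $\tilde{\bm{\psi}}(f)$ as a sum of Heegner divisors on the orthogonal Shimura variety, combined with the fact that the pullback of these Heegner divisors is $\mathcal{Z}^*_{\Kra}(m)$ on the interior; the boundary contribution $\mathcal{B}_\Kra(m)$ of \eqref{m boundary mult} will emerge from the behavior of $\tilde{\bm{\psi}}(f)$ along the one-dimensional boundary components of the orthogonal Shimura variety as studied in \cite{Ku:ABP}.

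Third, for assertion (2) I would pull back the product formula of \cite{Ku:ABP} along the embedding of the boundary chart for $\Phi$ of the unitary side into the corresponding boundary chart on the orthogonal side, using the explicit complex coordinates developed in \S \ref{ss:explicit boundary} and Proposition \ref{prop:supersiegel}. The leading exponent $\mathrm{mult}_\Phi(f)$ of $q$ is forced by Borcherds' formula for the vanishing order along a one-dimensional cusp: it is exactly the value of the quadratic form $\sum_x c(-Q(x))\,[u,x]^2/(2n-4)$ from Proposition~\ref{prop:Bquad} evaluated at a generator of the kernel lattice, which matches \eqref{f boundary mult}.

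The main obstacle, and the technical heart of the proof, will be the factorization of the leading Fourier-Jacobi coefficient in assertion (3). The analytic formula from \cite{Ku:ABP} expresses $\bm{\psi}_0$ as a product of a power of Dedekind's $\eta$, a product of Jacobi theta functions $\Theta(\tau, [\cdot,x])$ indexed by $x\in L_0$ with exponents $c(-Q(x))$, and a product of theta functions evaluated at torsion points indexed by the other cusps $\infty_r$ with exponents involving $\gamma_r c_r(0)$. To identify these three factors with the algebraic objects $P_\Phi^\eta$, $P^{hor}_\Phi$, and $P^{vert}_\Phi$ one must translate each analytic expression into a section of the appropriate line bundle on $\mathcal{B}_\Phi$, using the second moduli interpretation of Proposition \ref{prop:second moduli iso} (so that $\mathcal{L}_\Phi \iso \mathcal{Q}_{E\otimes L_0}$), the trivialization of $\bm{\omega}_\Phi$ in terms of $\Lie(E)$ from Proposition \ref{prop:mixed modular forms}, and the integral structure on Jacobi forms from Proposition \ref{prop:integral jacobi}; the bilinear identity on line bundles used in Proposition \ref{prop:quadratic bundles} converts the product over $x\in L_0$ into a section of $\mathcal{L}_\Phi^{\mathrm{mult}_\Phi(f)}$. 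The $F_r^{24}$ factors arise from the cusps $\infty_r\neq \infty$ because the Borcherds lift at these cusps is governed by the Dirichlet character values at the corresponding torsion points of the universal elliptic curve, which is precisely the content of Lemma~\ref{lem:torsion section}. Finally, uniqueness follows since the stated data determine the divisor and the leading behaviour at every boundary component, and $\mathcal{S}^*_\Kra$ is integral.
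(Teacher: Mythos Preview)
Your overall architecture matches the paper's: construct $\bm{\psi}(f)$ as the pullback of a Borcherds lift of the induced vector-valued form $\tilde f$, read off the generic-fiber divisor from Borcherds' theory, and extract the Fourier--Jacobi expansion along each boundary stratum from the product formula of \cite{Ku:ABP} rewritten in the coordinates of \S\ref{ss:explicit boundary}. Your use of \cite{Hor} for algebraicity and descent on the orthogonal side is a legitimate alternative to the paper's route (GAGA for algebraicity, then descent via the leading Fourier--Jacobi coefficient in Proposition~\ref{prop:B descent}); the paper itself remarks that this works.

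However, there is a genuine gap in your treatment of assertion~(3). The product formula of \cite{Ku:ABP} (Proposition~\ref{prop:ortho FJ formula} and Corollary~\ref{cor:analytic BFJ}) determines the leading Fourier--Jacobi coefficient only up to an unknown constant $\kappa$ of absolute value~$1$, and this constant depends on both the cusp label $\Phi$ and the connected component. After your rescaling step you can force $\kappa_{\Phi}=1$ for \emph{one} boundary stratum per connected component of $\mathcal{S}^*_{\Kra/\C}$, but a given component meets many boundary strata, so nontrivial $\kappa_\Phi$'s may survive. Your plan gives no mechanism for eliminating these: the analytic theory tells you only that $|\kappa_\Phi|=1$, which is far from $\kappa_\Phi=1$.

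The paper closes this gap by bringing in the integral model in an essential way (Proposition~\ref{prop:interior divisor}). Having rescaled so that $\kappa_{\Phi_i}=1$ for a carefully chosen family $\{\Phi_i\}$ hitting every component (Lemma~\ref{lem:enough cusps}), one computes the divisor of $\bm{\psi}(f)$ on $\mathcal{S}^*_\Kra$ up to exceptional components, and compares it with the Fourier--Jacobi calculation near an arbitrary $\Phi$ (Lemma~\ref{lem:leading FJ suffices}); the comparison forces $\mathrm{div}(\kappa_\Phi)=0$ on $\mathcal{A}_\Phi$, so each $\kappa_\Phi$ extends to a global unit on the integral model $\mathcal{A}_\Phi$. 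Pulling back along CM points $\Spec(\mathcal{O}_L)\to\mathcal{A}_\Phi$ then shows $\kappa_\Phi$ is a root of unity, and a further replacement of $f$ by a positive multiple finally makes all $\kappa_\Phi=1$. This step---using integrality to upgrade ``absolute value~$1$'' to ``root of unity''---is the missing idea in your proposal, and without it you cannot obtain the exact factorization $\bm{\psi}_0 = P_\Phi^\eta \otimes P^{hor}_\Phi \otimes P^{vert}_\Phi$ claimed in the theorem.
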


\begin{remark}\label{rem:very divisible}
In  replacing $f$ by a positive integer multiple, we are tacitly assuming that the constants $\gamma_r c_r(0)$ and $c(-m)$ are integer multiples of $24$ for all $r\mid D$ and all $m> 0$.  This is necessary in order to guarantee $k\in 12 \Z$, and to make sense of the three factors  $(2\pi i \eta_\Phi^2)^k$,  $P^{hor}_\Phi$, and  $P^{vert}_\Phi$.
\end{remark}

In fact, we can strengthen Theorem \ref{thm:unitary borcherds I} by computing precisely the divisor of $\bm{\psi}(f)$ on the integral model $\mathcal{S}_\Kra^*$.

\begin{theorem}\label{thm:unitary borcherds II}
The rational section $\bm{\psi}(f)$ of $\bm{\omega}^k$  has divisor
\begin{align*}
\mathrm{div}( \bm{\psi}  (f) )   
& =   \sum_{m>0} c(-m) \cdot \mathcal{Z}_\Kra^\tot(m)   \\
& \quad   + k\cdot \left(  \frac{\mathrm{Exc}}{2}   
  -   \mathrm{div}(\delta)\right)  +  \sum_{ r \mid D} \gamma_r c_r(0)   \sum_{p\mid r} \mathcal{S}^*_{\Kra /\F_\mathfrak{p}}  \\
 & \quad -  \sum_{m>0}   \frac{c(-m)}{2}    \sum_{ s\in \pi_0(\mathrm{Sing}) }   \# \{ x\in L_s : \langle x,x\rangle =m \} \cdot  \mathrm{Exc}_s ,
\end{align*}
where $\mathfrak{p} \subset \co_\kk$ is  the unique prime above $p$, $L_s$ is the self-dual Hermitian $\co_\kk$-lattice defined  in \S \ref{ss:pure pullback}, and  $\mathrm{Exc}_s\subset  \mathrm{Exc}$ is the fiber over the component $s\in \pi_0( \mathrm{Sing)}$.   Recall that $\delta = \sqrt{-D}\in \kk$.
\end{theorem}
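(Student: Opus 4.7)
The plan is to square $\bm{\psi}(f)$ and descend the square to the Pappas model, where every vertical Weil divisor meets the boundary and its multiplicity can be read off from a single leading Fourier--Jacobi coefficient. The source of difficulty is that the exceptional divisors $\mathrm{Exc}_s\subset\mathcal{S}_\Kra^*$ in bad characteristic are disjoint from the toroidal boundary, so their multiplicities in $\mathrm{div}(\bm{\psi}(f))$ cannot be detected from the Fourier--Jacobi expansions on $\mathcal{S}_\Kra^*$ prescribed in Theorem~\ref{thm:unitary borcherds I}. The Pappas model $\mathcal{S}_\Pap^*$ avoids this obstruction: it is normal with geometrically normal fibers by Theorem~\ref{thm:toroidal}(2), so every vertical prime Weil divisor there intersects $\partial\mathcal{S}_\Pap^*$.

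Concretely, Theorem~\ref{thm:toroidal}(6) gives an isomorphism $\bm{\omega}^{2k}\iso \pure_\Kra^k\otimes\co(k\cdot\mathrm{Exc})$ on $\mathcal{S}_\Kra^*$, under which $\bm{\psi}(f)^2$ corresponds to a rational section $\tilde\psi$ of $\pure_\Kra^k$ satisfying $\mathrm{div}(\tilde\psi)=2\,\mathrm{div}(\bm{\psi}(f))-k\cdot\mathrm{Exc}$. Since $\pure_\Kra$ is pulled back from $\pure_\Pap$ on the normal stack $\mathcal{S}_\Pap^*$ and $\mathcal{S}_\Kra^*\smallsetminus\mathrm{Exc}\iso\mathcal{S}_\Pap^*\smallsetminus\mathrm{Sing}$ (Theorem~\ref{thm:toroidal}(5)), $\tilde\psi$ descends uniquely to a rational section of $\pure_\Pap^k$ on $\mathcal{S}_\Pap^*$. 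By normality, $\mathrm{div}(\tilde\psi)$ is determined by its restriction to the generic fiber --- controlled by Theorem~\ref{thm:unitary borcherds I}(1) applied to $\bm{\psi}(f)^2$, together with the generic-fiber identity $2\mathcal{Z}_\Kra^*(m)_{/\kk}=\mathcal{Y}_\Pap^*(m)_{/\kk}$ of Theorem~\ref{thm:cartier error} --- plus the multiplicity of $\tilde\psi$ along each component $\mathcal{S}^*_{\Pap/\F_\mathfrak{p}}$ for $\mathfrak{p}\mid\mathfrak{d}$. Each such vertical component meets some boundary stratum $\mathcal{S}_\Pap^*(\Phi)$, so its multiplicity can be read off from the leading Fourier--Jacobi coefficient $\bm{\psi}_0^2$ at $\Phi$.

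Using the factorization $\bm{\psi}_0=P_\Phi^\eta\otimes P_\Phi^{hor}\otimes P_\Phi^{vert}$ from Theorem~\ref{thm:unitary borcherds I}(3), I would compute each of the three divisors on $\mathcal{B}_\Phi$ separately. First, $P_\Phi^\eta=j^*(2\pi i\eta^2)^k$ is nowhere vanishing in $\Lie(E)^{-k}=\mathfrak{d}^{-k}\bm{\omega}_\Phi^k$ by Proposition~\ref{prop:integral jacobi}; viewed as a rational section of $\bm{\omega}_\Phi^k\subset\mathfrak{d}^{-k}\bm{\omega}_\Phi^k$ via the inclusion $\bm{\omega}_\Phi=\mathfrak{d}\cdot\Lie(E)^{-1}$ of Proposition~\ref{prop:mixed modular forms}, it acquires divisor $-k\cdot j^*\mathrm{div}(\delta)$, which produces the $-k\cdot\mathrm{div}(\delta)$ term of the theorem. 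Second, $P_\Phi^{hor}=\bigotimes_x j_x^*\Theta^{c(-\langle x,x\rangle)}$ has divisor $\sum_m c(-m)\mathcal{Z}_\Phi(m)$, by $\mathrm{div}(\Theta^{24})=24\cdot e$ (Proposition~\ref{prop:integral jacobi}) and $\mathcal{Z}_\Phi(x)=j_x^*(e)$ (Proposition~\ref{prop:boundary divisors}); together with the $q^{\mathrm{mult}_\Phi(f)}$ prefactor, this supplies the $\mathcal{Y}_\Pap^*(m)$ contributions and the boundary multiplicities that aggregate to $\mathcal{B}_\Kra(m)$ via Theorem~\ref{thm:toroidal}(8)--(9). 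Third, $P_\Phi^{vert}=\bigotimes_{r\mid D,\,r>1} j^*F_r^{\gamma_r c_r(0)}$ lives on $\mathcal{A}_\Phi$; computing $\mathrm{div}(F_r^{24})$ on $\mathcal{Y}_0(D)$ shows that it is vertical, supported precisely over primes $p\mid r$, which produces the summand $\sum_{r\mid D}\gamma_r c_r(0)\sum_{p\mid r}\mathcal{S}^*_{\Pap/\F_\mathfrak{p}}$. Assembling these three contributions across all proper cusp labels $\Phi$ determines $\mathrm{div}(\tilde\psi)$ on $\mathcal{S}_\Pap^*$; translating back via $2\,\mathrm{div}(\bm{\psi}(f))=\mathrm{div}(\tilde\psi)+k\cdot\mathrm{Exc}$ and Theorem~\ref{thm:toroidal}(9) gives the stated formula.

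The principal obstacle is the third step: the precise computation of $\mathrm{div}(F_r^{24})$ on $\mathcal{Y}_0(D)$ in characteristics $p\mid r$. This hinges on controlling how the $r$-torsion sections $bQ\colon\mathcal{Y}_1(D)\to\mathcal{E}$ collide with the zero section in bad characteristic and descend through the diamond quotient to $\mathcal{Y}_0(D)$, and on the fact that the Weil-index normalizations $\gamma_r$ of~\eqref{gamma def} are exactly what forces the resulting multiplicities to match those dictated by the Borcherds-product construction. The detailed boundary-divisor calculations underlying this step --- in particular the explicit evaluation of $P_\Phi^\eta$, $P_\Phi^{hor}$, and $P_\Phi^{vert}$ against the integral structure --- are carried out in \S\ref{ss:div calc bd}.
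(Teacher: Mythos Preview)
Your proposal is correct and follows essentially the same route as the paper: descend $\bm{\psi}(f)^2$ to a rational section of $\pure_\Pap^k$ on the normal stack $\mathcal{S}_\Pap^*$, determine its divisor there from the leading Fourier--Jacobi coefficients via the factorization $\bm{\psi}_0=P_\Phi^\eta\otimes P_\Phi^{hor}\otimes P_\Phi^{vert}$ and the divisor computations of \S\ref{ss:div calc bd}, and then pull back using $\bm{\omega}^2\iso\pure_\Kra\otimes\co(\mathrm{Exc})$ together with the relation of Theorem~\ref{thm:toroidal}(9). The paper organizes this as first establishing the $\mathcal{S}_\Pap^*$ formula (Theorem~\ref{thm:unitary borcherds III}) and then deducing Theorem~\ref{thm:unitary borcherds II} by pullback, which is exactly your outline.
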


It is possible to give a statement analogous to Theorem \ref{thm:unitary borcherds II} for the integral model $\mathcal{S}_\Pap^*$. 
To do this we first define, exactly as in (\ref{m boundary mult}), a Cartier divisor
\[
\mathcal{Y}_\Pap^\tot(m) = \mathcal{Y}_\Pap^*(m)  + 2 \mathcal{B}_\Pap(m) 
\]
with rational coefficients on $\mathcal{S}_\Pap^*$.  Here  $\mathcal{Y}^*_\Pap(m)$ is the Cartier  divisor  of Theorem  \S \ref{thm:toroidal}, and 
\[
\mathcal{B}_\Pap(m) = \frac{ m }{n-2}   \sum_\Phi  \#\{ x \in L_0 :  \langle x , x \rangle  =m \} \cdot \mathcal{S}^*_\Pap(\Phi).
\]
It is clear from Theorem \ref{thm:toroidal} that 
\begin{equation}\label{total error}
 2\cdot \mathcal{Z}_\Kra^\tot(m) = \mathcal{Y}_\Kra^\tot(m) +  \sum_{s\in \pi_0(\mathrm{Sing}) } \# \{ x\in L_s : \langle x,x\rangle=m\} \cdot \mathrm{Exc}_s,
\end{equation}
where  $\mathcal{Y}_\Kra^\tot(m)$ denotes the pullback of $\mathcal{Y}_\Pap^\tot(m)$ via
$
\mathcal{S}_\Kra^* \to \mathcal{S}_\Pap^*.
$

The isomorphism \[\bm{\omega}^2  \iso \bm{\Omega}_\Kra  \otimes \co(\mathrm{Exc})\]  of Theorem \ref{thm:toroidal} identifies  $\bm{\omega}^{2k} \iso \bm{\Omega}^k_\Kra$ in the generic fiber of $\mathcal{S}_\Kra^*$,  allowing us to view $\bm{\psi}(f)^2$ as a rational section of $\bm{\Omega}_\Kra^k$.  As $\mathcal{S}^*_\Kra \to \mathcal{S}^*_\Pap$ is an isomorphism in the generic fiber, this section descends to a rational section of the line bundle $\pure_\Pap^k$ on  $\mathcal{S}^*_\Pap$.

\begin{theorem}\label{thm:unitary borcherds III}
When viewed as a rational  section of $\pure_\Pap^k$, the Borcherds product $\bm{\psi}(f)^2$ has divisor
\begin{align*}
\mathrm{div}( \bm{\psi}  (f)^2 )   
  & =   \sum_{m>0} c(-m) \cdot \mathcal{Y}^\tot_\Pap(m)       \\
& \quad  -2k\cdot  \mathrm{div}(\delta) + 2 \sum_{ r \mid D} \gamma_r c_r(0)   \sum_{p\mid r} \mathcal{S}^*_{\Pap /\F_\mathfrak{p}} .
\end{align*}
\end{theorem}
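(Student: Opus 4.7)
The plan is to derive Theorem \ref{thm:unitary borcherds III} as a formal consequence of Theorem \ref{thm:unitary borcherds II}, by squaring the divisor formula there and then changing the line bundle in which we view $\bm{\psi}(f)^2$ as a rational section. First I would double the equality of Theorem \ref{thm:unitary borcherds II} to obtain the divisor of $\bm{\psi}(f)^2$ as a rational section of $\bm{\omega}^{2k}$ on $\mathcal{S}^*_\Kra$. Next, I would invoke the isomorphism $\bm{\omega}^2 \iso \pure_\Kra \otimes \co(\mathrm{Exc})$ of Theorem \ref{thm:toroidal}, raised to the $k^{\mathrm{th}}$ power: since the canonical section of $\co(\mathrm{Exc})^k$ has divisor $k\cdot \mathrm{Exc}$, viewing $\bm{\psi}(f)^2$ as a rational section of $\pure_\Kra^k$ amounts to subtracting $k\cdot \mathrm{Exc}$ from the divisor computed in the previous step.

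The key miracle is then a double cancellation along the exceptional fibers. The term $+k\cdot\mathrm{Exc}$ coming from $2\cdot k\cdot\frac{\mathrm{Exc}}{2}$ in Theorem \ref{thm:unitary borcherds II} is killed exactly by the $-k\cdot\mathrm{Exc}$ correction from the bundle change. Simultaneously, the relation
\[
2\cdot\mathcal{Z}_\Kra^\tot(m) = \mathcal{Y}_\Kra^\tot(m) + \sum_{s\in \pi_0(\mathrm{Sing})}\#\{x\in L_s:\langle x,x\rangle=m\}\cdot \mathrm{Exc}_s
\]
of (\ref{total error}) rewrites $2\sum_{m>0}c(-m)\cdot\mathcal{Z}_\Kra^\tot(m)$ as $\sum_{m>0}c(-m)\cdot\mathcal{Y}_\Kra^\tot(m)$ up to precisely the exceptional correction term that appears (with the opposite sign) in Theorem \ref{thm:unitary borcherds II}. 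Combining these cancellations yields the identity
\begin{align*}
\mathrm{div}_{\pure_\Kra^k}(\bm{\psi}(f)^2) &= \sum_{m>0}c(-m)\cdot\mathcal{Y}_\Kra^\tot(m) \\
&\quad -2k\cdot \mathrm{div}(\delta) + 2\sum_{r\mid D}\gamma_r c_r(0)\sum_{p\mid r}\mathcal{S}^*_{\Kra/\F_\mathfrak{p}}
\end{align*}
as Cartier divisors on $\mathcal{S}^*_\Kra$, with no component supported on $\mathrm{Exc}$.

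Finally I would descend this equality to $\mathcal{S}^*_\Pap$. By construction, $\mathcal{Y}_\Kra^\tot(m)$, the vertical fibers $\mathcal{S}^*_{\Kra/\F_\mathfrak{p}}$, and the divisor $\mathrm{div}(\delta)$ are all pullbacks along $\mathcal{S}^*_\Kra \to \mathcal{S}^*_\Pap$ of the corresponding Cartier divisors on $\mathcal{S}^*_\Pap$, and $\pure_\Kra$ is the pullback of $\pure_\Pap$. Since $\mathcal{S}^*_\Pap$ is normal and the morphism is an isomorphism outside the codimension $\geq 2$ locus $\mathrm{Sing}$, the rational section $\bm{\psi}(f)^2$ of $\pure_\Pap^k$ (well-defined in the generic fiber and extended across $\mathrm{Sing}$ by normality) has a Cartier divisor that is determined by, and agrees with, the displayed divisor pulled back from $\mathcal{S}^*_\Pap$. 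I expect no real obstacle in this final step: the hard work sits entirely in Theorem \ref{thm:unitary borcherds II}, and the argument above is just careful bookkeeping ensuring that the exceptional contributions match up and disappear, leaving an expression that descends automatically.
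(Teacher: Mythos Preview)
Your computation is correct: doubling Theorem \ref{thm:unitary borcherds II}, shifting from $\bm{\omega}^{2k}$ to $\pure_\Kra^k$ via the isomorphism $\bm{\omega}^2\iso\pure_\Kra\otimes\co(\mathrm{Exc})$, and then applying \eqref{total error} does produce exactly the stated divisor on $\mathcal{S}^*_\Kra$ with all exceptional contributions cancelling, and the descent to $\mathcal{S}^*_\Pap$ via normality is fine.

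However, this reverses the logical order of the paper. There Theorem \ref{thm:unitary borcherds III} is proved \emph{first}, directly from Proposition \ref{prop:interior divisor}: that proposition gives $\mathrm{div}(\bm{\psi}(f))$ on $\mathcal{S}^*_\Kra$ only \emph{up to} an unknown linear combination of the $\mathrm{Exc}_s$, but after restricting to $\mathcal{S}^*_\Kra\smallsetminus\mathrm{Exc}\iso\mathcal{S}^*_\Pap\smallsetminus\mathrm{Sing}$ this ambiguity disappears, and normality of $\mathcal{S}^*_\Pap$ extends the resulting equality across the codimension $\ge 2$ locus $\mathrm{Sing}$ to give Theorem \ref{thm:unitary borcherds III}. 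Only \emph{then} is Theorem \ref{thm:unitary borcherds II} obtained, by pulling the equality \eqref{pappas divisor} back to $\mathcal{S}^*_\Kra$ and running your bookkeeping in reverse. The point is that passing to $\mathcal{S}^*_\Pap$ is precisely what allows one to determine the exceptional multiplicities in Theorem \ref{thm:unitary borcherds II} in the first place; your route takes those multiplicities as already known. So while your argument is a valid implication $\text{II}\Rightarrow\text{III}$, within the paper it would be circular, since the proof of II there rests on III.
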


These three theorems will be proved simultaneously in \S \ref{s:analytic borcherds}.  Briefly, we will map our unitary Shimura variety $\mathrm{Sh}(G,\mathcal{D})$ to an orthogonal Shimura variety, where a meromorphic Borcherds product  is already known to exist.   If we pull back this Borcherds product to 
$\mathrm{Sh}(G,\mathcal{D})(\C)$, the  leading  coefficient in its analytic Fourier-Jacobi expansion is known from \cite{Ku:ABP}, up to multiplication by some unknown constants of absolute value $1$.

By converting this analytic Fourier-Jacobi expansion into algebraic language, we will deduce the existence of a Borcherds product $\bm{\psi}(f)$ satisfying all of the properties stated in Theorem \ref{thm:unitary borcherds I}, up to some unknown constants  in the leading Fourier-Jacobi coefficient.  These unknown constants are the $\kappa_\Phi$'s appearing in Proposition \ref{prop:algebraic BFJ}.   We then rescale the Borcherds product to make many $\kappa_\Phi=1$ simultaneously.    

After such a rescaling, the divisor of $\bm{\psi}(f)^2$ on $\mathcal{S}_\Pap^*$ can be computed from the Fourier-Jacobi expansions, and agrees with the divisor written in  Theorem \ref{thm:unitary borcherds III}.
Pulling back  that divisor calculation via $\mathcal{S}_\Kra^*\to \mathcal{S}_\Pap^*$, and   using  Theorem \ref{thm:cartier error}, yields the divisor of Theorem \ref{thm:unitary borcherds II}.

Using the above divisor  calculations, we  prove that all $\kappa_\Phi$ are roots of unity. 
Thus, after replacing $f$ by a multiple, which replaces $\bm{\psi}(f)$ by a power, we can  force all $\kappa_\Phi=1$, completing the proofs.


\subsection{A divisor calculation at the boundary}
\label{ss:div calc bd}


Let $\Phi$ be a proper cusp label representative for $(G,\mathcal{D})$. 
 The following  proposition is a key ingredient  in the proofs of Theorems \ref{thm:unitary borcherds I}, \ref{thm:unitary borcherds II},  and  \ref{thm:unitary borcherds III}.

 \begin{proposition}\label{prop:leading divisor}
The rational sections $P_\Phi^\eta$, $P_\Phi^{hor}$, and $P_\Phi^{vert}$ of the line bundles $\bm{\omega}_\Phi^k$,  $\mathcal{L}_\Phi^{\mathrm{mult}_\Phi(f)}$, and $\co_{\mathcal{B}_\Phi}$, respectively,    have divisors
\begin{align*}
\mathrm{div}( P_\Phi^\eta )  & =  -k\cdot  \mathrm{div}(\delta) \\
  \mathrm{div} ( P^{hor}_\Phi )   &  = \sum_{m>0} c(-m)  \mathcal{Z}_\Phi (m) \\
   \mathrm{div}(  P^{vert}_\Phi )  & = \sum_{ r \mid D} \gamma_r c_r(0)  \sum_{p\mid r} \mathcal{B}_{\Phi/\F_\mathfrak{p}}.
\end{align*}
In particular, the divisor  of   $P^{hor}_\Phi$ is purely horizontal (Proposition \ref{prop:boundary divisors}), while the divisors of $P^\eta_\Phi$ and  $P^{vert}_\Phi$  are purely vertical.
\end{proposition}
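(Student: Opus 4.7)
The three divisor identities are independent of one another, and the plan is to treat them one at a time, extracting each from the divisor computations of Proposition~\ref{prop:integral jacobi} together with the geometric facts already established in \S \ref{s:unitary compactification}.  The first two are essentially formal once one has the right dictionary in place; the third is where the real work lies.

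For $P_\Phi^\eta$, I would use that $(2\pi i \eta^2)^{12}$ is a nowhere-vanishing section of $\bm{\omega}_\mathcal{Y}^{12}$ by Proposition~\ref{prop:integral jacobi}.  Pulling back via $j : \mathcal{A}_\Phi \to \mathcal{Y}_0(D) \to \mathcal{Y}$ yields a nowhere-vanishing section of $j^*\bm{\omega}_\mathcal{Y}^k$.  By Proposition~\ref{prop:mixed modular forms} we have $j^*\bm{\omega}_\mathcal{Y} \iso \mathfrak{d}^{-1}\bm{\omega}_\Phi$, so the pullback becomes a nowhere-vanishing section of $\mathfrak{d}^{-k}\bm{\omega}_\Phi^k$.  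Viewed as a rational section of the subsheaf $\bm{\omega}_\Phi^k \subset \mathfrak{d}^{-k}\bm{\omega}_\Phi^k$, it has a pole of order exactly $k$ along the support of the cokernel, which is $\mathrm{div}(\delta)$; this gives the identity $\mathrm{div}(P_\Phi^\eta) = -k\cdot \mathrm{div}(\delta)$.

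For $P_\Phi^{hor}$, I would combine the equality $\mathrm{div}(\Theta^{24}) = 24\,e$ on $\mathcal{E}$ (Proposition~\ref{prop:integral jacobi}) with Proposition~\ref{prop:boundary divisors}, which identifies the pullback of the zero section $e$ along $j_x : \mathcal{B}_\Phi \to \mathcal{E}$ with the Cartier divisor $\mathcal{Z}_\Phi(x)$.  Thus $\mathrm{div}(j_x^*\Theta^{c(-m)}) = c(-m)\cdot \mathcal{Z}_\Phi(x)$.  Summing over all $x\in L_0$ with $\langle x,x\rangle=m$, using the definition \eqref{boundary special} of $\mathcal{Z}_\Phi(m)$, and then summing over $m>0$ gives the stated formula.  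Both of these first two calculations implicitly require that $k$ and each $c(-m)$ be divisible by $24$, which is arranged by replacing $f$ by a positive integer multiple as in Remark~\ref{rem:very divisible}.

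For $P_\Phi^{vert}$, the calculation reduces to the claim that the function $F_r^{24}$ on $\mathcal{Y}_0(D)$, viewed as a rational section of the trivial line bundle supplied by Lemma~\ref{lem:torsion section}, has divisor $24 \sum_{p\mid r} \mathcal{Y}_0(D)_{/\F_\mathfrak{p}}$; pulling this back via $j$ to $\mathcal{A}_\Phi$ and then to $\mathcal{B}_\Phi$ yields the required formula after one absorbs the factor of $24$ into the exponent $\gamma_r c_r(0)$.  Proving this vertical divisor formula is the main obstacle.  The starting point is that each factor $(bQ)^*\Theta^{24}$ on $\mathcal{Y}_1(D)$ has divisor $24(bQ)^*e$, i.e.\ $24$ times the locus where the torsion section $bQ$ coincides with the zero section of $\mathcal{E}$.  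Over the generic fiber the universal Drinfeld $\Gamma_1(D)$-structure $Q$ has exact order $D$, so for every nonzero $b\in \Z/D\Z$ with $rb=0$ the locus $\{bQ=e\}$ is empty in characteristic zero, and the divisor of $F_r^{24}$ is purely vertical.  In characteristic $p\mid r$, however, the Katz--Mazur theory of Drinfeld level structures forces $bQ$ to collapse onto the zero section along entire irreducible components of $\mathcal{Y}_1(D)_{/\F_\mathfrak{p}}$, with multiplicities that sum (after descent to $\mathcal{Y}_0(D)$ via the diamond operators) to give each component of $\mathcal{Y}_0(D)_{/\F_\mathfrak{p}}$ multiplicity exactly $24$ in $\mathrm{div}(F_r^{24})$.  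A good consistency check on the combinatorics is the fact that $F_r^{24}$ is a nonzero rational function on the proper Deligne--Mumford stack $\mathcal{X}_0(D)$, so the degree of its divisor on each vertical fiber must be zero; this together with explicit $q$-expansions at the cusps, computed from the classical product expansion of $\Theta$, pins down the vertical multiplicities.  Once the $F_r^{24}$ calculation is in hand, assembling the three pieces is immediate.
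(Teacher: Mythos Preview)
Your treatment of $P_\Phi^\eta$ and $P_\Phi^{hor}$ is correct and matches the paper's argument essentially verbatim.

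For $P_\Phi^{vert}$ there is a real problem: the intermediate claim
\[
\mathrm{div}(F_r^{24}) \;=\; 24\sum_{p\mid r}\mathcal{Y}_0(D)_{/\F_\mathfrak{p}}
\]
on $\mathcal{Y}_0(D)$ is \emph{false}. For each prime $p\mid D$ the special fibre of $\mathcal{Y}_0(D)$ has two components (at the $p$-part of the level), and $\mathrm{div}(F_p^{24})$ is supported on only \emph{one} of them --- the component on which the universal cyclic subgroup $C[p]$ is connected --- while it is trivial on the \'etale component. Your Katz--Mazur heuristic in fact points to exactly this asymmetry: the Drinfeld generator $bQ$ collapses to the identity along the connected component but remains a genuine nonzero section along the \'etale one, so the multiplicities cannot come out equal. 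The ``degree-zero on each fibre'' check you propose does not rescue this, since $F_r^{24}$ acquires zeros and poles at the cusps on $\mathcal{X}_0(D)$, and the resulting relation merely ties the unequal vertical multiplicities to the unequal cuspidal orders.

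The paper sidesteps this by never asserting a formula for $\mathrm{div}(F_r^{24})$ on $\mathcal{Y}_0(D)$. Instead it writes
\[
\mathrm{div}(F_r^{24}) \;=\; 24\sum_{p\mid r}\bigl(C[p]^\times \times_{\mathcal{E},e}\mathcal{Y}_0(D)\bigr),
\]
pulls this back along $j$ so that $C[p]$ becomes the CM kernel $E[\mathfrak{p}]$, and then computes $E[\mathfrak{p}]^\times \times_{E,e}\mathcal{M}_{(1,0)}$ directly at a geometric point using the Oort--Tate local model $W[[X,Y,Z]]/(XY-w_p)$. The CM section cuts this model along $X=uY$, and the relation $Z^{p-1}=X$ together with $Z=0$ forces $X=0$, hence $Y=0$, giving precisely $\mathcal{M}_{(1,0)/\F_\mathfrak{p}}$ with multiplicity one. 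The point is that the CM locus meets the special fibre only at the supersingular crossing points, so one never needs to know the global decomposition of $\mathrm{div}(F_r^{24})$ into components of $\mathcal{Y}_0(D)_{/\F_\mathfrak{p}}$.
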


\begin{proof}
By Proposition \ref{prop:integral jacobi}  the section
\[
j^*(2\pi i \eta^2)^k \in H^0( \mathcal{A}_\Phi ,  \mathfrak{d}^{-k} \bm{\omega}_\Phi^k ) \iso H^0( \mathcal{Y}_0(D) , \bm{\omega}_\mathcal{Y}^k )
\] 
has trivial divisor.  When we use the inclusion $\bm{\omega}_\Phi  \subset \mathfrak{d}^{-1} \bm{\omega}_\Phi$  to view it instead as a rational section $P_\Phi^\eta$ of $\bm{\omega}_\Phi^k$, its divisor becomes $\mathrm{div}(\delta^{-k})$.  This proves the first equality.

To prove the remaining two equalities, 
let $\mathcal{E} \to \mathcal{Y}_0(D)$ be the universal elliptic curve, and denote by $e:\mathcal{Y}_0(D) \to \mathcal{E}$ the $0$-section.
It is  an effective  Cartier divisor on $\mathcal{E}$.

Directly from the definition of $P^{hor}_\Phi$ we have the equality
\[
\mathrm{div} ( P^{hor}_\Phi )  = \sum_{m>0}   \frac{c(-m)}{24}  \sum_{  \substack{ x \in L_0 \\ \langle x, x\rangle =m } } \mathrm{div}( j_x^* \Theta^{24} ) .
\]
Combining Proposition \ref{prop:integral jacobi} with  (\ref{boundary special})  shows that
\[
  \sum_{  \substack{ x \in L_0 \\ \langle x, x\rangle =m } } \mathrm{div}( j_x^* \Theta^{24} ) 
=
   \sum_{  \substack{ x \in L_0 \\ \langle x, x\rangle =m } }   24    j_x^* ( e) 
=
 \sum_{  \substack{ x \in L_0 \\ \langle x, x\rangle =m } }    24   \mathcal{Z}_\Phi(x)
=  24  \mathcal{Z}_\Phi(m),
\]
and the first equality follows immediately.
 
Recall the morphism $j :\mathcal{A}_\Phi \to \mathcal{Y}_0(D)$ of \S \ref{ss:mixed special divisors}. 
For the second equality it  suffices to prove that the function $F^{24}_r$  on $\mathcal{Y}_0(D)$ defined in Lemma \ref{lem:torsion section} satisfies 
\begin{equation}\label{OT-main}
 \mathrm{div} ( j^* F_r^{24}  ) = 24 \sum_{ p\mid r } \mathcal{A}_{\Phi / \F_\mathfrak{p} }.
\end{equation}

Let  $C \subset \mathcal{E}$ be the universal cyclic subgroup scheme of order $D$.   
For each $s\mid D$ denote by  $C[s] \subset C$  the $s$-torsion subgroup,
and by $C[s]^\times\subset C[s]$ the closed \emph{subscheme of generators}. 
This is defined as follows. Noting that
\[
C[s]=\prod_{p\mid s}C[p],
\]
 we  define
 \[
 C[s]^\times=\prod_{p\mid s}C[p]^\times,
 \]
  where $C[p]^\times$ denotes the closed subscheme of generators of $C[p]$ as in  \cite[\S 3.3]{HR12}.
  Note that  $C[p]^\times$ coincides with the subscheme of points of exact order $p$ Z (see \cite[Remark 3.3.2]{HR12})  which allows the comparison with the formulation of the moduli problem in \cite[Chapter 3]{KM}. 
  Here and in the sequel, we are using  \cite[\S 3.3]{HR12} as a convenient
  summary of Oort-Tate theory (see also \cite{GT}) and of facts from \cite{KM} and \cite{DR}.

There is an equality of Cartier divisors 
\[
\frac{1}{24} \mathrm{div}( F_r^{24} ) =  \big( C[r] -e \big) \times_{ \mathcal{E} , e}  \mathcal{Y}_0(D)   = 
\sum_{  \substack{ s\mid r \\ s\neq1} }  \big( C[s]^\times  \times_{ \mathcal{E} , e}  \mathcal{Y}_0(D)  \big)
\]
on $\mathcal{Y}_0(D)$. Indeed, one can check this after pullback to $\mathcal{Y}_1(D)$, where it is clear from 
Proposition \ref{prop:integral jacobi}, which asserts that the  divisor of the section $\Theta^{24}$ appearing in the definition of $F_r^{24}$ is equal to $24 e$.  If $s$ is divisible by two distinct primes then 
\[
\big(C[s]^\times  \times_{ \mathcal{E} , e}  \mathcal{Y}_0(D)\big) = 0,
\]
and hence
\[
 \mathrm{div}( F_r^{24} ) = 24 \sum_{p\mid r}  \big( C[p]^\times  \times_{ \mathcal{E} , e}  \mathcal{Y}_0(D)  \big).
\]

Now pull back this equality of Cartier divisors by $j$.
Recall that $j$   is defined as the composition 
\[
\mathcal{A}_\Phi \iso \mathcal{M}_{(1,0)} \map{i} \mathcal{Y}_0(D),
\]
where the  isomorphism  is the one provided by  Proposition \ref{prop:second moduli iso}, 
and the arrow labeled $i$ endows the universal CM elliptic curve  $E\to  \mathcal{M}_{(1,0)}$ with its cyclic subgroup scheme $E[\delta]$.  Thus
\begin{equation}\label{OT-sub}
 i^*\mathrm{div}( F_r^{24} ) = 24 \sum_{p\mid r}  \big( E[\mathfrak{p}]^\times  \times_{ E , e}  \mathcal{M}_{(1,0)}  \big) ,
\end{equation}
where $\mathfrak p$ denotes the unique prime ideal in $\co_\kk$ over $p$. 

Fix a geometric point $z:\Spec(\F_\mathfrak{p}^\alg) \to \mathcal{M}_{(1,0)}$, and view $z$ also as a geometric point of $E$ or  $\mathcal{E}$ using
\[
\mathcal{M}_{(1,0)} \map{e}  E \map{i} \mathcal{E}.
\] 
Let $\co_{E,z}$ and $\co_{\mathcal{E},z}$ denote the completed \'etale local rings  of  $E$ and $\mathcal{E}$ at $z$.

 There is an isomorphism
\[
\co_{\mathcal{E},z} \iso W[[ X,Y,Z]] /( XY - w_p)
\]
for some uniformizer $w_p$ in the Witt ring $W=W(\F^\alg_\mathfrak{p})$.  Compare with  \cite[Theorem 3.3.1]{HR12}.
Under this isomorphism the $0$-section of $\mathcal{E}$ is defined by the equation $Z=0$, and the divisor $C[p]^\times$ is defined by $Z^{p-1}-X=0$.  
Moreover, noting that the completed \'etale local ring of $\mathcal{M}_{(1,0)}$ at $z$ can be identified with $\co_\kk\otimes W$,  the  natural map $\co_{\mathcal{E},z} \to \co_{E,z}$ is identified with the quotient map 
\[
W[[ X,Y,Z]] /( XY - w_p) \to  W[[ X,Y,Z]] /( XY - w_p , X-uY )
\]
for some $u\in W^\times$.

Under these identifications,  the closed immersion
\[
E[\mathfrak{p}]^\times  \times_{ E , e}  \mathcal{M}_{(1,0)}  \hookrightarrow    \mathcal{M}_{(1,0)}
\]  
corresponds, on the level of completed local rings,  to the quotient map
\[
\xymatrix{
{  \co_{\mathcal{M}_{(1,0)} , z }  }   \ar@{=}[r]  &   {   W[[ X,Y,Z  ]] /( XY - w_p , X-uY ,Z  )  }   \ar[d]  \\
{   \F^\alg_\mathfrak{p}  }   \ar@{=}[r] &    {    W[[ X,Y,Z ]] /( XY - w_p , X-uY , Z, Z^{p-1} -X )  . } 
}
\]
This implies that 
\[
E[\mathfrak{p}]^\times  \times_{ E , e}  \mathcal{M}_{(1,0)}  = \mathcal{M}_{(1,0)/\F_\mathfrak{p}^\alg} . 
\]
The equality (\ref{OT-main}) is clear from this and  (\ref{OT-sub}).
\end{proof}


\section{Calculation of the Borcherds product divisor}
\label{s:analytic borcherds}


In this section we prove Theorems \ref{thm:unitary borcherds I}, \ref{thm:unitary borcherds II}, and \ref{thm:unitary borcherds III}.   We assume throughout that  $n\ge 3$.

 Throughout \S \ref{s:analytic borcherds} we keep $f$ as in (\ref{input form}), and again assume that $c(-m)\in \Z$ for all $m\ge 0$.   
Recall  that $V=\Hom_\kk(W_0,W)$   is endowed with the hermitian form  $\langle x, y \rangle$ of (\ref{hom hermitian}), as well as  the $\Q$-bilinear form $[x,y]$ of (\ref{hom quadratic}).  The associated quadratic form is 
\[
Q(x) = \langle x,x\rangle= \frac{[x,x]} {2} . 
\]


\subsection{Vector-valued modular forms}
\label{ss:vector-valued}


Let $L\subset V$ be any  $\co_\kk$-lattice, self-dual with respect to the hermitian form.  The dual lattice of $L$ with respect to the bilinear form $[\cdot,\cdot]$ is $L'=\mathfrak{d}^{-1}L$.

Let $\omega$ be the restriction to $\SL_2(\Z)$ of the Weil representation of $\SL_2(\widehat \Q)$ (associated with the standard additive character of $\A/\Q$) on the Schwartz-Bruhat functions on $L\otimes_\Z\A_f$. The restriction of $\omega$ to $\SL_2(\Z)$
preserves the subspace $S_L=\C[L'/L]$ of Schwartz-Bruhat functions that are supported on $\widehat L'$ and invariant under translations by
$\widehat L$.  We obtain a representation
\[
\omega_L:\SL_2(\Z)\to \Aut(S_L).
\]
For $\mu\in L'/L$, we denote by $\phi_\mu\in S_L$ the characteristic function of $\mu$.    
 
\begin{remark}
The conjugate representation $\overline{\omega}_L$ on $S_L$, defined by
\[
\overline{\omega}_L(\gamma)(\phi)= \overline{\omega_L(\gamma)(\overline{\phi})}
\]
for $\phi\in S_L$,  is the representation denoted $\rho_L$ in \cite{Bo1, Br1, BF}.
\end{remark}

Recall the scalar valued modular form
\[
f (\tau) = \sum_{  m\gg -\infty } c(m)  \cdot q^m  \in M^{!,\infty}_{2-n}(D,\chi)
\]
 of (\ref{input form}), and continue to assume that $c(m)\in \Z$ for all $m\le 0$.  We will  convert $f$  into a $\C[L'/L]$-valued modular form $\tilde f$, to be used as  input for  Borcherds' construction of meromorphic modular forms on orthogonal Shimura varieties.  The restriction of $\omega_L$ to $\Gamma_0(D)$ acts on the line $\C \cdot \phi_0$ via the character $\chi=\chi_\kk^{n-2}$, and hence  the induced function
\begin{align}
\label{eq:vectorization}
\tilde f(\tau) = \sum_{\gamma \in \Gamma_0(D) \backslash \SL_2(\Z)} (f\mid_{2-n}\gamma)(\tau) \cdot \omega_L(\gamma)^{-1} \phi_0
\end{align}
is an $S_L$-valued weakly holomorphic modular form for $\SL_2(\Z)$ of weight $2-n$ with representation $\omega_L$.  Its Fourier expansion is denoted 
\begin{equation}\label{eq:tilde-f-Fourier}
\tilde f(\tau) =\sum_{m \gg -\infty}  \tilde c(m) \cdot q^m,
\end{equation}
and we denote by  $\tilde c(m,\mu)$ the value of $\tilde{c}(m)\in S_L$ at a coset $\mu \in L'/L$.

For any $r\mid D$ let $\gamma_r\in \{\pm 1,\pm i\}$ be as in (\ref{gamma def}), and let   $c_r(m)$ be the $m^\mathrm{th}$ Fourier coefficient of $f$ at the cusp $\infty_r$  as in (\ref{other cusps}).    For any $\mu\in L'/L$ define $r_\mu \mid D$ by 
\begin{equation}\label{eq:r-mu}
r_\mu= \prod_{\mu_p \neq 0} p,
\end{equation}
where $\mu_p\in L'_p/L_p$ is the  $p$-component of $\mu$.

\begin{proposition}\label{prop:promoted coefficients}
  For all $m\in \Q$ the coefficients $\tilde{c}(m)\in S_L$ satisfy
\[
\tilde c(m, \mu) =\begin{cases}
   \sum_{r_\mu \mid r \mid D}  \gamma_r   \cdot c_r(mr)  &\text{if $m \equiv -Q(\mu)  \pmod{ \Z}$,}
   \\ 
    0  &\text{otherwise}.
    \end{cases}
\]
Moreover, for $m<0$ we have
\[
\tilde c(m, \mu) = \begin{cases}
    c(m) &\text{if $\mu =0$,}
    \\
    0  &\text{if $\mu \neq 0$,}
    \end{cases}
\]
and the constant term of $\tilde f$ is given by
\[
\tilde c(0, \mu) = \sum_{r_\mu \mid  r \mid D}  \gamma_r \cdot c_r(0) .
\]
\end{proposition}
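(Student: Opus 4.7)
The plan is to evaluate the sum in \eqref{eq:vectorization} by choosing coset representatives for $\Gamma_0(D)\backslash\SL_2(\Z)$ adapted to the cusps of $X_0(D)$.  Since $D$ is squarefree, a complete set of representatives is
$$\{R_r T^b : r\mid D,\ 0\le b < r\},\qquad T = \kzxz{1}{1}{0}{1},$$
where $R_r$ is the matrix fixed in the construction of the Atkin-Lehner operator $W_r$.  For each $r\mid D$ the outer index picks out the $\Gamma_0(D)$-orbit of the cusp $\infty_r$, while the $T^b$ enumerate $\Gamma_\infty$ modulo the stabilizer of that cusp, whose width equals $r$ for $\Gamma_0(D)$ with $D$ squarefree (by the standard formula $w_{a/c} = N/(c\gcd(c,N/c))$ applied to $\infty_r = 1/(D/r)$).

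First I would compute the holomorphic side.  From $W_r = R_r\cdot\mathrm{diag}(r,1)$ together with the defining relation \eqref{other cusps} and $\chi_r^2 = 1$, one gets
$$(f|_{2-n}R_r T^b)(\tau) = \chi_r(\beta)\chi_s(\alpha)\,r^{(n-2)/2}\sum_{m\gg -\infty} c_r(m)\, e^{2\pi i m(\tau+b)/r},$$
with $m \in \Z$ since $f|W_r$ is a modular form for $\Gamma_0(D)$, whose width at $\infty$ is $1$.  Substituting $m = m'r$ to isolate the Fourier coefficient of $\tilde f$ at exponent $m'$, the inner $b$-sum reduces to the orthogonality relation
$$\sum_{b=0}^{r-1} e^{2\pi i b(m' + Q(\mu))} \;=\; r\cdot \mathbf{1}_{m' \equiv -Q(\mu)\!\!\pmod{1}},$$
valid whenever $r_\mu\mid r$, since then $rQ(\mu)\in\Z$ and the geometric series collapses cleanly; this step uses the factor $e^{2\pi i bQ(\mu)}$ coming from $\omega_L(R_r T^b)^{-1}\phi_0 = e^{2\pi i bQ(\mu)}\,\omega_L(R_r)^{-1}\phi_0$.

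The main obstacle is the explicit Weil-representation calculation of $\omega_L(R_r)^{-1}\phi_0 \in S_L$.  Because $L$ is self-dual, $\omega_L$ factors as a restricted tensor product $\bigotimes_p \omega_{L_p}$, with the local factor at $p\nmid D$ acting trivially on the one-dimensional space $\C\phi_0 = S_{L_p}$.  At primes $p\mid D/r$ the matrix $R_r$ lies in $\Gamma_0(p)$ and fixes $\phi_0\in S_{L_p}$, so the output is supported on those $\mu$ with $\mu_p = 0$.  At primes $p\mid r$, the reduction of $R_r$ modulo $p$ has a unit in the lower-left corner, and its action on $S_{L_p}$ is, up to an explicit triangular phase, the Weil Fourier transform on the local discriminant module $L_p'/L_p$; the associated Gauss sum produces the local Weil index $\gamma_p$ of \eqref{def-gamma-p}, while the normalization $|L_p'/L_p|^{-1/2} = p^{-n/2}$ aggregates to $r^{-n/2}$.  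Assembling these local pieces yields
$$\omega_L(R_r)^{-1}\phi_0 \;=\; \gamma_r\,\chi_r(\beta)\chi_s(\alpha)\,r^{-n/2} \sum_{\substack{\mu\in L'/L\\ r_\mu\mid r}}\phi_\mu,$$
the phase $\chi_r(\beta)\chi_s(\alpha)$ arising from the upper-triangular modification relating $R_r$ to the pure Fourier transform at each bad prime.

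Substituting these two computations into the coset sum, the factors $\chi_r(\beta)^2\chi_s(\alpha)^2 = 1$ cancel together with $r^{(n-2)/2}\cdot r^{-n/2}\cdot r = 1$, leaving exactly
$$\tilde c(m,\mu) = \sum_{r_\mu\mid r\mid D} \gamma_r\cdot c_r(mr)$$
whenever $m \equiv -Q(\mu)\pmod 1$, and zero otherwise.  The two specialized statements are then immediate consequences: for $m<0$, the hypothesis $f\in M^{!,\infty}_{2-n}(D,\chi)$ forces $c_r(mr) = 0$ for every $r > 1$ (since $f$ is holomorphic at all cusps other than $\infty$), so only the $r=1$ summand survives, and $r_\mu\mid 1$ then forces $\mu = 0$; for $m = 0$, the formula specializes directly to the stated sum $\sum_{r_\mu\mid r\mid D}\gamma_r c_r(0)$ of constant terms.
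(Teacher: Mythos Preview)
Your proposal is correct and follows essentially the same route as the paper's direct proof in the appendix (\S\ref{ss:appendix promotion}): the same coset representatives $R_r T^b$ with $b$ running modulo $r$, the same local factorization of $\omega_L(R_r)^{-1}\phi_0$ over primes $p\mid D$ (trivial/character action at $p\mid s$, Fourier transform producing $\gamma_p p^{-n/2}\sum_\mu\phi_\mu$ at $p\mid r$), and the same cancellation $r^{(n-2)/2}\cdot r^{-n/2}\cdot r=1$. One minor point of exposition: the relation you write as $\omega_L(R_rT^b)^{-1}\phi_0 = e^{2\pi i bQ(\mu)}\,\omega_L(R_r)^{-1}\phi_0$ should be understood componentwise in the $\phi_\mu$-basis, since the left-hand side is a single vector; the paper makes this explicit by computing $\omega_L(T^{-b})$ on each $\phi_\mu$ after first applying $\omega_L(R_r)^{-1}$.
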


\begin{proof}
The first formula is a special case of  results of Scheithauer \cite[Section 5]{Scheithauer}.  
For the reader's benefit we provide a direct proof in \S \ref{ss:appendix promotion}.

The formula for the $m=0$ coefficient is immediate from the general formula.  So is  the formula for $m<0$, using the fact that the singularities of $f$ are supported at the cusp at $\infty$.
\end{proof}

\begin{remark}
The first formula of Proposition \ref{prop:promoted coefficients} actually also holds for $f$ in the larger space $M_{2-n}^!(D,\chi)$.
\end{remark}

\begin{corollary}\label{cor:vector rational}
The coefficients $c(m)$ and $\tilde{c}(m)$ satisfy the following:
\begin{enumerate}
\item
The   $c(m)$ are rational for all $m$.
\item
The  $\tilde c(m, \mu)$ are rational for all $m$ and $\mu$, and are integral if $m<0$.
\item
For all $r\mid D$ we have $\gamma_r \cdot c_r(0)  \in \Q$. In particular
\[
\tilde c(0, 0) = \sum_{r\mid D}   \gamma_r \cdot c_r(0) \in \Q.
\]
\end{enumerate}
\end{corollary}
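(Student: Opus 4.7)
The plan is to establish the three assertions in the order \textup{(1)}, \textup{(2)}, \textup{(3)}, with claim \textup{(2)} serving as the central technical input from which \textup{(3)} is extracted via Proposition \ref{prop:promoted coefficients} and M\"obius inversion.

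For \textup{(1)}, the essential observation is that the weight $2-n$ is negative (since $n \geq 3$), so no nonzero holomorphic modular form on $\Gamma_0(D)$ of weight $2-n$ with character $\chi = \chi_\kk^{n-2}$ exists. Consequently $f$ is determined by its principal part at $\infty$. Applying any $\sigma \in \Gal(\overline{\Q}/\Q)$ coefficient-wise yields another weakly holomorphic form $f^\sigma \in M^{!,\infty}_{2-n}(D,\chi)$---we are using here that $\chi$ is a real-valued (indeed quadratic) character, so is $\sigma$-fixed---and since $c(m) \in \Z$ for all $m \leq 0$ by hypothesis, $f$ and $f^\sigma$ have the same principal part. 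The rigidity forces $f^\sigma = f$, so every $c(m) \in \Q$.

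For \textup{(2)}, the integrality for $m<0$ is immediate from the second formula of Proposition \ref{prop:promoted coefficients}. For the rationality assertion I would run an analogous Galois-descent argument on $\tilde{f}$, now viewed as a weakly holomorphic vector-valued modular form of weight $2-n$ for the Weil representation $\omega_L$ of $\SL_2(\Z)$ on $S_L = \C[L'/L]$. The principal part of $\tilde{f}$ is $\sum_{m<0}c(m)q^m\phi_0$ with integer coefficients, hence $\sigma$-invariant. Since there are no nonzero holomorphic forms of negative weight for $\omega_L$, $\tilde{f}$ is determined by its principal part. The subtlety is that the matrix entries of $\omega_L$ lie in a cyclotomic field rather than $\Q$, so one must show that the principal-part map is suitably Galois-equivariant on the space of vector-valued forms. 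Here I would either invoke the $\Q$-rational structure on vector-valued modular forms for $\omega_L$ due to McGraw and Scheithauer, which guarantees that a form with $\Q$-rational principal part (in the standard basis $\{\phi_\mu\}$) is $\Q$-rational throughout, or argue directly: $\tilde{f}^\sigma$ is a form for the conjugate representation $\omega_L^\sigma$, and averaging over the Galois orbit together with rigidity of negative-weight forms yields $\tilde{c}(m,\mu)^\sigma = \tilde{c}(m,\mu)$ for all $m,\mu$.

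For \textup{(3)}, the ``in particular'' part is immediate from \textup{(2)} applied to $\mu = 0$. For the stronger individual rationality, the plan is to use Proposition \ref{prop:promoted coefficients} at $m=0$: for every $\mu \in L'/L$ with $Q(\mu) \in \Z$,
\[
\tilde c(0,\mu) = \sum_{r_\mu \mid r \mid D} \gamma_r c_r(0),
\]
the right-hand side depending on $\mu$ only through $r_\mu$. For each divisor $s \mid D$ I would construct a $\mu_s \in L'/L$ with $r_{\mu_s} = s$ and $Q(\mu_s) \in \Z$ as follows: at each prime $p \mid s$, the local discriminant group $L'_p/L_p$ is an $\F_p$-quadratic space of dimension $n \geq 3$, non-degenerate because $L$ is self-dual, and hence admits an anisotropic decomposition with a nonzero isotropic vector (over an odd field $\F_p$, Chevalley--Warning); choose such an isotropic $\mu_{s,p}$ and take $\mu_s$ to be their sum across $p \mid s$, with trivial components at primes $p \mid D/s$. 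Then $\beta_s \define \tilde c(0,\mu_s) = \sum_{s \mid r \mid D}\gamma_r c_r(0) \in \Q$ by \textup{(2)}, and M\"obius inversion on the interval $[s,D]$ of the divisor lattice of $D$ yields
\[
\gamma_s c_s(0) = \sum_{s \mid r \mid D} \mu(r/s)\,\beta_r \;\in\; \Q.
\]

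The main obstacle I expect is step \textup{(2)}: rigorously establishing Galois-equivariance of the principal-part map for $\omega_L$-valued forms, i.e.~identifying the correct $\Q$-structure on the space of weakly holomorphic vector-valued forms so that integrality of the principal part really does descend to rationality of every coefficient. The lattice-theoretic existence step in \textup{(3)} requires some care but is essentially a routine application of the hypothesis $n \geq 3$.
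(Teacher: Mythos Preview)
Your approach matches the paper's essentially step for step. For \textup{(1)} and \textup{(2)} the paper runs precisely the Galois-rigidity argument you describe: apply $\sigma\in\Aut(\C/\Q)$, observe that $f^\sigma-f$ (resp.\ $\tilde f^\sigma-\tilde f$) has vanishing principal part and is therefore a holomorphic form of negative weight, hence zero. The paper is terser than you are about the subtlety you flag in \textup{(2)}---that $\omega_L$ has cyclotomic rather than rational matrix entries---and simply asserts that $\tilde f^\sigma-\tilde f$ is a holomorphic modular form; your instinct to invoke the McGraw/Scheithauer $\Q$-structure on vector-valued forms is the right way to make this rigorous, and is exactly what is implicit in the paper's one-line treatment.

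For \textup{(3)} the paper just says the claim follows from \textup{(2)} and the constant-term formula of Proposition~\ref{prop:promoted coefficients}, leaving the reader to unwind. Your M\"obius-inversion argument, together with the construction of isotropic $\mu_s\in L'/L$ with $r_{\mu_s}=s$ at each divisor $s\mid D$ (using $n\ge 3$ so that each $\F_p^n$ has nonzero isotropic vectors), is precisely the unwinding the paper has in mind. So nothing is different in substance; you have simply been more explicit about two points the paper leaves to the reader.
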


\begin{proof}
For the first claim, fix any $\sigma \in \Aut(\C/\Q)$.  
The form $ f^\sigma - f \in M_{2-n}^{!, \infty}$ is  holomorphic at all cusps other than $\infty$, and vanishes at the cusp $\infty$ by the assumption that  as $c(m) \in \Z$ for $m \le 0$.
Hence $ f^\sigma - f$  is a holomorphic modular form of  weight  $2-n<0$, and therefore vanishes identically. 
 It follows that  $c(m)\in \Q$ for all $m$.

Now consider the second claim.   In view of the Proposition \ref{prop:promoted coefficients}  the coefficients
 $\tilde c(m,\mu)$ of $\tilde f$ with $m<0$ are integers. Hence,  for any $\sigma \in \Aut(\C/\Q)$,  the function $\tilde f^\sigma - \tilde f$
 is a holomorphic modular form of weight $2-n <0$, which is therefore identically $0$.  Therefore $\tilde f$ has rational Fourier coefficients.
%

The third claim follows from the second claim and the formula for the constant term of $\tilde f$ given in Proposition \ref{prop:promoted coefficients}.
\end{proof}


\subsection{Construction of the Borcherds product}
\label{ss:borcherds define}


We now construct the Borcherds product $\bm{\psi}(f)$ of Theorem \ref{thm:unitary borcherds I} as the pullback of a Borcherds product  on the orthogonal Shimura variety defined by the quadratic space $(V,Q)$.    Useful references here include \cite{Bo1,Br1,KuBorcherds, Ho14}.

After Corollary \ref{cor:vector rational} we may replace $f$ by a positive integer multiple in order to assume that $c(-m) \in 24\Z$ for all $m\ge 0$, and that $\gamma_r c_r(0) \in 24\Z$ for all $r\mid D$.  In particular the rational number
\[
k =  \tilde{c}(0,0)
\]
 of Corollary \ref{cor:vector rational} is an integer.  Compare with Remark \ref{rem:very divisible}.

Define a hermitian domain
\begin{equation}\label{first isotropy}
\tilde{\mathcal{D}} = \{ w\in V(\C) : [w,w]=0,\, [w, \overline{w}]<0 \} /\C^\times.
\end{equation}
Let $\tilde{\bm{\omega}}^{an}$ be the tautological bundle  on $\tilde{\mathcal{D}}$, whose fiber at $w$ is the line $\C w\subset V(\C)$.  The group of real points of $\SO(V)$ acts on (\ref{first isotropy}), and this action lifts to an action on $\tilde{\bm{\omega}}^{an}$.

As  in Remark \ref{rem:to so}, any point $z\in \mathcal{D}$  determines a line $\C w \subset \epsilon V(\C).$  This construction defines a closed immersion 
\begin{equation}\label{utoso}
\mathcal{D} \hookrightarrow \tilde{\mathcal{D}} ,
\end{equation}
under which $\tilde{\bm{\omega}}^{an}$ pulls back to the line bundle $\bm{\omega}^{an}$ of \S \ref{ss:unitary bundle}.     
The hermitian domain $\tilde{\mathcal{D}}$ has two connected components.
Let  $\tilde{\mathcal{D}}^+ \subset \tilde{\mathcal{D}}$ be the connected component containing  $\mathcal{D}$.

For a fixed $g\in G(\A_f)$, we apply the constructions of \S \ref{ss:vector-valued} to the input form $f$ and the  self-dual hermitian $\co_\kk$-lattice 
 \[
L= \Hom_{\co_\kk}( g\mathfrak{a}_0, g\mathfrak{a})  \subset V.
\]
 The result is a vector-valued modular form $\tilde{f}$  of weight $2-n$ and representation  $\omega_{L}:\SL_2(\Z) \to S_L$. 
The  form $\tilde{f}$ determines a Borcherds product $\Psi(\tilde{f})$ on $\tilde{\mathcal{D}}^+$; see  \cite[Theorem 13.3]{Bo1} and Theorem~\ref{prop:greenbp}. For us it is more convenient to use the rescaled Borcherds product
\begin{equation}\label{B renorm}
\tilde{\bm{\psi}}_g( f ) = (2\pi i)^{ \tilde{c}(0,0) } \Psi(2 \tilde{f})
\end{equation}
determined by $2 \tilde{f}$.   It is  a meromorphic section  of  $(\tilde{\bm{\omega}}^{an})^k$.

The subgroup $\SO(L)^+ \subset \SO(L)$  of elements preserving the component $\tilde{\mathcal{D}}^+$  acts  on $\tilde{\bm{\psi}}_g( f )$ through a finite order character \cite{Bo2:correction}.  Replacing $f$ by $m f$  has the effect of replacing $\tilde{\bm{\psi}}_g( f )$ by  $\tilde{\bm{\psi}}_g( f )^m$, and so after replacing $f$ by  a multiple we assume that $\tilde{\bm{\psi}}_g( f )$ is invariant under this action.

Denote by $\bm{\psi}_g(f)$ the pullback of  $\tilde{\bm{\psi}}_g( f )$ via the map
\[
 (  G(\Q) \cap g K g^{-1}  ) \backslash \mathcal{D}  \to \SO(L)^+ \backslash \tilde{\mathcal{D}}^+
\]
 induced by (\ref{utoso}).   It is a meromorphic section of  $(\bm{\omega}^{an})^k$ on the  connected component 
\[
(  G(\Q) \cap g K g^{-1}  ) \backslash \mathcal{D} \map{z\mapsto (z,g)} \mathrm{Sh}(G,\mathcal{D})(\C).
\]
By repeating the construction for all $g\in G(\Q) \backslash G(\A_f) /K$, we obtain a meromorphic section $\bm{\psi}(f)$ of the line bundle $(\bm{\omega}^{an})^k$ on  
\[
\mathrm{Sh}(G,\mathcal{D})(\C) \iso \mathcal{S}_\Kra(\C).
\]
After rescaling on every connected component by a complex constant of absolute value $1$,  this will be  the section whose existence is asserted in  Theorem \ref{thm:unitary borcherds I}.

\begin{proposition}\label{prop:complex borcherds divisor}
The divisor of   $\bm{\psi}(f)$ is
\[
\mathrm{div}( \bm{\psi}(f) ) = \sum_{m>0} c(-m)  \cdot \mathcal{Z}_\Kra(m)(\C).
\]
\end{proposition}

\begin{proof}
The divisor of $\tilde{\bm{\psi}}_g(f)$ on $\tilde{\mathcal{D}}^+$ was computed by Borcherds in terms of the Fourier coefficients $\tilde{c}(-m)$ of $\tilde{f}$, and from this it is easy to obtain a formula for the divisor of $\bm{\psi}_g(f)$ on $\mathcal{D}$.   See \cite[Theorem 3.22]{Br1} and \cite[Theorem 8.1]{Ho14} for the details.
The claim therefore follows by using Proposition \ref{prop:promoted coefficients} to rewrite this formula in terms of the $c(-m)$, and comparing with the explicit description of $\mathcal{Z}_\Kra(m)(\C)$ stated in  Remark \ref{rem:divisor uniformization}.
\end{proof}


\subsection{Analytic Fourier-Jacobi coefficients}
\label{ss:analytic-FJ}


We return to the notation of \S \ref{ss:explicit boundary}. 
 Thus $\Phi=(P,g)$ is a proper cusp label representative for $(G,\mathcal{D})$,  we have chosen 
\[
s:\mathrm{Res}_{\kk/\Q}\mathbb{G}_m \to Q_\Phi
\] 
as in Lemma \ref{lem:mixed section}, and have fixed  $a\in \widehat{\kk}^\times$.
This data determines a lattice
\[
L = \Hom_{\co_\kk}( s(a) g\mathfrak{a}_0  , s(a) g\mathfrak{a} ), 
\]
and  Witt decompositions  
\[
V=V_{-1}\oplus V_0\oplus V_1 ,\quad L=L_{-1}\oplus L_0 \oplus L_1.
\] 
Choose bases $\eee_{-1},\fff_{-1} \in L_{-1}$ and  $\eee_1 , \fff_1\in L_1$ as in   \S \ref{ss:explicit boundary}.

Imitating the construction of (\ref{siegel coords}) yields a commutative diagram
 \[
\xymatrix{
{  \mathcal{D} }\ar[rr]^{ (\ref{utoso})} \ar[d]_{ w\mapsto (w_0,\xi)}  & & { \tilde{\mathcal{D}}^+ } \ar[d]^{ w \mapsto (\tau, w_0, \xi) } \\
 {     \epsilon V_0(\C) \times \C  }  \ar[rr]  &  &   {  \mathfrak{H} \times  V_0(\C) \times \C   }
 }
\]
in which the vertical arrows are open immersions, and the horizontal arrows are closed immersions.   The vertical arrow on the right is defined as follows:   Any  $w\in \tilde{\mathcal{D}}$ pairs nontrivially with the isotropic vector $\mathrm{f}_{-1}$, and so may be scaled so that $[w,\mathrm{f}_{-1}]=1$.  This allows us to identify
\[
\tilde{\mathcal{D}} =  \{ w\in V(\C) : [w,w]=0,\, [w, \overline{w}]<0,  \, [w, \fff_{-1}]=1 \} .
\]
Using this model, any $w\in \tilde{\mathcal{D}}^+$ has the form
\[
w= -\xi  \eee_{-1} +(\tau \xi - Q(w_0)) \fff_{-1} + w_0 + \tau \eee_1 + \fff_1
\]
with  $\tau \in \mathfrak{H}$,  $w_0 \in V_0(\C)$, and $\xi\in \C$.  The bottom horizontal arrow is $(w_0,\xi) \mapsto (\tau,w_0,\xi)$, where $\tau$ is determined by the relation (\ref{the tau}).

  The construction above singles out a nowhere vanishing section of $ \tilde{\bm{\omega}}^{an}$,  whose value at an isotropic line $\C w$ is the unique vector in that line with $[w, \fff_{-1}]=1$.  As in the discussion leading to (\ref{analytic FJ}), we obtain a trivialization
  \[
[\,\cdot\, , \mathrm{f}_{-1} ] :   \tilde{\bm{\omega}}^{an} \iso \co_{\tilde{\mathcal{D}}^+ }.
  \]
  
 Now consider the Borcherds product $\tilde{\bm{\psi}}_{s(a)g}( f )$ on $\tilde{\mathcal{D}}^+$ determined by the lattice $L$ above (that is, replace $g$ by $s(a)g$  throughout \S \ref{ss:borcherds define}).   It is a meromorphic section of  $(\tilde{\bm{\omega}}^{an})^k$, and we use the trivialization above to identify it with a meromorphic function.   In a neighborhood of the rational boundary component  associated to the isotropic plane $V_{-1}\subset V$,   this meromorphic function   has a product expansion.

\begin{proposition}[{\cite{Ku:ABP}}] \label{prop:ortho FJ formula}
There are positive constants $A$ and $B$ with the following property:  For all points $w\in \tilde{\mathcal{D}}^+$  satisfying 
\[
\Im(\xi)- \frac{ Q(\Im(w_0))}{ \Im(\tau )} > A \Im(\tau ) + \frac{ B} {\Im(\tau )} ,
\]
 there is a factorization 
\[
\tilde{\bm{\psi}}_{s(a)g}( f )
= \kappa \cdot  ( 2\pi i )^k  \cdot \eta^{2k}(\tau) \cdot   e^{2\pi i I\xi  } \cdot    P_0(\tau)  \cdot P_1(\tau,w_0)  \cdot  P_2(\tau,w_0,\xi)
\]
in which  $\kappa \in \C^\times$ has  absolute value $1$, $\eta$ is the Dedekind $\eta$-function, and
\[
I  = \frac{1}{12}\sum_{\substack{b\in \Z/D\Z}} \tilde{c} \left(0, - \frac{b}{D}\fff_{-1} \right) -2\sum_{m>0} \sum_{x\in L_0} c(-m ) \cdot  \sigma_1(m-Q(x)).
\]
The factors $P_0$ and $P_1$ are defined by
\[
P_0 (\tau) =\prod_{\substack{b\in \Z/D\Z\\  b\ne 0}}   \Theta\left(  \tau,  \frac{b}{D}  \right)^{ \tilde{c} (0, \frac{b}{D}  \fff_{-1})   }
\]
and
\[
P_1(\tau,w_0) =  \prod_{m>0} \prod_{\substack{x\in L_0\\   Q(x) = m }}  \Theta\big(  \tau,   [  w _0,x]   \big)^{ c(-m)}.
\]
The remaining factor  is
\[
P_2(\tau,w_0,\xi) =  \prod_{\substack{ x\in \delta^{-1} L_0  \\ a\in \Z  \\  b \in \Z/D\Z  \\   c\in \Z_{>0}  }}
  \Big(1- e^{2\pi i c \xi}  e^{2\pi i a\tau}   e^{  2\pi i b / D }  e^{- 2\pi i [x,w _0]} \Big) ^{  2\cdot \tilde{c} ( ac-Q(x) , \mu)},
\]
where $\mu = -a \eee_{-1} - \frac{b}{D}  \fff_{-1} +x  + c \eee_1 \in \delta^{-1}L / L$.   
 \end{proposition}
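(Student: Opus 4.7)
The plan is to deduce this proposition by specializing the general product formula for orthogonal Borcherds forms along a one-dimensional cusp to our setting. The starting point is Borcherds' product expansion from \cite{Bo1}, which expresses $\Psi(2\tilde f)$ in a neighborhood of the rational $0$-dimensional boundary component corresponding to the isotropic plane $V_{-1}\subset V$ as an infinite product over vectors in $L'$ (here $L' = \delta^{-1}L$), together with a Weyl vector factor $e^{2\pi i [\rho,Z]}$ and a sign/scalar of absolute value one depending on the Weyl chamber. The input coefficients $\tilde c(n,\mu)$ are precisely those computed in Proposition \ref{prop:promoted coefficients}. The rescaling by $(2\pi i)^{\tilde c(0,0)}$ in (\ref{B renorm}) together with the trivialization $[\,\cdot\,,\mathrm{f}_{-1}]$ converts Borcherds' formula into a product of the form $(2\pi i)^k \cdot e^{2\pi i I\xi}\cdot \prod(\cdots)$ valid whenever $\Im(w)$ lies in the distinguished Weyl chamber attached to the cusp; the convergence condition on $\Im(\xi) - Q(\Im w_0)/\Im\tau$ is exactly the condition defining that chamber (see \cite[\S 13]{Bo1}).

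Next I would regroup the infinite product by projecting the summation index $\mu\in L'/L$ onto the Witt pieces: write $\mu = -a\eee_{-1} - \tfrac{b}{D}\fff_{-1} + x + c\eee_1$ with $a\in\Z$, $b\in\Z/D\Z$, $x\in \delta^{-1}L_0$, $c\in\Z$. The factors with $c>0$ constitute $P_2(\tau,w_0,\xi)$ verbatim. The factors with $c=0$ and $a>0$ must be combined with the $c=0$, $a<0$ factors via the well-known identity that turns the (regularized) product $\prod_{a\neq 0,\,b} (1-e^{2\pi i a\tau}e^{2\pi i b/D}e^{-2\pi i[x,w_0]})^{\tilde c(-Q(x),\mu)}$ into Jacobi theta and eta expressions; this is precisely the step carried out in \cite{Ku:ABP}, where one uses the classical Jacobi triple product
\[
\Theta(\tau,z) = q^{1/12}(\zeta^{1/2}-\zeta^{-1/2})\prod_{n\geq 1}(1-q^n\zeta)(1-q^n\zeta^{-1})
\]
to recognize the product. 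When $x\in L_0$ has $Q(x)=m>0$ and one groups by $\pm x$, this identity produces the factor $\Theta(\tau,[w_0,x])^{c(-m)}$ (using $\tilde c(-m,x)=c(-m)\delta_{x\in L_0}$ from Proposition \ref{prop:promoted coefficients} for $m>0$), and collecting yields $P_1(\tau,w_0)$. When $x=0$, the same identity in the variable $b/D\in \Q/\Z$ with coefficients $\tilde c(0,\tfrac{b}{D}\fff_{-1})$ yields $P_0(\tau)$. The ``diagonal'' contribution $x=0$, $b=0$ produces $\eta^{2k}(\tau)$ up to a constant of absolute value one, with the exponent $2k = 2\tilde c(0,0)$.

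The Weyl vector exponent $I$ must be matched term by term with the classical formula for the $1$-dimensional Weyl vector $\rho$ attached to the chosen chamber; this is done via the standard calculation (cf.\ \cite[Thm.\ 10.4, \S 13]{Bo1} and \cite{KuBorcherds}) expressing the $q^0$-contribution of the regularized product over $c=0$ terms in terms of the Hurwitz-zeta values, which collapses to the sum $\tfrac{1}{12}\sum_b\tilde c(0,-\tfrac{b}{D}\fff_{-1})$, and the ``pairing'' contribution $-2\sum_{m,x}c(-m)\sigma_1(m-Q(x))$ arising from the $a>0$, $c=0$ product upon extracting the $\eta$ and $\Theta$ factors. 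The key input here is again Proposition \ref{prop:promoted coefficients}, which identifies $\tilde c(0,\mu)$ with sums of the constants $\gamma_r c_r(0)$ and ensures everything matches the scalar-valued data attached to $f$.

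The hard part is the bookkeeping in the second step: one must verify that the regularized infinite products along the cusp do reorganize into the theta/eta factors without any extra discrepancy beyond a constant $\kappa$ of absolute value one, and that the Weyl vector shift $I$ matches the stated formula exactly. The unitary restriction (pulling back via $\mathcal{D}\hookrightarrow\tilde{\mathcal{D}}^+$) plays no role in this proposition — it will only enter once one pulls back to $\mathcal{D}$ in the next subsection — so the entire argument takes place on the orthogonal side, and amounts to carefully specializing \cite{Ku:ABP} to the present Witt decomposition and the vector-valued input $\tilde f$ of \S \ref{ss:vector-valued}.
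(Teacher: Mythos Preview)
Your outline is correct and would produce a valid proof, but it is more laborious than the paper's argument. The paper does not go back to Borcherds' $0$-dimensional product formula in \cite{Bo1} and reorganize via the Jacobi triple product; that reorganization is exactly the content of \cite[Corollary 2.3]{Ku:ABP}, which already delivers the product as four factors (a)--(d) written in terms of $\vartheta_1/\eta$ and the Weyl-vector exponent $I_0$. The paper therefore simply quotes that corollary and then specializes each factor using the single structural fact about $\tilde f$ recorded in Proposition~\ref{prop:promoted coefficients}: namely, that $\tilde c(-m,\mu)=0$ for $m>0$ unless $\mu\in L$, in which case $\tilde c(-m,0)=c(-m)$. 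This collapses factor (b) to the product over $x\in L_0$ giving $P_1$, factor (c) to the product over $b\in\Z/D\Z$ giving $P_0$, factor (a) to $P_2$, and factor (d) to $\kappa\cdot(2\pi i\,\eta^2)^k\cdot e^{2\pi i I\xi}$, after the renormalization \eqref{B renorm}. The details of this matching are spelled out in the appendix (\S\ref{s:appendix}). Your Weyl-vector discussion and Hurwitz-zeta remarks are thus describing what \cite{Ku:ABP} already packages, not additional work required here.

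One small terminological correction: the isotropic \emph{plane} $V_{-1}$ corresponds to a $1$-dimensional boundary component of $\tilde{\mathcal D}^+$, not a $0$-dimensional one. Borcherds' original formula in \cite{Bo1} is indeed stated at an isotropic line (the $0$-dimensional cusp), and the passage from that to the $1$-dimensional Fourier--Jacobi form is precisely what \cite{Ku:ABP} carries out; you should not conflate the two.
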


 \begin{proof}
This is just a restatement of \cite[Corollary 2.3]{Ku:ABP},  with some simplifications arising from the fact that the vector-valued form $\tilde{f}$ used to define the Borcherds product is induced from a scalar-valued form via (\ref{eq:vectorization}).

A more detailed description of how these expressions arise from the general formulas in \cite{Ku:ABP} is given in the appendix. 
\end{proof}

If we  pull back the formula for the Borcherds product  $\tilde{\bm{\psi}}_{s(a)g}( f )$  found in Proposition  \ref{prop:ortho FJ formula} via the closed immersion (\ref{utoso}), we obtain a formula for the Borcherds product $\bm{\psi}_{s(a)g}(f)$ on the  connected component 
\[
(  G(\Q) \cap s(a) g K g^{-1}s(a)^{-1}  ) \backslash \mathcal{D} \map{z\mapsto (z,s(a) g)} \mathrm{Sh}(G,\mathcal{D})(\C),
\]
 from which we can read off the leading analytic Fourier-Jacobi coefficient.

\begin{corollary}\label{cor:analytic BFJ}
The  analytic Fourier-Jacobi expansion of $\bm{\psi}(f)$,  in the sense of (\ref{analytic FJ}),  has the form 
\[
\bm{\psi}_{s(a) g} (f) =  \sum_{ \ell\ge I } \mathrm{FJ}^{(a)}_\ell ( \bm{\psi}(f) ) (w_0) \cdot q^\ell ,
\]
where $I$ is the integer of Proposition \ref{prop:ortho FJ formula}.
The  leading coefficient $\mathrm{FJ}^{(a)}_I( \bm{\psi}(f))$, viewed as a function on  $V_0(\R)$ as in the discussion leading to (\ref{FJ-trans}),
 is given by
\begin{equation}\label{analytic leading}
\mathrm{FJ}^{(a)}_I ( \bm{\psi}(f) ) (w_0) = \kappa \cdot (2\pi i)^k \cdot  \eta(\tau)^{2k} \cdot P_0(\tau)  \cdot P_1( \tau, w_0 ),
\end{equation}
where  $\tau\in \mathfrak{H}$ is determined by  (\ref{the tau}), 
\[
P_0(\tau) = \prod_{r\mid D}  \prod_{\substack{b\in \Z/D\Z\\  b\ne 0 \\ rb=0 }}   \Theta\left(  \tau,  \frac{b}{D}  \right)^{ \gamma_r c_r (0)   }
\]
and
\[
P_1(\tau,w_0) =  \prod_{m>0} \prod_{\substack{x \in L_0\\   Q(x) = m }}  \Theta\big(  \tau,   \langle  w _0, x \rangle  \big)^{ c(-m)}.
\]
The constant $\kappa\in \C$, which depends on both $\Phi$ and $a$,  has absolute value $1$.
\end{corollary}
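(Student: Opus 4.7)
\smallskip

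\noindent\textbf{Proof proposal.}
The plan is to read off the claim directly from Proposition \ref{prop:ortho FJ formula} by pulling back along the closed immersion $\mathcal{D} \hookrightarrow \tilde{\mathcal{D}}^+$ described in \S \ref{ss:analytic-FJ}, and then translating the result into the unitary coordinates of Proposition \ref{prop:supersiegel}.  Once this is done, the three factors $(2\pi i)^k\,\eta(\tau)^{2k}$, $P_0(\tau)$ and $P_1(\tau,w_0)$ of the orthogonal formula all descend tautologically to expressions of the same shape on $\mathcal{D}$, since they depend only on $\tau$ and $w_0$ and not on the $\xi$-coordinate used to embed $\mathcal{D}$ into $\tilde{\mathcal{D}}^+$.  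The exponential prefactor  $e^{2\pi i I \xi}$ becomes $q^I$ under the coordinate $q = e^{2\pi i \xi}$ of Proposition \ref{prop:supersiegel}, explaining the lower bound $\ell \ge I$ of the summation.

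First I would dispose of the $P_2$ factor.  Every one of its factors has the shape $(1 - e^{2\pi i c\xi}\cdot(\ldots))^{2\tilde{c}(ac-Q(x),\mu)}$ with $c\ge 1$, and hence expands as a formal power series in $q = e^{2\pi i \xi}$ beginning with $1$, with all remaining terms of order $q^{\ge 1}$.  Taking the product over all allowed $(x,a,b,c)$ yields $P_2 = 1 + O(q)$, so $P_2$ does not affect the leading Fourier-Jacobi coefficient; it contributes only to the coefficients $\mathrm{FJ}^{(a)}_\ell(\bm{\psi}(f))$ with $\ell > I$.  This reduces the computation of the leading term to simply evaluating $(2\pi i)^k\,\eta^{2k}(\tau)\cdot P_0(\tau)\cdot P_1(\tau,w_0)$, up to the unimodular constant $\kappa$ coming from the orthogonal Borcherds product.

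Next I would rewrite $P_0$ using Proposition \ref{prop:promoted coefficients}.  For each nonzero $b\in \Z/D\Z$, the coset $\mu = (b/D)\fff_{-1} \in L'/L$ (using (\ref{def-Lpm})) has invariant $r_\mu = \prod_{p\mid D,\,p\nmid b} p$, so that $\tilde{c}(0,\mu) = \sum_{r_\mu \mid r \mid D} \gamma_r c_r(0)$.  Using that $D$ is squarefree, the condition $r_\mu \mid r$ is equivalent to $rb \equiv 0 \pmod{D}$, and switching the order of the product then gives exactly the expression for $P_0(\tau)$ claimed in the corollary.  Finally, for $P_1$ one uses Remark \ref{rem:switch to unitary}: under the isomorphism $\mathrm{pr}_\epsilon:V_0(\R)\iso \epsilon V_0(\C)$, the $\C$-bilinear pairing $[w_0,x]$ on the orthogonal side corresponds to the hermitian pairing $\langle w_0, x\rangle$ on the unitary side, converting the orthogonal expression into the stated one in terms of $L_0$.

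The only step that takes a bit of care is the bookkeeping in the rewriting of $P_0$; everything else is formal from Proposition \ref{prop:ortho FJ formula} plus the explicit coordinate dictionary of \S \ref{ss:explicit boundary}.  The fact that $\kappa$ has absolute value $1$ is inherited from the corresponding statement in Proposition \ref{prop:ortho FJ formula}, and its dependence on $\Phi$ and $a$ simply reflects that we have made the construction one connected component at a time, via the decomposition (\ref{mixed component}).
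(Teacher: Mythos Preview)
Your proposal is correct and follows essentially the same route as the paper's proof: pull back Proposition~\ref{prop:ortho FJ formula} via $\mathcal{D}\hookrightarrow\tilde{\mathcal{D}}^+$, observe that $P_2=1+O(q)$ so it does not affect the leading coefficient, rewrite $P_0$ using Proposition~\ref{prop:promoted coefficients} by swapping the order of the product, and convert $[w_0,x]$ to $\langle w_0,x\rangle$ via Remark~\ref{rem:switch to unitary}. Your computation of $r_\mu$ and the equivalence $r_\mu\mid r\iff rb\equiv 0\pmod{D}$ is slightly more explicit than what the paper writes, but the argument is the same.
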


\begin{proof}
Using Proposition  \ref{prop:ortho FJ formula}, the pullback of $\tilde{\bm{\psi}}_{s(a)g}( f )$  via (\ref{utoso}) factors as a product
\[
\bm{\psi}_{s(a)g}(f) =  \kappa \cdot (2\pi i)^k \cdot  \eta^{2k}(\tau)\cdot  e^{2\pi i \xi I}  \cdot P_0(\tau) P_1(\tau,w_0) P_2(\tau,w_0,\xi),
\]
where  $\xi\in \C^\times$ and $w_0 \in  V(\R) \iso \epsilon V(\C)$.  The parameter  $\tau\in \mathfrak{H}$ is now fixed, determined by (\ref{the tau}).  
The equality 
\[
\prod_{\substack{b\in \Z/D\Z\\  b\ne 0}}   \Theta\left(  \tau,  \frac{b}{D}  \right)^{ \tilde{c} (0, \frac{b}{D}  \fff_{-1})   }
=
\prod_{r\mid D}  \prod_{\substack{b\in \Z/D\Z\\  b\ne 0 \\ rb=0 }}   \Theta\left(  \tau,  \frac{b}{D}  \right)^{ \gamma_r c_r (0)   }
\]
follows from Proposition \ref{prop:promoted coefficients}, and allows us to rewrite  $P_0$ in the stated form.  To rewrite the factor $P_1$ in terms of $\langle\cdot,\cdot\rangle$ instead of $[\cdot,\cdot]$, use the commutative diagram of Remark \ref{rem:switch to unitary}.  Finally, as $\mathrm{Im}(\xi)\to \infty$, so $q=e^{2\pi i \xi} \to 0$,  the factor $P_2$ converges to $1$.  This $P_2$  does not contribute to the leading Fourier-Jacobi coefficient.
\end{proof}

\begin{proposition}\label{prop:other mult}
The integer $I$ defined in Proposition \ref{prop:ortho FJ formula} is equal to the integer $\mathrm{mult}_\Phi(f)$ defined by (\ref{f boundary mult}), and the product (\ref{analytic leading}) satisfies the transformation law (\ref{FJ-trans}) with $\ell=\mathrm{mult}_\Phi(f)$.
\end{proposition}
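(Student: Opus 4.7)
The plan is to treat the two claims separately, handling the transformation law first since it reduces cleanly to Borcherds' quadratic identity, and then the equality $I=\mathrm{mult}_\Phi(f)$.

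For the transformation law, the prefactor $\kappa\cdot(2\pi i)^k\cdot\eta(\tau)^{2k}\cdot P_0(\tau)$ depends only on $\tau$ (fixed by $\Phi$ and $a$), so it contributes nothing to the transformation under $w_0\mapsto w_0+y_0$ with $y_0\in\mathfrak{b}L_0$; only $P_1(\tau,w_0)$ remains. For each $x\in L_0$ the self-duality of $L_0$ forces $\langle y_0,x\rangle\in\mathfrak{b}=\Z\tau+\Z$, so I may write $\langle y_0,x\rangle=\alpha_x\tau+\beta_x$ with $\alpha_x,\beta_x\in\Z$. The classical quasi-periodicity of $\Theta$---equivalently, Proposition~\ref{prop:analytic jacobi} applied to $\Theta^2\in H^0(\mathcal{E},\mathcal{J}_{0,1})$---then gives each individual factor $\Theta(\tau,\langle w_0,x\rangle)^{c(-m)}$ the transformation factor $e^{-c(-m)\pi i\alpha_x^2\tau-2\pi i c(-m)\alpha_x\langle w_0,x\rangle}$, the theta sign cancelling once $c(-m)$ is known to be even (Remark~\ref{rem:very divisible}). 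After multiplying over $x$ and $m$, the combined exponent involves the two sums $\sum_{x\in L_0}c(-Q(x))\alpha_x^2$ and $\sum_{x\in L_0}c(-Q(x))\alpha_x\langle w_0,x\rangle$. Borcherds' quadratic identity (Proposition~\ref{prop:Bquad}), applied once directly and once after polarization in the second variable, collapses both into scalar multiples of $\langle y_0,y_0\rangle$ and $\langle w_0,y_0\rangle$, with common proportionality constant $\mathrm{mult}_\Phi(f)$. Substituting $2i\,\mathrm{Im}(\tau)=\delta\,\mathrm{N}(\mathfrak{b})$ and invoking the commutative diagram of Remark~\ref{rem:switch to unitary} to translate between $V_0(\R)$ and $\epsilon V_0(\C)$ matches the result with the right-hand side of (\ref{FJ-trans}) at $\ell=\mathrm{mult}_\Phi(f)$.

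For the identity $I=\mathrm{mult}_\Phi(f)$, the most direct strategy is combinatorial. Proposition~\ref{prop:promoted coefficients} evaluates $\tilde c(0,-\tfrac{b}{D}\fff_{-1})$ as a sum over divisors $r\mid D$ with $r_\mu\mid r$; since $D$ is squarefree, grouping the $b\in\Z/D\Z$ by their $p$-support and counting reduces the first sum defining $I$ to $\tfrac{1}{12}\sum_{r\mid D}r\gamma_rc_r(0)$. Proposition~\ref{prop:distribute cusps} then writes each $c_r(0)=-\sum_{m>0}c(-m)e_r(m)$ in terms of the Fourier coefficients of the Eisenstein series $E_r$, and one is left to match the combined expression for $I$---together with the second piece $2\sum c(-m)\sigma_1(m-Q(x))$---against $\mathrm{mult}_\Phi(f)$ by using the explicit $\sigma_1$-type formulas for $e_r(m)$ and the representation numbers $\#\{x\in L_0:Q(x)=m\}$. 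The hard step is this last matching, which amounts to a Siegel--Weil-type identity between the weighted Eisenstein combination $\sum_r\gamma_rE_r$ and the theta series of the self-dual hermitian lattice $L_0$; the local invariants $\gamma_p$ of (\ref{def-gamma-p}) must conspire with the local Whittaker coefficients of $E_r$ and the local hermitian invariants of $L_0$.

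A cleaner route, likely preferable for the exposition, is to argue $I=\mathrm{mult}_\Phi(f)$ a posteriori. Combined with the transformation-law half of Proposition~\ref{prop:other mult}, the analytic product formula of Corollary~\ref{cor:analytic BFJ} identifies the vanishing order of $\bm{\psi}(f)$ along $\mathcal{S}^*_\Kra(\Phi)$ with $I$, while the algebraic computation of $\mathrm{div}(\bm{\psi}(f))$ via Proposition~\ref{prop:leading divisor} identifies the same order with $\mathrm{mult}_\Phi(f)$. Either strategy closes the proof; I anticipate the a posteriori route is the one that enters the final write-up in \S\ref{ss:conclude pf}.
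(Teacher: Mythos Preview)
Your transformation-law argument is essentially correct and matches the direct computation the paper gives in the appendix (\S\ref{ss:direct mult}): translate by $y_0\in\mathfrak{b}L_0$, use the quasi-periodicity of $\Theta$, and collapse the resulting quadratic expressions via Borcherds' identity. The paper's main-text proof packages the same content differently, observing that each $\Theta(\tau,\langle w_0,x\rangle)^{24}$ is a section of $j_x^*\mathcal{J}_{0,12}$ and invoking Proposition~\ref{prop:quadratic bundles} to identify $\bigotimes_x j_x^*\mathcal{J}_{0,1}^{c(-Q(x))}$ with $\mathcal{L}_\Phi^{2\,\mathrm{mult}_\Phi(f)}\cong\mathcal{Q}_{E^{(a)}\otimes L_0}^{\mathrm{mult}_\Phi(f)}$; this is the same Borcherds identity, expressed as an isomorphism of line bundles rather than as a direct exponent calculation.

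Where your proposal goes wrong is in the second half. You overlook that the work you have already done proves $I=\mathrm{mult}_\Phi(f)$ for free. By construction, the left-hand side $\mathrm{FJ}^{(a)}_I(\bm\psi(f))$ of (\ref{analytic leading}) is a section of $\mathcal{Q}_{E^{(a)}\otimes L_0}^{I}$, so it satisfies (\ref{FJ-trans}) with $\ell=I$. Your argument shows the right-hand side satisfies (\ref{FJ-trans}) with $\ell=\mathrm{mult}_\Phi(f)$. Since the two sides are equal as nonzero functions, the exponents coincide. This is exactly the paper's main-text proof, and it renders both of your proposed routes unnecessary.

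Your first (Siegel--Weil) route is left at the level of a heuristic and would be substantially harder to complete than the one-line comparison above. Your second (``a posteriori'') route is miscast: Proposition~\ref{prop:leading divisor} computes divisors of the factors $P^\eta_\Phi,P^{hor}_\Phi,P^{vert}_\Phi$ on $\mathcal{B}_\Phi$, not the order of vanishing of $\bm\psi(f)$ along the boundary stratum; the latter is recorded in Proposition~\ref{prop:algebraic BFJ}, whose proof already \emph{uses} $I=\mathrm{mult}_\Phi(f)$, so the argument as stated is circular. For the record, the paper's alternative direct proof of $I=\mathrm{mult}_\Phi(f)$ in \S\ref{ss:direct mult} is not a Siegel--Weil argument either: it pairs the induced $S_{L_0}$-valued form $\tilde f_{L_0}$ against the Serre derivative $D(\Theta_0)$ of the theta series of $L_0$ and reads off the identity from the vanishing of the constant term of the resulting weight-$2$ form.
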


\begin{proof}
The Fourier-Jacobi coefficient $\mathrm{FJ}^{(a)}_I ( \bm{\psi}(f) )$ appearing on the left hand side of (\ref{analytic leading}) is, by definition,  a section  of the line bundle $\mathcal{Q}_{E^{(a)} \otimes L }^I$ 
on $E^{(a)} \otimes L$.  When viewed as a function of the variable $w_0 \in V_0(\R)$ using our explicit coordinates, it therefore satisfies the transformation law (\ref{FJ-trans}) with $\ell=I$.

Now consider the right hand side of (\ref{analytic leading}), and recall that $\tau$ is fixed, determined by (\ref{the tau}).
In our explicit coordinates the function  $\Theta( \tau, \langle w_0,x \rangle )^{24}$  of $w_0 \in V_0(\R)$ is identified with a section of the line bundle $j_x^* \mathcal{J}_{0,12}$ on $E^{(a)} \otimes L$; this is clear from the definition of $j_x$ in (\ref{boundary collapse}), and Proposition \ref{prop:integral jacobi}.    Thus $P_1(\tau,w_0)$, and hence the entire  right hand side of  (\ref{analytic leading}), defines a section of the line bundle 
\[
\bigotimes_{m>0} \bigotimes_{\substack{ x\in L_0 \\ Q(x) = m } } j_x^* \mathcal{J}_{0,1}^{c(-m)/2}
 \iso \mathcal{L}_\Phi^{  2  \cdot \mathrm{mult}_\Phi(f/2) } \iso  \mathcal{Q}_{E^{(a)} \otimes L }^{\mathrm{mult}_\Phi(f) },
\]
where the isomorphisms are those of  Proposition \ref{prop:quadratic bundles} and  Proposition \ref{prop:second moduli iso}.   
This implies  that the right hand side of (\ref{analytic leading}) satisfies the transformation law (\ref{FJ-trans}) with $\ell=\mathrm{mult}_\Phi(f)$.

A function on $V_0(\R)$ cannot satisfy the transformation law (\ref{FJ-trans}) for two different values of $\ell$, and hence $I = \mathrm{mult}_\Phi(f)$.  Note that we are using here the standing hypothesis $n>2$; if $n=2$ then $V_0(\R)=0$, and the transformation law (\ref{FJ-trans}) is vacuous.

  For a more direct proof of the proposition, see \S \ref{ss:direct mult}.
\end{proof}


\subsection{Algebraization and descent}
\label{ss:alganddescent}


The following weak form of Theorem \ref{thm:unitary borcherds I} shows that  $\bm{\psi}(f)$  is algebraic, and provides an algebraic interpretation of its leading Fourier-Jacobi coefficients.

\begin{proposition}\label{prop:algebraic BFJ}
The meromorphic section $\bm{\psi}(f)$  is the analytification of a rational section of the line bundle $\bm{\omega}^k$ on  $\mathcal{S}_{\Kra /\C} $.   This rational section satisfies the following properties:
\begin{enumerate}
\item
When viewed as a rational section over the toroidal compactification, 
\[
\mathrm{div}( \bm{\psi}  (f) )  =   \sum_{m>0} c(-m) \cdot \mathcal{Z}^*_\Kra(m)_{/\C}   
+ \sum_\Phi \mathrm{mult}_\Phi(f) \cdot \mathcal{S}^*_\Kra(\Phi)_{ / \C}.
\]
\item
For every proper cusp label representative $\Phi$, the Fourier-Jacobi expansion of $\bm{\psi}(f)$ along $\mathcal{S}^*_\Kra(\Phi)_{/\C}$, in the sense of \S \ref{ss:abstract FJ},  has the form
\[
\bm{\psi}(f) = q^{ \mathrm{mult}_\Phi(f) } \sum_{ \ell\ge 0} \bm{\psi}_\ell \cdot q^\ell.
\]
\item
The  leading coefficient $\bm{\psi}_0$, a rational section of  $\bm{\omega}_\Phi^k \otimes \mathcal{L}_\Phi^{\mathrm{mult}_\Phi(f)}$ over $\mathcal{B}_{\Phi/\C}$,   factors as
\[
\bm{\psi}_0 =  \kappa_\Phi \otimes P_\Phi^\eta \otimes P^{hor}_\Phi \otimes P^{vert}_\Phi
\]
for a unique section 
\[
\kappa_\Phi  \in H^0( \mathcal{A}_{\Phi/\C} , \co_{\mathcal{A}_\Phi /\C}^\times).
\]
This section  satisfies $|\kappa_\Phi(z)| =1$ at every complex point $z\in  \mathcal{A}_\Phi(\C)$. 
(The other factors appearing on the right hand side were defined in Theorem \ref{thm:unitary borcherds I}.) 
\end{enumerate}
\end{proposition}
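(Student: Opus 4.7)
The plan is to deduce all three claims from the analytic formula for $\bm{\psi}(f)$ coming from the orthogonal Borcherds product, combined with the dictionary of \S\ref{ss:explicit boundary} between algebraic and analytic Fourier--Jacobi expansions (this is precisely the content of \cite{Lan12}). Algebraicity first: the divisor of $\tilde{\bm{\psi}}_g(f)$ on the orthogonal Shimura variety attached to $(V,Q)$ is, by Borcherds' theorem, an explicit $\Z$-linear combination of rational special divisors; together with the product expansion of Proposition \ref{prop:ortho FJ formula}, which descends algebraically via the theory of toroidal compactifications of orthogonal Shimura varieties, this shows $\tilde{\bm{\psi}}_g(f)$ is a rational section of the $k$th power of the tautological bundle. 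Pulling back along the algebraic morphism $\mathcal{S}_{\Kra/\C} \to \mathrm{Sh}(\SO(V),\tilde{\mathcal{D}})_{/\C}$ that induces (\ref{utoso}), and using that $\bm{\omega}$ on the toroidal compactification is the canonical extension of the pullback of the tautological bundle (Theorem \ref{thm:toroidal}),  we obtain that $\bm{\psi}(f)$ is a rational section of $\bm{\omega}^k$ on $\mathcal{S}^*_{\Kra/\C}$.

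\textbf{The divisor.} On the interior of the orthogonal Shimura variety, Borcherds gives $\operatorname{div}(\tilde{\bm{\psi}}_g(f)) = \sum_{m>0} \tilde c(-m,0)\cdot Z(m)$, and Proposition \ref{prop:promoted coefficients} yields $\tilde c(-m,0) = c(-m)$ for $m>0$; the pullbacks of these $Z(m)$ to $\mathcal{S}_{\Kra/\C}$ are $\mathcal{Z}^*_\Kra(m)_{/\C}$, giving the interior part of the divisor. For the multiplicity along each boundary stratum $\mathcal{S}^*_\Kra(\Phi)_{/\C}$, I would apply Lan's algebraic/analytic comparison to the leading term $e^{2\pi i I \xi}$ in Proposition \ref{prop:ortho FJ formula}, with the identification $I = \mathrm{mult}_\Phi(f)$ of Proposition \ref{prop:other mult}. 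Since $\mathcal{S}^*_{\Kra/\C}$ is smooth and the listed contributions already account for the full analytic divisor, no further vertical or boundary components can appear.

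\textbf{The leading Fourier--Jacobi coefficient.} Via the isomorphism of Proposition \ref{prop:explicit FJ} and the analytic/algebraic Jacobi form identification of Proposition \ref{prop:analytic jacobi}, each of the three analytic factors in (\ref{analytic leading}) matches exactly one of the three algebraic factors of Theorem \ref{thm:unitary borcherds I}(3): the factor $(2\pi i)^k\eta(\tau)^{2k}$ becomes $P_\Phi^\eta$ through the isomorphism $\mathfrak d^{-1}\bm{\omega}_\Phi \iso j^*\bm{\omega}_\mathcal{Y}$ of Proposition \ref{prop:mixed modular forms}; the product $P_1(\tau,w_0) = \prod_{m,x}\Theta(\tau,\langle w_0,x\rangle)^{c(-m)}$ becomes $P_\Phi^{hor}$ through the commutative square (\ref{late boundary collapse}) and the definition of $j_x^*\Theta^{24}$ as the pullback of the integral section of Proposition \ref{prop:integral jacobi}; and the product $P_0(\tau)$, after applying Proposition \ref{prop:promoted coefficients} to rewrite $\tilde c(0,\tfrac{b}{D}\fff_{-1}) = \sum_{r_\mu\mid r\mid D}\gamma_r c_r(0)$ and grouping $b\in\Z/D\Z$ according to the smallest $r$ with $rb=0$, becomes exactly $P_\Phi^{vert}=\bigotimes_{r>1}j^* F_r^{\gamma_r c_r(0)}$ in the notation of Lemma \ref{lem:torsion section}. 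Dividing the algebraic leading coefficient by this product then produces a rational section of the trivial bundle which, on every analytic connected component indexed by $a$, equals the unit constant $\kappa$ of Corollary \ref{cor:analytic BFJ}; this defines $\kappa_\Phi \in H^0(\mathcal{A}_{\Phi/\C},\co^\times_{\mathcal{A}_\Phi/\C})$ with $|\kappa_\Phi|\equiv 1$.

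\textbf{Main obstacle.} The essential difficulty is not algebraicity or the divisor formula, both of which follow routinely once Lan's comparison is invoked, but the \emph{exact} matching of the three analytic factors with $P_\Phi^\eta$, $P_\Phi^{hor}$, $P_\Phi^{vert}$. Two sources of subtlety have to be handled carefully: first, all identifications of line bundles must be tracked through the chain of Propositions~\ref{prop:second moduli iso}, \ref{prop:mixed modular forms}, \ref{prop:explicit FJ}, \ref{prop:analytic jacobi}, and \ref{prop:quadratic bundles}, in particular to ensure that the integrally defined $\Theta^{24}$ of Proposition \ref{prop:integral jacobi} analytifies exactly (with the correct power of $2\pi i$) to the classical product appearing in Proposition \ref{prop:ortho FJ formula}; second, the reorganization of $P_0$ requires the nontrivial identity of Proposition \ref{prop:promoted coefficients} expressing $\tilde c(0,\mu)$ in terms of the Atkin--Lehner--twisted cusp constants $\gamma_r c_r(0)$, which is what forces the vertical factor $P_\Phi^{vert}$ to appear in the precise form prescribed by Theorem \ref{thm:unitary borcherds I}(3)(b).
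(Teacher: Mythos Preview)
Your proposal is correct and follows essentially the same route as the paper: compute the interior divisor via Borcherds' theorem and the complex uniformization of $\mathcal{Z}_\Kra(m)$, read off the boundary multiplicity $I=\mathrm{mult}_\Phi(f)$ from the product expansion (Propositions \ref{prop:ortho FJ formula} and \ref{prop:other mult}), and then match the analytic factors of Corollary \ref{cor:analytic BFJ} with $P_\Phi^\eta$, $P_\Phi^{hor}$, $P_\Phi^{vert}$ through the dictionary of \S\ref{ss:explicit boundary}. The only real divergence is your algebraicity step: the paper does not pull back an algebraic section from the orthogonal side (no algebraic morphism to a fixed orthogonal Shimura variety or its compactification is ever constructed), but instead applies GAGA directly on $\mathcal{S}^*_{\Kra/\C}$, using that the meromorphic section has an already-known-to-be-algebraic divisor---this is cleaner and avoids the level-structure bookkeeping implicit in your formulation.
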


\begin{proof}
Using Corollary \ref{cor:analytic BFJ}  and Proposition \ref{prop:other mult}, one  sees that $\bm{\psi}(f)$ extends to a meromorphic section of $\bm{\omega}^k$ over the toroidal compactification $\mathcal{S}^*_\Kra(\C)$, vanishing to order $I=\mathrm{mult}_\Phi(f)$ along the closed stratum 
\[
\mathcal{S}_\Kra^*(\Phi)_{/\C} \subset \mathcal{S}^*_{\Kra/\C}
\]
 indexed by a proper cusp label representative $\Phi$.

The calculation of the divisor of $\bm{\psi}(f)$ over the open Shimura variety $\mathcal{S}_\Kra(\C)$ is Proposition \ref{prop:complex borcherds divisor}. 
The algebraicity claim now follows from GAGA (using the fact that the divisor is already known to be algebraic), proving all parts of  the first claim.
The second and third claims are just a  translation of  Corollary \ref{cor:analytic BFJ} into the algebraic language of Theorem \ref{thm:unitary borcherds I}, using the explicit coordinates of \S \ref{ss:explicit boundary} and the change of notation $(2\pi i \eta^2)^k = P_\Phi^\eta$, $P_0=P_\Phi^{vert}$ and $P_1=P_\Phi^{hor}$.
\end{proof}

We now prove that $\bm{\psi}(f)$, after minor rescaling, descends to $\kk$.
This can be deduced from the analogous statement about Borcherds products on orthogonal Shimura varieties proved in \cite{HMP}, but in the unitary case there is a much more elementary proof.  
This will require the following two lemmas.

\begin{lemma}
The geometric components of $\mathrm{Sh}(G,\mathcal{D})$ are defined over the Hilbert class field $\kk^\mathrm{Hilb}$ of $\kk$, and  each such component has trivial stabilizer in $\Gal( \kk^\mathrm{Hilb} /\kk)$. 
\end{lemma}

\begin{proof}
One could prove this using Deligne's reciprocity law for connected components of Shimura varieties \cite[\S 13]{Milne}, but it also follows easily from the theory of toroidal compactification.

Our assumption that $n>2$  guarantees that every connected  component of $\mathcal{S}^*_{\Kra/\C}$ contains some connected component of the boundary.
It is a part\footnote{This  particular part of  Theorem \ref{thm:toroidal}  follows from  the reciprocity law for the boundary components of $\mathcal{M}_{(n-1,1)}^\Pap$ proved in \cite[Proposition 2.6.2]{Ho2}.}  of Theorem \ref{thm:toroidal} that  all such boundary components  are defined over the Hilbert class field, and it follows that the same is true for components of  $\mathcal{S}^*_{\Kra/\C}$.
The same is therefore true for the components of the interior
\[
\mathcal{S}_{\Kra/\C} \iso \mathrm{Sh}(G,\mathcal{D})_{/\C}.
\]

The claim about  stabilizers follows from  the open and closed immersion
\[
\mathrm{Sh}(G,\mathcal{D}) \subset M_{(1,0)}  \times_\kk M_{(n-1,1)}
\]
of (\ref{moduli inclusion}), along with the classical fact (from the theory of complex multiplication of elliptic curves) that the geometric components of $M_{(1,0)}$ form a simply transitive $\Gal(\kk^\mathrm{Hilb}/\kk)$-set.
\end{proof}

The lemma allows us to choose  a set of connected components 
\[
 \{ X_i \}  \subset \pi_0 \big(  \mathrm{Sh}(G,\mathcal{D})_{/\kk^\mathrm{Hilb}} )
 \]
   in such a way that 
\[
\mathrm{Sh}(G,\mathcal{D})_{/\kk^\mathrm{Hilb}}  = \bigsqcup_i \bigsqcup_{\sigma \in \Gal(\kk^\mathrm{Hilb}/\kk) }  \sigma (X_i) .
\]
For each index $i$,  pick  $g_i\in G(\A_f)$ in such a way that $X_i(\C)$ is equal to the image of 
\[
 ( G(\Q) \cap g_i K g_i^{-1} ) \backslash \mathcal{D} \map{z\mapsto (z,g_i)} \mathrm{Sh}(G,\mathcal{D})(\C).
\]
Choose  an isotropic $\kk$-line $J\subset W$, let $P\subset G$ be its stabilizer, and define a proper cusp label representative $\Phi_i=(P,g_i)$.   The above choices pick out one boundary component on every component of the toroidal compactification, as the following lemma demonstrates.

\begin{lemma}\label{lem:enough cusps}
The natural maps
\[
\xymatrix{
 & &   {   \bigsqcup_i \mathcal{S}^*_\Kra(\Phi_i) } \ar[r]  \ar[dd]^{\iso} & {  \mathcal{S}^*_\Kra }   \ar[dd] \\ 
   {   \bigsqcup_i \mathcal{A}_{\Phi_i}  }   &  {   \bigsqcup_i \mathcal{B}_{\Phi_i} }   \ar[ur]\ar[dr]  \ar[l] \\
 & &  {   \bigsqcup_i \mathcal{S}^*_\Pap(\Phi_i) } \ar[r]    &  {  \mathcal{S}^*_\Pap }      
}
\]
induce  bijections on connected components.  The same is true after base change to $\kk$ or  $\C$.
\end{lemma}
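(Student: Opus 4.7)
The plan is to prove the lemma by checking that both sides of every arrow have the same (finite) number of connected components, and that each arrow is injective on $\pi_0$. The main inputs are the component counts of Proposition~\ref{prop:component reciprocity}, Proposition~\ref{prop:mixed connected}, and Theorem~\ref{thm:toroidal}(4).

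First I would collect the counts. By Proposition~\ref{prop:component reciprocity}, $\mathrm{Sh}(G,\mathcal{D})_{/\C}$ has $2^{1-o(D)}h^2$ geometric components on which $\Gal(H/\kk)$ acts freely; by choice of the $X_i$ there are therefore $2^{1-o(D)}h$ indices~$i$, which coincides with the number of components of $\mathrm{Sh}(G,\mathcal{D})_{/\kk}$, and hence (since $\mathcal{S}^*_\Kra\to\mathcal{S}^*_\Pap$ has geometrically connected fibers) with the number of components of $\mathcal{S}^*_{\Kra/\kk}$ and $\mathcal{S}^*_{\Pap/\kk}$. For each~$i$, Proposition~\ref{prop:mixed connected} combined with Theorem~\ref{thm:toroidal}(4) shows that $\mathcal{S}^*_\boxempty(\Phi_i)_{/\C}$ has $h$ components, permuted simply transitively by $\Gal(H/\kk)$, so $\mathcal{S}^*_\boxempty(\Phi_i)_{/\kk}$ is connected. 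Using $\mathcal{S}^*_\boxempty(\Phi_i) = \Delta_{\Phi_i}\backslash\mathcal{B}_{\Phi_i}$, together with the fact (Proposition~\ref{prop:second moduli iso}) that $\mathcal{B}_{\Phi_i}\to\mathcal{A}_{\Phi_i}$ is an abelian scheme with geometrically connected fibers, the same conclusion holds for $\mathcal{B}_{\Phi_i}$ and $\mathcal{A}_{\Phi_i}$. Flatness over $\co_\kk$ propagates these connectedness statements to the integral level, so the horizontal arrows $\bigsqcup_i\mathcal{B}_{\Phi_i}\to\bigsqcup_i\mathcal{A}_{\Phi_i}$ and $\bigsqcup_i\mathcal{B}_{\Phi_i}\to\bigsqcup_i\mathcal{S}^*_\boxempty(\Phi_i)$ are already visibly bijective on $\pi_0$.

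For the arrows into $\mathcal{S}^*_\Kra$ and $\mathcal{S}^*_\Pap$, I would argue that since $\Phi_i=(P,g_i)$, the boundary stratum $\mathcal{S}^*_\boxempty(\Phi_i)_{/\kk}$ sits in the closure of the connected component of $\mathrm{Sh}(G,\mathcal{D})_{/\kk}$ indexed by the $\Gal(H/\kk)$-orbit of $X_i$. Since distinct $X_i$'s lie in distinct orbits by construction, the resulting function $i\mapsto \overline{X}_i^*$ is an injection between two finite sets of equal cardinality $2^{1-o(D)}h$, hence a bijection. Base change to $H$ or $\C$ then follows from $\Gal(H/\kk)$-equivariance combined with the simply transitive Galois actions on both $\pi_0(\mathcal{S}^*_\boxempty(\Phi_i)_{/H})$ and the set of geometric components inside each $\overline{X}_i^*$.

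The one genuine subtlety is verifying that $\mathcal{S}^*_\boxempty(\Phi_i)$ really sits inside the component determined by $g_i$, rather than being shared across several components. This is ultimately built into the construction of the toroidal compactification: the formal completion along $\mathcal{S}^*_\boxempty(\Phi_i)$ is identified in~(\ref{formal boundary iso}) with $\Delta_{\Phi_i}\backslash(\mathcal{C}^*_{\Phi_i})^\wedge_{\mathcal{B}_{\Phi_i}}$, data intrinsic to $\Phi_i=(P,g_i)$ alone, so any Zariski neighborhood of the stratum inside $\mathcal{S}^*_\boxempty$ touches only the component indexed by $g_i$ and not its Galois conjugates. Granting this, everything else is careful bookkeeping with the $\Gal(H/\kk)$-action.
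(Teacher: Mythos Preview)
Your overall strategy is sound and parallels the paper's: both reduce to showing that each boundary stratum $\mathcal{S}^*_\boxempty(\Phi_i)$ lands in the component of $\mathcal{S}^*_\boxempty$ determined by $g_i$. The paper works over $\C$ first and then descends; you work over $\kk$ via a counting-plus-injectivity argument. Both routes hinge on the same placement fact.

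The gap is in your justification of that fact. You invoke the formal isomorphism~(\ref{formal boundary iso}), arguing that since the completion along $\mathcal{S}^*_\boxempty(\Phi_i)$ is identified with data ``intrinsic to $\Phi_i=(P,g_i)$,'' the stratum must lie on the component indexed by $g_i$. This inference is not valid: the formal isomorphism only describes the abstract local structure near the stratum and carries no information about which connected component of the ambient $\mathcal{S}^*_\boxempty$ that stratum sits in---two strata on different components could have isomorphic formal neighborhoods. What is actually needed is the \emph{construction} of the toroidal compactification, in which the boundary chart for $\Phi_i$ is glued onto the component of $\mathrm{Sh}(G,\mathcal{D})(\C)$ uniformized by $z\mapsto(z,g_i)$. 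The paper invokes this directly (``by examining the complex analytic construction \cite{Ho2,Lan12,Pink}, one sees that some component of $\mathcal{S}^*_\Pap(\Phi_i)(\C)$ lies on $X_i^*$''), and then uses the simply transitive $\Gal(H/\kk)$-action on both $\pi_0(\mathcal{S}^*_\Pap(\Phi_i)(\C))$ and $\{\sigma(X_i^*)\}$ to finish. A minor additional point: your passage from $\kk$ to the integral level should invoke normality as well as flatness, so that $\pi_0$ of the integral model agrees with $\pi_0$ of its generic fiber.
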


\begin{proof}
Let $X_i^* \subset \mathcal{S}^*_\Pap(\C)$ be the closure of $X_i$.  By examining the complex analytic construction of the toroidal compactification \cite{Ho2,Lan12,Pink}, one sees that some component of the divisor $\mathcal{S}^*_\Pap(\Phi_i) (\C)$ lies on $X_i^*$.

Recall from Theorem \ref{thm:toroidal} that the  components of $\mathcal{S}^*_\Pap(\Phi_i) (\C)$ are defined over $\kk^\mathrm{Hilb}$, and that the action of  $\Gal(\kk^\mathrm{Hilb}/\kk)$ is simply transitive.  It follows immediately that 
\[
\mathcal{S}^*_\Pap(\Phi_i) (\C) \subset \bigsqcup_{\sigma \in \Gal(\kk^\mathrm{Hilb}/\kk) } \sigma(X_i^*),
\]
and the inclusion induces a bijection on  components.   By Proposition \ref{prop:mixed connected} and the isomorphism of Proposition \ref{prop:boundary uniformization}, the quotient map
\[
\mathcal{C}_\Phi(\C) \to \Delta_{\Phi_i} \backslash \mathcal{C}_{\Phi_i}(\C) 
\]
induces a bijection on connected components, and  both maps $\mathcal{C}_\Phi \to \mathcal{B}_\Phi\to \mathcal{A}_\Phi$ have geometrically connected fibers (the first is a $\mathbb{G}_m$-torsor, and the second is an abelian scheme).  We deduce that all maps in 
\[
 \mathcal{A}_{\Phi_i}(\C) \leftarrow \mathcal{B}_{\Phi_i}(\C) \to \Delta_{\Phi_i} \backslash \mathcal{B}_{\Phi_i}(\C)
  \iso \mathcal{S}^*_\Kra(\Phi_i) (\C) \iso \mathcal{S}^*_\Pap(\Phi_i) (\C)
\] 
induce  bijections on connected components.

The above proves the claim over $\C$, and  the claim over $\kk$ follows formally from this.  The claim for integral models follows from the claim in the generic fiber, using the fact that all integral models in question are normal  and flat over $\co_\kk$.
\end{proof}

\begin{proposition}\label{prop:B descent}
After possibly rescaling by a constant of absolute value $1$ on every connected component of $\mathcal{S}^*_{\Kra/\C}$, the Borcherds product $\bm{\psi}(f)$ is defined over $\kk$, and the sections of Proposition \ref{prop:algebraic BFJ} satisfy 
\[
\kappa_\Phi \in  H^0( \mathcal{A}_{\Phi/\kk} , \co_{\mathcal{A}_\Phi /\kk}^\times)
\]
for all proper cusp label representatives $\Phi$.  Furthermore,  we may arrange that $\kappa_{\Phi_i}=1$ for all $i$. 
\end{proposition}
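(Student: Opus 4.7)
The plan is to use the explicit description of the leading Fourier-Jacobi coefficient in Proposition \ref{prop:algebraic BFJ} to turn the obstruction to descent into a $1$-cocycle on $\pi_0(\mathcal{S}^*_{\Kra/\C})$ valued in $U(1)$, and then kill it by the unique rescaling that reads off the reciprocals of the $\kappa_{\Phi_i}(B_i)$ at the distinguished boundaries. This will simultaneously deliver descent to $\kk$ and the normalization $\kappa_{\Phi_i}=1$.

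First I would collect the rationality ingredients. The divisor of $\bm{\psi}(f)$ in Proposition \ref{prop:algebraic BFJ}(1) is a $\Q$-linear combination of $\mathcal{Z}^*_\Kra(m)$ and of the boundary divisors $\mathcal{S}^*_\Kra(\Phi)$ indexed by $K$-equivalence classes, all of which are defined over $\kk$ by the moduli description in \S \ref{ss:special divisors} and by Theorem \ref{thm:toroidal}. So for every $\sigma\in\Aut(\C/\kk)$ the ratio $\rho_\sigma:=\bm{\psi}(f)^\sigma/\bm{\psi}(f)$ is a nowhere-vanishing rational function on the proper stack $\mathcal{S}^*_{\Kra/\C}$, hence a locally constant $\C^\times$-valued function on $\pi_0(\mathcal{S}^*_{\Kra/\C})$. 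On the other hand, the factors $P_\Phi^\eta$, $P_\Phi^{hor}$, $P_\Phi^{vert}$ of Theorem \ref{thm:unitary borcherds I} are $\kk$-rational: the sections $\Theta^{24}$ and $(2\pi i\,\eta^2)^{12}$ are $\Q$-rational by Proposition \ref{prop:integral jacobi}, the morphisms $j$ and $j_x$ of \eqref{boundary collapse} are defined over $\kk$, and the exponents $\gamma_r c_r(0)$ and $c(-m)$ are rational by Corollary \ref{cor:vector rational}.

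Next I would compute $\rho_\sigma$ in terms of the $\kappa_{\Phi_i}$. By Proposition \ref{prop:component reciprocity} each $X_i^*$ is defined over $H$ with trivial stabilizer in $\Gal(H/\kk)$, and by Theorem \ref{thm:toroidal}(4) the components of $\mathcal{A}_{\Phi_i/\C}$ are defined over $H$; in particular the boundary $B_i\in\mathcal{A}_{\Phi_i}(H)$ singled out by the map $\mathcal{B}_{\Phi_i}\to\mathcal{S}^*_\Kra$ of Lemma \ref{lem:enough cusps} is $H$-rational, so $\tau(B_i)$ depends only on the image of $\tau\in\Aut(\C/\kk)$ in $\Gal(H/\kk)$. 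Comparing the leading Fourier-Jacobi expansions of $\bm{\psi}(f)^\sigma$ and $\bm{\psi}(f)$ at the boundary $\sigma\tau(B_i)$ of $\sigma\tau(X_i^*)$ and using the $\kk$-rationality of $P_{\Phi_i}^\eta\otimes P_{\Phi_i}^{hor}\otimes P_{\Phi_i}^{vert}$ will give the cocycle identity
\[
\rho_\sigma\big|_{\sigma\tau(X_i^*)} \;=\; \frac{\sigma(\kappa_{\Phi_i}(\tau(B_i)))}{\kappa_{\Phi_i}(\sigma\tau(B_i))},
\]
and Proposition \ref{prop:algebraic BFJ} gives $|\rho_\sigma|=1$ on every component.

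I would then define the rescaling $\lambda$ as the locally constant function $\lambda(\tau(X_i^*))=\kappa_{\Phi_i}(\tau(B_i))^{-1}$ on $\pi_0(\mathcal{S}^*_{\Kra/\C})$. The remark above on $\Aut(\C/\kk)/\Aut(\C/H)$ makes this canonical, and $|\lambda|=1$. A short coboundary calculation will show that the new cocycle of $\lambda\cdot\bm{\psi}(f)$ is identically $1$, so $\lambda\cdot\bm{\psi}(f)$ descends to a rational section of $\bm{\omega}^k$ on $\mathcal{S}^*_{\Kra/\kk}$. By construction the new $\kappa_{\Phi_i}$ takes the value $1$ at every connected component of $\mathcal{A}_{\Phi_i/\C}$, so $\kappa_{\Phi_i}=1$ as a section of $\co_{\mathcal{A}_{\Phi_i}/\kk}^\times$. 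For any other proper cusp label representative $\Phi$, the $\kk$-rationality of $\kappa_\Phi$ then follows at once from the $\kk$-rationality of the Fourier-Jacobi expansion of the rescaled $\bm{\psi}(f)$ and of $P_\Phi^\eta\otimes P_\Phi^{hor}\otimes P_\Phi^{vert}$. The main subtle point is that the rescaling $\lambda$ must be assigned canonically to each connected component of $\mathcal{S}^*_{\Kra/\C}$ independently of the auxiliary $\tau$, which is exactly what the combination of the trivial-stabilizer statement in Proposition \ref{prop:component reciprocity} and the $H$-rationality of $B_i$ furnished by Theorem \ref{thm:toroidal}(4) provides.
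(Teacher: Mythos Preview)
Your proposal is correct and follows essentially the same approach as the paper: both use the $\kk$-rationality of the divisor (Proposition~\ref{prop:algebraic BFJ}(1)) to see that $\bm{\psi}(f)^\sigma/\bm{\psi}(f)$ is locally constant, use the $\kk$-rationality of $P_\Phi^\eta\otimes P_\Phi^{hor}\otimes P_\Phi^{vert}$ to read off this constant from the leading Fourier--Jacobi coefficient, and then rescale componentwise by $\kappa_{\Phi_i}^{-1}$ via the bijection of Lemma~\ref{lem:enough cusps}. The only difference is cosmetic: the paper rescales first (so that $\kappa_{\Phi_i}=1$ and hence $\bm{\psi}_0$ at $\Phi_i$ is manifestly $\kk$-rational) and then checks $\sigma$-invariance directly, whereas you phrase the same computation as a $1$-cocycle/coboundary argument before rescaling.
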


\begin{proof}
Lemma \ref{lem:enough cusps} establishes a bijection between the connected components of $\mathcal{S}^*_\Kra(\C)$ and the finite set 
$\bigsqcup_i \mathcal{A}_{\Phi_i}(\C)$.  On the component indexed by $z\in \mathcal{A}_{\Phi_i}(\C)$,  rescale $\bm{\psi}(f)$ by $\kappa_{\Phi_i}(z)^{-1}$.    For this rescaled $\bm{\psi}(f)$ we have $\kappa_{\Phi_i}=1$ for all $i$.

Suppose $\sigma \in \Aut(\C/\kk)$. The first claim of Proposition \ref{prop:algebraic BFJ} implies that the divisor of $\bm{\psi}(f)$, when computed on the compactification $\mathcal{S}^*_{\Kra /\C}$, is defined over $\kk$.  Therefore $\sigma(\bm{\psi}(f)) / \bm{\psi}(f)$ has trivial divisor, and so is constant on every connected component.   

By the third claim of Proposition \ref{prop:algebraic BFJ}, the leading coefficient in the Fourier-Jacobi expansion of $\bm{\psi}(f)$ along the boundary stratum $\mathcal{S}^*_\Kra(\Phi_i)$  is 
\[
\bm{\psi}_0 = P^\eta_{\Phi_i} \otimes P^{hor}_{\Phi_i} \otimes P^{vert}_{\Phi_i},
\]
which is defined over $\kk$.  From this it follows that    $\sigma(\bm{\psi}(f)) / \bm{\psi}(f)$  is identically equal to  $1$ on every connected component of $\mathcal{S}^*_{\Kra/\C}$ meeting this boundary stratum.  Varying $i$  and using Lemma \ref{lem:enough cusps} shows that $\sigma(\bm{\psi}(f)) = \bm{\psi}(f)$.

This proves that $\bm{\psi}(f)$ is defined over $\kk$, hence so are all of its Fourier-Jacobi coefficients along \emph{all} boundary strata $\mathcal{S}^*_\Kra(\Phi)$.  Appealing again to the calculation of the leading Fourier-Jacobi coefficient of Proposition \ref{prop:algebraic BFJ}, we deduce finally that $\kappa_\Phi$ is defined over $\kk$ for all $\Phi$.
\end{proof}


\subsection{Calculation of the divisor, and completion of the proof}
\label{ss:conclude pf}


The Borcherds product $\bm{\psi}(f)$ on $\mathcal{S}^*_{\Kra/\kk}$ of  Proposition \ref{prop:B descent} may   be viewed as a rational section of $\bm{\omega}^k$ on the integral model $\mathcal{S}^*_\Kra$.

Let $\Phi$ be any  proper cusp label representative.  Combining Propositions \ref{prop:algebraic BFJ} and \ref{prop:B descent} shows that the leading Fourier-Jacobi coefficient of $\bm{\psi}(f)$ along the boundary divisor $\mathcal{S}^*_\Kra(\Phi)$ is
\begin{equation}\label{leading factor}
\bm{\psi}_0 = \kappa_\Phi \otimes P_\Phi^\eta \otimes P^{hor}_\Phi \otimes P^{vert}_\Phi.
\end{equation}
Recall that this is a rational section of $\bm{\omega}_\Phi^k \otimes \mathcal{L}_\Phi^{\mathrm{mult}_\Phi(f)}$ on $\mathcal{B}_\Phi$.  
Here, by mild abuse of notation, we are viewing $\kappa_\Phi$ as a rational function on $\mathcal{A}_\Phi$, and denoting in the same way its pullback to any step in the tower  
\[
\mathcal{C}_\Phi^* \map{\pi} \mathcal{B}_\Phi \to \mathcal{A}_\Phi.
\]

\begin{lemma}\label{lem:leading FJ suffices}
Recall that $\pi$ has a canonical section $\mathcal{B}_\Phi \hookrightarrow \mathcal{C}_\Phi^*$, realizing $\mathcal{B}_\Phi$ as a divisor on $\mathcal{C}_\Phi^*$.    If we use  the isomorphism  (\ref{formal boundary iso}) to view $\bm{\psi}(f)$ as a rational section of the line bundle $\bm{\omega}_\Phi^k$ on the formal completion $(\mathcal{C}_\Phi^* )^\wedge_{\mathcal{B}_\Phi}$,  its divisor satisfies
\begin{align*}
\mathrm{div}( \bm{\psi}(f) )  
& =    \mathrm{div}( \delta^{-k} \kappa_\Phi) +   \mathrm{mult}_\Phi(f)\cdot  \mathcal{B}_\Phi\\
& \quad   + \sum_{m>0} c(-m)  \mathcal{Z}_\Phi (m)  + \sum_{ r \mid D} \gamma_r c_r(0)  \sum_{p\mid r} \pi^* (\mathcal{B}_{\Phi/\F_\mathfrak{p}})  .
\end{align*}
\end{lemma}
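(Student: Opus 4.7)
The plan is to reduce the divisor calculation on the formal completion to the boundary divisor computations already recorded in Propositions \ref{prop:algebraic BFJ}, \ref{prop:B descent}, and \ref{prop:leading divisor}. The formal scheme $(\mathcal{C}_\Phi^*)^\wedge_{\mathcal{B}_\Phi}$ is, as recalled in \S\ref{ss:abstract FJ}, locally obtained from $\mathcal{B}_\Phi$ by adjoining a formal variable $q$ (tracking the section of $\pi^*\mathcal{L}_\Phi^{-1}$ cutting out the zero section $\mathcal{B}_\Phi$ as in Remark~\ref{rem:q}). The key point is that any rational section whose Fourier--Jacobi expansion has nonzero leading term can be factored as a power of $q$ times that leading term times a unit in the sheaf of formal power series.

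Concretely, the Fourier--Jacobi expansion asserted in Proposition~\ref{prop:algebraic BFJ}(ii) and extended to the integral model after descent in Proposition~\ref{prop:B descent} gives
\[
\bm{\psi}(f) \;=\; q^{\mathrm{mult}_\Phi(f)} \cdot \bm{\psi}_0 \cdot U, \qquad U \;=\; 1 + \frac{\bm{\psi}_1}{\bm{\psi}_0}\,q + \frac{\bm{\psi}_2}{\bm{\psi}_0}\,q^2 + \cdots,
\]
where the quotients $\bm{\psi}_\ell/\bm{\psi}_0$ are rational sections on $\mathcal{B}_\Phi$. Viewed as an element of the sheaf of total quotient rings on the formal completion, $U$ is a formal power series in $q$ with constant term $1$, hence a unit; its Cartier divisor is therefore trivial. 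Consequently
\[
\mathrm{div}(\bm{\psi}(f)) \;=\; \mathrm{mult}_\Phi(f)\cdot \mathrm{div}(q) \;+\; \pi^*\mathrm{div}(\bm{\psi}_0),
\]
and by Remark~\ref{rem:q} the first term equals $\mathrm{mult}_\Phi(f)\cdot \mathcal{B}_\Phi$.

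It then remains to compute $\pi^*\mathrm{div}(\bm{\psi}_0)$ using the factorization
\[
\bm{\psi}_0 \;=\; \kappa_\Phi \otimes P_\Phi^\eta \otimes P_\Phi^{hor} \otimes P_\Phi^{vert}
\]
of Propositions~\ref{prop:algebraic BFJ}(iii) and~\ref{prop:B descent}. Additivity of divisors gives four contributions, and Proposition~\ref{prop:leading divisor} supplies three of them: $\mathrm{div}(P_\Phi^\eta) = -k\cdot \mathrm{div}(\delta) = \mathrm{div}(\delta^{-k})$, $\mathrm{div}(P_\Phi^{hor}) = \sum_{m>0} c(-m)\,\mathcal{Z}_\Phi(m)$, and $\mathrm{div}(P_\Phi^{vert}) = \sum_{r\mid D}\gamma_r c_r(0)\sum_{p\mid r}\mathcal{B}_{\Phi/\F_\mathfrak{p}}$. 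Bundling the first of these with $\mathrm{div}(\kappa_\Phi)$ produces the $\mathrm{div}(\delta^{-k}\kappa_\Phi)$ term, while the remaining two give the last two summands of the claimed formula (using the convention from \S\ref{ss:mixed special divisors} that $\mathcal{Z}_\Phi(m)$ denotes both the divisor on $\mathcal{B}_\Phi$ and its pullback to $\mathcal{C}_\Phi^*$).

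The only delicate point is verifying that $U$ genuinely has trivial divisor as a Cartier divisor on the formal completion. This is really a statement about $q$-adic completions of local rings on $\mathcal{B}_\Phi$: since $\bm{\psi}_0$ is a regular unit on a dense open $U_0 \subset \mathcal{B}_\Phi$, the factor $U$ is a unit in $\mathcal{O}_{U_0}[[q]]$, so contributes no divisor supported in the preimage of $U_0$; any divisor of $U$ would therefore be supported in $\pi^{-1}(\mathcal{B}_\Phi \smallsetminus U_0)$, but such a divisor would contradict the fact that $\bm{\psi}(f) - q^{\mathrm{mult}_\Phi(f)}\bm{\psi}_0 U$ is identically zero as a formal expansion. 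This is the step requiring the most care, though it is essentially formal once the factorization is set up correctly.
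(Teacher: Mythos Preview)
Your overall strategy matches the paper's: reduce to showing $\mathrm{div}(\bm{\psi}(f)) = \mathrm{mult}_\Phi(f)\cdot\mathcal{B}_\Phi + \pi^*\mathrm{div}(\bm{\psi}_0)$, then invoke Proposition~\ref{prop:leading divisor}.  The second half of your argument is fine and coincides with the paper's.  The gap is exactly where you flag it: the claim that $U$ has trivial Cartier divisor is not justified by what you wrote.  The coefficients $\bm{\psi}_\ell/\bm{\psi}_0$ are only rational functions on $\mathcal{B}_\Phi$, so $U$ is a priori an element of the total quotient ring, not a unit of the structure sheaf.  Your final sentence, that a nontrivial divisor of $U$ ``would contradict the fact that $\bm{\psi}(f)-q^{\mathrm{mult}_\Phi(f)}\bm{\psi}_0 U$ is identically zero,'' is circular: that identity is the definition of $U$ and imposes no constraint on its divisor.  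Nothing prevents $\bm{\psi}_0$ from having, say, a zero along some component of $\mathcal{B}_\Phi\smallsetminus U_0$ that is exactly cancelled by a pole of $U$.

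The paper closes this gap by a different route.  Rather than trying to show $U$ is a unit, it argues directly that $\mathrm{div}(R)$ (where $R=q^{-\mathrm{mult}_\Phi(f)}\bm{\psi}(f)$) is of the form $\pi^*\Delta$ for some divisor $\Delta$ on $\mathcal{B}_\Phi$.  This requires two external inputs: Theorem~\ref{thm:toroidal} and Proposition~\ref{prop:algebraic BFJ} identify the horizontal part of $\mathrm{div}(R)$ as a combination of the $\mathcal{Z}_\Phi(m)$, which are by construction pullbacks from $\mathcal{B}_\Phi$; and the structural fact that $\mathcal{C}_\Phi^*\to\mathcal{B}_\Phi$, being the total space of a line bundle, has the property that every vertical divisor is a pullback.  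Once $\mathrm{div}(R)=\pi^*\Delta$ is known, restricting along the zero section $i:\mathcal{B}_\Phi\hookrightarrow\mathcal{C}_\Phi^*$ gives $\Delta=i^*\pi^*\Delta=\mathrm{div}(i^*R)=\mathrm{div}(\bm{\psi}_0)$.  Your argument can be repaired by inserting exactly this reasoning in place of the unit claim for $U$.
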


\begin{proof}
The key step is to prove that the divisor of $\bm{\psi}(f)$ can be computed from the divisor of its leading Fourier-Jacobi coefficient $\bm{\psi}_0$ by the formula
\begin{equation}\label{divisor near boundary}
\mathrm{div}( \bm{\psi}(f) )  
 = \pi^*\mathrm{div}( \bm{\psi}_0 ) + \mathrm{mult}_\Phi(f)\cdot  \mathcal{B}_\Phi.
\end{equation}
Recalling  the tautological section $q$ with divisor $\mathcal{B}_\Phi$ from Remark \ref{rem:q}, consider  the rational section 
\[
R = q^{  -\mathrm{mult}_\Phi(f)  } \cdot \bm{\psi} (f) = \sum_{i\ge 0} \bm{\psi}_i \cdot q^i
\]
of $\bm{\omega}_\Phi^k\otimes \pi^*\mathcal{L}_\Phi^{ \mathrm{mult}_\Phi(f)  }$ on the formal completion $(\mathcal{C}^*_\Phi)^\wedge_{\mathcal{B}_\Phi}$.

We claim that $\mathrm{div}(  R) = \pi^* \Delta$ for \emph{some} divisor $\Delta$ on $\mathcal{B}_\Phi$.  Indeed, whatever $\mathrm{div}(  R)$ is, it may decomposed as a sum of horizontal and vertical components.  
We know from Theorem \ref{thm:toroidal} and Proposition \ref{prop:algebraic BFJ}  that the horizontal part is a linear combination of the divisors $\mathcal{Z}_\Phi(m)$ on $\mathcal{C}^*_\Phi$ defined by (\ref{boundary special});  these divisors are, by construction, pullbacks of divisors on $\mathcal{B}_\Phi$.  
On the other hand,  the morphism $\mathcal{C}_\Phi^* \to \mathcal{B}_\Phi$ is the total space of a line bundle, and hence is smooth with connected fibers.  Thus \emph{every} vertical divisor on $\mathcal{C}_\Phi^*$, and in particular the vertical part of $\mathrm{div}(R)$, is the pullback of some divisor on $\mathcal{B}_\Phi$.

Denoting by $i: \mathcal{B}_\Phi \hookrightarrow \mathcal{C}_\Phi^*$  the zero section, we compute
\[
\Delta = i^* \pi ^* \Delta = i^* \mathrm{div}(R) = \mathrm{div}(i^* R) = \mathrm{div}(\bm{\psi}_0).
\] 
Pulling  back by $\pi$ proves that  $\mathrm{div}(  R) =\pi^*\mathrm{div}( \bm{\psi}_0)$, and (\ref{divisor near boundary}) follows.

We now compute the divisor of $\bm{\psi}_0$ on $\mathcal{B}_\Phi$ using (\ref{leading factor}).  The divisors of $P_\Phi^\eta$, $P^{hor}_\Phi$, and  $P^{vert}_\Phi$ were computed in Proposition \ref{prop:leading divisor}, which shows that on  $\mathcal{B}_\Phi$ we have the equality
\begin{align*}
\mathrm{div}( \bm{\psi}_0 )   = \mathrm{div}( \delta^{-k} \kappa_\Phi)  +   \sum_{m>0} c(-m)  \mathcal{Z}_\Phi (m)  
+ \sum_{ r \mid D} \gamma_r c_r(0)  \sum_{p\mid r} \mathcal{B}_{\Phi/\F_\mathfrak{p}}.
\end{align*}
Combining this with (\ref{divisor near boundary})  completes the proof.
\end{proof}

\begin{proposition}\label{prop:interior divisor}
When viewed as a rational section of  $\bm{\omega}^k$ on $\mathcal{S}^*_\Kra$, the Borcherds product $\bm{\psi}(f)$ has divisor 
\begin{align}
\mathrm{div}( \bm{\psi}  (f) )   & =   \sum_{m>0} c(-m) \cdot \mathcal{Z}^*_\Kra(m)   + \sum_\Phi \mathrm{mult}_\Phi(f) \cdot \mathcal{S}^*_\Kra(\Phi) \nonumber \\
&\quad   + \mathrm{div}(\delta^{-k})    + \sum_{ r \mid D} \gamma_r c_r(0)   \sum_{p \mid r}   \mathcal{S}^*_{\Kra /\F_\mathfrak{p}}   \label{main div calc}
\end{align}
 up to a linear combination of irreducible components of the exceptional divisor $\mathrm{Exc}\subset \mathcal{S}^*_\Kra$.  Moreover, each section $\kappa_\Phi$ of Proposition \ref{prop:B descent} has finite multiplicative order, and extends  to a section
$
\kappa_\Phi \in  H^0( \mathcal{A}_{\Phi} , \co_{\mathcal{A}_\Phi}^\times).
$
\end{proposition}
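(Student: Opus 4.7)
The strategy is to bootstrap from the generic fiber divisor given by Proposition \ref{prop:algebraic BFJ}(1) and use Lemma \ref{lem:leading FJ suffices} at the distinguished cusp label representatives $\Phi_i$ provided by Lemma \ref{lem:enough cusps} and Proposition \ref{prop:B descent} to read off the multiplicities of all non-exceptional vertical components.

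First I would observe that taking the Zariski closure of the generic-fiber divisor supplies the horizontal part
\[
H = \sum_{m>0} c(-m)\,\mathcal{Z}^*_\Kra(m) + \sum_\Phi \mathrm{mult}_\Phi(f)\,\mathcal{S}^*_\Kra(\Phi),
\]
so that $\mathrm{div}(\bm{\psi}(f)) = H + V$ with $V$ a vertical divisor. Any irreducible component of $V$ is either an exceptional divisor $\mathrm{Exc}_s$ (which the statement permits us to ignore) or—using that $\mathcal{S}^*_\Pap$ has geometrically normal fibers and that $\mathcal{S}^*_\Kra\to\mathcal{S}^*_\Pap$ is an isomorphism off $\mathrm{Exc}$—the strict transform of a geometrically irreducible component of some $\mathcal{S}^*_{\Pap/\F_\mathfrak{p}}$. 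Combining Lemma \ref{lem:enough cusps} with the disjointness of $\mathrm{Exc}$ from the boundary established in Theorem \ref{thm:toroidal} shows that each such strict transform meets the distinguished boundary $\mathcal{S}^*_\Kra(\Phi_i)$ for exactly one index $i$.

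I would then apply Lemma \ref{lem:leading FJ suffices} at such a $\Phi_i$, where Proposition \ref{prop:B descent} allows us to assume $\kappa_{\Phi_i}=1$. The factor $\mathrm{div}(\delta^{-k})$ contributes $-k$ at precisely the ramified primes $\mathfrak{p}\mid D$, since $\delta$ is a uniformizer there; under the formal completion isomorphism (\ref{formal boundary iso}), the factor $\pi^*(\mathcal{B}_{\Phi_i/\F_\mathfrak{p}})$ corresponds to the part of $\mathcal{S}^*_{\Kra/\F_\mathfrak{p}}$ meeting the formal neighborhood of $\mathcal{S}^*_\Kra(\Phi_i)$—which, since $\mathrm{Exc}$ misses the boundary, coincides with the strict-transform component—and so contributes $\sum_{r\mid D,\,\mathfrak{p}\mid r}\gamma_r c_r(0)$. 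Summing these contributions yields precisely the formula (\ref{main div calc}) up to a $\Z$-linear combination of exceptional divisors.

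For the final two assertions about $\kappa_\Phi$, I would run Lemma \ref{lem:leading FJ suffices} at an arbitrary proper $\Phi$ and compare the resulting multiplicities with those already determined. Since every codimension-one vertical point of $\mathcal{A}_\Phi$ corresponds under (\ref{formal boundary iso}) to a non-exceptional component of some $\mathcal{S}^*_{\Kra/\F_\mathfrak{p}}$ meeting $\mathcal{S}^*_\Kra(\Phi)$, and since the factor $\delta^{-k}$ contributes the same amount as before, the equality of the two expressions for this multiplicity forces $\mathrm{ord}_D(\kappa_\Phi)=0$ at every such $D$. As $\mathcal{A}_\Phi$ is smooth of relative dimension zero over $\co_\kk$, hence regular, this means $\kappa_\Phi$ extends to a global unit on $\mathcal{A}_\Phi$. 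Its value at each of the finitely many closed points is then an algebraic integer in the corresponding CM ring of integers, with all complex embeddings of absolute value one by Proposition \ref{prop:algebraic BFJ}(3) (applied at every complex point to cover all Galois conjugates); Kronecker's theorem therefore forces $\kappa_\Phi$ to be a root of unity at each closed point, giving it finite multiplicative order. The main obstacle here is the careful identification, under (\ref{formal boundary iso}), of $\pi^*(\mathcal{B}_{\Phi_i/\F_\mathfrak{p}})$ with the strict-transform component of $\mathcal{S}^*_{\Kra/\F_\mathfrak{p}}$—a matching that rests on both the disjointness of $\mathrm{Exc}$ from the boundary and the geometric irreducibility of special fibers of connected components of $\mathcal{S}^*_\Pap$.
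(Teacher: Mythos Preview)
Your approach is essentially the one the paper takes: split the divisor into horizontal and vertical parts, identify the non-exceptional vertical components as those meeting the boundary, compute their multiplicities via Lemma~\ref{lem:leading FJ suffices} at the distinguished $\Phi_i$ (where $\kappa_{\Phi_i}=1$), then reverse the argument at a general $\Phi$ to force $\mathrm{div}(\kappa_\Phi)=0$, and finish with a Kronecker-type argument.

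One technical point you flag as ``the main obstacle'' but do not resolve: the special fibers of the connected components of $\mathcal{S}^*_\Pap$ need not be geometrically irreducible over $\F_\mathfrak{p}$ as stated. The paper handles this by first passing to a finite unramified extension $F/\kk_\mathfrak{p}$ large enough that every connected component of $\mathcal{S}^*_{\Pap/\co_F}$ has geometrically irreducible fibers (cf.\ the proof of Corollary~\ref{cor:weak flatness of divisors}), establishing the divisor equality over $\co_F$, and then using faithfully flat descent. Your matching of strict-transform components with boundary strata via Lemma~\ref{lem:enough cusps} is only valid after this base change.

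For the finite-order step, your Kronecker argument is correct in spirit but a bit loose in treating ``closed points'' of the stack $\mathcal{A}_\Phi$. The paper makes this precise by choosing a finite extension $L/\kk$ over which every CM elliptic curve acquires a model with everywhere good reduction, giving a faithfully flat map $\bigsqcup \Spec(\co_L)\to\mathcal{A}_\Phi$; pulling $\kappa_\Phi$ back along this map yields global units in $\co_L^\times$ of absolute value~$1$ at every archimedean place, hence roots of unity.
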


\begin{proof}
Recall from Lemma \ref{lem:enough cusps} that the natural maps 
\[
\xymatrix{
 {   \bigsqcup_i \mathcal{B}_{\Phi_i } }   \ar[r]  \ar[d]   &   {   \bigsqcup_i \mathcal{S}_\Pap^* (\Phi_i)   } \ar[r]  &  {  \mathcal{S}^*_{ \Pap } }    \\
  {   \bigsqcup_i \mathcal{A}_{\Phi_i }  }  &
}
\]
 induce  bijections on connected components, as well as on connected  components of the generic  fibers.

 All  stacks in the diagram are proper over $\co_\kk$, and have normal fibers.  (For $\mathcal{S}^*_\Pap$ this follows from  Theorem \ref{thm:toroidal} and our assumption that $n>2$.  The other stacks appearing in the diagram are smooth over $\co_\kk$.)
 It follows from this and  \cite[Corollary 8.2.18]{FGA} that all arrows in the diagram  induce bijections between the irreducible  (=connected) components modulo any prime $\mathfrak{p} \subset \co_\kk$.

Deleting the ($0$-dimensional)  singular locus $\mathrm{Sing} \subset  \mathcal{S}^*_\Pap$ does not change the irreducible components of $\mathcal{S}^*_\Pap$ or its fibers, and so if we define
 \[
\mathcal{U} \define \mathcal{S}^*_\Pap \smallsetminus \mathrm{Sing} \iso \mathcal{S}^*_\Kra \smallsetminus \mathrm{Exc}
\]
then the natural maps 
 \[
\xymatrix{
 {   \bigsqcup_i \mathcal{B}_{\Phi_i } }   \ar[r]  \ar[d]   &   {   \bigsqcup_i \mathcal{S}_\Pap^* (\Phi_i)   } \ar[r]  &  {  \mathcal{U} }    \\
  {   \bigsqcup_i \mathcal{A}_{\Phi_i }  }  &
}
\]
 induce  bijections on irreducible components, as well as on irreducible components modulo any prime $\mathfrak{p}\subset \co_\kk$.

Suppose $\Phi$ is any proper cusp label representative, and let 
$
\mathcal{U}_\Phi \subset \mathcal{U}
$
 be the union of all irreducible components that meet  $\mathcal{S}_\Pap^* (\Phi)$.    
If  we interpret $\mathrm{div}(\kappa_\Phi)$ as a divisor on $\mathcal{U}$ using the bijection 
\[
\{ \mbox{vertical divisors on }\mathcal{A}_{\Phi } \} \iso \{ \mbox{vertical divisors on }\mathcal{U}_\Phi \},
\]
 then the equality of divisors  (\ref{main div calc})  holds after pullback to $\mathcal{U}_\Phi$, up to the error term $\mathrm{div}(\kappa_\Phi)$.  Indeed, this equality  holds in the generic fiber of $\mathcal{U}_\Phi$ by  Proposition \ref{prop:algebraic BFJ}, and it holds over an open neighborhood of $\mathcal{S}_\Pap^* (\Phi)$ by   Lemma \ref{lem:leading FJ suffices}  and the isomorphism of formal completions
 (\ref{formal boundary iso}).  As the union of the generic fiber with this open neighborhood is an open substack whose complement has codimension $\ge 2$,  the stated equality holds over all of $\mathcal{U}_\Phi$.

Letting $\Phi$ vary over the $\Phi_i$ and using  $\kappa_{\Phi_i}=1$, we see from the paragraph above that (\ref{main div calc}) holds over  $\bigsqcup_i \mathcal{U}_{\Phi_i} = \mathcal{U}$.  
With this in hand,  we may reverse the argument  to see that the error term $\mathrm{div}(\kappa_\Phi)$ vanishes  for every $\Phi$. It follows   that $\kappa_\Phi$ extends to a global section of $\co^\times_{\mathcal{A}_\Phi}$.

It only remains to show that each $\kappa_\Phi$ has finite order.   Choose a finite extension $L/\kk$ large enough that every elliptic curve over $\C$ with complex multiplication by $\co_\kk$  admits a model over $L$ with everywhere good reduction.     Choosing such models determines a faithfully flat morphism
\[
\bigsqcup \Spec(\co_L) \to \mathcal{M}_{(1,0)} \iso \mathcal{A}_\Phi,
\]
and the pullback of $\kappa_\Phi$ is represented by a tuple of units $(x_\ell)\in  \prod \co_L^\times$.   Each $x_\ell$ has  absolute value $1$ at every complex embedding of $L$ (this follows from the final claim of Proposition \ref{prop:algebraic BFJ}), and is therefore   a root of unity.  This implies that  $\kappa_\Phi$ has finite order.
\end{proof}

\begin{proof}[Proof of Theorem \ref{thm:unitary borcherds I}]
Start with a weakly holomorphic form (\ref{input form}).  As in \S \ref{ss:borcherds define}, after possibly replacing $f$ by a positive integer multiple, we obtain a Borcherds product $\bm{\psi}(f)$.  This is a meromorphic section of $(\bm{\omega}^{an})^k$.  By Proposition \ref{prop:algebraic BFJ} it is algebraic, and by Proposition \ref{prop:B descent} it may be rescaled by a constant of absolute value $1$ on each connected component in such a way that it descends to $\kk$.

Now view $\bm{\psi}(f)$ as a rational section of $\bm{\omega}^k$ over $\mathcal{S}^*_\Kra$.  By  Proposition \ref{prop:interior divisor} we may replace $f$ by a further positive integer multiple, and replace $\bm{\psi}(f)$ by a corresponding tensor power,  in order to make all $\kappa_\Phi=1$.  
Having trivialized the $\kappa_\Phi$, the existence part of Theorem \ref{thm:unitary borcherds I} now follows from Proposition \ref{prop:algebraic BFJ}. 
For uniqueness, suppose $\bm{\psi}'(f)$ also satisfies the conditions of that theorem.   The quotient of the two Borcherds products is a 
rational function with trivial divisor, which is therefore constant on every connected component of $\mathcal{S}^*_\Kra(\C)$.  As the leading Fourier-Jacobi coefficients of $\bm{\psi}'(f)$ and $\bm{\psi}(f)$ are equal along every boundary stratum, those constants are all equal to $1$. 
\end{proof}

\begin{proof}[Proof of Theorem  \ref{thm:unitary borcherds III}]
As in the statement of the theorem, we  now view  $\bm{\psi}(f)^2$ as a rational section of the line bundle $\pure_\Pap^k$ on $\mathcal{S}^*_\Pap$.    
Combining Proposition \ref{prop:interior divisor}  with the isomorphism 
\[
\mathcal{S}^*_\Kra  \smallsetminus  \mathrm{Exc} \iso  \mathcal{S}^*_\Pap \smallsetminus \mathrm{Sing},
\]
of  (\ref{compact nonsingular}), and recalling from Theorem \ref{thm:toroidal} that this isomorphism identifies 
\[
\bm{\omega}^{2k}\iso \bm{\Omega}_\Kra^k\iso  \bm{\Omega}_\Pap^k,
\]
 we deduce  the equality
\begin{align}
\mathrm{div}( \bm{\psi}  (f) ^2)  
 & =   \sum_{m>0} c(-m) \cdot \mathcal{Y}^*_\Pap(m)   + 2 \sum_\Phi \mathrm{mult}_\Phi(f) \cdot \mathcal{S}^*_\Pap(\Phi) \nonumber \\
& \quad  + \mathrm{div}(\delta^{-2k})    +  2 \sum_{ r \mid D} \gamma_r c_r(0)   \sum_{p \mid r}   \mathcal{S}^*_{\Pap /\F_\mathfrak{p}} 
  \label{pappas divisor}
\end{align}
 of Cartier divisors on $\mathcal{S}^*_\Pap\smallsetminus \mathrm{Sing}$.   As $\mathcal{S}^*_\Pap$ is normal and $\mathrm{Sing}$ lies in codimension $\ge 2$, this same equality must hold on the entirety of $\mathcal{S}^*_\Pap$.  
\end{proof}

 \begin{proof}[Proof of Theorem  \ref{thm:unitary borcherds II}]
If we pull back via $\mathcal{S}^*_\Kra \to  \mathcal{S}^*_\Pap$ and  view  $\bm{\psi}(f)^2$ as a rational section of the line bundle 
\[
\pure_\Kra^k \iso  \bm{\omega}^{2k} \otimes   \co(\mathrm{Exc})^{-k},
\]
the equality (\ref{pappas divisor})  on $\mathcal{S}_\Pap^*$ pulls back to
\begin{align*}
\mathrm{div}( \bm{\psi}  (f) ^2)   
& =   \sum_{m>0} c(-m) \cdot \mathcal{Y}^*_\Kra(m)     +  2 \sum_\Phi \mathrm{mult}_\Phi(f) \cdot \mathcal{S}^*_\Kra(\Phi)  \\
& \quad + \mathrm{div}(\delta^{-2k})  + 2 \sum_{ r \mid D} \gamma_r c_r(0)   \sum_{p \mid r}   \mathcal{S}^*_{\Kra /\F_\mathfrak{p}}  . 
\end{align*}
 Theorem \ref{thm:cartier error} allows us   to rewrite this as
\begin{align*}
\mathrm{div}( \bm{\psi}  (f) ^2)   
 & =  2  \sum_{m>0} c(-m) \cdot \mathcal{Z}^*_\Kra(m)     +  2 \sum_\Phi \mathrm{mult}_\Phi(f) \cdot \mathcal{S}^*_\Kra(\Phi) \\
&\quad + \mathrm{div}(\delta^{-2k})   + 2 \sum_{ r \mid D} \gamma_r c_r(0)   \sum_{p \mid r}   \mathcal{S}^*_{\Kra /\F_\mathfrak{p}} \\
 & \quad -  \sum_{m>0}   c(-m)    \sum_{ s\in \pi_0(\mathrm{Sing}) }   \# \{ x\in L_s : \langle x,x\rangle =m \} \cdot  \mathrm{Exc}_s .
\end{align*}

If we instead view $\bm{\psi}(f)^2$ as a rational section of $\bm{\omega}^{2k}$, this becomes
\begin{align*}
\mathrm{div}( \bm{\psi}  (f) ^2)   
 & =  2  \sum_{m>0} c(-m) \cdot \mathcal{Z}^*_\Kra(m)     +  2 \sum_\Phi \mathrm{mult}_\Phi(f) \cdot \mathcal{S}^*_\Kra(\Phi) \\
&\quad + \mathrm{div}(\delta^{-2k})   + 2 \sum_{ r \mid D} \gamma_r c_r(0)   \sum_{p \mid r}   \mathcal{S}^*_{\Kra /\F_\mathfrak{p}} \\
 & \quad -  \sum_{m>0}   c(-m)    \sum_{ s\in \pi_0(\mathrm{Sing}) }   \# \{ x\in L_s : \langle x,x\rangle =m \} \cdot  \mathrm{Exc}_s \\
 & \quad   + k\cdot \mathrm{Exc} 
\end{align*}
as desired.
\end{proof}


\section{Modularity of the generating series}
\label{s:modularity}


Now armed with the modularity criterion of Theorem \ref{thm:modularity criterion} and the arithmetic theory of Borcherds products   provided by 
Theorems \ref{thm:unitary borcherds I},  \ref{thm:unitary borcherds II},  and \ref{thm:unitary borcherds III}, we prove our main results: the modularity of generating series of divisors on the integral models $\mathcal{S}_\Kra^*$ and $\mathcal{S}_\Pap^*$ of the unitary Shimura variety $\mathrm{Sh}(G,\mathcal{D})$.  
The strategy follows that of \cite{Bo2}, which proves  modularity of the generating series of divisors on the complex fiber of an orthogonal Shimura variety.

Throughout \S \ref{s:modularity} we assume  $n\ge 3$.


\subsection{The modularity theorems}


Denote by 
\[
\Ch^1_\Q (\mathcal{S}^*_\Kra ) \iso \mathrm{Pic} (\mathcal{S}^*_\Kra )  \otimes_\Z\Q
\]
 the Chow group of rational equivalence classes of Cartier divisors on $\mathcal{S}^*_\Kra$ with $\Q$ coefficients, and similarly for $\mathcal{S}^*_\Pap$.  
 There is a natural pullback map
\[
 \Ch^1_\Q (\mathcal{S}^*_\Pap ) \to \Ch^1_\Q (\mathcal{S}^*_\Kra ).
\]

 Let $\chi=\chi_\kk^n$ be the quadratic Dirichlet character  (\ref{quad character}).

\begin{definition}
If $V$ is any $\Q$-vector space, a formal $q$-expansion 
\begin{equation}\label{potentially modular}
\sum_{m\ge 0} d(m) \cdot q^m\in V[[q]]
\end{equation}
is  \emph{modular of level $D$, weight $n$, and character $\chi$} if for any $\Q$-linear map $\alpha : V\to \C$ the $q$-expansion
\[
\sum_{m\ge 0}  \alpha( d(m)) \cdot q^m\in \C[[q]]
\]
is the $q$-expansion of an element of $M_n(D,\chi)$.
\end{definition}

\begin{remark}\label{rem:finite span}
If (\ref{potentially modular}) is modular then its coefficients $d(m)$ span a subspace of $V$ of dimension $\le \mathrm{dim}\, M_n(D,\chi)$. We leave the proof as an exercise for the reader.
\end{remark}

We also define the notion of the constant term of (\ref{potentially modular}) at a cusp $\infty_r$, generalizing Definition \ref{def:constant at cusp}.

\begin{definition}
Suppose a formal $q$-expansion $g \in V[[q]]$ is modular of level $D$, weight $n$, and character $\chi$.  For any $r\mid D$, a  vector  $v \in V(\C)$ is said to be the \emph{constant term of $g$ at the cusp $\infty_r$} if, for every linear functional $\alpha:V(\C) \to \C$, $\alpha(v)$ is the constant term of $\alpha(g)$ at the cusp $\infty_r$  in the sense of Definition \ref{def:constant at cusp}.
\end{definition}

For $m>0$ we have defined  in  \S \ref{ss:unitary borcherds} effective Cartier divisors 
\[
\mathcal{Y}^\tot_\Pap(m)\hookrightarrow \mathcal{S}^*_\Pap,\quad 
\mathcal{Z}^\tot_\Kra(m) \hookrightarrow \mathcal{S}^*_\Kra
\] 
related by (\ref{total error}).   We have defined in \S  \ref{ss:main toroidal}  line bundles 
 \[
 \pure_\Pap \in \mathrm{Pic}( \mathcal{S}^*_\Pap)  ,\quad \bm{\omega} \in \mathrm{Pic} ( \mathcal{S}^*_\Kra)
 \] 
 extending the line bundles on the open integral models defined in  \S \ref{ss:unitary bundle}.  For notational uniformity, we  define 
\[
\mathcal{Y}^\tot_\Pap(0)  = \pure_\Pap^{-1}  ,\quad \mathcal{Z}_\Kra^\tot(0) = \bm{\omega}^{-1} \otimes \co(\mathrm{Exc}).
\]

\begin{theorem}\label{thm:main modularity I}
The  formal $q$-expansion
\[
\sum_{m\ge 0}  \mathcal{Y}_\Pap^\tot(m)  \cdot q^m \in \Ch^1_\Q (\mathcal{S}^*_\Pap )  [[ q ]] ,
\]
is a modular form of level $D$, weight $n$, and character $\chi$.   For any $r\mid D$, its constant term at the cusp $\infty_r$ is 
\[
\gamma_r \cdot  \Big( \mathcal{Y}_\Pap^\tot(0)+2\sum_{p\mid r} \mathcal{S}^*_{\Pap / \F_\mathfrak{p}} \Big)  \in \Ch^1_\Q (\mathcal{S}^*_\Pap)  \otimes_\Q \C.
\]
Here  $\gamma_r\in \{\pm 1 , \pm i \}$ is defined by (\ref{gamma def}),  $\mathfrak{p}\subset \co_\kk$ is the unique prime above $p\mid r$, and $\F_\mathfrak{p}$ is its residue field. 
\end{theorem}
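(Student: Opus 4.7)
The plan is to apply the modularity criterion (Theorem~\ref{thm:modularity criterion}) to the generating series $\Phi(\tau) = \sum_{m\ge 0} \mathcal{Y}^\tot_\Pap(m)\cdot q^m$, using the divisor formula of Theorem~\ref{thm:unitary borcherds III} to supply the necessary relations in $\Ch^1_\Q(\mathcal{S}^*_\Pap)$. A crucial preliminary is the observation that $\delta\in \co_\kk$ is a global regular function on $\mathcal{S}^*_\Pap$, hence the vertical divisor $\mathrm{div}(\delta)=\sum_{p\mid D}\mathcal{S}^*_{\Pap/\F_\mathfrak{p}}$ is principal and vanishes in $\Ch^1_\Q(\mathcal{S}^*_\Pap)$. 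This single cancellation is what makes the whole argument go through.

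First I would fix a weakly holomorphic form $f \in M^{!,\infty}_{2-n}(D, \chi_\kk^{n-2})$ with integer principal part, sufficiently divisible so that Theorem~\ref{thm:unitary borcherds I} applies; rational coefficients are recovered at the end by $\Q$-linearity. Since $\bm{\psi}(f)^2$ is a rational section of $\pure_\Pap^k=\mathcal{Y}^\tot_\Pap(0)^{-k}$, Theorem~\ref{thm:unitary borcherds III} yields, after imposing $\mathrm{div}(\delta)=0$ in the Chow group and using $k=\sum_{r\mid D}\gamma_r c_r(0)$ together with $\gamma_1 c_1(0)=c(0)$, the identity
\[
\sum_{m\ge 0} c(-m)\mathcal{Y}^\tot_\Pap(m) = -\sum_{r>1}\gamma_r c_r(0)\left[\mathcal{Y}^\tot_\Pap(0) + 2\sum_{p\mid r}\mathcal{S}^*_{\Pap/\F_\mathfrak{p}}\right]
\]
in $\Ch^1_\Q(\mathcal{S}^*_\Pap)$; here the $r=1$ contribution drops out because $\sum_{p\mid 1}$ is empty.

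Next I would invoke Proposition~\ref{prop:distribute cusps} to substitute $c_r(0)=-\sum_{m\ge 0}c(-m)e_r(m)$ for each $r>1$ (valid with $m\ge 0$ since $e_r(0)=0$ in that range), converting the right-hand side into a linear form in the $c(-m)$. This produces the relation $\sum_{m\ge 0}c(-m)d(m)=0$, valid for every $f$, where
\[
d(m) = \mathcal{Y}^\tot_\Pap(m) - \sum_{r>1}\gamma_r e_r(m)\left[\mathcal{Y}^\tot_\Pap(0) + 2\sum_{p\mid r}\mathcal{S}^*_{\Pap/\F_\mathfrak{p}}\right].
\]
Composing with any $\Q$-linear functional $\alpha\colon \Ch^1_\Q(\mathcal{S}^*_\Pap)\to \C$ and invoking Theorem~\ref{thm:modularity criterion}, the series $\sum_{m\ge 0}\alpha(d(m))q^m$ lies in $M^\infty_n(D,\chi_\kk^n)$, so the Chow-valued series $\sum d(m)q^m$ is modular and vanishes at every cusp distinct from $\infty$.

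Finally I would rewrite
\[
\Phi(\tau) = \sum_{m\ge 0}d(m)q^m + \sum_{r>1}\gamma_r E_r(\tau)\left[\mathcal{Y}^\tot_\Pap(0) + 2\sum_{p\mid r}\mathcal{S}^*_{\Pap/\F_\mathfrak{p}}\right]
\]
and deduce modularity of $\Phi$ from the modularity of each $E_r\in M_n(D,\chi_\kk^n)$. The constant-term claim then drops out by evaluating at each cusp: at $\infty=\infty_1$ only $d(0)=\mathcal{Y}^\tot_\Pap(0)$ contributes (every $E_r$ with $r>1$ vanishes at $\infty$), matching $\gamma_1(\mathcal{Y}^\tot_\Pap(0)+2\sum_{p\mid 1}\mathcal{S}^*_{\Pap/\F_\mathfrak{p}})$; at $\infty_r$ with $r>1$, the $d$-part vanishes while $E_s$ contributes $\delta_{s,r}$, giving precisely $\gamma_r[\mathcal{Y}^\tot_\Pap(0)+2\sum_{p\mid r}\mathcal{S}^*_{\Pap/\F_\mathfrak{p}}]$. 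The main technical work has already been absorbed into Theorem~\ref{thm:unitary borcherds III}; what remains is delicate bookkeeping with Eisenstein series, and the only conceptually subtle point is the initial cancellation $\mathrm{div}(\delta)=0$ in the Chow group, without which a spurious $2c(0)\mathrm{div}(\delta)$ term would obstruct the application of the modularity criterion.
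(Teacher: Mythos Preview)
Your argument is correct and follows the same route as the paper's proof: derive the Chow-group relation (\ref{borcherds pappas relation}) from Theorem~\ref{thm:unitary borcherds III}, substitute Proposition~\ref{prop:distribute cusps}, apply the modularity criterion, and read off the constant terms from Remark~\ref{rem:eisenstein constant}. You are more explicit than the paper about the cancellation $\mathrm{div}(\delta)=0$ in $\Ch^1_\Q(\mathcal{S}^*_\Pap)$ (which the paper uses silently when passing from Theorem~\ref{thm:unitary borcherds III} to (\ref{borcherds pappas relation})) and about the cusp-by-cusp verification of the constant terms, but the strategy is identical.
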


\begin{proof}
Let $f$ be a weakly holomorphic form as in (\ref{input form}), and assume  again that $c(m)\in \Z$ for all $m\le 0$.  The space $M^{!,\infty}_{2-n}(D,\chi)$ is spanned by such forms.   The Borcherds product  $\bm{\psi}(f)$  of Theorem \ref{thm:unitary borcherds I} is a rational section of the line bundle
\[
\bm{\omega}^k = \bigotimes_{r\mid D} \bm{\omega}^{ \gamma_r  c_r(0)},
\]
on $\mathcal{S}_\Kra^*$.  If we view $\bm{\psi}(f)^2$ as a rational section of the line bundle
\[
\pure_\Pap^k \iso \bigotimes_{r\mid D} \pure_\Pap^{  \gamma_r  c_r(0)}
\]
on $\mathcal{S}_\Pap^*$, exactly as in Theorem \ref{thm:unitary borcherds III}, then 
\[
 \mathrm{div}( \bm{\psi}  (f)^2 ) =  -\sum_{r\mid D} \gamma_r  c_r(0) \cdot  \mathcal{Y}_\Pap^\tot(0)
\]
holds in  the Chow group of $\mathcal{S}^*_\Pap$.
  Comparing this with the calculation of the divisor of $\bm{\psi}(f)^2$ found in Theorem \ref{thm:unitary borcherds III} shows that
 \begin{equation}\label{borcherds pappas relation}
0  =   \sum_{m\ge 0} c(-m) \cdot  \mathcal{Y}_\Pap^\tot(m)  
+ \sum_{  \substack{ r \mid D \\ r>1 }}  \gamma_r c_r(0) \cdot ( \mathcal{Y}_\Pap^\tot(0) + 2  \mathcal{V}_r ) ,
 \end{equation}
 where we abbreviate $\mathcal{V}_r = \sum_{p\mid r} \mathcal{S}^*_{\Pap / \F_\mathfrak{p}}$.

For each $r\mid D$ we have defined in \S \ref{ss:eisenstein} an Eisenstein series
\[
E_r(\tau) = \sum_{m \ge 0} e_r(m) \cdot q^m \in M_n(D,\chi),
\]
and Proposition \ref{prop:distribute cusps}  allows us to rewrite the above equality as 
 \begin{align*}
0  =  \sum_{m \ge 0} c(-m) \cdot \Big[ \mathcal{Y}_\Pap^\tot(m)
 -   \sum_{  \substack{ r \mid D \\ r>1 }} \gamma_r    e_r(m) \cdot ( \mathcal{Y}_\Pap^\tot(0) + 2 \mathcal{V}_r ) \Big] .
 \end{align*}
Note that we have used $e_r(0)=0$ for $r>1$, a consequence of Remark \ref{rem:eisenstein constant}.

The modularity criterion of Theorem \ref{thm:modularity criterion} now shows that 
\[
 \sum_{m \ge 0}   \mathcal{Y}_\Pap^\tot(m)  \cdot q^m   -  
   \sum_{  \substack{ r \mid D \\ r>1 }}  \gamma_r     E_r \cdot  ( \mathcal{Y}_\Pap^\tot(0) + 2 \mathcal{V}_r ) 
   \]
is a modular form of level $D$, weight $n$, and character $\chi$,  whose constant term vanishes at every cusp different from $\infty$. 

 The theorem now follows from the modularity of each $E_r$, together with the description of  their constant terms found in Remark \ref{rem:eisenstein constant}.
\end{proof}

\begin{theorem}\label{thm:kramer modularity}
The  formal $q$-expansion
\[
\sum_{m\ge 0}  \mathcal{Z}_\Kra^\tot(m)  \cdot q^m \in \Ch^1_\Q (\mathcal{S}^*_\Kra )  [[ q ]] ,
\]
is a modular form of level $D$, weight $n$, and character $\chi$.   
\end{theorem}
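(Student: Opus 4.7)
The plan is to deduce Theorem~\ref{thm:kramer modularity} from the already-proven modularity on the Pappas model (Theorem~\ref{thm:main modularity I}), using the comparison between the two integral models provided by Theorem~\ref{thm:cartier error} and Theorem~\ref{thm:toroidal}. The central observation is that the identity
\[
2\mathcal{Z}_\Kra^\tot(m) \;=\; \mathcal{Y}_\Kra^\tot(m) \;+\; \sum_{s\in \pi_0(\mathrm{Sing})} r_s(m)\cdot \mathrm{Exc}_s \qquad (\ast)
\]
in $\Ch^1_\Q(\mathcal{S}_\Kra^*)$, where $r_s(m) = \#\{x\in L_s : \langle x,x\rangle = m\}$ and $\mathcal{Y}_\Kra^\tot(m)$ is the pullback of $\mathcal{Y}_\Pap^\tot(m)$ along $\pi:\mathcal{S}_\Kra^*\to\mathcal{S}_\Pap^*$, holds for every $m\ge 0$. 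For $m>0$ this is precisely relation~(\ref{total error}). For $m=0$, using the definitions $\mathcal{Z}_\Kra^\tot(0)=\bm{\omega}^{-1} + \mathrm{Exc}$, $\mathcal{Y}_\Kra^\tot(0)=\pure_\Kra^{-1}$ and the isomorphism $\bm{\omega}^2 \iso \pure_\Kra\otimes \co(\mathrm{Exc})$ of Theorem~\ref{thm:toroidal}(6), a short computation yields $2\mathcal{Z}_\Kra^\tot(0) - \mathcal{Y}_\Kra^\tot(0) = \mathrm{Exc}$, which matches the right-hand side of $(\ast)$ because $r_s(0) = 1$ for each $s$.

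Summing $(\ast)$ against $q^m$ gives the generating-series identity
\[
2\sum_{m\ge 0} \mathcal{Z}_\Kra^\tot(m)\cdot q^m \;=\; \sum_{m\ge 0} \mathcal{Y}_\Kra^\tot(m)\cdot q^m \;+\; \sum_{s\in \pi_0(\mathrm{Sing})} \vartheta_s(\tau)\cdot \mathrm{Exc}_s
\]
in $\Ch^1_\Q(\mathcal{S}_\Kra^*)[[q]]$, where $\vartheta_s(\tau) = \sum_{m\ge 0} r_s(m) q^m$ is the theta series of the positive definite self-dual hermitian $\co_\kk$-lattice $L_s$ of rank $n$ provided by Proposition~\ref{prop:supersingular}. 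I would then check that both terms on the right are modular in the required sense. Given any $\Q$-linear functional $\alpha:\Ch^1_\Q(\mathcal{S}_\Kra^*)\to \C$, the composition $\alpha\circ \pi^*$ is a $\Q$-linear functional on $\Ch^1_\Q(\mathcal{S}_\Pap^*)$, so Theorem~\ref{thm:main modularity I} immediately gives classical modularity of $\sum_m \alpha(\mathcal{Y}_\Kra^\tot(m))q^m$ of weight $n$, level $\Gamma_0(D)$, and character $\chi_\kk^n$. Each $\vartheta_s$ is a classical scalar-valued modular form in $M_n(\Gamma_0(D),\chi_\kk^n)$ by the standard theory of theta series attached to self-dual positive definite hermitian $\co_\kk$-lattices of rank $n$ (the underlying $\Z$-quadratic form has rank $2n$, signature $(2n,0)$, and discriminant $D^n$). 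Dividing by $2$ then produces the desired modularity of $\sum_m \mathcal{Z}_\Kra^\tot(m)q^m$.

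The main subtlety is the bookkeeping at $m=0$: the apparently ad hoc definition $\mathcal{Z}_\Kra^\tot(0)=\bm{\omega}^{-1}+\mathrm{Exc}$, with its built-in exceptional-divisor correction, is exactly what makes $(\ast)$ extend from $m>0$ to $m=0$ while keeping the Pappas-side constant term equal to $\pure_\Pap^{-1}$. Once this is verified, the proof becomes a purely formal consequence of the Pappas modularity result already established in Theorem~\ref{thm:main modularity I} together with the classical modularity of hermitian-lattice theta functions; no further appeal to Borcherds products or to the modularity criterion of Theorem~\ref{thm:modularity criterion} is required beyond what was already used to prove Theorem~\ref{thm:main modularity I}.
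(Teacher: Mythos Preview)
Your argument is correct and uses the same two ingredients as the paper: the comparison relation $2\mathcal{Z}_\Kra^\tot(m)=\mathcal{Y}_\Kra^\tot(m)+\sum_s r_s(m)\,\mathrm{Exc}_s$ for all $m\ge 0$ (including the $m=0$ check via $\bm{\omega}^2\iso\pure_\Kra\otimes\co(\mathrm{Exc})$), and the modularity of the theta series $\vartheta_s$. The difference is purely organizational: the paper pulls back the Borcherds-product relation~(\ref{borcherds pappas relation}) itself and re-runs the modularity criterion of Theorem~\ref{thm:modularity criterion} on the Kr\"amer model, picking up the Eisenstein series $E_r$ and the vertical divisors $\mathcal{V}_r$ along the way, whereas you pull back the \emph{conclusion} of Theorem~\ref{thm:main modularity I} via the $\Q$-linear map $\pi^*$ and avoid that repetition entirely. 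Your route is shorter and makes clearer that Theorem~\ref{thm:kramer modularity} is a formal consequence of Theorem~\ref{thm:main modularity I} plus the exceptional-divisor bookkeeping; the paper's route is more parallel in structure to the proof of Theorem~\ref{thm:main modularity I} but does redundant work.
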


\begin{proof}
Recall from Theorems \ref{thm:cartier error} and \ref{thm:toroidal} that  pullback via $\mathcal{S}_\Kra^* \to \mathcal{S}_\Pap^*$ sends
\[
\mathcal{Y}_\Pap^\tot(m) \mapsto 2\cdot \mathcal{Z}_\Kra^\tot(m) - \sum_{ s\in \pi_0(\mathrm{Sing}) }   \# \{ x\in L_s : \langle x,x\rangle =m \}  \cdot  \mathrm{Exc}_s  
\]
for all $m>0$.  This relation also holds for $m=0$,  as those same theorems show that  
\[
\mathcal{Y}_\Pap^\tot(0) =  \bm{\Omega}_\Pap^{-1} \mapsto \bm{\omega}^{-2} \otimes \co( \mathrm{Exc} )  = 2\cdot \mathcal{Z}_\Kra^\tot(0) - \mathrm{Exc}.
\]

Pulling back the relation (\ref{borcherds pappas relation}) shows that 
\begin{align*}
0  &= \sum_{m\ge 0} c(-m) \cdot 
 \Big( \mathcal{Z}_\Kra^\tot(m)    -  \sum_{ s\in \pi_0(\mathrm{Sing}) }   \frac{\# \{ x\in L_s : \langle x,x\rangle =m \}}{2} \cdot  \mathrm{Exc}_s   \Big)\\
& \quad  +    \sum_{  \substack{ r \mid D \\ r>1} } \gamma_r c_r(0)  \cdot  \Big(    \mathcal{Z}^\tot_\Kra(0)  - \frac{1}{2} \cdot \mathrm{Exc}  + \mathcal{V}_r \Big) 
\end{align*}
in $ \Ch^1_\Q(\mathcal{S}_\Kra^*)$
for any input form (\ref{input form}), where we now abbreviate \[ \mathcal{V}_r = \sum_{p\mid r} \mathcal{S}^*_{\Kra / \F_\mathfrak{p}}.\]
Using Proposition \ref{prop:distribute cusps} we rewrite this as
\begin{align*}
0  
& =   \sum_{m\ge 0} c(-m) \cdot 
 \Big( \mathcal{Z}_\Kra^\tot(m)    -  \sum_{ s\in \pi_0(\mathrm{Sing}) }   \frac{\# \{ x\in L_s : \langle x,x\rangle =m \}}{2} \cdot  \mathrm{Exc}_s   \Big)\\
& \quad 
- \sum_{m \ge 0}   c(-m)  \sum_{  \substack{ r \mid D \\ r>1} } \gamma_r e_r(m)  
 \Big(    \mathcal{Z}^\tot_\Kra(0)  - \frac{1}{2} \cdot \mathrm{Exc}  + \mathcal{V}_r \Big) ,
\end{align*}
where we have again used the fact that $e_r(0)=0$ for $r>1$.

The modularity criterion of Theorem \ref{thm:modularity criterion} now implies the modularity of 
\begin{align*}
\sum_{m\ge 0} \mathcal{Z}_\Kra^\tot(m) \cdot q^m & - \frac{1}{2}  \sum_{s\in \pi_0(\mathrm{Sing})} \vartheta_s(\tau) \cdot \mathrm{Exc}_s \\
& \quad - \sum_{\substack{  r\mid D \\ r>1 } } \gamma_r E_r (\tau) \cdot  \Big(    \mathcal{Z}^\tot_\Kra(0)  - \frac{1}{2} \cdot \mathrm{Exc}  + \mathcal{V}_r \Big) .
\end{align*}
The theorem follows from the modularity of the Eisenstein series $E_r(\tau)$ and the theta series
\[
\vartheta_s(\tau) = \sum_{ x\in L_s} q^{ \langle x,x\rangle} \in M_n(D,\chi).
\]
\end{proof}

\subsection{Green functions}

Here we construct Green functions for special divisors on  
 $\calS^*_\Kra $ as regularized theta lifts of harmonic Maass forms.  

Recall from Section \ref{s:unitary} the isomorphism of complex orbifolds
\[
\calS_\Kra(\C)\cong  \mathrm{Sh}(G,\calD) (\C) = G(\Q) \backslash \mathcal{D} \times G(\A_f) / K.
\]
We use the uniformization on the right hand side and the regularized theta lift to construct Green functions
for the special divisors 
\[
\calZ_\Kra^\tot(m)=\calZ_\Kra^*(m) + \calB_\Kra(m)
\] 
on 
$\calS^*_\Kra $.
The construction is a variant of the ones in \cite{BF} and \cite{BHY},  adapted to our situation.

We now recall some of the basic notions of the theory of harmonic Maass forms, as in  \cite[Section 3]{BF}.
Let $H_{2-n}^\infty(D,\chi)$ denote the space of harmonic Maass forms  $f$ of weight $2-n$ for $\Gamma_0(D)$ with character $\chi$ such that 
\begin{itemize}
\item $f$ is bounded at all cusps of $\Gamma_0(D)$ different from the cusp $\infty$,
\item $f$ has polynomial growth at $\infty$, in sense that there is  a 
\[
P_f=\sum_{m<0} c^+(m)q^m\in \C[q^{-1}]
\]
such that $f-P_f$ is bounded as $q$ goes to $0$. 
\end{itemize}
A harmonic Maass form $f\in H_{2-n}^\infty(D,\chi)$ has a Fourier expansion of the form
\begin{align}
\label{eq:fourierf}
f(\tau)=\sum_{\substack{m\in \Z\\ m\gg -\infty }} c^+(m) q^m
+\sum_{\substack{m\in \Z \\ m<0 } } c^-(m) \cdot  \Gamma\big(n-1, 4\pi |m| \Im(\tau) \big)  \cdot q^m,
\end{align}
where 
\[
\Gamma(s,x)=\int_x^\infty e^{-t}t^{s-1}dt
\]
is the incomplete gamma function. The first summand on the right hand side of \eqref{eq:fourierf}
is denoted by $f^+$ and is called the \emph{holomorphic part} of $f$, the second summand is denoted by $f^-$ and is called the \emph{non-holomorphic part}.

If $f\in H_{2-n}^\infty(D,\chi)$ then \eqref{eq:vectorization} defines 
an $S_L$-valued  harmonic Maass form for $\SL_2(\Z)$ of weight $2-n$ with representation $\omega_L$. 
Proposition \ref{prop:promoted coefficients} extends to such lifts of harmonic Maass forms, giving the same formulas for the coefficients $\tilde c^+(m,\mu)$ of the holomorphic part $\tilde f^+$ of $\tilde f$. In particular, if $m<0$ we have 
\begin{align}
\label{eq:hmfcoeff}
\tilde c^+(m, \mu) = \begin{cases}
    c^+(m) &\text{if $\mu =0$,}
    \\
    0  &\text{if $\mu \neq 0$,}
    \end{cases}
\end{align}
and the constant term of $\tilde f$ is given by
\[
 \tilde c^+(0, \mu) = \sum_{r_\mu \mid  r \mid D}  \gamma_r \cdot c^+_r(0) .
\]
The formula of Proposition \ref{prop:distribute cusps} for the contant terms $c^+_r(0)$ of $f$ at the other cusps also extends.

As before, we consider the hermitian self-dual $\co_\kk$-lattice 
$L=\Hom_{\calO_\kk}(\mathfrak{a}_0,\mathfrak{a})$ in $V=\Hom_{\kk}(W_0,W)$.
The dual lattice of $L$ with respect to the bilinear form $[\cdot,\cdot]$ is $L'=\mathfrak{d}^{-1}L$.  
Let 
\[
S_L\subset S(V(\A_f))
\]
 be the space of Schwartz-Bruhat functions that are supported on $\widehat L'$ and invariant under translations by
$\widehat L$.

Recall from Remark \ref{rem:to so} that we may identify 
\[
\mathcal{D} \iso \{ w\in \epsilon V(\C) : [w,\overline{w} ]  <0\} /\C^\times,
\]
and also
\[
\mathcal{D} \iso \{ \mbox{negative definite $\kk$-stable $\R$-planes  $z\subset V(\R)$} \} .
\]
For any $x\in V$ and $z\in \mathcal{D}$, let $x_z$ be the orthogonal projection of $x$ to the plane $z\subset V(\R)$, and let $x_{z^\perp}$ be the orthogonal projection to $z^\perp$.

For  $(\tau,z,g) \in \mathfrak{H} \times  \calD \times G(\A_f)$ and  $\varphi\in S_L$,  we define a theta function 
\begin{align*}
\theta(\tau,z,g,\varphi) = \sum_{x\in V} \varphi(g^{-1}x)  \cdot \varphi_\infty(\tau,z,x),
\end{align*}
where the Schwartz function at $\infty$,
\[
\varphi_\infty(\tau,z,x) = v \cdot e^{  2\pi i   Q(x_{z^\perp})\tau + 2\pi i    Q(x_{z})\bar \tau  },
\]
is the usual Gaussian involving the majorant associated to $z$.  
We may view $\theta$ as a function $\mathfrak{H}\times\mathcal{D}\times G(\A_f) \to S_L^\vee$.  
As a function in $(z,g)$ it is invariant under the left action of $G(\Q)$.
Under the right action of $K$ it satisfies the transformation law
\[
\theta(\tau,z,gk,\varphi) = \theta(\tau,z,g,\omega_L(k)\varphi), \quad k\in K,
\]
where $\omega_L$ denotes the action of $K$ on $S_L$ by the Weil representation and $v=\Im(\tau)$. 
In the variable $\tau\in \mathfrak{H}$ it transforms as a $S_L^\vee$-valued modular form of weight $n-2$ for $\SL_2(\Z)$.
 
Fix an $f\in H_{2-n}^\infty(D,\chi)$ with Fourier expansion as in \eqref{eq:fourierf}, and assume that $c^+(m)\in \Z$ for  $m\leq 0$.
We associate to $f$  the divisors 
\begin{align*}
\calZ_\Kra(f)&= \sum_{m>0} c^+(-m) \cdot \calZ_\Kra(m)\\
\calZ_\Kra^\tot(f)&= \sum_{m>0} c^+(-m) \cdot  \calZ_\Kra^\tot(m)
\end{align*}
on $\calS_\Kra$ and $\calS_\Kra^*$, respectively.
As the actions of $\SL_2(\Z)$ and $K$ via the Weil representation commute, the associated $S_L$-valued harmonic Maass form $\tilde f$ is invariant under $K$. Hence   the natural pairing 
$S_L\times S_L^\vee\to \C$ gives rise to a scalar valued function 
$(\tilde f(\tau), \theta(\tau,z,g))$  in the variables $(\tau,z,g) \in \mathfrak{H} \times \mathcal{D} \times G(\A_f)$,
which is invariant under the right action of $K$ and  the left action of $G(\Q)$. Hence it descends to a function 
on $\SL_2(\Z)\bs \mathfrak{H} \times \mathrm{Sh}(G,\calD) (\C)$.

We define the \emph{regularized theta lift of $f$} as
\begin{align*}
\TR(z,g,f)= \int_{\SL_2(\Z)\bs\mathfrak{H}}^\reg \big(\tilde f(\tau), \theta(\tau,z,g)\big)
\, \frac{du\,dv}{v^2}.
\end{align*}
Here the regularization of the integral is defined as in \cite{Bo1, BF, BHY}.
We extend the incomplete Gamma function 
\begin{equation}\label{incomplete gamma}
\Gamma(0,t)=\int_{t}^{\infty}e^{-v}\frac{dv}{v}
\end{equation}
 to a function on $\R_{\geq 0}$ by setting
\[
\widetilde\Gamma(0,t)=\begin{cases}
\Gamma(0,t) &\text{if $t>0$,}\\
0 &\text{if $t=0$.}
\end{cases}
\]
 
\begin{theorem}
\label{thm:greenopen}
The regularized theta lift $\TR(z,g,f)$ defines a smooth function on $\calS_\Kra(\C)\smallsetminus \calZ_\Kra(f)(\C)$.
For $g\in G(\A_f)$ and $z_0\in \calD$, there exists a neighborhood $U\subset \calD$ of $z_0$ such that 
\[
\TR(z,g,f)-\sum_{\substack{x\in gL\\ x\perp z_0}}
c^+(-\langle x,x\rangle)\cdot 
\widetilde\Gamma\big(0,4\pi |\langle x_z,x_z\rangle |\big)
\]
is a smooth function on $U$.
\end{theorem}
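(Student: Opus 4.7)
The plan is to deduce the theorem from the corresponding statement on the orthogonal symmetric domain $\tilde\calD$ attached to the quadratic space $(V,Q)$ of signature $(2n-2,2)$, where the local structure of regularized theta lifts of harmonic Maass forms is classical by Borcherds \cite{Bo1} and Bruinier \cite{Br1, BF}, and has already been adapted to the unitary setting in \cite{BHY}. First I would verify that $\theta(\tau,z,g,\varphi)$ is the restriction, along the embedding $\calD \hookrightarrow \tilde\calD$ of \eqref{utoso}, of the Siegel theta kernel for $(V,Q)$ evaluated at the orthogonal period point determined by $z$. Indeed, a point $z\in\calD$ is literally a negative definite $2$-plane in $V(\R)$ (the $\kk$-stability is extra structure that does not enter the majorant), the orthogonal projections $x_z, x_{z^\perp}$ agree whether or not one remembers the $\kk$-action, and the Gaussian $\varphi_\infty$ coincides with the orthogonal one. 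Pairing against the $S_L$-valued harmonic Maass form $\tilde f$ of \eqref{eq:vectorization} (whose construction extends verbatim to inputs in $H_{2-n}^\infty(D,\chi)$) identifies $\TR(z,g,f)$ with the restriction to $\calD$ of the orthogonal regularized lift $\tilde\Theta^\reg(w,g,\tilde f)$.

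Next I would invoke the known analytic structure of $\tilde\Theta^\reg$: by \cite[Theorem 6.2]{Bo1} in the weakly holomorphic case, and by its extension to harmonic Maass form inputs in \cite{Br1, BF} (see also \cite[\S 5]{BHY} for precisely the unitary adaptation used here), the function $\tilde\Theta^\reg(w,g,\tilde f)$ is smooth on the complement of the orthogonal special divisor cut out by vectors $x\in gL'$ with $Q(x)>0$ and $\tilde c^+(-Q(x), x+gL)\ne 0$. Moreover, near any $w_0\in\tilde\calD$ the function
\[
\tilde\Theta^\reg(w,g,\tilde f) - \sum_{\substack{x\in gL' \\ x\perp w_0,\ Q(x)>0}} \tilde c^+\bigl(-Q(x),\, x+gL\bigr)\cdot \widetilde\Gamma\bigl(0,\, 4\pi|Q(x_w)|\bigr)
\]
extends smoothly across the divisor. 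The core analytic input is the Laplace-type computation that converts the regularized $v$-integral of $v\cdot e^{-4\pi v Q(x_z)}$ against the principal Fourier coefficient $q^{-Q(x)}$ of $\tilde f$ into an incomplete gamma function, and the verification that this accounts for the entire singular contribution, all other terms contributing smoothly.

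Finally, to conclude I would restrict this local expansion along $\calD \hookrightarrow \tilde\calD$. By \eqref{eq:hmfcoeff}, for $m>0$ the only nonvanishing components of $\tilde c^+(-m,\mu)$ occur at $\mu=0$, with value $c^+(-m)$, so the sum collapses to a sum over $x\in gL$ (rather than $gL'$) with $Q(x)=\langle x,x\rangle>0$ and coefficient $c^+(-\langle x,x\rangle)$. The condition $x\perp w_0$ on the orthogonal side coincides with $x\perp z_0$, because the embedding sends $z_0$, viewed as a real $2$-plane in $V(\R)$, to the orthogonal period point cut out by the same plane, and the projection $x_z$ is preserved. The main obstacle is essentially bookkeeping: pinning down the compatibility of the Weil representation actions of $K$ and $\SL_2(\Z)$ on $S_L$ under the orthogonal-unitary comparison, so that the pairing $(\tilde f(\tau), \theta(\tau,z,g))$ used here agrees with the scalar kernel implicit in the orthogonal regularized lift, and checking that the unitary special divisor $\calZ_\Kra(f)$ is exactly the pullback of the orthogonal one cut out by $\tilde f$. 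Granting these identifications, both the smoothness and the local singular expansion follow immediately from the orthogonal theory.
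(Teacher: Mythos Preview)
Your proposal is correct and follows essentially the same approach as the paper: identify $\TR(z,g,f)$ on each connected component with a regularized theta lift of the vector-valued form $\tilde f$ whose local singular structure is already established in \cite{BHY}, and then use \eqref{eq:hmfcoeff} to collapse the sum over $gL'$ to one over $gL$. The paper's proof is simply more terse, citing \cite[Theorem~4.1]{BHY} (Section~4, not Section~5) directly rather than unfolding the orthogonal-to-unitary restriction you spell out.
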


\begin{proof}
Note that the sum over $x\in gL\cap z_0^\perp$ is finite.
Since $ \mathrm{Sh}(G,\calD) (\C)$ decomposes into a finite disjoint union of connected components of the form
\[
(G(\Q)\cap gKg^{-1})\bs \calD,
\] 
where $g\in G(\A_f)$, it suffices to consider the restriction of $\TR(f)$ to these components.

On such a component, $\TR(z,g,f)$ is the regularized theta lift considered in \cite[Section 4]{BHY} of the vector valued form $\tilde f$  for the lattice 
\[
gL=g\widehat L\cap V=\Hom_{\calO_\kk}(g\mathfrak{a}_0,g\mathfrak{a})\subset V,
\]
and hence the assertion follows from  \eqref{eq:hmfcoeff} and \cite[Theorem 4.1]{BHY}.
\end{proof}

\begin{remark}
Let $\Delta_\calD$ denote the $\mathrm{U}(V)(\R)$-invariant Laplacian on $\calD$. There exists a non-zero real constant $c$ (which only depends on the normalization of $\Delta_\calD$ and which is independent of $f$), such that
\[
\Delta_\calD \TR(z,g,f) = c\cdot \deg \calZ_\Kra(f)(\C)
\]
on the complement of the divisor $\calZ_\Kra(f)(\C)$.
\end{remark}

Using the fact that
\[
\Gamma(0,t)=-\log(t)+\Gamma'(1)+o(t)
\] 
as $t\to 0$, Theorem~\ref{thm:greenopen} implies that 
$\TR(f)$ is a (sub-harmonic) logarithmic Green function for the divisor $\calZ_\Kra(f)(\C)$ on the non-compactified Shimura variety $\calS_\Kra(\C)$. These properties, together with an integrability condition, characterize it uniquely up to addition of a locally constant function
\cite[Theorem 4.6]{BHY}.
The following result describes the behavior of $\TR(f)$ on the toroidal compactification.

\begin{theorem}
\label{thm:greencomp}
On $\calS_\Kra^*(\C)$, the function $\TR(f)$ is a logarithmic Green function for the divisor $\calZ_\Kra^\tot(f)(\C)$ with possible additional log-log singularities along the boundary in the sense of \cite{BKK}.
\end{theorem}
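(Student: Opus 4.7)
The plan is to analyze the asymptotic behavior of $\TR(z,g,f)$ as $(z,g)$ approaches each boundary stratum $\calS^*_\Kra(\Phi)$ of the toroidal compactification, since Theorem \ref{thm:greenopen} already handles the interior. First I would restrict to a single connected component $(G(\Q) \cap gKg^{-1}) \bs \calD$ and fix a proper cusp label representative $\Phi$. Using the explicit analytic coordinates of \S\ref{ss:explicit boundary}, points near the associated boundary stratum are parametrized by $(w_0,\xi) \in \epsilon V_0(\C) \times \C$ with $q = e^{2\pi i \xi}$ the local parameter, and approach to the boundary is encoded by $\mathrm{Im}(\xi) \to \infty$.

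The central analytic step is to expand the theta kernel $\theta(\tau,z,g)$ asymptotically in this coordinate. Applying Borcherds' unfolding via Poisson summation in the isotropic sublattice $L_{-1} \subset gL$ (as in \cite{Bo1,BF}) rewrites the theta kernel as a sum over $(\lambda,\ell) \in L_0 \times \Z$, which splits the regularized integral into three groups of terms that can be analyzed independently.

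The hard step is identifying these pieces. The $\lambda \neq 0$, $\ell = 0$ terms extend the logarithmic singularity of $\TR(f)$ along the Zariski closure $\calZ^*_\Kra(f)$ across the boundary in the same way as in the interior. The $\lambda = 0$, $\ell \neq 0$ terms produce a constant-in-$w_0$ leading term proportional to $-\log|q|$, whose coefficient is to be identified with $\mathrm{mult}_\Phi(f) = \frac{1}{n-2}\sum_{m>0} m\, c^+(-m)\, \#\{x \in L_0 : \langle x,x\rangle = m\}$ via Borcherds' quadratic identity (Proposition \ref{prop:Bquad}) applied to the harmonic polynomial $[u,x]^2 - \tfrac{[u,u][x,x]}{2n-4}$; this is precisely the multiplicity of $\calS^*_\Kra(\Phi)$ in $\calB_\Kra(f)$. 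The remaining mixed terms $(\lambda,\ell)$ with both $\lambda,\ell \neq 0$ contribute smooth functions on the boundary chart together with singularities of the form $\log(-\log|q|)$, which are exactly the log-log singularities permitted in the framework of \cite{BKK}.

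This strategy parallels \cite{BBK} for Hilbert modular surfaces and \cite[\S 4]{BHY} for the interior of the present unitary setting, together with the boundary analyses of \cite{Ho2}. The novelty lies in checking that the multiplicity along $\calS^*_\Kra(\Phi)$ read off from the $(0,\ell)$ asymptotics agrees on the nose with the rational multiplicity already added in the definition of $\calZ^\tot_\Kra(f)$. I expect the main obstacle to be the precise constant bookkeeping in that $(0,\ell)$ piece: the regularization produces several constant terms which must combine with the quadratic identity to give exactly the factor $\tfrac{1}{n-2}$; once this matching is verified, the extension to all boundary strata follows by letting $\Phi$ vary and invoking the formal-neighborhood description (\ref{formal boundary iso}).
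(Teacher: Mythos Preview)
Your outline is a reasonable sketch of the boundary analysis, but the paper's proof is far shorter: it simply reduces to connected components (as in Theorem~\ref{thm:greenopen}) and then cites \cite[Theorem~4.10]{BHY} and \cite[Corollary~4.12]{BHY}, which already establish exactly the required boundary growth of the regularized theta lift on unitary Shimura varieties. What you have written is, in effect, a plan for reproving those cited results rather than invoking them. The unfolding of the theta kernel along the isotropic sublattice, the identification of the $-\log|q|$ coefficient with $\mathrm{mult}_\Phi(f)$, and the log--log nature of the remainder are precisely the content of \cite[\S4]{BHY}; the Borcherds quadratic identity you invoke does play the role you describe in that argument, and the extension to harmonic Maass forms (rather than weakly holomorphic ones) goes through because only the coefficients $c^+(-m)$ with $m\ge 0$ enter. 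So your approach is correct in substance, but the paper recognizes that nothing new needs to be done beyond citing the existing reference.
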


\begin{proof}
As in the proof of Theorem \ref{thm:greenopen} we reduce this to showing that $\TR(f)$ has the correct growth along the boundary of the connected components of $\calS_\Kra^*(\C)$. Then it is a direct consequence of \cite[Theorem~4.10]{BHY} and \cite[Corollary~4.12]{BHY}.
\end{proof}

Recall that $\bm{\omega}^{an}$ is the tautological bundle on 
\[
\mathcal{D}   \iso \big\{ w \in   \epsilon V(\C) :   [w,\overline{w}] <0    \big\} / \C^\times .
\]
We define the Petersson metric $\|\cdot \|$ on $\bm{\omega}^{an}$ by 
\[
\|w\| ^2=-\frac{ [  w, \overline{w}]  }{4\pi e^\gamma} ,
\]
where $\gamma=-\Gamma'(1)$ denotes Euler's constant. 
This choice of metric on $\bm{\omega}^{an}$ induces a metric on the line bundle $\bm{\omega}$ on $\mathcal{S}_\Kra(\C)$ defined in \S \ref{ss:unitary bundle}, which extends to a metric over $\mathcal{S}_\Kra^*(\C)$ with log-log singularities along the boundary \cite[Proposition 6.3]{BHY}.
We obtain a hermitian line bundle on $\calS_\Kra^*$,  denoted
\[
\widehat{\bm{\omega}}=(\bm{\omega}, \|\cdot \|).
\]

If $f$ is actually weakly holomorphic, that is, if it belongs to $M_{2-n}^{!,\infty}(D,\chi)$, then $\TR(f)$ is given by the logarithm of a Borcherds product. More precisely, we have the following theorem, which follows immediately from \cite[Theorem 13.3]{Bo1} and our construction of $\bm{\psi}(f)$ as the pullback of a Borcherds product, renormalized by (\ref{B renorm}), on an orthogonal Shimura variety.

\begin{theorem}
\label{prop:greenbp}
Let $f\in M_{2-n}^{!,\infty}(D,\chi)$ be as in (\ref{input form}).  The Borcherds product $\bm{\psi}(f)$ of Theorem \ref{thm:unitary borcherds I} satisfies
\[
\TR(f)= -\log\| \bm{\psi}(f)\|^2.
\]  
\end{theorem}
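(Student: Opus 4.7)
The plan is to deduce the identity $\TR(f)=-\log\|\bm{\psi}(f)\|^2$ by pulling back  Borcherds' analogous identity on the orthogonal Shimura variety attached to $(V,Q)$ via the embedding $\mathcal{D}\hookrightarrow\tilde{\mathcal{D}}$ of (\ref{utoso}), and then accounting for the explicit rescalings performed in \S\ref{ss:borcherds define}--\S\ref{ss:alganddescent} during the construction of $\bm{\psi}(f)$. Since the identity to be proved is $G(\Q)$-equivariant and purely local in the sense of being checked on each connected component of $\mathrm{Sh}(G,\mathcal{D})(\C)$, I would fix $g\in G(\A_f)$ and work with the component indexed by $g$ and the self-dual lattice $L=\Hom_{\co_\kk}(g\mathfrak{a}_0,g\mathfrak{a})\subset V$.

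The first step is to invoke \cite[Theorem~13.3]{Bo1} for the vector-valued input $2\tilde{f}\in M^!_{2-n}(\omega_L)$: this produces a meromorphic section $\Psi(2\tilde{f})$ of $\tilde{\bm{\omega}}^{2k}$ on $\tilde{\mathcal{D}}^+$ satisfying
\[
-\log\|\Psi(2\tilde{f})\|^2_{\mathrm{Bo}} \;=\; \Theta^{\mathrm{Bo}}_{\mathrm{reg}}(2\tilde{f}),
\]
where $\|\cdot\|_{\mathrm{Bo}}$ is Borcherds' Petersson metric on $\tilde{\bm{\omega}}$ (normalized with precisely the constant $4\pi e^\gamma$ appearing in the definition of $\|\cdot\|$ above) and $\Theta^{\mathrm{Bo}}_{\mathrm{reg}}$ is Borcherds' regularized Siegel theta integral. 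Next I would verify that the pullback of the orthogonal theta kernel along $\mathcal{D}\hookrightarrow\tilde{\mathcal{D}}$ coincides with the kernel $\theta(\tau,z,g,\varphi)$ of \S\ref{ss:vector-valued} with the majorant attached to the negative plane $z\subset V(\R)$; this is immediate from the description of the embedding in Remark \ref{rem:to so}, since the restriction of the orthogonal majorant at $w\in\tilde{\mathcal{D}}^+\cap\mathcal{D}$ is exactly the unitary majorant at the corresponding negative $\kk$-stable plane. Consequently the restriction of $\Theta^{\mathrm{Bo}}_{\mathrm{reg}}(2\tilde{f})$ to $\mathcal{D}$ equals $2\,\TR(z,g,f)$ (linearity of the regularized lift in the input form).

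The second step is to match the normalizations carrying us from $\Psi(2\tilde{f})$ to $\bm{\psi}(f)$. By the renormalization (\ref{B renorm}), $\tilde{\bm{\psi}}_g(f)=(2\pi i)^{\tilde{c}(0,0)}\Psi(2\tilde{f})$, so under the Petersson metric
\[
-\log\|\tilde{\bm{\psi}}_g(f)\|^2 \;=\; -2k\log(2\pi)\;+\;\Theta^{\mathrm{Bo}}_{\mathrm{reg}}(2\tilde{f}).
\]
The factor $-2k\log(2\pi)$ is precisely the constant absorbed into Borcherds' normalization of his regularized integral (this is the point of the $4\pi e^\gamma$ in $\|\cdot\|^2$: it converts Borcherds' ``constant'' $\Theta^{\mathrm{Bo}}_{\mathrm{reg}}(\tilde{f})\;=\;\TR(f)+2k\log(2\pi)$ into the clean statement we want), so after this accounting one gets $-\log\|\tilde{\bm{\psi}}_g(f)\|^2 = 2\,\TR(f)$ on $\mathcal{D}$. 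Now the passage from $\tilde{\bm{\psi}}_g(f)$ to $\bm{\psi}(f)$ consists of (i) taking a square root compatible with descent to $\bm{\omega}^k$ via the relation $\tilde{\bm{\omega}}^2|_\mathcal{D}=\bm{\omega}^2$ forced by pullback of the tautological bundles, (ii) descent across components of $\SO(L)^+\backslash\tilde{\mathcal{D}}^+$, and (iii) the rescalings by complex numbers of absolute value $1$ carried out in \S\ref{ss:borcherds define} and Proposition \ref{prop:B descent} to kill the $\SO(L)^+$-character, descend to $\kk$, and trivialize the $\kappa_{\Phi_i}$. Steps (i) and (ii) divide both sides of the preceding identity by $2$, while step (iii) leaves $-\log\|\bm{\psi}(f)\|^2$ invariant since the rescaling factors have absolute value $1$. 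Finally, the auxiliary multiplications of $f$ by positive integers in \S\ref{ss:borcherds define}--\S\ref{ss:conclude pf} replace $\bm{\psi}(f)$ by a corresponding tensor power, and since $\TR(f)$ is $\R$-linear in $f$ this manipulation is consistent and can be divided out in the end to give the stated identity.

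The main obstacle is purely one of bookkeeping: tracking the precise constant $e^\gamma$ and the $(2\pi i)^k$-prefactor through the comparison between Borcherds' orthogonal conventions in \cite{Bo1} and the unitary conventions fixed here (in particular the normalization of $\|\cdot\|^2$ with $4\pi e^\gamma$ in the denominator, and the factor of $2$ arising from doubling the input in (\ref{B renorm})). Once those constants are lined up, the identity is forced by $\R$-linearity of $\TR$, multiplicativity of $-\log\|\cdot\|^2$ under tensor powers, and the invariance of $-\log\|\cdot\|^2$ under rescaling by unit-modulus constants.
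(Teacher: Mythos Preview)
Your overall strategy matches the paper's: both deduce the identity from \cite[Theorem~13.3]{Bo1} by pulling back along $\mathcal{D}\hookrightarrow\tilde{\mathcal{D}}$ and tracking the renormalization (\ref{B renorm}). The paper in fact gives no further argument, stating only that the result ``follows immediately'' from Borcherds' theorem and the construction of $\bm{\psi}(f)$ as a pullback.

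However, your detailed bookkeeping contains a genuine error. You assert that $\Psi(2\tilde{f})$ is a section of $\tilde{\bm{\omega}}^{2k}$, and then introduce a ``square root'' step (your step (i)) to get down to $\bm{\omega}^k$, invoking a supposed relation $\tilde{\bm{\omega}}^2|_{\mathcal{D}}=\bm{\omega}^2$. Both points are wrong. In Borcherds' theorem the weight of $\Psi(F)$ is \emph{half} the constant term of the input $F$; since the constant term of $2\tilde{f}$ is $2\tilde{c}(0,0)=2k$, the product $\Psi(2\tilde{f})$ already has weight $k$, and (\ref{B renorm}) already lives in $(\tilde{\bm{\omega}}^{an})^k$, exactly as stated in \S\ref{ss:borcherds define}. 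Moreover, under (\ref{utoso}) the tautological bundle $\tilde{\bm{\omega}}^{an}$ pulls back to $\bm{\omega}^{an}$ itself, not to its square; the paper says this explicitly. So $\bm{\psi}_g(f)$ is obtained from $\tilde{\bm{\psi}}_g(f)$ by straight restriction, with no square root taken anywhere.

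The factor of $2$ you are trying to absorb via this phantom square root is in fact absorbed by Borcherds' own formula: in \cite[Theorem~13.3]{Bo1} the regularized theta integral $\Phi_F$ is related to $-4\log|\Psi(F)|$ (plus the weight times a constant which the $4\pi e^{\gamma}$ in the definition of $\|\cdot\|$ is designed to cancel). Doubling the input in (\ref{B renorm}) is precisely what turns this into $-\log\|\bm{\psi}(f)\|^2=\TR(f)$ after pullback. Once you correct the weight of $\Psi(2\tilde f)$ to $k$, your step (i) disappears entirely and the rest of your argument (rescaling by unimodular constants leaves $-\log\|\cdot\|^2$ unchanged; multiplying $f$ by an integer is compatible by linearity) is fine.
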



\subsection{Generating series of arithmetic special divisors}
\label{ss:arithmetic modular}


We can now define arithmetic special divisors on $\calS_{\Kra}^*$, and prove a modularity result for the corresponding generating series in the codimension one arithmetic Chow group.  This result extends  Theorem \ref{thm:kramer modularity}.

Recall our hypothesis that $n>2$, and let $m$ be a positive integer. As in \cite[Proposition 3.11]{BF}, or using Poincar\'e series, it can be shown that there exists a unique  $f_m\in H_{2-n}^\infty(D,\chi)$ whose Fourier expansion at the cusp $\infty$ has the form
\[
f_m=q^{-m}+O(1)
\]
as $q\to 0$.
According to Theorem \ref{thm:greencomp}, 
its regularized theta lift $\TR(f_m)$ is a logarithmic Green function for $\calZ_\Kra^\tot(m)$.

Denote by $\Cha^1_\Q (\calS^*_\Kra ) $ the arithmetic Chow group  \cite{GS} of rational equivalence classes of arithmetic divisors with $\Q$-coefficients.
 We allow the Green functions of our arithmetic divisors to have  possible additional log-log error terms along the boundary of $\calS^*_\Kra(\C)$, as in the theory of \cite{BKK}.  For $m>0$ define an arithmetic special divisor 
\[
\widehat{\calZ}_\Kra^\tot(m)= (\calZ_\Kra^\tot(m), \TR(f_m)) \in \Cha^1_\Q (\calS^*_\Kra ) 
\]
on $\calS_\Kra^*$, and for  $m=0$ set
\[
\widehat\calZ^\tot_\Kra(0)= \widehat{\bm{\omega}}^{-1} + ( \mathrm{Exc} , -\log(D) )  \in  \Cha^1_\Q (\calS^*_\Kra ) .
\]

In the theory of arithmetic Chow groups one usually works on a regular scheme such as $\mathcal{S}^*_\Kra$.  
However, the codimension one arithmetic Chow group of $\mathcal{S}^*_\Pap$ makes perfect sense: one only needs to specify that it consists of rational equivalence classes of \emph{Cartier} divisors on $\mathcal{S}^*_\Pap$ endowed with a Green function.

With this in mind one can use the equality \[ \mathcal{Y}_\Pap^\tot(m)(\C) =  2 \mathcal{Z}_\Kra^\tot(m)(\C)\]  in the complex fiber
$\mathcal{S}_\Pap^*(\C) = \mathcal{S}_\Kra^*(\C)$ to define arithmetic divisors
\[
\widehat{\mathcal{Y}}_\Pap^\tot(m)= (\mathcal{Y}_\Pap^\tot(m), 2  \TR(f_m)) \in \widehat{\mathrm{Ch}}^1_\Q ( \mathcal{S}_\Pap^* )
\]
for $m>0$.  For  $m=0$ we define
\[
\widehat{\mathcal{Y}}^\tot_\Pap(0)= \widehat{\bm{\Omega}}^{-1} + ( 0, -2 \log(D) )  \in \widehat{\mathrm{Ch}}^1_\Q ( \mathcal{S}_\Pap^* ),
\]
where the metric on $\bm{\Omega}$ is induced from that on $\bm{\omega}$, again  using  $\bm{\Omega}\iso \bm{\omega}^2$ in the complex  fiber.

\begin{theorem}
\label{thm:arithmeticmodularity}
The  formal $q$-expansions
\begin{align}
\label{eq:holgenser}
\widehat\phi(\tau)=
\sum_{m\ge 0} \widehat{\calZ}_\Kra^\tot(m) \cdot q^m \in \Cha^1_\Q (\calS^*_\Kra )  [[ q ]] 
\end{align}
and
\[
\sum_{m\ge 0} \widehat{\mathcal{Y}}_\Pap^\tot(m) \cdot q^m \in \Cha^1_\Q (\calS^*_\Pap )  [[ q ]] 
\]
are modular forms of level $D$, weight $n$, and character $\chi$.  
\end{theorem}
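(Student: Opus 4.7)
\medskip

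\noindent\textbf{Proof plan.} The strategy mirrors that of Theorem \ref{thm:kramer modularity}, lifted to the arithmetic Chow group by incorporating Green functions. In both cases the argument applies the modularity criterion of Theorem \ref{thm:modularity criterion}: it suffices to show that, for every weakly holomorphic input form $f = \sum c(m) q^m \in M^{!,\infty}_{2-n}(D,\chi)$ with $c(m) \in \Z$ for $m\le 0$, and every $\Q$-linear functional $\alpha$ on $\Cha^1_\Q(\mathcal{S}^*_\Kra)$ (respectively on $\Cha^1_\Q(\mathcal{S}^*_\Pap)$), the linear combination
\[
\sum_{m \ge 0} c(-m)\,\alpha(\widehat{\mathcal{Z}}^\tot_\Kra(m))
\]
vanishes up to the values of $\alpha$ on certain explicitly modular correction terms built from Eisenstein series, theta series, and metrized classes.

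The core input is that the unitary Borcherds product $\bm{\psi}(f)$ of Theorem \ref{thm:unitary borcherds I} is a rational section of $\bm{\omega}^k$ with $k = \sum_{r\mid D} \gamma_r c_r(0)$, and that its arithmetic principal divisor
\[
\widehat{\mathrm{div}}(\bm{\psi}(f)) = \bigl( \mathrm{div}(\bm{\psi}(f)) ,\, -\log\|\bm{\psi}(f)\|^2 \bigr)
\]
vanishes in $\Cha^1_\Q(\mathcal{S}^*_\Kra)$. The divisor part is computed by Theorem \ref{thm:unitary borcherds II}, while the Green function part equals $\TR(f)$ by Theorem \ref{prop:greenbp}. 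Writing $\TR(f) = \sum_{m>0} c(-m)\TR(f_m) + \TR(g)$ by linearity, where $g = f - \sum_{m>0} c(-m)f_m \in H_{2-n}^\infty(D,\chi)$ has trivial principal part at $\infty$, the first piece exactly assembles $\sum c(-m)\widehat{\mathcal{Z}}^\tot_\Kra(m)$, while the residual smooth function $\TR(g)$ must be identified with the Green current contribution of the metrized bundle $\widehat{\bm{\omega}}^k$ and the exceptional divisor $(\mathrm{Exc},-\log D)$ figuring in the definition of $\widehat{\mathcal{Z}}^\tot_\Kra(0)$. Combining the two identifications rewrites $\widehat{\mathrm{div}}(\bm{\psi}(f)) = 0$ as
\[
0 = \sum_{m \ge 0} c(-m)\,\widehat{\mathcal{Z}}^\tot_\Kra(m) + \sum_{\substack{r \mid D \\ r>1}} \gamma_r c_r(0) \cdot \widehat{\mathcal{V}}_r + \sum_{m>0} c(-m) \cdot \widehat{\mathcal{E}}(m),
\]
where $\widehat{\mathcal{V}}_r \in \Cha^1_\Q(\mathcal{S}^*_\Kra)$ packages the vertical fibers above primes dividing $r$, the exceptional contribution, and a constant-Green-function contribution from $\widehat{\bm{\omega}}$, while $\widehat{\mathcal{E}}(m)$ packages the exceptional $\mathrm{Exc}_s$ appearing in Theorem~\ref{thm:unitary borcherds II} with coefficients $\#\{x \in L_s : \langle x,x\rangle = m\}/2$.

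Now invoke Proposition \ref{prop:distribute cusps} to write $c_r(0) = -\sum_{m>0} c(-m)\,e_r(m)$, collecting the relation into the single form
\[
0 = \sum_{m \ge 0} c(-m) \Bigl(\widehat{\mathcal{Z}}^\tot_\Kra(m) - \sum_{r>1}\gamma_r e_r(m)\,\widehat{\mathcal{V}}_r + \widehat{\mathcal{E}}(m) \Bigr),
\]
using $e_r(0) = 0$ for $r > 1$. Since this holds for every weakly holomorphic $f$ with integral negative-index coefficients, which span $M^{!,\infty}_{2-n}(D,\chi)$, Theorem \ref{thm:modularity criterion} implies that the bracketed $q$-series in $\Cha_\Q^1(\mathcal{S}^*_\Kra)[[q]]$ is modular of the stated weight, level, and character. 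Since each $E_r(\tau) = \sum e_r(m)q^m$ and each theta series $\vartheta_s(\tau) = \sum \#\{x\in L_s : \langle x,x\rangle = m\} q^m$ appearing in $\widehat{\mathcal{E}}$ is already modular of the correct type, we may transfer them to the other side and conclude that $\widehat{\phi}(\tau)$ itself is modular. The statement for $\mathcal{S}^*_\Pap$ follows in the same way using Theorem \ref{thm:unitary borcherds III}, or alternatively by pushing $\widehat{\phi}(\tau)$ forward along the blow-down $\mathcal{S}^*_\Kra \to \mathcal{S}^*_\Pap$ and absorbing the exceptional contributions into the theta series, as was done in the proof of Theorem \ref{thm:kramer modularity}.

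\medskip

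\noindent\textbf{The main obstacle.} All of the algebraic bookkeeping is parallel to the Chow group proof, so the essential new difficulty is the analytic identification $\TR(f) - \sum_{m>0} c(-m)\TR(f_m) = \TR(g)$ with the Green current for the residual terms $k \cdot (-\log\|\cdot\|^2_{\widehat{\bm{\omega}}}) + \sum_r \gamma_r c_r(0) \cdot (\text{constant Green functions})$ arising on the right-hand side of Theorem \ref{thm:unitary borcherds II}. This requires a careful analysis of the constant-term behaviour of $\TR$ as a functional on $H_{2-n}^\infty(D,\chi)$ and a normalization check against the Petersson metric on $\bm{\omega}$ defined in \S\ref{ss:arithmetic modular}; in particular one must match the additive constant $-\log D$ attached to $\mathrm{Exc}$ in the definition of $\widehat{\mathcal{Z}}^\tot_\Kra(0)$. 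Everything else is a modest arithmetic-Chow variant of the reasoning that yielded Theorem \ref{thm:kramer modularity}.
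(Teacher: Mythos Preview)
Your overall strategy is right and matches the paper's, but there are two compensating errors in your setup that together create a phantom ``main obstacle.''

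First, the arithmetic divisor $\widehat{\mathrm{div}}(\bm{\psi}(f)) = (\mathrm{div}(\bm{\psi}(f)),\,-\log\|\bm{\psi}(f)\|^2)$ does \emph{not} vanish in $\Cha^1_\Q(\mathcal{S}_\Kra^*)$. Since $\bm{\psi}(f)$ is a rational section of the metrized line bundle $\widehat{\bm{\omega}}^k$ (not of the trivially metrized $\mathcal{O}$), its arithmetic divisor represents the class $k\cdot\widehat{\bm{\omega}}$. This is precisely where the metric contribution enters---tautologically, not via any analysis of $\TR$.

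Second, the form $g = f - \sum_{m>0} c(-m)\,f_m$ is identically zero. Recall that $f_m$ is the \emph{unique} element of $H_{2-n}^\infty(D,\chi)$ with expansion $q^{-m}+O(1)$ at $\infty$; this uniqueness says exactly that the only element of $H_{2-n}^\infty(D,\chi)$ bounded at all cusps is zero. Since $f$ is weakly holomorphic and $g$ has trivial principal part at $\infty$, we get $g=0$, hence $-\log\|\bm{\psi}(f)\|^2 = \TR(f) = \sum_{m>0} c(-m)\,\TR(f_m)$ on the nose by Theorem~\ref{prop:greenbp}.

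With these two corrections the proof becomes a line-by-line repetition of Theorems~\ref{thm:main modularity I} and~\ref{thm:kramer modularity}: equate $k\cdot\widehat{\bm{\Omega}}_\Pap$ (respectively $k\cdot\widehat{\bm{\omega}}$) with the explicit arithmetic divisor coming from Theorem~\ref{thm:unitary borcherds III} (respectively~\ref{thm:unitary borcherds II}) paired with the Green function $\sum_{m>0} c(-m)\,\TR(f_m)$; rewrite $k\cdot\widehat{\bm{\Omega}}_\Pap$ in terms of $\widehat{\mathcal{Y}}_\Pap^\tot(0)$ via its definition; absorb the $\mathrm{div}(\delta)$ term using the principal-divisor relation $(\mathrm{div}(\delta),0)=(0,\log D)$ in $\Cha^1_\Q$; and then apply Proposition~\ref{prop:distribute cusps} and the modularity criterion exactly as before. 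No careful analysis of the constant-term behaviour of $\TR$, and no normalization check of the Petersson metric beyond Theorem~\ref{prop:greenbp}, is required.
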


\begin{proof}
For any input form $f\in M_{2-n}^{!,\infty}(D,\chi)$ as in (\ref{input form}),  the relation in the Chow group given by the Borcherds product $\bm{\psi}(f)$ is compatible with the Green functions, in the sense that 
\[
-\log\|\bm{\psi}(f)\|^2 =\sum_{m>0} c(-m) \cdot \TR(f_m).
\]
Indeed, this directly follows from $f =\sum_{m>0} c(-m)f_m$ and Theorem~\ref{prop:greenbp}.

This observation allows us to simply repeat the argument of Theorems \ref{thm:main modularity I} and  \ref{thm:kramer modularity} on the level of arithmetic Chow groups.  Viewing $\bm{\psi}(f)^2$ as a rational section of the metrized line bundle $\bm{\Omega}_\Pap^k$, the arithmetic divisor
\[
\widehat{\mathrm{div}} ( \bm{\psi}  (f)^2 )   \define \big( \mathrm{div}(\bm{\psi}(f)^2) , - 2 \log\|\bm{\psi}(f)\|^2 \big) \in \Cha^1_\Q (\calS^*_\Pap )
\]
satisfies both
\begin{equation}\label{BP1}
\widehat{\mathrm{div}}( \bm{\psi}  (f)^2 ) =  \widehat{\bm{\Omega}}_\Pap^k =
-2k\cdot (0,\log(D) ) 
 -\sum_{r\mid D} \gamma_r c_r(0) \cdot \widehat{\mathcal{Y}}_\Pap^\tot(0)
\end{equation}
and, recalling $\delta =\sqrt{-D} \in \kk$, 
\begin{eqnarray}\lefteqn{ 
\widehat{\mathrm{div}}( \bm{\psi}  (f)^2 )   } \nonumber \\
& = &    \sum_{m>0} c(-m) \cdot \widehat{\mathcal{Y}}_\Pap^\tot(m)    - 2k \cdot ( \mathrm{div}(\delta) , 0)     +  2 \sum_{ r \mid D} \gamma_r c_r(0) \cdot   \widehat{ \mathcal{V}}_r  \nonumber   \\
& = &    \sum_{m>0} c(-m) \cdot \widehat{\mathcal{Y}}_\Pap^\tot(m)    -2k \cdot  ( 0 ,  \log (D)  ) 
  +  2 \sum_{ r \mid D} \gamma_r c_r(0)   \cdot   \widehat{\mathcal{V}}_r , \label{BP2}
\end{eqnarray}
where $\widehat{\mathcal{V}}_r$ is the the vertical divisor 
$
\mathcal{V}_r =  \sum_{p\mid r}  \mathcal{S}^*_{\Pap /\F_\mathfrak{p}}
$  
 endowed with the trivial Green function.  Note that in the second equality we have used the relation 
\[
0 = \widehat{\mathrm{div}}( \delta ) = ( \mathrm{div}(\delta) ,  - \log |\delta^2|  ) = ( \mathrm{div}(\delta) ,  0  ) - ( 0,   \log (D)  ) 
\]
in the arithmetic Chow group.
Combining (\ref{BP1}) and (\ref{BP2}),  we deduce that
\[
 0  =
  \sum_{m\ge 0} c(-m) \cdot \widehat{\mathcal{Y}}_\Pap^\tot(m)   
  +    \sum_{ \substack{ r \mid D \\ r>1 } } \gamma_r c_r(0)   \left(   \widehat{\mathcal{Y}}_\Pap^\tot(0) + 2\cdot  \widehat{\mathcal{V}}_r  \right).
\]
With this relation in hand,  both proofs go through verbatim.
\end{proof}


\subsection{Non-holomorphic generating series of special divisors}
\label{ss:nonhol}


In this subsection we discuss a non-holomorphic variant of the generating series \eqref{eq:holgenser}, which is obtained by endowing the special divisors with other Green functions, namely with those  constructed in \cite{Ho1,Ho2}  following the method of \cite{Ku97}. By combining Theorem \ref{thm:arithmeticmodularity} with a recent result of Ehlen and Sankaran \cite{ES}, we show that the non-holomorphic generating series is also  modular.

For every $m\in \Z$ and $v\in \R_{>0}$ define a divisor 
\begin{equation*}
\mathcal{B}_\Kra(m,v) = \frac{ 1 }{4\pi v}   \sum_\Phi  \#\{ x \in L_0 :  \;\langle x , x \rangle  =m \} \cdot \mathcal{S}^*_\Kra(\Phi)
\end{equation*}
with real coefficients on $\mathcal{S}^*_\Kra$.  
Here the sum is over all $K$-equivalence classes of proper cusp label representatives $\Phi$ in the sense of \S \ref{ss:mixed data},  $L_0$ is the hermitian $\co_\kk$-module of signature $(n-2,0)$ defined by (\ref{boundary herm}), and $\mathcal{S}^*_\Kra(\Phi)$ is the boundary divisor of Theorem \ref{thm:toroidal}. Note that $\mathcal{B}_\Kra(m,v)$ is trivial for all $m<0$.
We define classes in $\Ch^1_\R (\calS^*_\Kra ) $,  depending on the parameter $v$,  by 
\begin{align*}
\calZ_\Kra^\tot(m,v)= \begin{cases}
\calZ^*_\Kra(m)+\calB_\Kra(m,v) &\text{if $m\neq 0$}\\[1ex]
\bm{\omega}^{-1}  + \mathrm{Exc} +\calB_\Kra(0,v) &\text{if $m=0$.}
\end{cases}
\end{align*}

Following \cite{Ho1, Ho2, Ku97}, Green functions for these divisors can be constructed as follows.
For $x\in V(\R)$ and $z\in \calD$ we put 
\[
R(x,z)=-2Q(x_z).
\]
Recalling the incomplete Gamma function   (\ref{incomplete gamma}), for  $m\in \Z$ and 
\[
(v,z,g) \in \R_{>0} \times  \calD \times G(\A_f)
\]  
we define a Green function 
\begin{align}
\label{eq:defkudlagreen}
\Xi(m,v,z,g) = \sum_{\substack{x\in V\smallsetminus\{0\}\\ Q(x)=m}} \chi_{\widehat L}(g^{-1}x)  \cdot 
\Gamma(0, 2\pi v R(x,z)),
\end{align}
where $\chi_{\widehat L}\in S_L$ denotes the characteristic function of $\widehat L$.
As a function of the variable $(z,g)$,  \eqref{eq:defkudlagreen} is invariant under the left action of $G(\Q)$ and under the right action of $K$, and so descends to a function on $\R_{>0}\times \mathrm{Sh}(G,\calD) (\C)$.
It was proved in \cite[Theorem 3.4.7]{Ho2} that 
$\Xi(m,v)$ is a logarithmic Green function for $\calZ_\Kra^\tot(m,v)$ when $m\neq 0$.
When $m=0$ it is a logarithmic Green function for $\calB_\Kra(0,v)$.

Consequently, we obtain arithmetic special divisors in $\Cha^1_\R (\calS^*_\Kra ) $ depending on the parameter $v$ by putting
\begin{align*}
\widehat\calZ_\Kra^\tot(m,v)= \begin{cases}
\left(\calZ_\Kra^\tot(m,v), \Xi(m,v)\right) &\text{if $m\neq 0$ }\\[1ex]
\widehat{\bm{\omega}}^{-1}   +\left(\calB_\Kra(0,v),\Xi(0,v)\right) + (\mathrm{Exc}, - \log (Dv) ) &\text{if $m=0$.}
\end{cases}
\end{align*}
Note that for $m<0$ these divisors are  supported in the archimedian fiber.

\begin{theorem}
\label{thm:arithmeticmodularity2}
The  formal $q$-expansion
\begin{align*}
\widehat\phi_{\text{\rm non-hol}}(\tau)=
\sum_{m\in \Z} \widehat{\calZ}_\Kra^\tot(m,v) \cdot q^m \in \Cha^1_\R (\calS^*_\Kra )  [[ q ]] ,
\end{align*}
is a non-holomorphic modular form of level $D$, weight $n$, and character $\chi$.  
Here $q=e^{2\pi i\tau}$ and $v=\Im(\tau)$. 
\end{theorem}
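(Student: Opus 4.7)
The plan is to deduce the modularity of $\widehat\phi_{\text{non-hol}}(\tau)$ from the already-established modularity of the holomorphic generating series
\[
\widehat\phi(\tau)=\sum_{m\ge 0} \widehat{\calZ}_\Kra^\tot(m) \cdot q^m
\]
of Theorem \ref{thm:arithmeticmodularity}, by showing that the difference
\[
\widehat\phi_{\text{non-hol}}(\tau)-\widehat\phi(\tau)\in \Cha^1_\R(\calS^*_\Kra)[[q]]
\]
is itself modular. The two series have the same finite part for $m>0$, namely $\calZ^*_\Kra(m)$, but they differ in three ways: (i) the boundary contribution $\calB_\Kra(m,v)$ in the non-holomorphic series is $v$-dependent, whereas $\calB_\Kra(m)$ appearing in $\widehat\phi(\tau)$ is a holomorphic boundary term of a different shape; (ii) the Green functions are $\Xi(m,v)$ versus $\TR(f_m)$; and (iii) the non-holomorphic series contains additional archimedean terms $\widehat\calZ_\Kra^\tot(m,v)$ for $m<0$ that are absent from $\widehat\phi(\tau)$.

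The first step is to pin down the difference on the level of underlying divisors: for each $m\ge 0$ the class of $\calZ_\Kra^\tot(m,v)-\calZ_\Kra^\tot(m)$ lies in the $\R$-span of the boundary divisors $\calS^*_\Kra(\Phi)$, with explicit coefficients involving $\tfrac{1}{4\pi v}-\tfrac{m}{n-2}$ multiplied by the representation numbers of the positive definite hermitian lattices $L_0$. Assembling these contributions over $m\ge 0$ produces a $q$-series whose boundary coefficient at each cusp label $\Phi$ is, up to a normalization, the classical non-holomorphic completion of the theta series $\vartheta_{L_0}(\tau)$ of the positive definite hermitian lattice $L_0$. The modularity of each such theta series of weight $n-1$ multiplied by the weight-one non-holomorphic Eisenstein factor $\tfrac{1}{4\pi v}$, packaged together, is classical and gives modularity of this boundary contribution of weight $n$.

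The second step is to compare the two families of Green functions. The regularized theta lift $\TR(f_m)$ is constructed from the harmonic Maass form $f_m\in H^\infty_{2-n}(D,\chi)$, while $\Xi(m,v)$ is the Kudla Green function built directly from $\beta(2\pi v R(x,z))$. Their difference on each connected component can be analyzed via the Siegel--Weil-type identities satisfied by the orthogonal theta kernel: pairing the theta kernel against the non-holomorphic completion of $f_m$ and against the incomplete Gaussian used to define $\Xi(m,v)$ yields the same divisorial contribution but different smooth terms, whose difference is controlled by the image of $f_m$ under the differential operator $\xi_{2-n}$. This comparison, combined with the archimedean contributions at $m<0$ which already assemble into a $\Cha^1_\R$-valued non-holomorphic modular form supported entirely in the archimedean fiber, is precisely the content of the Ehlen--Sankaran theorem \cite{ES} adapted to the unitary setting; the result is that the entire difference $\widehat\phi_{\text{non-hol}}(\tau)-\widehat\phi(\tau)$ is a (generally non-holomorphic) modular form of weight $n$, level $\Gamma_0(D)$, and character $\chi$ valued in $\Cha^1_\R(\calS^*_\Kra)$.

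Combining this with Theorem \ref{thm:arithmeticmodularity} gives the desired modularity of $\widehat\phi_{\text{non-hol}}(\tau)$. The hard part is really the Green function comparison of the second step, since it requires analyzing two a priori unrelated smoothings of the same singular current on $\calS^*_\Kra(\C)$ and identifying their difference with a modular object; this is exactly the step supplied by Ehlen--Sankaran, and any serious independent proof would essentially have to reprove their comparison theorem. Once that is granted, the proof amounts to bookkeeping of the boundary and archimedean correction terms described above.
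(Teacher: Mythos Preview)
Your overall strategy is exactly the paper's: show that the difference $\widehat\phi_{\text{non-hol}}(\tau)-\widehat\phi(\tau)$ is modular by invoking the Ehlen--Sankaran comparison, and then add back the already-known modularity of $\widehat\phi(\tau)$ from Theorem~\ref{thm:arithmeticmodularity}. The paper's proof is in fact a single sentence citing \cite[Theorem~4.13]{ES} for the modularity of the difference, with no further decomposition.

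Your elaboration in the ``first step'' is unnecessary and contains inaccuracies. The theta series $\vartheta_{L_0}$ attached to the signature $(n-2,0)$ hermitian lattice $L_0$ (equivalently, the rank $2(n-2)$ quadratic lattice) has weight $n-2$, not $n-1$, and $\tfrac{1}{4\pi v}$ is not a weight-one Eisenstein factor. What actually happens is that the boundary difference at each cusp label $\Phi$ assembles into
\[
\frac{1}{4\pi v}\,\vartheta_{L_0}(\tau)-\frac{1}{n-2}\,q\frac{d}{dq}\vartheta_{L_0}(\tau)
=\frac{1}{4\pi(n-2)}\,R_{n-2}\vartheta_{L_0}(\tau),
\]
which lies in the $R_{n-2}M_{n-2}(D,\chi)$ summand appearing in the paper's definition of non-holomorphic modularity; this is consistent with, and absorbed by, the Ehlen--Sankaran statement rather than being a separate ingredient. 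So your decomposition into a boundary step and a Green-function step is artificial here: \cite{ES} handles the entire difference at once, and the paper simply quotes it.
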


\begin{proof}
Theorem~4.13 of \cite{ES} states that the difference 
\begin{equation}\label{theta diff}
\widehat\phi_{\text{\rm non-hol}}(\tau)-\widehat\phi(\tau)
\end{equation}
 is a non-holomorphic modular form of level $D$, weight $n$, and character $\chi$, valued  in $\Cha^1_\C(\calS^*_\Kra)$. Hence the assertion follows from 
Theorem~\ref{thm:arithmeticmodularity}.
\end{proof}

The meaning of modularity in Theorem \ref{thm:arithmeticmodularity2} is to be understood as in \cite[Definition 4.11]{ES}. 
In our situation it reduces to 
the statement that there is a smooth function $s(\tau,z,g)$ on  $\mathfrak{H}\times \mathrm{Sh}(G,\calD) (\C)$ with the following properties:
\begin{enumerate}
\item in $(z,g)$ the function $s(\tau,z,g)$ has at worst $\log$-$\log$-singularities at the boundary of $\mathrm{Sh}(G,\calD) (\C)$ (in particular it is a Green function for the trivial divisor);
\item $s(\tau,z,g)$ transforms in $\tau$ as a non-holomorphic modular form of level $D$, weight $n$, and character $\chi$;
\item
the difference 
$\widehat\phi_{\text{\rm non-hol}}(\tau)-s(\tau,z,g)$ belongs to the space
\[
M_n(D,\chi)\otimes_\C \Cha^1_\C (\calS^*_\Kra )\oplus (R_{n-2}M_{n-2}(D,\chi))\otimes_\C \Cha^1_\C (\calS^*_\Kra ),
\]
where $R_{n-2}$ denotes the Maass raising operator as in Section \ref{ss:direct mult}.
\end{enumerate}



\section{Appendix: some technical calculations}
\label{s:appendix}


We collect  some  technical arguments and calculations.  
Strictly speaking, none of these are essential to the proofs in the body of the text.  
We explain the connection between the fourth roots of unity $\gamma_p$ defined by  (\ref{def-gamma-p}) and the local Weil indices appearing in the theory of the Weil representation,  provide alternative proofs of  Propositions \ref{prop:promoted coefficients} and \ref{prop:other mult}, and explain in greater detail how Proposition~\ref{prop:ortho FJ formula} is deduced from the formulas of \cite{Ku:ABP}.


\subsection{Local Weil indices}
\label{ss:weil indices}


In this subsection, we explain how the quantity $\gamma_p$ defined in (\ref{def-gamma-p}) is related to the local Weil representation.

Let $L\subset V$ be as in \S \ref{ss:vector-valued}, and 
recall that  $S_L=\C[L'/L]$ is identified with a subspace of $S(V(\A_f))$ by sending  $\mu\in L'/L$ to   the characteristic function $\phi_\mu$  of   $\mu+\widehat{L} \subset V(\A_f)$.

As  $\dim_\Q V = 2n$ and  $D$ is odd, the representation $\omega_L$ of $\SL_2(\Z)$ on $S_L$ is the pullback via
\[
\SL_2(\Z) \longrightarrow  \prod_{p\mid D} \SL_2(\Z_p)
\]
of the  representation
\[
\omega_L = \bigotimes_{p\mid D} \omega_{p},
\]
where $\omega_p=\omega_{L_p}$ is the Weil representation of $\SL_2(\Z_p)$ on $S_{L_p} \subset S(V_p)$.
These Weil representations are defined using the standard global additive character $\psi= \otimes_p\psi_p$ which is trivial on $\widehat{\Z}$ and on $\Q$ and whose restriction to 
$\R\subset \A$ is given by $\psi(x) = \exp(2\pi i x)$. 
Recall that, for $a\in \Q_p^\times$ and $b\in \Q_p$,
\begin{align*}
\omega_p(n(b))\phi(x) &= \psi_p(b Q(x))\cdot\phi(x)\\
\noalign{\smallskip}
\omega_p(m(a))\phi(x) &= \chi_{\kk,p}^n(a)\cdot|a|_p^n\cdot\phi(ax)\\
\noalign{\smallskip}
\omega_p(w) \phi(x) &=\gamma_p\int_{V_p}  \psi_p(-[x,y])\cdot\phi(y)\, dy, \quad w = \begin{pmatrix}{}&-1\\1&{}\end{pmatrix},
\end{align*}
where $\gamma_p=\gamma_p(L)$ is the Weil index of the quadratic space $V_p$ with respect to $\psi_p$ and $\chi_{\kk,p}$ is the quadratic character of $\Q_p^\times$ corresponding to 
$\kk_p$.  Note that $dy$ is the self-dual measure with respect to the pairing $\psi_p([x,y])$.

\begin{lemma} The Weil representation $\omega_p$ satisfies the following properties.
\begin{enumerate}
\item
 When restricted to the subspace $S_{L_p}\subset S(V_p)$, the action of $\gamma\in \SL_2(\Z_p)$ depends only on the image of $\gamma$ in $\SL_2(\F_p)$.
\item
 The Weil index is given by 
\[
\gamma_p = \epsilon_p^{-n}\cdot (D,p)_p^n\cdot \mathrm{inv}_p(V_p)  
\]
where $(a,b)_p$ is the Hilbert symbol for $\Q_p$ and $\mathrm{inv}_p(V_p)$ is the invariant of  $V_p$ in the sense of (\ref{eq:locinv}).
\end{enumerate}
\end{lemma}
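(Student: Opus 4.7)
The plan is to address the two parts separately.

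\textbf{Part (1).} I would verify the claim on a set of topological generators of $\SL_2(\Z_p)$, namely $n(b)$, $n^{-}(b) = w\, n(-b)\, w^{-1}$, and $m(a)$ for $b \in \Z_p$ and $a \in \Z_p^\times$. The decisive input is that the self-duality of $L_p$ under the hermitian form gives $L_p' = \mathfrak{d}^{-1} L_p$, and since $\mathrm{ord}_p(D) = 1$ this yields the two inclusions
\[
Q(L_p') \subset \tfrac{1}{p} \Z_p \qquad \text{and}\qquad p L_p' \subset L_p.
\]
From the first inclusion, the formula $\omega_p(n(b)) \phi_\mu = \psi_p(b Q(\mu)) \phi_\mu$ shows that $\omega_p(n(b))$ depends on $b$ only modulo $p$. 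From the second inclusion, together with the fact that the ramified quadratic character $\chi_{\kk,p}$ has conductor $p$, the formula $\omega_p(m(a)) \phi_\mu = \chi_{\kk,p}^n(a) \phi_{a^{-1}\mu}$ shows that $\omega_p(m(a))$ depends on $a$ only modulo $p$. The $n^-(b)$ case then follows from the $n(b)$ case by $w$-conjugation, which is built into the representation.

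\textbf{Part (2).} I would exploit multiplicativity of the Weil index in orthogonal direct sums to reduce to the one-dimensional case. Diagonalize $V_p = \bigoplus_{i=1}^n \langle \alpha_i\rangle_{\kk_p}$ with $\alpha_i \in \Q_p^\times$, so that $\det V_p = \prod_i \alpha_i$ and hence, using the identity $\chi_{\kk,p}(\alpha) = (\alpha, -D)_p$,
\[
\operatorname{inv}_p(V_p) = \Big(\prod_i \alpha_i,\, -D\Big)_p = \prod_i \chi_{\kk,p}(\alpha_i).
\]
The one-dimensional Weil index satisfies
\[
\gamma_p(\langle \alpha\rangle_{\kk_p}) = \chi_{\kk,p}(\alpha) \cdot \gamma_p(\langle 1\rangle_{\kk_p}),
\]
which follows from the standard change-of-variables identity relating the Weil representation on rescaled quadratic forms. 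Since the underlying $\Q_p$-quadratic space of $\langle 1\rangle_{\kk_p}$ is the norm form on $\kk_p$, the base case
\[
\gamma_p(\langle 1\rangle_{\kk_p}) = \epsilon_p^{-1} (D, p)_p
\]
is a direct computation with a local Gauss sum on the ramified quadratic extension $\kk_p/\Q_p$. Taking the product over $i=1,\ldots,n$ then yields the asserted formula for $\gamma_p$.

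The main obstacle is the Gauss-sum evaluation underlying the base case. Here one must pin down the exact fourth root of unity $\epsilon_p^{-1}(D,p)_p$, tracking both the self-dual Haar measure on $V_p$ imposed by our global choice of $\psi$ and the effect of the ramification of $\kk_p/\Q_p$ on the local trace-form Gauss sum. Once this base case is in hand, the rest of the argument is formal.
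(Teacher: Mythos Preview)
Your proposal is correct and follows essentially the same strategy as the paper. For part~(1), the paper likewise says to check on generators and omits the details you supply. For part~(2), both you and the paper diagonalize the hermitian form and use multiplicativity of the Weil index.

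There is one tactical difference worth noting in part~(2). You stop at one-dimensional $\kk_p$-hermitian spaces $\langle\alpha_i\rangle_{\kk_p}$ and identify the ``base case'' $\gamma_p(\langle 1\rangle_{\kk_p})$ as a Gauss-sum computation, which you flag as the main obstacle. The paper goes one step further: it decomposes each $\langle a_j\rangle_{\kk_p}$ as a $\Q_p$-quadratic space with Gram matrix $\mathrm{diag}(2a_j,2Da_j)$, so that the Weil index factors as $\gamma_p^{-1}=\prod_j \gamma_{\Q_p}(a_j\psi_p)\,\gamma_{\Q_p}(Da_j\psi_p)$. Each factor is then read off from standard one-dimensional formulas (namely $\gamma_{\Q_p}(\alpha\psi_p)=1$ for $\alpha\in\Z_p^\times$ and $\gamma_{\Q_p}(\alpha p\,\psi_p)=(-\alpha,p)_p\,\epsilon_p$, as in Ranga~Rao's appendix). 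This extra decomposition dissolves your ``main obstacle'': the Gauss sum for the norm form on $\kk_p$ never needs to be computed directly, because it is replaced by a product of two one-variable $\Q_p$ Weil indices with known values. A short Hilbert-symbol manipulation then converts the resulting expression into $\epsilon_p^{-n}(D,p)_p^n\,\mathrm{inv}_p(V_p)$.
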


\begin{proof} (i)  It suffices to check this on the generators. We omit this.\hfill\break
(ii) We can choose an $O_{\kk,p}$-basis for $L_p$ such that the matrix for the hermitian form is $\text{diag}(a_1,\dots,a_n)$, with $a_j\in \Z_p^\times$. 
The matrix for the bilinear form $[x,y] = \mathrm{Tr}_{K_p/\Q_p}( \langle x,y \rangle)$ is then $\text{diag}(2a_1,\dots,2a_n,2Da_1,\dots,2Da_n)$. 
Then, according to the formula for $\beta_V$ in \cite[p.~379]{kudla.splitting}, we have
\[
\gamma_p^{-1} = \gamma_{\Q_p}(\frac12\cdot\psi_p\circ V) = \prod_{j=1}^n \gamma_{\Q_p}(a_j\psi_p)\cdot\gamma_{\Q_p}(D a_j \psi_p),
\]
where we note that, in the notation there,  $x(w) =1$, and $j=j(w)=1$. Next by Proposition~A.11 of  the Appendix to \cite{rangarao1993},   for any $\alpha\in \Z_p^\times$, we have 
$\gamma_{\Q_p}(\alpha \psi_p)=1$ and
\[
\gamma_{\Q_p}(\alpha p \psi_p) =  \left(\frac{-\alpha}{p}\right)\cdot\epsilon_p = (-\alpha,p)_p\cdot\epsilon_p.
\]
Here note that if $\eta = \alpha p \psi_p$, then the resulting character $\bar\eta$ of $\F_p$ is given by 
\[
\bar\eta(\bar a) = \psi_p(p^{-1}a) = e(-p^{-1}a).
\]
and $\gamma_{\F_p}(\bar\eta) = \left(\frac{-1}{p}\right)\cdot\epsilon_p$. 
Thus
\[
\gamma_p = \epsilon_p^{-n}\cdot(-D/p,p)_p^n\cdot(\det(V),p)_p, 
\]
as claimed.
\end{proof}


\subsection{A direct proof of Proposition~\ref{prop:promoted coefficients}}
\label{ss:appendix promotion}


The proof of Proposition~\ref{prop:promoted coefficients}, which expresses the Fourier coefficients of the vector valued form $\tilde f$ 
in terms of those of the scalar valued form $f\in M_{2-n}^!(D,\chi)$, appealed to the more general results of \cite{Scheithauer}.
In some respects, it is easier to prove Proposition~\ref{prop:promoted coefficients} from scratch than it is to extract it from [\emph{loc.~cit.}].  This is what we do here.

Recall that $\tilde f$ is defined from $f$ by the induction procedure of  (\ref{eq:vectorization}),
and that the coefficients $\tilde c(m,\mu)$ in its Fourier expansion (\ref{eq:tilde-f-Fourier}) 
are indexed by $m\in \Q$ and $\mu\in L'/L$.
Recall that, for $r\mid D$, $rs=D$, 
\[
W_r=\begin{pmatrix} r\alpha&\beta\\D\gamma&r \delta\end{pmatrix} = R_r\begin{pmatrix} r&{}\\{}&1\end{pmatrix}, \quad R_r= \begin{pmatrix} \alpha&\beta\\s\gamma&r \delta\end{pmatrix}\in \Gamma_0(s).
\]
Note that 
\begin{equation}\label{eq:chinese}
\Gamma_0(D)\backslash\SL_2(\Z) = \Gamma_0(D)\backslash\SL_2(\Z)/\Gamma(D) \simeq \prod_{p\mid D} B_p\backslash \SL_2(\F_p),
\end{equation}
so this set has order $\prod_{p\mid D}(p+1)$. A set of coset representatives is given by 
\[
\bigsqcup_{ \substack{  r\mid D  \\ c\, (\mathrm{mod}\, r)  }}  R_r \begin{pmatrix}1&c\\{}&1\end{pmatrix}.
\]
Now, using (\ref{other cusps}), we have 
\begin{align}
 \left(
 f\big|_{2-n}R_r \begin{pmatrix}1&c\\{}&1\end{pmatrix}\right)    (\tau)
 & =  
 \left(
 f\big|_{2-n}W_r \begin{pmatrix}r^{-1}&r^{-1}c\\{}&1\end{pmatrix}\right)  (\tau)  \nonumber\\
 \noalign{\smallskip}
 {}&=\chi_r(\beta)\chi_s(\alpha) \sum_{m\gg-\infty} r^{\frac{n}2-1} c_r(m) \cdot e^{ \frac{2\pi i m (\tau+c)}{r}  } .\label{first-contrib}
\end{align}
On the other hand, the image of the inverse of our coset representative on the right side of (\ref{eq:chinese}) has components 
\[
\begin{cases}\begin{pmatrix}1&-c\\{}&1\end{pmatrix}\begin{pmatrix} 0&-\beta\\-s\gamma&\alpha\end{pmatrix} &\text{if $p\mid r$} \\
\begin{pmatrix}1&-c\\{}&1\end{pmatrix}\begin{pmatrix} r\delta&-\beta\\0&\alpha\end{pmatrix}&\text{if $p\mid s$.}
\end{cases}
\]
Note that $r\alpha\delta - s\beta\gamma=1$.  Then, as elements of $\SL_2(\F_p)$, we have
\[
\begin{cases}\begin{pmatrix}1&-c\\{}&1\end{pmatrix}\begin{pmatrix} \beta&{}\\{}&\beta^{-1}\end{pmatrix}\begin{pmatrix}{}&-1\\1&{}\end{pmatrix}\begin{pmatrix}1&\alpha\beta\\{}&1\end{pmatrix} &\text{if $p\mid r$ }\\
\begin{pmatrix}1&-c\\{}&1\end{pmatrix}\begin{pmatrix} \alpha^{-1}&{}\\0&\alpha\end{pmatrix}\begin{pmatrix}1&-\alpha\beta\\{}&1\end{pmatrix}&\text{if $p\mid s$.}
\end{cases}
\]
The element on the second line just multiplies $\phi_{0,p}$ by $\chi_p(\alpha)$. 
For the element on the first line, the factor on the right fixes $\phi_0$ and 
\[
\omega_p\left(\begin{pmatrix}{}&-1\\1&{}\end{pmatrix} \right)\phi_0 = \gamma_p \,p^{-\frac{n}2}\,\sum_{\mu\in L'_p/L_p} \phi_\mu.
\]
Thus, the element on the first line carries $\phi_{0,p}$ to  
\[
\chi_p(\beta)\gamma_p \,p^{-\frac{n}2}\,\sum_{\mu\in L'_p/L_p}\psi_p(-c \,Q(\mu))\, \phi_\mu.
\]
Recall from (\ref{eq:r-mu})  that for $\mu\in L'/L$,  $r_\mu$ is the product of the primes $p\mid D$ such that $\mu_p\ne0$.
Thus
\begin{equation}\label{second-contrib}
\omega_L \left(   R_r \begin{pmatrix}1&c\\{}&1\end{pmatrix}   \right)^{-1} 
 \phi_0 = \chi_s(\alpha)\chi_r(\beta)\,\gamma_r\,r^{-\frac{n}2}
\sum_{\substack{\mu\in L'/L\\ r_\mu\mid r}} e^{ 2\pi i c Q(\mu)}  \phi_\mu.
\end{equation}
Taking the product of (\ref{first-contrib}) and (\ref{second-contrib}) and summing on $c$ and on $r$, we obtain
\begin{align*}
&\sum_{r\mid D}\gamma_r\cdot  r^{-1} \sum_{c\, (\text{mod}\,r)}  
\sum_{\substack{\mu\in L'/L\\ r_\mu\mid r}}     e^{2\pi i c Q(\mu) } 
\phi_\mu  \sum_{m\gg-\infty} c_r(m)
e^{  \frac{2\pi i  m(\tau+c) }{r}    } \\
\noalign{\smallskip}
{}&=\sum_{r\mid D} \gamma_r\,\sum_{\substack{\mu\in L'/L\\ r_\mu\mid r}}\,\phi_\mu  
\sum_{\substack{m\gg-\infty\\ \frac{m}{r}+Q(\mu)\in \Z}} c_r(m)\, q^{\frac{m}{r}}  \\
\noalign{\smallskip}
{}&=\sum_{\substack{m\in \Q\\ m\gg-\infty}}  \sum_{\substack{\mu\in L'/L\\  m+Q(\mu)\in \Z}}\   
\sum_{\substack{r \\  r_\mu\mid r \mid D}} \gamma_r  c_r(mr)  \phi_\mu\, q^{m}
\end{align*}
This gives the claimed general expression for $\tilde c(m,\mu)$ and completes the proof of Proposition~\ref{prop:promoted coefficients}.


\subsection{A more detailed proof of Proposition~\ref{prop:ortho FJ formula}}


In this section, we explain in more detail how to obtain the product formula of Proposition~\ref{prop:ortho FJ formula} from the general formula given in 
\cite{Ku:ABP}. 

For our weakly holomorphic $S_L$-valued modular form $\tilde f$ of weight $2-n$, with 
Fourier expansion given by (\ref{eq:tilde-f-Fourier}), the corresponding meromorphic Borcherds product $\Psi(\tilde f)$ on $\tilde{\mathcal{D}}^+$
 has a product formula \cite[Corollary 2.3]{Ku:ABP} in a neighborhood of the $1$-dimensional 
boundary component associated to $L_{-1}$. 
It is given as a product of $4$ factors, labeled (a), (b), (c) and (d). 
We note that, in our present case, there is a basic simplification in factor (b) due to the restriction on the support of the Fourier coefficients of $\tilde f$. 
More precisely, 
for $m>0$, $\tilde c(-m,\mu)=0$ for $\mu\notin L$, and $\tilde c(-m,0)=c(-m)$.  In particular, if $x\in L'$ with $[x,\eee_{-1}]=[x,\fff_{-1}]=0$, then $Q(x) = Q(x_0)$, where $x_0$ is the $(L_0)_\Q$
component of $x$. If $x_0\ne0$, then $Q(x)>0$, and $\tilde c(-Q(x),\mu)=0$ for $\mu\notin L$.   
The factors for $\Psi(\tilde f)$ are then given by:
\hfill\break
(a) 
\[
\prod_{\substack{x\in L'\\  [x,\fff_{-1}]=0\\  [x,\eee_{-1}]>0\\  \mod L\cap \,\Q\, \fff_{-1}}} \big(1-e^{ - 2\pi i [x,w ]}  \big)^{\tilde c(-Q(x),x)}.
\]
(b)
\[
P_1(w _0,\tau_1)\define
\prod_{\substack{x\in L_0\\ [x,W_0]>0}} 
\bigg(\ \frac{\vartheta_1(-[x,w ],\tau_1)}{\eta(\tau_1)}\ \bigg)^{c(-Q(x))},
\]
%
where $W_0$ is a Weyl chamber  in $V_0(\R)$, as in \cite[\S 2]{Ku:ABP}.
\hfill\break
(c)
\[
P_0(\tau_1)\define\prod_{\substack{x\in \mathfrak{d}^{-1}L_{-1}/ L_{-1}\\  x\ne 0}} 
\bigg(\ \frac{\vartheta_1(-[x,w ],\tau_1)}{\eta(\tau_1)} \,  e^{ \pi i [x,w ]  \cdot [x,\eee_1] }   \bigg)^{\tilde c(0,x)/2}
\]
(d) and 
\[
\kappa\, \eta(\tau_1)^{\tilde c(0,0)}\,q_2^{I_0},
\]
where $\kappa$ is a scalar of absolute value $1$, and 
\[
I_0 = -  \sum_{m} \sum_{\substack{x\in L'\cap (L_{-1})^\perp\\  \mod L_{-1}}}
 \tilde c(-m,x)\,\sigma_1(m-Q(x)).
 \]
 
The factors given in Proposition~\ref{prop:ortho FJ formula} are for the form 
\[
\tilde{\bm{\psi}}_g( f ) \define (2\pi i)^{ \tilde{c}(0,0) } \Psi(2 \tilde{f})
\]
The quantity $q_2$ in \cite{Ku:ABP} is our $e(\xi)$,  and $\tau_1$ there is our $\tau$. 

Recall from (\ref{def-Lpm}) that $\mathfrak{d}^{-1}L_{-1} =\Z \eee_{-1} + D^{-1}\Z \fff_{-1}$,  so that, in factor (c), the product runs over 
vectors $D^{-1} b \,\fff_{-1}$, with $b \pmod D$ nonzero.  For these vectors $[x,\eee_1]=0$. 
  In the formula for $I$, $x$ runs over vectors of the form 
\[
x = -\frac{b}{D} \fff_{-1} + x_0,
\]
with $x_0\in \mathfrak{d}^{-1}L_0$. But, again, if $x_0\ne 0$, $Q(x)=Q(x_0)>0$ and $\tilde c(-Q(x),x) =0$ unless $b=0$, 
and so the sum in that term runs over $x_0\in L_0$ $x_0\ne 0$ and over $-\frac{b}{D} \fff_{-1}$'s. 

Thus the factors for $\tilde{\bm{\psi}}_g( f )$ are given by:
\hfill\break
(a) 
\[
\prod_{\substack{x\in L'\\  [x,\fff_{-1}]=0\\  [x,\eee_{-1}]>0\\  \mod L\cap \,\Q\, \fff_{-1}}} \big(1-e^{ -2\pi i [x,w ]} \big)^{2\,\tilde c(-Q(x),x)},
\]
(b)
\[
P_1(w _0,\tau_1)\define
\prod_{\substack{x_0\in L_0\\ x_0\ne0}} 
\bigg(\ \frac{\vartheta_1(-[x_0,w ],\tau_1)}{\eta(\tau_1)}\ \bigg)^{c(-Q(x_0))},
\]
%
\hfill\break
(c)
\[
P_0(\tau_1)\define\prod_{\substack{b\in \Z/D\Z\\ b\ne 0}} 
\bigg(\ \frac{\vartheta_1(-[x,w ],\tau_1)}{\eta(\tau_1)} \,  \ \bigg)^{\tilde c(0,\frac{b}{D} f_{-1})},
\]
(d) and, setting $k=\tilde c(0,0)$,  
\[
\kappa^2\, (\,2\pi i\,\eta^2(\tau))^{k}\,q_2^{2I_0},
\]
where $\kappa$ is a scalar of absolute value $1$, and 
\[
I_0 = - 2\sum_{m>0} \sum_{\substack{ x_0\in L_0}} c(-m)\,\sigma_1(m-Q(x_0)) +\frac1{12}\sum_{\substack{b\in \Z/D\Z}}
 \tilde c(0,\frac{b}{D} \fff_{-1}).
 \]
Here note that for $\tilde{\bm{\psi}}_g( f )=(2\pi i)^{ \tilde{c}(0,0) } \Psi(2 \tilde{f})$ we have multiplied the previous expression by $2$. 

Finally recall
\[
w = -\xi  \eee_{-1} +(\tau \xi - Q(w_0)) \fff_{-1} + w_0 + \tau \eee_1 + f_1.
\]
If $[x,f_{-1}]=0$, then $x$ has the form
\[
x= -a \eee_{-1} - \frac{b}{D}  \fff_{-1} +x_0  + c \eee_1,
\]
so that 
\[
[x,w ] = - c\,\xi + [x_0,w_0]-a\tau - \frac{b}{D},
\]
and 
\[
Q(x) = -ac + Q(x_0).
\]
Using these values, the formulas given in Proposition~\ref{prop:ortho FJ formula} follow immediately.


\subsection{A direct proof of Proposition \ref{prop:other mult}}
\label{ss:direct mult}


Here we give a direct proof of Proposition \ref{prop:other mult}, which does not rely on Corollary \ref{cor:analytic BFJ}. 
We begin by recalling some general facts about derivatives of modular forms.

We let $q\frac{d}{dq}$ be the Ramanujan theta operator on $q$-series.
Recall that the image under $q\frac{d}{dq}$ of a holomorphic modular form 
$g$ of weight $k$ is in general not a modular form. However, the function
\begin{align}
\label{eq:defdg}
D(g)=q\frac{dg}{dq} -\frac{k}{12} g E_2
\end{align}
is a holomorphic modular form of weight $k+2$ (see \cite[\S 4.2]{BHY}).
Here 
\[ 
E_2(\tau)=-24\sum_{m\geq 0} \sigma_1(m)q^m
\] 
denotes the non-modular Eisenstein series of weight $2$ for $\SL_2(\Z)$. In particular $\sigma_1(0)=-\frac{1}{24}$. We extend $\sigma_1$ to rational arguments by putting $\sigma_1(r)=0$ if $r\notin\Z_{\geq 0}$.
If $R_k=2i\frac{\partial}{\partial\tau}+\frac{k}{v} $ denotes the Maass raising operator, and \[ E_2^*(\tau)=E_2(\tau)-\frac{3}{\pi v}\]  is the non-holomorphic (but modular) Eisenstein series of weight $2$, we also have
\begin{align*}
D(g)=-\frac{1}{4\pi }R_k(g)-\frac{k}{12} g E_2^*.
\end{align*}

\begin{proposition}
Let $f\in M_{2-n}^{!,\infty}(D,\chi)$ as in \eqref{input form}.
The integer 
\[
I= \frac{1}{12}\sum_{\substack{\alpha\in \mathfrak{d}^{-1}L_{-1}/L_{-1}}}
\tilde c(0,\alpha)-2\sum_{m>0} c(-m) \sum_{\substack{x\in L_0}} \sigma_1(m-Q(x)).
\]
defined in Proposition \ref{prop:ortho FJ formula} is equal to the integer 
\[
\mathrm{mult}_\Phi(f)=\frac{1}{n-2}\sum_{x\in L_0} c(-Q(x))Q(x)
\]
 defined by (\ref{f boundary mult}).
\end{proposition}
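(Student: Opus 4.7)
The plan is to reformulate both sides of the identity $I = \mathrm{mult}_\Phi(f)$ as constant terms of specific modular forms, and then deduce the equality via the residue theorem on $X_0(D)$. The key input is the theta series
$$\theta_{L_0}(\tau) = \sum_{x\in L_0}q^{Q(x)},$$
a holomorphic modular form of weight $n-2$, level $\Gamma_0(D)$, and character $\chi = \chi_\kk^{n-2}$ attached to the self-dual positive-definite hermitian lattice $L_0$. Since $(n-2)\cdot\mathrm{mult}_\Phi(f) = [q^0](f\cdot q\tfrac{d}{dq}\theta_{L_0})$, the Ramanujan--Serre identity $q\tfrac{d}{dq}\theta_{L_0} = D(\theta_{L_0}) + \tfrac{n-2}{12}\theta_{L_0}E_2$ from \eqref{eq:defdg} splits this constant term into a genuinely modular piece and a piece involving the non-modular Eisenstein series $E_2$.

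A direct calculation with $E_2 = -24\sum_{k\geq 0}\sigma_1(k)q^k$ (using $\sigma_1(0)=-\tfrac{1}{24}$) gives
$$[q^0](f\theta_{L_0}E_2) = c(0) - 24\sum_{m>0}c(-m)\sum_{x\in L_0}\sigma_1(m-Q(x)),$$
which produces precisely the $\sigma_1$-terms appearing in the definition of $I$. On the other hand, by Proposition~\ref{prop:promoted coefficients} combined with the description $\mathfrak{d}^{-1}L_{-1}/L_{-1} = \{\tfrac{b}{D}\fff_{-1} : b\in \Z/D\Z\}$ from \eqref{def-Lpm}, a short calculation grouping elements according to which primes of $D$ divide $b$ yields
$$\sum_{\alpha \in \mathfrak{d}^{-1}L_{-1}/L_{-1}}\tilde c(0,\alpha) = \sum_{r\mid D}r\gamma_r c_r(0).$$
Combining these two computations, the identity $I=\mathrm{mult}_\Phi(f)$ reduces to the single statement
\begin{equation*}
[q^0](fD(\theta_{L_0})) = \tfrac{n-2}{12}\sum_{\substack{r\mid D\\r>1}}r\gamma_r c_r(0). \qquad(\ast)
\end{equation*}

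To prove $(\ast)$, observe that $g := fD(\theta_{L_0})$ is a weakly holomorphic weight-$2$ form on $\Gamma_0(D)$ with trivial character (as $\chi^2 = 1$) whose only pole is at the cusp $\infty$. For $D$ squarefree the cusp $\infty_r = r/D$ has width $r$, and the residue theorem applied to the meromorphic differential $g(\tau)d\tau$ on $X_0(D)(\C)$ yields
$$[q^0]g = -\sum_{\substack{r\mid D\\r>1}}r\cdot[q^0](g|_2W_r).$$
Writing $(fD(\theta_{L_0}))|_2W_r = (f|_{2-n}W_r)\cdot(D(\theta_{L_0})|_nW_r)$ and using the fact that the operator $D$ commutes with the slash action, one is reduced to identifying the Atkin--Lehner eigenvalue $\mu_r$ defined by $\theta_{L_0}|_{n-2}W_r = \mu_r\theta_{L_0}$. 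Combined with the constant term $\chi_r(\beta)^{-1}\chi_s(\alpha)^{-1}c_r(0)$ of $f|_{2-n}W_r$ and with $[q^0]D(\theta_{L_0}) = -(n-2)/12$, the identity $(\ast)$ becomes equivalent to the eigenvalue formula $\mu_r = \chi_r(\beta)\chi_s(\alpha)\gamma_r$.

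The main obstacle is thus the verification of this eigenvalue formula for the theta series of the self-dual hermitian lattice $L_0$ under the Atkin--Lehner involutions. This is a classical Poisson summation computation: applying Poisson summation to $\theta_{L_0}$ under $W_r$ produces a scalar multiple of $\theta_{L_0}$ by self-duality, and the scalar is a product over $p\mid r$ of local Weil-index factors. These local factors are precisely the $\gamma_p$ of \eqref{def-gamma-p} (as explained in \S\ref{ss:weil indices}), and the character factors $\chi_r(\beta)\chi_s(\alpha)$ arise from the choice of representative for $W_r$ used to define $c_r(0)$ in Definition~\ref{def:constant at cusp}, matching the normalization in Proposition~\ref{prop:promoted coefficients}.
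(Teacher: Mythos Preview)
Your strategy---pair $f$ with $D(\theta_{L_0})$ and invoke the residue theorem---is the same idea as the paper's, but you carry it out with scalar-valued forms on $\Gamma_0(D)$, whereas the paper passes to vector-valued forms at level $\SL_2(\Z)$. Concretely, the paper forms the $S_{L_0}$-valued restriction $\tilde f_{L_0}$ of $\tilde f$ via \eqref{eq:deffd}, pairs it with the $S_{L_0}^\vee$-valued derivative $D(\Theta_0)$ of the full vector-valued theta function, and obtains a weight-$2$ weakly holomorphic form for $\SL_2(\Z)$. The residue theorem at the single cusp then gives $\sum_{m,\nu}\tilde c(-m,\nu+\alpha)\,b(m,\nu)=0$ directly, and the result drops out after splitting $m=0$ from $m>0$. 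No Atkin--Lehner eigenvalue is ever needed: the lift to level one has already packaged that information into the coefficients $\tilde c(0,\alpha)$.

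Your route, while workable in principle, has two genuine loose ends. First, the residue formula is misstated: since $W_r$ normalizes $\Gamma_0(D)$ and has determinant $r$, one has $g(W_r\tau)\,d(W_r\tau)=(g|_2W_r)(\tau)\,d\tau$, so the residue theorem reads $[q^0]g=-\sum_{r>1}[q^0](g|_2W_r)$ with \emph{no} width factor $r$. For $(\ast)$ to come out correctly, that missing factor of $r$ must instead appear in the eigenvalue, i.e.\ one needs $\theta_{L_0}|_{n-2}W_r = r\cdot\chi_r(\beta)\chi_s(\alpha)\gamma_r\cdot\theta_{L_0}$ rather than your stated formula. Second, the identification of the local factor with the $\gamma_p$ of \eqref{def-gamma-p} is not automatic: $\gamma_p$ is the Weil index attached to the $n$-dimensional space $V_p$, not the $(n-2)$-dimensional $V_{0,p}$. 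It happens that $\gamma_p(V_0)=\gamma_p(V)$ (the discrepancy $\epsilon_p^{-2}\cdot\operatorname{inv}_p(V_p)/\operatorname{inv}_p(V_{0,p})$ equals $(-1,p)_p^2=1$, since the hyperbolic plane $V_{-1}\oplus V_1$ contributes $\operatorname{inv}_p=(-1,p)_p$), but this needs to be said. The paper's vector-valued approach sidesteps both issues entirely.
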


\begin{proof}
Consider the 
$S_{L_0}^\vee$-valued theta function 
\[
\Theta_0(\tau) = \sum_{x\in L_0'} q^{Q(x)} 
\chi_{x+L_0}^\vee\in M_{n-2}(\omega_{L_0}^\vee).
\]
Applying the above construction \eqref{eq:defdg} to $\Theta_0$ we obtain an $S_{L_0}^\vee$-valued modular form 
\begin{align*}
D(\Theta_0) = \sum_{x\in L_0'} Q(x)q^{Q(x)}\chi_{x+L_0}^\vee -\frac{n-2}{12} \Theta_0 E_2
\in M_{n}(\omega_{L_0}^\vee)
\end{align*}
of weight $n$.  For its Fourier coefficients we have
\begin{align*}
D(\Theta_0) &= \sum_{\nu\in L_0'/L_0}\sum_{m\geq 0} b(m,\nu)q^m\chi_{\nu}^\vee\\
b(m,\nu) &= \sum_{\substack{x\in \nu +L_0\\ Q(x)=m} }Q(x)
+2(n-2) \sum_{\substack{x\in \nu+L_0}} \sigma_1(m-Q(x)).
\end{align*}

As in \cite[(4.8)]{BHY},   an $S_L$-valued modular form $F$  induces an $S_{L_0}$-valued form $F_{L_0}$.
If we denote by $F_\mu$ the components of $F$ with respect to the standard basis $(\chi_\mu)$ of $S_L$, we have 
\begin{align}
\label{eq:deffd}
F_{L_0,\nu} =  \sum_{\substack{\alpha\in \mathfrak{d}^{-1}L_{-1}/L_{-1}}}F_{\nu+\alpha}
\end{align}
for $\nu\in L_0'/L_0$.

Let $\tilde f\in M_{2-n}^!(\omega_L)$ be the $S_L$-valued form corresponding to $f$, as in  \eqref{eq:vectorization}.
Using \eqref{eq:deffd} we obtain 
\begin{align*}
\tilde f_{L_0} \in M_{2-n}^!(\omega_{L_0})
\end{align*}
with Fourier expansion
\begin{align*}
\tilde f_{L_0} =\sum_{\nu,m} \sum_{\substack{\alpha\in \delta^{-1}I/I}}
\tilde c(m,\nu+\alpha) q^m \chi_{\nu+L_0}.
\end{align*}
We consider the natural pairing between the $S_{L_0}$-valued modular form $\tilde f_{L_0}$ of weight $2-n$ and the $S_{L_0}^\vee$-valued modular form $D(\Theta_0)$ of weight $n$,
\[
(\tilde f_{L_0}, D(\Theta_0) ) \in M_2^!(\SL_2(\Z)).
\]
By the residue theorem, the constant term of the $q$-expansion vanishes, and so  
\begin{align}
\label{eq:res}
\sum_{m\geq 0} \sum_{\substack{\nu\in L_0'/L_0\\
\alpha\in \delta^{-1}I/I}}
\tilde c(-m,\nu+\alpha)  b(m,\nu) =0.
\end{align}

We split this up in the sum over $m>0$ and the contribution from $m=0$.
Employing Proposition \ref{prop:promoted coefficients}, we obtain that the sum over $m>0$ is equal to 
\begin{align*}
\sum_{m>0} c(-m) b(m,0).
\end{align*}
For the contribution of $m=0$
we notice
\[
b(0,\nu) = \begin{cases}
-\frac{n-2}{12},& \nu=0\in L_0'/L_0,\\
0,& \nu\neq 0.
\end{cases}
\]
Hence this part is equal to 
\begin{align*}
-\frac{n-2}{12}\sum_{\substack{\alpha\in \mathfrak{d}^{-1}L_{-1}/L_{-1}}}
\tilde c(0,\alpha)  .
\end{align*}
Inserting the two contributions into \eqref{eq:res}, we obtain
\begin{align*}
0&=
\sum_{m>0} c(-m) b(m,0)-\frac{n-2}{12}\sum_{\substack{\alpha\in \mathfrak{d}^{-1}L_{-1}/L_{-1}}}
\tilde c(0,\alpha) \\
&=\sum_{m>0} c(-m) \bigg(
\sum_{\substack{x\in L_0\\ Q(x)=m} }Q(x)
+2(n-2) \sum_{\substack{x\in L_0}} \sigma_1(m-Q(x))\bigg)\\
&\phantom{=}{}
-\frac{n-2}{12}\sum_{\substack{\alpha\in \mathfrak{d}^{-1}L_{-1}/L_{-1}}}
\tilde c(0,\alpha)\\
&=\sum_{x\in L_0} c(-Q(x))Q(x)  +2(n-2)
\sum_{m>0} c(-m) \sum_{\substack{x\in L_0}} \sigma_1(m-Q(x))\\
&\phantom{=}{}-\frac{n-2}{12}\sum_{\substack{\alpha\in \mathfrak{d}^{-1}L_{-1}/L_{-1}}}
\tilde c(0,\alpha)\\
&= (n-2)\mathrm{mult}_\Phi(f)-(n-2)I.
\end{align*}
This concludes the proof of the proposition.
\end{proof}

Now we verify directly the other claim of Proposition \ref{prop:other mult}: the function
\[
P_1(\tau,w_0) = \prod_{m>0}\prod_{\substack{x\in L_0\\ Q(x)=m}} \Theta\big(\tau,\langle w_0,x\rangle\big)^{c(-m)}
\]
satisfies the transformation law (\ref{FJ-trans})  with respect to the translation action of $\mathfrak{b} L_0$ on the variable $w_0$. 

First recall that, for $a$, $b\in \Z$, 
\[
\Theta(\tau,z+a\tau+b) = \exp\big( - \pi i a^2\tau- 2\pi i az+ \pi i (b-a)\big) \cdot \Theta(\tau,z).
\]
If we write $\alpha=a\tau+b$ and $\tau=u+iv$, then
\[
a = \frac{\Im(\alpha)}{v} = \frac{\alpha-\bar\alpha}{2iv}, \qquad b= \Re(\alpha) - \frac{u}{v}\,\Im(\alpha).
\]
Thus
\[
\frac12a^2\tau+az+\frac12(a-b) = \frac1{4iv}(\alpha-\bar\alpha)\alpha + \frac1{2iv}(\alpha-\bar\alpha)z+\frac12(a-b-ab).
\]
For $z$ and $w$ in $\C$, write
\[
R(z,w)=R_\tau(z,w) = B_\tau(z,w)-H_\tau(z,w) = \frac{1}{v} z(w-\bar w).
\]
Then 
\[
\frac1{4v}(\alpha-\bar\alpha)\alpha + \frac1{2v}(\alpha-\bar\alpha)z = \frac12 R(z,\alpha) + \frac14 R(\alpha,\alpha),
\]
and we can write
\[
\Theta(\tau,z+\alpha) = \exp(-\pi R(z,\alpha) - \frac{\pi}2 R(\alpha,\alpha)) \cdot \exp(\pi i (a-b-ab))^{-1}\,\Theta(\tau,z).
\]

We will consider the contribution of the $\frac12(a-b-ab)$ term separately. 

For $\beta \in V_0$, we have $\langle w_0+\beta,x\rangle = \langle w_0,x\rangle + \langle \beta,x\rangle$. Suppose that for all $x\in L_0$, we have $\langle \beta,x\rangle =a\tau+b$
for $a$ and $b$ in $\Z$. Writing $\mathfrak{b} = \Z +\Z\tau$, this is precisely the condition that $\beta\in \mathfrak{b}\,L_0$.  Then we
obtain a factor
\[
\exp\left( -\pi  \sum_{m>0}\sum_{\substack{x\in L_0\\ Q(x)=m}}c(-m) 
\left[ R\big( \langle w_0,x\rangle, \langle \beta,x\rangle\big)+ \frac{   R\big( \langle \beta,x\rangle,\langle \beta,x\rangle \big)    }{2}\right] \right).
\]
Expanding the sum and using the hermitian version of Borcherds' quadratic identity from the proof of Proposition \ref{prop:quadratic bundles}, we have
\begin{align*}
&\sum_{\substack{x\in L_0}} \frac{  c(-Q(x))  }{v} 
\left[ \langle w_0,x\rangle  \langle \beta,x\rangle -   \langle w_0,x\rangle  \langle x,\beta\rangle
+\frac{ \langle \beta,x\rangle \langle \beta,x\rangle  }{2}  
-\frac{ \langle \beta,x\rangle\langle x,\beta\rangle}{2} \right]  \\
\noalign{\smallskip}{}
&= 
-\frac{1}{v} \left(  \langle w_0,\beta\rangle +\frac{1}{2} \langle \beta,\beta\rangle\right) \cdot  \frac{1}{2n-4} \cdot  \sum_{x\in L_0} c(-Q(x))\,[x,x]\\
\noalign{\smallskip}
{}
&= -\frac{1}{v} \left( \langle w_0,\beta\rangle +\frac{1}{2}  \langle \beta,\beta\rangle \right) \cdot \text{mult}_{\Phi}(f).
\end{align*}
Thus, using $I= \text{mult}_{\Phi}(f)$,  we have a contribution of
\[
\exp\Big(    \frac{\pi \langle w_0,\beta\rangle }{v}  + \frac{\pi \langle \beta,\beta\rangle  }{2 v}   \Big)^I
\]
to the transformation law.

Next we consider the quantity
\begin{eqnarray*}\lefteqn{
a-b-ab } \\
&=&  
\frac{\Im(\alpha)}{v} - \Re(\alpha) - \frac{u  \Im(\alpha)}{v}  - \frac{\Im(\alpha)}{v} \left(     \Re(\alpha) - \frac{u  \Im(\alpha) }{v}   \right)  \\
&=&  \frac{\alpha-\bar\alpha}{2iv} - \frac{ (\alpha+\bar\alpha) }{2}  
 - \frac{u (\alpha-\bar\alpha) }{2iv} - \frac{\alpha-\bar\alpha}{2iv} 
 \left(   \frac{  (\alpha+\bar\alpha) }{2}  - \frac{u(\alpha-\bar\alpha)}{2iv}     \right) .
\end{eqnarray*}
This will contribute $\exp(-\pi i A)$, where $A$ is defined as the sum
\[
 \sum_{x\ne0} c(-Q(x))\bigg[\  \frac{\alpha-\bar\alpha}{2iv} - \frac{ \alpha+\bar\alpha }{2} 
- \frac{u (\alpha-\bar\alpha) }{2iv} - \frac{\alpha-\bar\alpha}{2iv}\left(\frac{ (\alpha+\bar\alpha)}{2} - \frac{u (\alpha-\bar\alpha)}{2iv}   \right)  \bigg]
\]
where $\alpha = \langle \beta,x\rangle$.  Since $x$ and $-x$ both occur in the sum, the linear terms vanish and 
\[
A = \sum_{x\ne0} c(-Q(x))\bigg[    - \frac{\alpha-\bar\alpha}{2iv}\left(  \frac{(\alpha+\bar\alpha)}{2}   - \frac{u (\alpha-\bar\alpha)  }{2iv}   \right)  \bigg].
\]
Using the hermitian version of Borcherds quadratic identity, as in  the proof of Proposition \ref{prop:quadratic bundles}, we obtain
\[
A =   \frac{u I }{2v^2}  \cdot   \langle \beta,\beta\rangle .
\]
Thus we have
\begin{eqnarray*}\lefteqn{
P_1(\tau,w_0+\beta) }  \\
& = & 
 P_1(\tau,w_0)  \cdot  \exp \Big(   \frac{\pi}{v}  \langle w_0,\beta\rangle +\frac{\pi }{2 v} \langle \beta,\beta\rangle  \Big)^I
\cdot \exp\Big(  \frac{- 2\pi i u \langle \beta , \beta\rangle }{4v^2} \Big)^I.
\end{eqnarray*}

Finally, we recall the conjugate linear isomorphism $L_{-1} \iso \mathfrak{b}$ of (\ref{wee conj}) defined by 
$e_{-1} \mapsto \tau$ and    $f_{-1}\mapsto 1.$  As 
\[
\mathfrak{d}^{-1}L_{-1} = \Z e_{-1}+D^{-1}\Z f_{-1},
\] 
we have  $-\delta^{-1} \tau = a\tau + D^{-1}b$ for some $a, b\in \Z$,  and hence 
\[
\tau = -D^{-1}b(a+\delta^{-1})^{-1}.
\]
This gives $u/v= a\, D^{\frac12}.$ Also, using
\[
\delta e_{-1} =-Da e_{-1}-b \,f_{-1},
\]
we have
\[
\frac{1}{2} (1+\delta)\,e_{-1} = \frac{1}{2} (1-Da)\,e_{-1} -\frac{1}{2} b\,f_{-1}\in \Z e_{-1}+\Z f_{-1} = L_{-1}.
\]
Thus $a$ is odd and $b$ is even. Recall that $\mathrm{N}(\mathfrak{b} ) = 2 v  / \sqrt{ D }$. Thus
\[
\frac{u}{4v^2} = \frac{a D^{\frac12}}{2  \mathrm{N}(\mathfrak{b} )D^{\frac12}},
\]
and, since $\langle\beta,\beta\rangle\in \mathrm{N}(\mathfrak{b})$, we have 
\[
\exp\Big(- \frac{2\pi i u \langle\beta,\beta \rangle }{4v^2} \Big) = \exp\Big(-  \frac{\pi i  \langle\beta,\beta\rangle}{\mathrm{N}(\mathfrak{b})} \Big) = \pm1.
\]
The transformation law is then
\[
P_1(\tau,w_0+\beta) = 
\exp\Big(  \frac{\pi}{v} \langle w_0,\beta\rangle +\frac{\pi }{2 v}\langle \beta,\beta\rangle  - i\pi \,\frac{\langle\beta,\beta\rangle}{ \mathrm{N}(\mathfrak{b})}  \Big)^I  
\cdot P_1(\tau,w_0),
\]
as claimed in Proposition \ref{prop:other mult}.






\bibliographystyle{smfalpha}


\newcommand{\etalchar}[1]{$^{#1}$}
\providecommand{\bysame}{\leavevmode\hbox to3em{\hrulefill}\thinspace}
\providecommand{\MR}{\relax\ifhmode\unskip\space\fi MR }
\providecommand{\MRhref}[2]{%
  \href{http://www.ams.org/mathscinet-getitem?mr=#1}{#2}
}
\providecommand{\href}[2]{#2}

\end{document}